\documentclass[a4paper,12pt,reqno]{article}
\usepackage{titlesec}  
\usepackage{authblk}
\usepackage{amsmath}
\usepackage{amsthm}
\usepackage{amssymb}
\usepackage{latexsym}
\usepackage{graphicx}
\usepackage{stmaryrd}
\usepackage{mathrsfs}
\usepackage{enumerate}
\usepackage{color}
\usepackage[colorlinks, linkcolor=red,citecolor=blue]{hyperref}
\font\sy=rsfs10     scaled 1200



\newcommand{\BLACK}{\color{black}}

\definecolor{dGREEN}{rgb}{0.0,0.5,0.5}

\newcommand{\glalign}[2]{\lower.6ex\vbox{
		\baselineskip\lineskip\ialign{$#1\hfil##\hfil$\crcr#2\crcr=\crcr}}}

\newcommand{\delx}{\partial_{xx}}

\newcommand{\scr}[1]{{\mbox{\sy #1}\,}}

\newcommand{\del}{\partial}

\newcommand{\delt}{\partial_{t}}

\renewcommand{\delx}{\partial_{x}}

\newcommand{\goto}{\rightarrow}

\renewcommand{\div}{\mbox{\rm div}\,}

\newcommand{\supp}{\mbox{\rm supp}\,}

\newcommand{\trans}{{}^\top}


\def\eqn#1$$#2$${\begin{equation}\label#1#2\end{equation}}

\numberwithin{equation}{section}

\newtheorem{defi}{Definition}[section]
\newtheorem{thm}[defi]{Theorem}

\newtheorem{prop}[defi]{Proposition}
\newtheorem{lem}[defi]{Lemma}
\newtheorem{rem}[defi]{Remark}


\def\eqn#1$$#2$${\begin{equation}\label#1#2\end{equation}}

\numberwithin{equation}{section}
\pagestyle{plain}

\setcounter{page}{1}
\topmargin=0cm
\oddsidemargin=0truecm \evensidemargin=0truecm
\textheight=22.5cm 
\textwidth=16.0cm
\numberwithin{equation}{section}
\allowdisplaybreaks[4]

%
\begin{document}

	
	\title{
		Time periodic problem 
		of compressible Euler equations with damping
		on the whole space }
	\author[a]{Houzhi Tang \thanks{E-mail: houzhitang@ahnu.edu.cn (H.-Z. Tang)}}
	\author[b]{Kazuyuki Tsuda \thanks{E-mail: k-tsuda@ip.kyusan-u.ac.jp (K. Tsuda)}}
	
	\affil[a]{School of Mathematics and Statistics, Anhui Normal University
		\par Wuhu 241002, P. R. China}
	
	\affil[b]{Kyushu Sangyo University, Fukuoka, 813-8503, Japan}
	
	\date{}
	\maketitle
	
	\begin{abstract}
		\noindent
		In this article, time periodic problem  
		of the compressible Euler equations with damping on the whole space 
		is studied. It is well known that in the Euler system, long-time behavior of solutions is a more delicate problem  due to lack of the viscosity.  By virtue of a  damping effect, 
		time global solutions barely exist.  Under such circumstances, existence of a time periodic solution is obtained for sufficiently  small time periodic external force 
		when the space dimension is greater than or equal to $3$. In addition, its stability is also obtained. The solution is 
		asymptotically stable 
		under sufficiently small initial perturbations 
		and the $L^\infty$ norm of the perturbation decays as time goes to infinity.  The potential theoretical estimates work well on a low frequency part of solutions, while a new energy estimate with weights is established to avoid derivative loss.  
	\end{abstract}
	\textbf{2020 MR Subject Classification:}\ 35Q31, 35B10, 35B35, 35B40.\\
	\noindent{\textbf{Key words:} Euler equations with damping, time periodic problem, spectral analysis}
	
	
	\section{Introduction}
	
	We consider time periodic problem of the following Euler equations with damping in $\mathbb{R}^d$ $(d\geq 3)$:
	\begin{equation}\label{Main1}
		\left\{
		\begin{aligned}
			&\partial_{t}\rho  +\div (\rho v) =0,\\
			&\partial_{t}(\rho v) +\div (\rho v\otimes v) +\nabla P(\rho) +\rho v=\rho g,
		\end{aligned}
		\right.
	\end{equation}
	\noindent where $\rho=\rho(x,t)$ denotes the unknown density, and 
	$v=\trans{(v_{1}(x,t),\cdots,v_{d}(x,t))}$ is the corresponding velocity field, respectively, at time $t\geq 0$ and 
	position $x\in\mathbb{R}^d$; 
	The pressure $P=P(\rho)$ is supposed to be a smooth function of $\rho$ satisfying 
	$$
	P'(\rho_*)>0
	$$
	for a given positive constant $\rho_*$;  The function $g=g(x,t)$ is given external force  satisfying the condition
	\begin{equation}
		\begin{array}{rcl}
			g(x,t+T)&=&g(x,t)\quad(x\in \mathbb{R}^d,\,t\in\mathbb{R})
		\end{array}
		\label{assumption-g}
	\end{equation}
	for some constant $T>0$.
	
	Time periodic flow is one of basic phenomena in fluid mechanics, and thus, time periodic problems for 
	fluid dynamical equations have been extensively studied. We refer, e.g., to \cite{Kaniel-Shinbrot, Kozono-Nakao, Serrin, Yamazaki} for the incompressible Navier-Stokes case, and to \cite{Brezina,  Feireisl-Matusu-Necasova-Petzeltova-Straskrava, Feireisl- Mucha-Novotny-Pokorny, Kagei-Tsuda, Ukai, Valli} 
	for the compressible case. 
	In this paper we are interested in time periodic problem for the compressible fluid system without the viscosity  terms on unbounded domains. 
	As well known, solutions to the compressible fluid system without the viscosity  terms, {\rm i.e., } the compressible Euler system blow up. Indeed, classical results, John \cite{John}, Liu \cite{Liu1978} and Li, Zhou and Kong\cite{LZK}  confirmed that when initial data are small smooth perturbations near constant states, blowup in gradient of solutions occurs in finite time. We note that on the incompressible case, the blow up problem has been a big problem in long time, and recently Eligindi \cite{terk} obtained an important progress, {\rm i.e., } some blow up results of solutions.

	Therefore, as one of continuing works, adding some damping terms to the system  has been considered to get  time global solutions. We focus on the compressible Euler system with damping \eqref{Main1} as a typical system. 
	Recently, study of the compressible Euler system with damping has been developed by many researchers.  In multi space dimensions, Wang and Yang \cite{Wang-Yang} first studied time global solutions with time asymptotic behavior of them to \eqref{Main1}. Then 
	Sideris, Thomases, and Wang \cite{Sider-Wang} obtained the global smooth solutions under small data assumption, and proved the singularity formation of solutions for a class of large data. On the initial value problem, here we refer to Ji and Mei \cite{Ji-Mei}. They investigated the influence of the damping effect to time-decay estimates of solutions and showed that if the damping effect depends time and becomes weak as $t\rightarrow \infty$, the solutions may have time growth order estimates.     
	On the other hand, concerning  the time periodic problem, as far as we investigated, there are few results. Tan, Xu and Wang \cite{TXZ} studied in a periodic domain. They showed that  under some smallness and symmetry assumptions on the external force, we have the existence of the periodic solutions. Li and Yao \cite{Li-Yao} proved the existence of time periodic solutions to the full quantum Euler equation on $\mathbb{R}^3$ for a small time periodic external force.  
	\BLACK 
	
	Let us explain why we consider the time periodic problem and difficulty of the problem. The problem is deeply related to time asymptotic behavior of solutions. 
	The difficulty is that we consider the unbounded domains which gives polynomial time-decay rate of solutions,  with lower regularity solution spaces by lack of the viscosity. 
	
	On the compressible Navier-Stokes system, 
	Ma, Ukai, and Yang \cite{Ukai} proved the existence and stability of time periodic solutions on the whole space $\mathbb{R}^d$ with $d \geq 5$. 
	Since time periodic solution is one of time global solutions, we have to use the information of time asymptotic behavior of linear system.   Hence they consider the higher dimension case $d \geq 5$ because 
	the solution has faster decay rates as the hear kernel. We note that 
	if we consider bounded domain case like a periodic domain, linear  solutions have exponential decay rates in each dimension.   
	It was shown in \cite{Kagei-Tsuda} that, 
	for $d\geq 3$, if the external force $g$ satisfies the same symmetry condition 
	as \cite{TXZ}, there exists a time periodic solution. 
	The symmetry condition gives faster decay rate of linear solutions. 
	After that, Tsuda \cite{Tsuda2016} proved the existence of the time periodic solution of the compressible Navier-Stokes equations without symmetric condition for the external force by applying the potential theoretical method. 
	
	For \eqref{Main1}, the long-time behavior is more delicate by lack of the viscosity as above previous works.  Indeed, 
	if there is no damping, solutions to the Euler system blow up in finite time, which can be referred to \cite{Siders}. If the damping effect becomes weaker as $t \rightarrow \infty$, 
	the solutions also may blow up in infinite time.  
	By virtue of a firm damping effect, 
	we barely have time global solutions. This motivate us to consider time periodic problem related to time asymptotic behavior of solutions.  
	
	Furthermore, the lack of viscosity causes another serious problem: we have to consider lower order regularity solution space in the energy estimates. 
	Since we consider the hyperbolic system, we have to take care of derivative loss of  the density part of solutions like the compressible Navier-Stokes system. This technique is called as the Matsumura-Nishida energy estimate. In addition, for the Euler-damping system, derivative loss of the velocity part also has to be taken care of because there is no dispersion term about the velocity.  We note that if we consider the full quantum Euler equation, we can use the dispersion term  in the energy estimates and the density has higher order regularity.

	We consider \eqref{Main1}.	It will be proved that if $d\geq 3$ and if $g$ satisfies \eqref{assumption-g} and 
	\begin{align*}
		\|g\|_{C([0,T];L^1)}
		+\|(1+|x|^d)g\|_{C([0,T];L^\infty)}
		+\|(1+|x|^{d-1})g\|_{L^2(0,T;H^{s-1})}
		\ll 1
	\end{align*}
	with an integer $s\geq [d/2]+2$, then 
	there exists a time periodic solution 
	$(\rho_{per},v_{per})\in C([0,T];H^s)$ with period $T$ for \eqref{Main1}
	and $u_{per}(t)=(\rho_{per}(t)-\rho_{\ast},v_{per}(t))$ satisfies 
	\begin{align}\label{u_{per}esimate}
		&\sup_{t\in [0,T]}(
		\sum_{j=0}^1\|(1+|x|^{d-2+j})\delx^j\rho_{per}(t)\|_{L^\infty}
		+\| (1+|x|^{d-1}) v_{per}(t)\|_{L^{\infty}})\notag\\
		&
		\quad 
		\leq 
		C (\|g\|_{C([0,T];L^1)}+\|(1+|x|^d)g\|_{L^\infty(0,T;L^\infty)}
		+\|(1+|x|^{d-1})g\|_{L^2(0,T;H^{s-1})}).
	\end{align}
	Furthermore, if $g$ satisfies 
	$$
	\|g\|_{C([0,T];L^1)}
	+\|(1+|x|^d)g\|_{C([0,T];L^\infty)}
	+\|(1+|x|^{d-1})g\|_{L^2(0,T;H^{s})}
	\ll 1,
	$$
	then the time periodic solution $(\rho_{per},v_{per})$ is asymptotically stable 
	under sufficiently small initial perturbations, 
	and the perturbation satisfies 
	$$
	\|(\rho(t),v(t))-(\rho_{per}(t),v_{per}(t))\|_{L^\infty}\goto 0
	$$ 
	as $t\goto\infty$. 
	\BLACK
	The precise statements of our existence and stability results 
	are given in Theorem \ref{Theorem 3.1} and Theorem \ref{Theorem 3.2} below. 
	
	\vspace{2ex}
	Let us explain our method to prove the main results. 
	The existence of a time periodic solution around $(\rho_*,0)$ is shown  by an iteration argument 
	using the time-$T$-map associated with the linearized problem at $(\rho_*,0)$. 
	As in \cite{Kagei-Tsuda, Tsuda2016} we formulate the time periodic problem 
	as a system of equations for low frequency part and high frequency part of the solution. We need to investigate 
	$(I-S_{j}(T))^{-1}$ for $j=1$ implying low frequency part and $j=\infty$ implying the high frequency part respectively. Here $S_j(T)=e^{-TA}$ with $A$ being the linearized operator around $(\rho_{\ast},0)$ which 
	acts on functions whose Fourier transforms have their supports in $\{\xi \in \mathbb{R}^{d};|\xi|\leq r_{1}\}$ for some $r_{1}>0$ or $\{\xi \in \mathbb{R}^{d};|\xi|\geq r_{\infty}\}$  for some $r_{\infty}>0$. Concerning the low frequency part, we apply the potential theoritical estimates, while the weighted energy estimates is used in the high frequency part as in \cite{Tsuda2016}. 
	
	There are two key observations. One is the decay of solution $v$ as $|x|\rightarrow \infty$ is faster than
	\cite{Tsuda2016} by virtue of the hyperbolic system.  In \cite{Tsuda2016}, 
	since the spatial decay at infinity coincides with that of the fundamental solution of the diffusion equation by the viscosity,  we need more delicate analysis to estimate the convective term $v\cdot\nabla v$, {\rm i.e.,} to reformulate to the momentum equation to get the divergence form. However, in our system $v$ is governed by the hyperbolic system due to lack of the viscosity, which derives faster spatial decay at infinity than  \cite{Tsuda2016}. Hence we can omit such a detailed analysis. We note that on the density part, the solution has slow spatial decay at infinity, however, the nonlinear term related to the density, that is, the pressure term $\nabla (P(\rho)\rho)$ has already the derivative form like the divergence form.   This implies possibility that time periodic solution can be constructed easier than 
	the usual compressible Navier-Stokes system although there are few previous results on the Euler system with damping terms so far. 
	
	Another point is  techniques to avoid derivative loss in the energy estimates. 
	As for the high frequency part, we employ the weighted energy estimates 
	established in \cite{Kagei-Tsuda}.  Since the solution belongs to $C([0,T]; H^s)$, especially, there is no dispersion term on $v$ due to Euler system, we have to take care of derivative loss in the energy estimate.  In taking care of the derivative loss, we cannot apply the previous method used in the initial value problem as in  \cite{Wang-Yang}. Because when we use the iteration, the numbers of solutions between linear part and nonlinear part is different, {\rm i.e., } there appears, for example,  
	$$
	(v^{(k-1)}\cdot \nabla v^{(k-1)}, v^{(k)} )_{L^2}
	$$
	in the energy estimate, where $k \in\mathbb{N}$.  Hence the integration by parts does not work for such a term to avoid derivative loss.  To overcome this difficulty, we extend the Matsumura-Nishida energy estimate. We regard such nonlinear terms as linear terms by taking one of solutions as given functions, for instance, $\tilde{v} \cdot \nabla v$ with a given function $\tilde{v}$.  Then we  include the linear terms to the linear system.   See \eqref{Mainsystem} below. We establish weighted energy estimates for \eqref{Mainsystem}. We note that in the weighted energy estimates, some reminder terms related to weights appear, for example, 
	$$
	(\tilde{v} \cdot \nabla v, \nabla(|x|^\ell) v)_{L^2},
	$$
	where $\ell \in \mathbb N$. 
	However, such terms do not have any derivative loss of solutions because derivatives act to the weights.  This implies that weighted energy estimates can be applicable to the Euler system.

	Finally, the asymptotic stability of the time periodic solution $(\rho_{per},v_{per})$, which had not been obtained in previous results,   can be proved as in 
	the argument in Kagei and Kawashima \cite{Kagei-KawashimaCMP} by using the Hardy inequality. 
	
	\vspace{2ex}
	This paper is organized as follows. 
	In section \ref{S2}, we introduce notations and auxiliary lemmas used in this paper. 
	In section \ref{S3}, we state main results of this paper. 
	Section \ref{S4} is devoted to the reformulation of the problem. 
	We will rewrite the system for the low and high frequency parts into a system of integral equations in terms of the time-$T$-map. 
	In section \ref{S5}, we study the low frequency part and derive the necessary estimates 
	for the time-$T$-map of the low frequency part. 
	In section \ref{S6}, we state some spectral properties of 
	the time-$T$-map of the high frequency part. 
	In section \ref{S7}, we estimate nonlinear terms 
	and then give a proof of the existence of a time periodic solution by the iteration argument. In section \ref{S8}, we prove the stability of the time periodic solution.


	\section{Preliminaries}\label{S2}
	In this section we first introduce some notations which will be used throughout this paper. We then introduce some auxiliary lemmas which will be useful in the proof of the main results.\\
	
	For a given Banach space $X,$ the norm on $X$ is denoted by $\|\cdot\|_{X}$.
	
	Let $1\leqq p\leqq \infty.$ We denote by $L^p$ the usual $L^p$ space over $\mathbb{R}^d$. 
	The inner product of $L^2$ is denoted by $(\cdot , \cdot)$. 
	For a nonnegative integer $k$, we denote by $H^k$ the usual $L^2$-Sobolev space of order $k$. 
	(As usual, $H^0=L^2$.) 
	
	We simply denote by $L^p$ the set of all vector fields 
	$ v=\trans(v_1,\cdots,v_d)$ on $\mathbb{R}^d$ 
	with $v_j\in L^p$ $(j=1,\cdots,d)$, i.e., $(L^p)^d$  
	and the norm $\|\cdot\|_{(L^p)^d}$ on it is denoted by $\|\cdot\|_{L^p}$ 
	if no confusion will occur. 
	Similarly, for a function space $X$, 
	the set of all vector fields 
	$v=\trans(v_1,\cdots,v_d)$ on $\mathbb{R}^d$ 
	with $v_j\in X$ $(j=1,\cdots,d)$, i.e., $X^d$, is simply denoted by $X$; 
	and the norm $\|\cdot\|_{X^d}$ on it is denoted by $\|\cdot\|_{X}$ if no confusion will occur. 
	(For example, $(H^k)^d$ is simply denoted by $H^k$ 
	and the norm $\|\cdot\|_{(H^k)^d}$ is denoted by $\|\cdot\|_{H^k}$.)
	
	Let $u=\trans(a,v)$ with $\phi\in H^k$ and $v=\trans(v_1,\cdots,v_d)\in H^m$.  
	we denote the norm of $u$ on $H^k\times H^m$ by $\|u\|_{H^k\times H^m}$:
	$$
	\|u\|_{H^k\times H^m}=\left(\|a\|_{H^k}^2+\|v\|_{H^m}^2\right)^{\frac{1}{2}}.
	$$
	When $m=k$, the space $H^k\times (H^k)^d$ is simply denoted by $H^k$, and, also, 
	the norm $\|u\|_{H^k\times (H^k)^d}$ by $\|u\|_{H^k}$ 
	if no confusion will occur : 
	$$
	H^k:=H^k\times (H^k)^d, 
	\ \ \ 
	\|u\|_{H^k}:=\|u\|_{H^k\times (H^k)^d} 
	\ \ \ (u=\trans(a,v)).
	$$
	Similarly, for $u=\trans(a,v)\in X\times Y$ with $v=\trans(v_1,\cdots,v_d)$ 
	, 
	we denote its norm $\|u\|_{X\times Y}$ by $\|u\|_{X\times Y}$:
	$$
	\|u\|_{X\times Y}=\left(\|a\|_{X}^2+\|v\|_{Y}^2\right)^{\frac{1}{2}}
	\ \ \ (u=\trans(a,v)).
	$$
	If $Y=X^d$, 
	we simply denote $X\times X^d$ by $X$, 
	and, its norm $\|u\|_{X\times X^d}$ by $\|u\|_X$: 
	$$
	X:=X\times X^d, 
	\ \ \ 
	\|u\|_{X}:=\|u\|_{X\times X^d}
	\ \ \ (u=\trans(a,v)).
	$$
	
	We will work on function spaces with spatial weight. 
	For a nonnegative integer $\ell$ and $1\leq p \leq \infty$, we denote by $L^{p}_{\ell}$ the weighted $L^{p}$ space defined by
	$$
	L^{p}_{\ell}=\{u\in L^{p}; \|u\|_{L^{p}_{\ell}}:=\|(1+|x|)^{\ell}u\|_{L^{p}}<\infty\}.
	$$
	
	We denote the Fourier transform of $f$ by $\hat{f}$ or $\mathcal{F}[f]$: 
	\begin{eqnarray}
		\hat{f}(\xi)
		=\mathcal{F}[f](\xi)
		=\int_{\mathbb{R}^d}f(x)e^{-ix\cdot\xi}dx\quad (\xi\in\mathbb{R}^d).\nonumber
	\end{eqnarray}
	The inverse Fourier transform of $f$ is denoted by $\mathcal{F}^{- 1}[f]$: 
	\begin{eqnarray}
		\mathcal{F}^{- 1}[f](x)
		=(2\pi)^{-d}\int_{\mathbb{R}^d}f(\xi)e^{i\xi\cdot x}d\xi\quad(x\in\mathbb{R}^d).\nonumber
	\end{eqnarray}

	Let $k$ be a nonnegative integer and let 
	$r_{1}$ and $r_{\infty}$ be positive constants  satisfying $r_{1}<r_{\infty}.$  We denote by $H_{(\infty)}^{k}$ the set of all $u\in H^k$ satisfying $\mbox{supp }\hat{u}\subset\{|\xi|\geq r_{1}\}$, 
	and by $L_{(1)}^{2}$ the set of all $u\in L^2$ satisfying $\mbox{supp }\hat{f}\subset\{|\xi|\leq r_{\infty}\}$. 
	Note that $H^k \cap L_{(1)}^2=L_{(1)}^2$ for any nonnegative integer $k$. (Cf., Lemma \ref{lemP_1} {\rm (ii)} bellow.)

	Let  $k$ and $\ell$ be nonnegative integers. We define the spaces $H_{\ell}^{k}$ and $H_{(\infty),\ell}^{k}$ by
	\begin{eqnarray}
		H_{\ell}^{k}=\{u\in H^k;\|u\|_{H_{\ell}^{k}}< +\infty\},\nonumber
	\end{eqnarray}
	where
	\begin{eqnarray}
		\|u\|_{H_{\ell}^{k}}&=&\left(\sum_{j=0}^{\ell}|u|_{H^{k}_{j}}^{2}\right)^{\frac{1}{2}},\nonumber\\
		|u|_{H^{k}_{\ell}}&=&\left(\sum_{|\alpha|\leq k} \parallel |x|^{\ell}\partial^{\alpha}_{x} u\parallel _{L^2}^{2} \right)^{\frac{1}{2}},\nonumber
	\end{eqnarray}
	and
	\begin{eqnarray}
		H_{(\infty),\ell}^{k}=\{u\in H_{(\infty)}^k;\|u\|_{H_{\ell}^{k}}< +\infty\}.\nonumber
	\end{eqnarray}
	
	Let $\ell $ be a nonnegative integer. We denote $L^2_{(1),\ell}$ by
	$$
	L^2_{(1),\ell}=\{f\in L^2_{\ell};f\in L^2_{(1)}\}.
	$$
	
	For $-\infty\leq a<b\leq \infty$,
	we denote by $C^k([a,b];X)$ the set of all $C^k$ functions on $[a,b]$ with values in $X$. 
	We denote the Bochner space on $(a,b)$ by $L^p(a,b;X)$ 
	and the $L^2$-Bochner-Sobolev space of order $k$ by $H^k(a,b;X)$.

	The space ${\scr X}_{(1)}$ is defined by 
	\begin{eqnarray}
		{\scr X}_{(1)}=\{a\in L^{\infty}_{d-2},\nabla {a}\in H^{1};\mbox{supp }\hat{a}\subset\{|\xi|\leq r_{\infty}\}, \|a\|_{{\scr X}_{(1)}}<+\infty\},\nonumber
	\end{eqnarray}
	where
	\begin{eqnarray*}
		&&\|a\|_{{\scr X}_{(1)}}:=\|a\|_{{\scr X}_{(1),L^{\infty}}}+\|a\|_{{\scr X}_{(1),L^{2}}},\\
		&&\|a\|_{{\scr X}_{(1),L^{\infty}}}:=\sum_{j=0}^{1}\|(1+|x|)^{d-2+j}{\nabla}^{j}a \|_{L^\infty},\,
		\|a\|_{{\scr X}_{(1),L^{2}}}:=\sum_{j=1}^{2}\|(1+|x|)^{j-1}{\nabla}^{j} a\|_{L^2}.
	\end{eqnarray*}
	
	We define the space ${\scr Y}_{(1)}$ by 
	\begin{eqnarray}
		{\scr Y}_{(1)}=\{v \in L^{\infty}_{d-1} \cap L^2,\nabla v \in L^{2}_1;\mbox{supp }\hat{v}\subset\{|\xi|\leq r_{\infty}\},\|v\|_{{\scr Y}_{(1)}}<+\infty\},\nonumber
	\end{eqnarray}
	where
	\begin{eqnarray*}
		&&\|v\|_{{\scr Y}_{(1)}}:=\|v\|_{{\scr Y}_{(1),L^{\infty}}}+
		\|v\|_{{\scr Y}_{(1),L^{2}}},\\
		&&\|v\|_{{\scr Y}_{(1),L^{\infty}}}:=\|(1+|x|)^{d-1}v\|_{L^\infty},\,
		\|v\|_{{\scr Y}_{(1),L^{2}}}:=\|v\|_{L^2}+\|(1+|x|)\nabla v\|_{L^2}.
	\end{eqnarray*}

	The space $\scr{Z}_{(1)}(a,b)$ is defined by 
	\begin{eqnarray*}
		\scr{Z}_{(1)}(a,b)=C^1([a,b];{\scr X}_{(1)} \times {\scr Y}_{(1)}).
	\end{eqnarray*}
	
	\vspace{2ex}
	
	Let $\ell$ be a nonnegative integer and  let $s$ be a nonnegative integer satisfying $s\geq\left[\frac{d}{2}\right]+2.$ For $k=s-1,s$,
	the space ${\scr Z}^{k}_{(\infty),\ell}(a,b)$ is defined by
	\begin{align*}
		{\scr Z}^{k}_{(\infty),\ell}(a,b)&= \bigl[C([a,b];H_{(\infty),\ell}^{k})\cap C^{1}(a,b;H_{(\infty)}^{k-1})\big]\nonumber\\
		&\qquad\times   C([a,b];H_{(\infty),\ell}^{k}).
	\end{align*}
	
	\vspace{2ex}
	
	\indent Let $s$ be a nonnegative integer satisfying $s\geq\left[\frac{d}{2}\right]+2.$ and let $k=s-1,s$. The space $X^{k}(a,b)$ is defined by
	\begin{align*}
		X^{k}(a,b)\quad \\
		=\bigl\{\{u_{1},u_{\infty}\};\,
		& u_{1}\in \scr{Z}_{(1)}(a,b), u_{\infty}\in {\scr Z}^{k}_{(\infty),d-1}(a,b),\\
		& \del_t \phi_1 \in C([a,b];L^2), \, \del_t \phi_\infty \in C([a,b];H^{k-1}), \,  u_{j} =\trans{(a_{j},v_{j})}\, (j=1,\infty)\bigr\},
	\end{align*}
	equipped with the norm
	\begin{align*}
		\|\{u_{1},u_{\infty}\}\|_{X^{k}(a,b)}=&
		\|u_{1}\|_{\scr{Z}_{(1)}(a,b)}
		+\|u_{\infty}\|_{{\scr Z}^{k}_{(\infty),d-1}(a,b)}\\
		&+
		\|\partial_{t}\phi_1\|_{C([a,b];L^{2})}+\|\partial_{t}\phi_\infty\|_{C([a,b];H^{k-1})}.
	\end{align*}
	
	We also introduce function spaces of $T$-periodic functions in $t$. 
	We denote by $C_{per}(\mathbb{R};X)$ the set of all $T$-periodic continuous functions 
	with values in $X$ equipped with the norm $\|\cdot\|_{C([0,T];X)}$; 
	and we denote by $L^2_{per}(\mathbb{R};X)$ the set of all $T$-periodic locally square integrable functions 
	with values in $X$ equipped with the norm $\|\cdot\|_{L^2(0,T;X)}$. 
	Similarly, $H^1_{per}(\mathbb{R};X)$ and $X_{per}^k(\mathbb{R})$, and so on, are defined. 
	
	For a bounded linear operator $L$ on  a Banach space $X$,  we denote by $r_{X}(L)$ the spectral radius of $L$.

	For operators $L_1$ and $L_2,$ $[L_1,L_2]$ denotes the commutator of $L_1$ and $L_2$: 
	\begin{eqnarray}
		[L_1,L_2]f=L_1(L_2 f)-L_2(L_1 f).\nonumber
	\end{eqnarray}

	We next  state some lemmas which will be used in the proof of the main results. 
	
	\vspace{2ex}
	
	We begin with the well-known Sobolev type inequality.
	
	\begin{lem}\label{lem2.1.} Let $d\geq 3$ and let $s\geq \left[\frac{d}{2}\right]+1.$ Then there holds the inequality 
		\begin{eqnarray}
			\|f\|_{L^{\infty}}\leq C\|\nabla f\|_{H^{s-1}}\nonumber
		\end{eqnarray}
		for $f\in H^{s}.$
	\end{lem}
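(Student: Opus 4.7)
The plan is to control $\|f\|_{L^\infty}$ via the Fourier inversion bound $\|f\|_{L^\infty}\leq (2\pi)^{-d}\|\hat f\|_{L^1}$ and then to split the frequency integral into a low-frequency piece on $\{|\xi|\leq 1\}$ and a high-frequency piece on $\{|\xi|>1\}$. On each piece I would insert a suitable power of $|\xi|$ and apply Cauchy--Schwarz so that the resulting weights are square integrable (this is where $d\geq 3$ and $s\geq [d/2]+1$ enter), while the remaining Fourier weights reconstruct the norm $\|\nabla f\|_{H^{s-1}}$.

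For the low-frequency piece I would write
\begin{equation*}
\int_{|\xi|\leq 1}|\hat f(\xi)|\,d\xi
=\int_{|\xi|\leq 1}\frac{1}{|\xi|}\cdot |\xi||\hat f(\xi)|\,d\xi
\leq\Bigl(\int_{|\xi|\leq 1}|\xi|^{-2}\,d\xi\Bigr)^{1/2}\Bigl(\int_{|\xi|\leq 1}|\xi|^{2}|\hat f(\xi)|^2\,d\xi\Bigr)^{1/2}.
\end{equation*}
In polar coordinates the first factor equals $C\bigl(\int_0^1 r^{d-3}\,dr\bigr)^{1/2}$, which is finite precisely because $d\geq 3$, while the second factor is bounded by $\|\nabla f\|_{L^2}\leq \|\nabla f\|_{H^{s-1}}$ (since the Fourier multiplier for $\nabla$ is $i\xi$). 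This gives the required estimate on the low-frequency part.

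For the high-frequency piece I would insert the weight $(1+|\xi|^2)^{-s/2}\cdot(1+|\xi|^2)^{s/2}$ and apply Cauchy--Schwarz. The integral $\int_{|\xi|>1}(1+|\xi|^2)^{-s}\,d\xi$ is finite because $2s\geq 2[d/2]+2>d$. To reach the norm $\|\nabla f\|_{H^{s-1}}$ rather than $\|f\|_{H^s}$, I would use that on $\{|\xi|>1\}$ one has $(1+|\xi|^2)^{s}\leq 2(1+|\xi|^2)^{s-1}|\xi|^2$, hence
\begin{equation*}
\int_{|\xi|>1}(1+|\xi|^2)^{s}|\hat f(\xi)|^2\,d\xi
\leq 2\int_{|\xi|>1}(1+|\xi|^2)^{s-1}|\xi|^2|\hat f(\xi)|^2\,d\xi
\leq 2\|\nabla f\|_{H^{s-1}}^2.
\end{equation*}
Combining the two estimates yields $\|f\|_{L^\infty}\leq C\|\nabla f\|_{H^{s-1}}$.

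The only delicate step is the low-frequency argument: a naive Sobolev embedding would only bound $\|f\|_{L^\infty}$ by $\|f\|_{H^s}$, and the key point of this lemma is that the zero-frequency mass of $f$ is controlled through the weight $|\xi|^{-2}$, which is integrable at the origin exactly when $d\geq 3$. This dimensional restriction is the main structural obstacle and is precisely why the paper restricts to $d\geq 3$ throughout.
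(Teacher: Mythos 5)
Your proof is correct. The paper itself gives no proof of this lemma, presenting it only as a ``well-known Sobolev type inequality,'' so there is no argument to compare against. Your frequency-splitting argument is the standard one and all steps check out: Fourier inversion reduces the problem to bounding $\|\hat f\|_{L^1}$; on $\{|\xi|\leq 1\}$ you trade $|\hat f|$ for $|\xi|^{-1}\cdot|\xi||\hat f|$ and use Cauchy--Schwarz, with $\int_{|\xi|\leq 1}|\xi|^{-2}\,d\xi<\infty$ exactly when $d\geq 3$ and the remaining factor controlled by $\|\nabla f\|_{L^2}$; on $\{|\xi|>1\}$ the weight $(1+|\xi|^2)^{-s}$ is integrable because $2s\geq 2[d/2]+2>d$, and the pointwise bound $(1+|\xi|^2)^s\leq 2(1+|\xi|^2)^{s-1}|\xi|^2$ valid for $|\xi|>1$ converts $\|f\|_{H^s}$ into $\|\nabla f\|_{H^{s-1}}$. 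Your closing remark correctly identifies the content of the lemma beyond the naive embedding $H^s\hookrightarrow L^\infty$: the point is that $f$ itself carries no $L^2$ weight on the right-hand side, and it is the integrability of $|\xi|^{-2}$ near the origin, hence $d\geq 3$, that makes this possible.
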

	
	\vspace{2ex}
	
	We next state some inequalities concerned with composite functions.
	
	\begin{lem}\label{lem2.2.} 
		Assume $d\geq 2$ and let $s$ be an integer satisfying $s\geq \left[\frac{d}{2}\right]+1$.  
		Let $s_{j}$ and $\mu_{j}$ ($j=1,\cdots,\ell$) 
		satisfy $0\leq |\mu_{j}|\leq s_{j}\leq  s+|\mu_{j}|$, 
		$\mu=\mu_{1}+\cdots +\mu_{\ell}$, 
		$s=s_{1}+\cdots+s_{\ell}\leq(\ell-1)s+|\mu|$. 
		Then there holds
		\begin{eqnarray}
			\parallel \partial^{\mu_{1}}_{x}f_{1}\cdots \partial^{\mu_{\ell}}_{x}f_{\ell}\parallel_{L^{2}}
			\leq C\prod_{1\leq j\leq \ell}\parallel f_{j}\parallel_{H^{s_{j}}}.\nonumber
		\end{eqnarray}
	\end{lem}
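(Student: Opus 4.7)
The plan is to combine H\"older's inequality with the Sobolev embedding theorem, following the standard proof of multilinear Moser-type estimates. I would first set $\sigma_j := s_j - |\mu_j|\in[0,s]$, so that $\partial_x^{\mu_j} f_j \in H^{\sigma_j}$ with the trivial bound $\|\partial_x^{\mu_j} f_j\|_{H^{\sigma_j}}\leq \|f_j\|_{H^{s_j}}$. The aim is then to allocate Lebesgue exponents $p_j \in [2,\infty]$ with $\sum_{j=1}^\ell 1/p_j = 1/2$ such that H\"older's inequality
\[
\Big\|\prod_{j=1}^\ell \partial_x^{\mu_j} f_j\Big\|_{L^2} \leq \prod_{j=1}^\ell \|\partial_x^{\mu_j} f_j\|_{L^{p_j}}
\]
combines with the Sobolev embedding $H^{\sigma_j}\hookrightarrow L^{p_j}$, valid whenever $1/p_j \geq 1/2 - \sigma_j/d$ with the obvious endpoint conventions, to produce the claim.

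The allocation of the $p_j$ is handled by case analysis on the size of $\sigma_j$ relative to $d/2$. Because $s\geq [d/2]+1 > d/2$, the embedding $H^s\hookrightarrow L^\infty$ is available, so any factor with $\sigma_j\geq [d/2]+1$ is placed in $L^\infty$ (contributing $0$ to $\sum 1/p_j$); for factors with $\sigma_j < d/2$ I would take $1/p_j = 1/2 - \sigma_j/d$ so that the Sobolev inclusion into $L^{p_j}$ holds with equality. The individual bounds $\sigma_j\leq s$ together with the global constraint on $s_1+\cdots+s_\ell$ make $\sum 1/p_j \leq 1/2$ hold, with any slack absorbed by enlarging a single $p_j$. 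A cleaner alternative is induction on $\ell$: the base case $\ell=1$ reduces to $\|\partial_x^{\mu_1}f_1\|_{L^2}\leq \|f_1\|_{H^{s_1}}$, immediate from $|\mu_1|\leq s_1$; the inductive step isolates one high-regularity factor (one with $\sigma_j\geq [d/2]+1$), places it in $L^\infty$ by Sobolev, and applies the inductive hypothesis to the remaining $(\ell-1)$-fold product.

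The main technical obstacle is the critical case $\sigma_j = d/2$, where $H^{\sigma_j}\hookrightarrow L^\infty$ narrowly fails. I would resolve this by borrowing an arbitrarily small amount of regularity from a neighbouring factor---using, e.g., the Gagliardo--Nirenberg interpolation $\|f\|_{H^\sigma}\leq C\|f\|_{L^2}^{1-\sigma/s}\|f\|_{H^s}^{\sigma/s}$ to shift derivatives between factors---and absorbing the loss into the constant $C$. Once this bookkeeping is set up, the estimate reduces to a routine H\"older computation, and no derivative loss occurs because all the regularity is already accounted for within the factors $\|f_j\|_{H^{s_j}}$ on the right-hand side.
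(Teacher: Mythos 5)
The paper itself gives no proof of Lemma~\ref{lem2.2.}; it simply cites~\cite{Kagei-Kobayashi}, so there is no in-paper argument to compare against. Your H\"older--Sobolev strategy is indeed the standard one used in that reference and its predecessors (Klainerman--Majda, Kato), so the overall approach is sound.

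However, there are two concrete gaps in the proposal. First, the treatment of the critical exponent $\sigma_j = d/2$ is not correct as written. You propose to ``shift derivatives between factors'' via the interpolation $\|f\|_{H^\sigma}\leq C\|f\|_{L^2}^{1-\sigma/s}\|f\|_{H^s}^{\sigma/s}$, but that inequality interpolates a single factor between two norms of the \emph{same} function and does not redistribute regularity across the product. The clean resolution is different: since $s\geq [d/2]+1>d/2$, when one computes the naive exponent budget $\sum_j(\tfrac12-\tfrac{\sigma_j}{d})_+$ using the constraint on $\sum_j\sigma_j$, one obtains a \emph{strict} inequality $<\tfrac12$, so a factor at the borderline $\sigma_j=d/2$ can be placed in $L^{p_j}$ for some large but finite $p_j$ (using $H^{d/2}\hookrightarrow L^{p}$ for all $p<\infty$), and the slack in the remaining $p_i$ absorbs the nonzero $1/p_j$. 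Your remark ``any slack absorbed by enlarging a single $p_j$'' gestures at this, but it is stated in terms of $\sum 1/p_j\leq 1/2$, whereas what matters is that the slack is \emph{strictly} positive precisely because $s>d/2$; this quantitative point should be made explicit, otherwise the critical case is not actually closed.

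Second, the ``cleaner alternative'' by induction on $\ell$ has a real bug: it is not true that one of the factors necessarily satisfies $\sigma_j\geq[d/2]+1$. For instance, take $d=3$, $s=2$, $\ell=2$ and $\sigma_1=\sigma_2=1$; neither factor embeds into $L^\infty$, so you cannot peel off an $L^\infty$ factor and induct. In this regime you are forced back to the balanced H\"older pairing $L^3\times L^6\to L^2$ (or similar), i.e.\ the non-inductive exponent allocation. A correct induction would have to be on the number of factors \emph{below} the $L^\infty$ threshold, handling the leftover subcritical factors simultaneously by H\"older, which is essentially the first argument in disguise. As an inductive shortcut it does not stand on its own.
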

	
	See, e.g., \cite{Kagei-Kobayashi}, for the proof of Lemma $\ref{lem2.2.}$.
	
	\vspace{2ex}
	\begin{lem}{\rm \cite[Lemma 2.1]{choi}}\label{lem2.3.} 
		Let $f, g \in H^k \cap L^\infty$ with $\nabla^k f \in L^\infty$. Then it holds that for $k \in \mathbb{N}$ 
		$$
		\|\nabla^k (fg) - f \nabla^k g \|_{L^2} \leq C(\|\nabla f\|_{L^\infty}\|\nabla^{k-1}g\|_{L^2}+\|\nabla^k f\|_{L^2}\|g\|_{L^\infty}).
		$$
		
	\end{lem}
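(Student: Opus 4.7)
The plan is to expand the commutator using the Leibniz rule and then control the intermediate summands by Hölder's inequality combined with Gagliardo--Nirenberg interpolation; this is the classical Moser/Kato--Ponce-type argument for products in Sobolev spaces.

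First I would write, by the Leibniz rule,
$$\nabla^k(fg) - f\,\nabla^k g = \sum_{j=1}^{k} \binom{k}{j} (\nabla^j f)(\nabla^{k-j} g),$$
so it suffices to bound each term on the right in $L^2$. The two endpoint indices are immediate by Hölder: the $j=1$ term gives $\|(\nabla f)\nabla^{k-1}g\|_{L^2} \leq \|\nabla f\|_{L^\infty}\|\nabla^{k-1}g\|_{L^2}$, and the $j=k$ term gives $\|(\nabla^k f)g\|_{L^2} \leq \|\nabla^k f\|_{L^2}\|g\|_{L^\infty}$. These are exactly the two terms on the right-hand side of the claimed inequality. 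When $k=1$ or $k=2$ there are no intermediate indices and the argument is complete at this step.

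For $k\geq 3$ and each intermediate $2\leq j \leq k-1$, I would apply Hölder in $L^{p_j}\times L^{q_j}$ with
$$p_j = \tfrac{2(k-1)}{j-1},\quad q_j = \tfrac{2(k-1)}{k-j},\quad \tfrac{1}{p_j}+\tfrac{1}{q_j}=\tfrac{1}{2},$$
and then interpolate via Gagliardo--Nirenberg (applied to $\nabla f$ and to $g$ respectively) to obtain
$$\|\nabla^j f\|_{L^{p_j}} \leq C\|\nabla f\|_{L^\infty}^{1-\theta_j}\|\nabla^k f\|_{L^2}^{\theta_j},\qquad \theta_j=\tfrac{j-1}{k-1},$$
$$\|\nabla^{k-j} g\|_{L^{q_j}} \leq C\|g\|_{L^\infty}^{\theta_j}\|\nabla^{k-1} g\|_{L^2}^{1-\theta_j}.$$
Multiplying these two estimates and setting $A := \|\nabla^k f\|_{L^2}\|g\|_{L^\infty}$ and $B := \|\nabla f\|_{L^\infty}\|\nabla^{k-1}g\|_{L^2}$, each intermediate summand is bounded by $C\,A^{\theta_j}B^{1-\theta_j}$, and Young's inequality then gives $A^{\theta_j}B^{1-\theta_j}\leq A+B$, reproducing the right-hand side of the lemma.

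The main point to check is the admissibility of the Gagliardo--Nirenberg exponents, i.e.\ the scaling identity and the conditions $\theta_j,\,1-\theta_j\in(0,1)$ for $2\leq j\leq k-1$; both are routine. The hypotheses $f,g\in H^k\cap L^\infty$ (giving in particular $\nabla^k f\in L^2$) are enough for every norm appearing in the argument, and the additional assumption $\nabla^k f\in L^\infty$ lets one apply the Leibniz rule directly rather than through a density argument. I do not expect a serious obstacle beyond this bookkeeping, since the estimate is a now-classical commutator inequality.
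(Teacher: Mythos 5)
Your argument is correct and is the classical Gagliardo--Nirenberg/Moser commutator proof: split off the endpoint terms $j=1$ and $j=k$ by H\"older, and for $2\le j\le k-1$ apply H\"older with exponents $p_j=\tfrac{2(k-1)}{j-1}$, $q_j=\tfrac{2(k-1)}{k-j}$ (indeed $\tfrac1{p_j}+\tfrac1{q_j}=\tfrac12$), then GN interpolation with $\theta_j=\tfrac{j-1}{k-1}\in(0,1)$; the exponents balance so that each intermediate term is $A^{\theta_j}B^{1-\theta_j}$ and Young finishes. Note that the paper does not actually prove this lemma --- it cites it as \cite[Lemma 2.1]{choi} --- so there is no paper proof to compare against; your proof is the standard one that such references give. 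One small remark on the role of the hypothesis $\nabla^k f\in L^\infty$: it is not merely for justifying the Leibniz rule, but (together with $f\in L^\infty$) is what guarantees $\|\nabla f\|_{L^\infty}<\infty$ on the right-hand side, since $f\in H^k\cap L^\infty$ alone does not control $\nabla f$ in $L^\infty$ in general dimension.
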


	

	\section{Main results}\label{S3}
	In this section, we introduce the main results on the existence and stability of a time-periodic solution for system \eqref{Main1}. 
	
	Without loss of generality, we assume $\rho_*=1$ and $P'(\rho_*)=1$.  Denote
	\begin{align*}
		\rho=1+\phi, \quad a=\log(1+\phi).
	\end{align*}
	Then system \eqref{Main1} is reformulated as follows,
	\begin{equation}\label{3.1}
		\left\{
		\begin{aligned}
			&\partial_ta+\text{div}v+v\cdot \nabla a=0,\\
			&\partial_tv+\nabla a+v+g_3(\phi)\nabla a+v\cdot\nabla v=g,
		\end{aligned}
		\right.
	\end{equation}
	where the function $g_3(\phi)$ is given by  
	\begin{align*}
		g_3(\phi)=P'(1+\phi)-P'(1)=P'(e^a)-1.
	\end{align*}
	We see that \eqref{Main1} is rewritten as the following symmetric hyperbolic system.
	\eqn{ns2}
	$$
	\delt u+Au=-B[u] u+G(g),
	$$
	where 
	\begin{eqnarray}\label{(a)}
		A=\begin{pmatrix}
			0 &\mathrm{div}\\
			\nabla & I \\
		\end{pmatrix},
	\end{eqnarray}
	\begin{eqnarray}\label{(b)}
		B[\tilde{u}]u=\begin{pmatrix}
			\tilde{v}\cdot\nabla a\\
			\tilde{v}\cdot\nabla v+g_3(\tilde\phi)\nabla a \\
		\end{pmatrix}
		\mbox{ for }u=\trans(a, v),\,\tilde{u}=\trans(\tilde{a},\tilde{v} ),
	\end{eqnarray}
	and
	\begin{eqnarray}
		G(g)&=&\begin{pmatrix}
			0\\
			g\\
		\end{pmatrix}.
		\label{(c)}
	\end{eqnarray}

	\vspace{2ex}
	
	We next introduce operators which decompose a function into 
	its low and high frequency parts. 
	Operators $P_{1}$ and $P_{\infty}$ on $L^{2}$ are defined by
	\begin{eqnarray}
		P_{j}f=\mathcal{F}^{- 1}\big(\hat\chi_{j}\mathcal{F}[f]\big)\quad(f\in L^2 , j=1,\infty),\nonumber
	\end{eqnarray}
	where 
	\begin{align*}
		&\hat{\chi}_{j}(\xi)\in C^{\infty}(\mathbb{R}^{d}) 
		\quad(j=1,\infty),\quad 0\leq \hat\chi_{j}\leq 1 \quad(j=1,\infty),\\
		&\hat\chi_{1}(\xi)=\left\{
		\begin{array}{l}
			1\quad (|\xi|\leq r_{1}),\\
			0\quad (|\xi|\geq r_{\infty}),
		\end{array}
		\right.
		\\
		&\hat\chi_{\infty}(\xi)=1-\hat\chi_{1}(\xi),\\
		&0<r_{1}<r_{\infty}.
	\end{align*}
	We fix $0<r_{1}<r_{\infty}<\frac{1}{2}$ in such a way that the estimate 
	\eqref{Pi} in Lemma \ref{eigenvalue} below holds for $|\xi|\leq r_{\infty}$.

	\vspace{2ex}
	
	Our result on the existence of a time periodic solution is stated as follows.

	\begin{thm}\label{Theorem 3.1} Let $d\geq 3$ and $s$ be an integer satisfying $s\geq \left[\frac{d}{2}\right]+2.$ Assume that $g(x,t)$ satisfies 
		\eqref{assumption-g} 
		and $g(x,t)\in C_{per}(\mathbb{R};L^{1}\cap L^{\infty}_{d}) \cap L^{2}_{per}(\mathbb{R};H^{s-1}_{d-1})$. Set 
		\begin{eqnarray*}
			[g]_{s}=\|g\|_{C([0,T];L^{1}\cap  L^{\infty}_{d})}+\|g\|_{L^{2}(0,T;H^{s-1}_{d-1})}.
		\end{eqnarray*}
		
		Then there exist constants $\delta>0$ and $C>0$ such that if $[g]_{s}\leq \delta,$ then the system $\eqref{3.1}$ has a time-periodic solution $u=u_{1}+u_{\infty}$ satisfying $\{u_{1},u_{\infty}\}\in X^{s}_{per}(\mathbb{R}^{d})$ 
		with $\|\{u_1,u_\infty\}\|_{X^s(0,T)}\leq C[g]_{s}$. 
		Furthermore, the uniqueness of time periodic solutions of \eqref{ns2} holds 
		in the class 
		$\{u=\trans(a,v);
		\{P_1u,P_\infty u\}\in X^{s}_{per}(\mathbb{R}), \, 
		\|\{u_1,u_\infty\}\|_{X^s(0,T)}\leq C\delta \}$. 
	\end{thm}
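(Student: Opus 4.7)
The plan is to recast the time-periodic problem for \eqref{ns2} as a fixed-point equation for the time-$T$-map of a suitably modified linear problem, and to set this up in the split form $u = u_1 + u_\infty$ with $u_j = P_j u$. Fixing a $T$-periodic datum $\tilde u$ and solving the linearized problem $\partial_t u + Au = -B[\tilde u] u + G(g)$ with initial value $u(0) = u_0$ gives a solution whose $T$-periodicity is equivalent to $u_0$ being a fixed point of the map $u_0 \mapsto S(T) u_0 + \int_0^T S(T-s) F(s)\,ds$, where $S$ is the solution operator of the linear homogeneous equation and $F$ collects the nonlinear forcing. Projecting with $P_1$ and $P_\infty$ splits this into two equations of the form $(I - S_j(T)) u_j(0) = \Phi_j[\tilde u, g]$ ($j = 1, \infty$), and the existence of a time periodic solution reduces to (i)~inverting $I - S_j(T)$ on the appropriate frequency-localized space, and (ii)~closing a Banach fixed-point iteration for $\tilde u$ in $X^s_{per}(\mathbb{R})$ of radius $\sim[g]_s$.

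For the low frequency part I would rely on the spectral resolution of $A$ on $\{|\xi| \le r_\infty\}$ promised by Lemma \ref{eigenvalue}, which gives a heat-kernel type representation of $S_1(T)$; this lets one estimate $(I - S_1(T))^{-1}$ by potential-theoretic tools in the spaces ${\scr X}_{(1)} \times {\scr Y}_{(1)}$, whose weights $|x|^{-(d-2)}$ and $|x|^{-(d-1)}$ are precisely the pointwise decay rates of the fundamental solution in dimension $d \ge 3$. The hyperbolic nature of the velocity equation produces the faster weight $d-1$ on $v$, as emphasized in the introduction, so unlike in \cite{Tsuda2016} one does not need to recast $v \cdot \nabla v$ into divergence form before applying the estimates; the pressure-type nonlinearity on the density side is already in divergence form up to harmless remainders, so both contributions to $\Phi_1$ can be bounded directly in the $\scr X_{(1)} \times \scr Y_{(1)}$ norms.

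For the high frequency part I would freeze the convective coefficient and work with the inhomogeneous linear system $\partial_t u + Au + B[\tilde u] u = G(g)$, which is \eqref{Mainsystem} in the introduction. Because $B[\tilde u]$ carries one derivative on $u$, a naive $H^s$ energy estimate would lose a derivative; the remedy is the weighted Matsumura--Nishida energy estimate from \cite{Kagei-Tsuda} with weight $(1+|x|)^{d-1}$. Crucially, the commutator-type remainders that arise, such as $(\tilde v \cdot \nabla \partial_x^\alpha a, \nabla(|x|^{2(d-1)}) \partial_x^\alpha a)_{L^2}$, have the derivative landing on the weight rather than on $u$, so no derivative loss is incurred. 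Combining this with the composite-function and commutator estimates of Lemmas \ref{lem2.2.}--\ref{lem2.3.} gives the exponential contractivity $\|S_\infty(T)\|_{H_{(\infty),d-1}^s \to H_{(\infty),d-1}^s} \le e^{-cT}$, hence invertibility of $I - S_\infty(T)$ on the high-frequency space, with nonlinear forcing controlled by $[g]_s + \|\tilde u\|_{X^s(0,T)}^2$.

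Finally I would define the iteration $\tilde u^{(k-1)} \mapsto u^{(k)} = u_1^{(k)} + u_\infty^{(k)}$ by inverting the two linearized periodic problems, and verify that for $[g]_s \le \delta$ small this map sends the closed ball $\{\|\{u_1,u_\infty\}\|_{X^s(0,T)} \le C[g]_s\}$ into itself and is a contraction; the fixed point yields the periodic solution and uniqueness in the stated class follows by the same difference estimate. The step I expect to be the hardest is the weighted high-frequency energy estimate: one must simultaneously track the $(1+|x|)^{d-1}$ weight up to order $s$, recover $\partial_t \phi_\infty$ in $C([0,T];H^{s-1})$ from the equation, and arrange the nonlinear bounds on $B[\tilde u]u$ and $g_3(\tilde\phi)\nabla a$ so they are genuinely quadratic in the $X^s(0,T)$ norm, since only then does the contraction close for an arbitrary sufficiently small $[g]_s$ independent of the size of the iterates.
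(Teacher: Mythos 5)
Your proposal takes essentially the same route as the paper: decompose into low/high frequency parts via $P_1,P_\infty$, invert $I-S_1(T)$ through the spectral resolution of $\hat A_\xi$ and potential-theoretic kernel bounds in ${\scr X}_{(1)}\times{\scr Y}_{(1)}$, handle the coefficient-frozen high-frequency system $\partial_t u_\infty + Au_\infty + P_\infty(B[\tilde u]u_\infty)=F_\infty$ by the weighted Matsumura--Nishida energy (derivatives falling on the weight, not on $u_\infty$), and close an iteration in a ball of $X^s_{per}(\mathbb{R})$ of radius $\sim[g]_s$. The paper's $F_\infty$ is $P_\infty(-B[u]u_1+G(g))$ rather than just $G(g)$, but that is notational, and your identification of the two key structural observations---the hyperbolic equation for $v$ giving $|x|^{-(d-1)}$ decay so $v\cdot\nabla v$ need not be rewritten in divergence form, and the weight $(1+|x|)^{d-1}$ absorbing the potential derivative loss---matches the paper's argument precisely.
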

	
	\vspace{2ex}
	{\rm } \begin{rem}\label{note-a} The time periodic solution obtained in Theorem \ref{Theorem 3.1} 
		satisfies that  
		$$
		\trans(\nabla a,v) \in C_{per}(\mathbb{R};H^{s-1}_{d-1} \times H^{s}_{d-1}),  \ \ 
		\|\nabla a\|_{C_{per}(\mathbb{R};H^{s-1}_{d-1} )} \leq  C\| \nabla \phi \|_{C_{per}(\mathbb{R};H^{s-1}_{d-1})}
		$$
		with $a \in C_{per}(\mathbb{R};L^\infty)$, where 
		$a=\log (1+\phi)$, $\phi=\phi_1 +\phi_\infty$. 
	\end{rem}
	
	\vspace{2ex}
	
	We next consider the stability of the time-periodic solution obtained in Theorem $\ref{Theorem 3.1}$.
	
	\vspace{2ex}
	Let $\trans(a_{per},v_{per})$ be the periodic solution given in Theorem $\ref{Theorem 3.1}$. 
	We denote the perturbation by ${u}=\trans{({\psi},{w})},$ where ${\psi}=a-a_{per},{w}=v-v_{per}$. 
	Substituting $a= {\psi}+a_{per}$ and $v= {w}+v_{per}$ into \eqref{3.1}, we see that the perturbation $ {u}=\trans{( {\psi}, {w})}$ is governed by 
	\begin{eqnarray}
		\left\{
		\begin{array}{ll}
			\partial_{t} {\psi}+\div w + v_{per}\cdot\nabla {\psi} + w \cdot \nabla a_{per}= {f^1(u)},
			\\
			\partial_{t} {w}+ w + \nabla \psi +v_{per}\cdot\nabla {w}+ {w}\cdot\nabla v_{per}
			\\
			\quad
			+g_4 (a_{per}, \psi) \nabla \psi + g_5(a_{per}, \psi) \nabla a_{per}
			= f^2 (u),
			\label{stability}
		\end{array}
		\right.
	\end{eqnarray}
	where 
	\begin{align*}
		f^1(u) 
		&=-w \cdot \nabla \psi,\\
		f^2 (u)
		&=
		- {w}\cdot\nabla {w},\\ 
		g_4 (a_{per}, \psi)& = P'(e^{a_{per}+\psi})-1, \\
		g_5(a_{per}, \psi) &= P'(e^{\psi+a_{per}})- P'(e^{a_{per}}) \\
		&= e^{a_{per}} (e^\psi-1) \int_0^1 P''(a_{per}+ \theta (e^{\psi+a_{per}}))d\theta.\BLACK  
	\end{align*}

	We consider the initial value problem for \eqref{stability} under the initial condition 
	\begin{eqnarray}
		u|_{t=0}=u_{0}=\trans{(\psi_{0},w_{0})}.\label{stability2}
	\end{eqnarray}
	
	Our result on the stability of the time-periodic solution is stated as follows.
	
	\begin{thm}\label{Theorem 3.2} Let $d\geq 3$ and $s$ be an integer satisfying $s\geq \left[\frac{d}{2}\right]+2.$ Assume that $g(x,t)$ satisfies 
		\eqref{assumption-g} 
		and $g(x,t)\in C_{per}(\mathbb{R};L^{1} \cap L^{\infty}_{d}) \cap L^{2}_{per}(\mathbb{R};H^{s+1}_{d-1})$. 
		Let $\trans(a_{per},v_{per})$ be the time-periodic solution obtained in Theorem $\ref{Theorem 3.1}$,    
		and let $u_{0}\in H^{s}$.
		Then there exist constants $\epsilon_{1}>0$ and $\epsilon_{2}>0$ such that if 
		\begin{eqnarray}
			[g]_{s+1}+ \|u_{0}\|_{H^{s}}\leq \delta_0 \leq 1,\nonumber
		\end{eqnarray}
		there exists a unique global solution $u=\trans{(\psi,w)}$ of \eqref{stability}-\eqref{stability2} satisfying 
		\begin{align}\label{energy-stability}
			&u\in C([0,\infty);H^{s}),\\
			&\|(\psi,w)(t)\|_{H^{s}}^2+\int_{0}^{t}(\|\nabla \psi(\tau)\|_{H^{s-1}}^{2}+\|w(\tau)\|_{H^s}^{2})d\tau \leq C\|u_{0}\|_{H^{s}}^{2} \ \ (t\in [0,\infty)),\\
			&\|u(t)\|_{L^{\infty}}\rightarrow 0 \ \ (t\rightarrow \infty).
		\end{align}
	\end{thm}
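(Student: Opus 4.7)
The plan is to follow the Matsumura--Nishida framework for small-data global existence for damped symmetric hyperbolic systems, adapted to the present setting where the background $(a_{per}, v_{per})$ is small, time-periodic, and spatially decaying (Theorem 3.1 and Remark 2.1). First I would establish local-in-time existence of $u = (\psi,w) \in C([0,T_0];H^s)$ by the standard quasilinear hyperbolic theory: the system \eqref{stability} is symmetric hyperbolic with smooth, small coefficients depending on $(a_{per},v_{per})$, plus linear damping in $w$. The external force $g$ has already been absorbed into the background and so does not appear explicitly on the right-hand side of \eqref{stability}.

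The core is a uniform a priori estimate of the form
$$
\|u(t)\|_{H^s}^2 + \int_0^t \bigl(\|\nabla\psi(\tau)\|_{H^{s-1}}^2 + \|w(\tau)\|_{H^s}^2\bigr)\,d\tau \;\le\; C\|u_0\|_{H^s}^2,
$$
provided $\|u\|_{H^s}$, $[g]_{s+1}$, and the background norm stay small. This comes from: (i) basic $L^2$ and higher-derivative energy estimates obtained by pairing $\partial^\alpha$ of the first equation with $\partial^\alpha \psi$ and of the second with $\partial^\alpha w$, which yield $\|w\|_{H^s}^2$ dissipation from the damping; (ii) a Matsumura--Nishida cross term, namely differentiating $(w,\nabla\psi)_{L^2}$ in time and using the momentum equation to produce $\|\nabla\psi\|_{L^2}^2$ on the left, with $\|\mathrm{div}\, w\|_{L^2}^2$ absorbed by (i), iterated with $\partial^\alpha$ for $|\alpha|\le s-1$ to recover $\|\nabla\psi\|_{H^{s-1}}^2$ dissipation. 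The nonlinear terms $w\cdot\nabla\psi$, $w\cdot\nabla w$, $g_4\nabla\psi$ and the background-coupling terms $v_{per}\cdot\nabla\psi$, $v_{per}\cdot\nabla w$, $w\cdot\nabla a_{per}$, $w\cdot\nabla v_{per}$, $g_5(a_{per},\psi)\nabla a_{per}$ are controlled by Lemma \ref{lem2.2.}, Lemma \ref{lem2.3.}, and the smallness of $u$ and of the background in $H^s$. Global existence then follows by the standard continuation argument.

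For the asymptotic $L^\infty$ decay, the energy identity gives $\int_0^\infty (\|\nabla\psi\|_{H^{s-1}}^2 + \|w\|_{H^s}^2)\,d\tau < \infty$. Using the equation to bound $\partial_t(\|\nabla\psi\|_{H^{s-1}}^2+\|w\|_{H^s}^2)$ uniformly (which requires $g\in L^2(0,T;H^{s+1}_{d-1})$, explaining the stronger assumption in Theorem \ref{Theorem 3.2}), a Barbalat-type argument yields $\|\nabla\psi(t)\|_{H^{s-1}} + \|w(t)\|_{H^s} \to 0$. Since $s-1\ge[d/2]+1$, Lemma \ref{lem2.1.} gives $\|\nabla\psi\|_{L^\infty} + \|w\|_{L^\infty}\to 0$. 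To recover $\|\psi\|_{L^\infty}\to 0$ (not just $\|\nabla\psi\|_{L^\infty}$), I would invoke Hardy's inequality on $\mathbb{R}^d$ with $d\ge 3$: from $\|\nabla\psi\|_{L^2}\to 0$ we obtain $\|\psi\|_{L^{2d/(d-2)}}\to 0$, and interpolating with the uniform $H^s$ bound via Gagliardo--Nirenberg yields $\|\psi\|_{L^\infty}\to 0$. This is the role of the Hardy inequality as in Kagei--Kawashima \cite{Kagei-KawashimaCMP}.

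I expect the main obstacle to be closing the $H^s$ energy estimate in the presence of the cross terms $w\cdot\nabla a_{per}$ and $g_5(a_{per},\psi)\nabla a_{per}$, because $a_{per}$ has only the weak spatial decay $(1+|x|)^{-(d-2)}$ and only $\nabla a_{per} \in H^{s-1}_{d-1}$ from Remark \ref{note-a}. These must be absorbed using the weighted bounds of Theorem \ref{Theorem 3.1} together with a Hardy-type inequality $\|(1+|x|)^{-1}f\|_{L^2}\le C\|\nabla f\|_{L^2}$, which shifts a factor of $\langle x\rangle^{-1}$ onto $\nabla a_{per}$ so that the remaining factor sits in $L^\infty$; keeping track of these weights through the top-order derivatives, while avoiding derivative loss (the issue emphasized in the introduction), is the delicate point.
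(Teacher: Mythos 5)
Your outline matches the paper's proof of Theorem \ref{Theorem 3.2} essentially step for step: local existence by symmetric-hyperbolic iteration, a Matsumura--Nishida energy argument consisting of basic and higher-order energy estimates together with the cross term $(\partial_x^{\alpha-1}w,\partial_x^\alpha\psi)$ to recover the dissipation $\|\nabla\psi\|_{H^{s-1}}^2$, the Hardy inequality to absorb the slowly-decaying background coefficients $a_{per},v_{per}$, and a Barbalat-plus-Hardy/interpolation argument for the $L^\infty$ decay (which the paper attributes to the Kagei--Kawashima scheme but does not spell out). The one point you flag but do not resolve --- derivative loss in $\partial_x^\alpha\bigl(g_4(a_{per},\psi)\nabla\psi\bigr)$ at top order $|\alpha|=s$ --- is handled in the paper by integrating by parts to pass to $(g_4\partial_x^\alpha\psi,\partial_x^\alpha\mathrm{div}\,w)$ and substituting the continuity equation for $\mathrm{div}\,w$, which yields $-\tfrac{1}{2}\tfrac{d}{dt}\int g_4|\partial_x^\alpha\psi|^2\,dx$ absorbed into the energy functional, with the remainder controlled by $\|\partial_t a_{per}\|_{H^{s-1}}$ and $\|\partial_t\psi\|_{H^{s-1}}$; also, the strengthened hypothesis $[g]_{s+1}$ is used primarily to guarantee $\nabla a_{per}\in H^{s}$ for the commutator bound on $w\cdot\nabla a_{per}$, rather than for the Barbalat step as you suggest.
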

	
	\vspace{2ex}

	\section{Reformulation of the problem}\label{S4}
	
	In this section, we reformulate problem \eqref{3.1}.
	Inspired by \cite{Kagei-Tsuda}, to solve the time periodic problem for \eqref{3.1}, we decompose $u$ into a low frequency part $u_{1}$ and 
	a high frequency part $u_{\infty}$, and then, we rewrite the problem 
	into a system of equations for $u_{1}$ and $u_{\infty}$.
	
	\vspace{2ex}
	First of all, we set 
	\begin{eqnarray*}
		u_{1}=P_{1}u,\quad u_{\infty}=P_{\infty}u.\nonumber
	\end{eqnarray*}
	Applying the operators $P_{1}$ and $P_{\infty}$ to \eqref{ns2}, we obtain,
	\begin{eqnarray}
		\partial_{t}u_{1}+Au_{1}=F_{1}(u_{1}+u_{\infty},g),\label{eq:(4.2.2)}\\
		\partial_{t}u_{\infty}+Au_{\infty}+P_\infty(B[u_1+u_\infty]u_\infty)
		=F_{\infty}(u_{1}+u_{\infty},g)\label{eq:(4.2.3)}.
	\end{eqnarray}
	Here
	\begin{eqnarray*}
		F_1(u_{1}+u_{\infty},g)
		&=&
		P_1[-B[u_{1}+u_{\infty}] (u_{1}+u_{\infty})+G(g)],
		\\
		F_\infty(u_{1}+u_{\infty},g)
		&=&
		P_\infty[- B[u_{1}+u_{\infty}] u_1+G(g)].
	\end{eqnarray*}
	More precisely, one has 
	\begin{align}\label{010301}
		F_1(u,g)=P_1[-B[u]u+G(g)],\quad F_\infty(u,g)=P_\infty[-B[u]u_1+G(g)].
	\end{align}
	Suppose that $(\ref{eq:(4.2.2)})$ and $(\ref{eq:(4.2.3)})$ are satisfied by some functions $u_{1}$ and $u_{\infty}$. 
	Then by adding $(\ref{eq:(4.2.2)})$ to $(\ref{eq:(4.2.3)})$, we obtain
	\begin{eqnarray*}
		\partial_{t}(u_{1}+u_{\infty})+A(u_{1}+u_{\infty})
		&=&
		-P_\infty(B[u_1+u_\infty]u_\infty)
		+(P_{1}+P_{\infty})F(u_{1}+u_{\infty},g)
		\\
		&=&
		-B[u_{1}+u_{\infty}](u_{1}+u_{\infty})
		+G(g).
	\end{eqnarray*}
	Set $u=u_{1}+u_{\infty}$, then we have
	\begin{eqnarray}
		\partial_{t}u+Au+B[u]u=G(g).\nonumber
	\end{eqnarray}
	Consequently, if we show the existence of  a pair of functions 
	$\{u_{1}, u_{\infty}\}$ satisfying 
	$(\ref{eq:(4.2.2)})$-$(\ref{eq:(4.2.3)})$, 
	then we can obtain a solution $u$ of \eqref{ns2}.

	\vspace{2ex}
	
	For later use,  we  prepare some inequalities for the low frequency part. 
	We first derive some properties of $P_1$. 
	
	\begin{lem}\label{lemP_1}
		{\rm (i)} 
		Let $k$ be a nonnegative integer. 
		Then $P_{1}$ is a bounded linear operator from $L^2$ to $H^{k}$. 
		In fact, it holds that 
		\begin{eqnarray}
			\|\nabla^{k}P_{1}f\|_{L^2}\leq C\|f\|_{L^2}\qquad (f\in L^{2}).\nonumber
		\end{eqnarray}
		As a result, for any $2\leq p\leq \infty$, $P_1$ is bounded from $L^2$ to $L^p$. 
		
		\vspace{1ex}
		{\rm (ii)} 
		Let $k$ be a nonnegative integer. 
		Then there hold the estimates 
		$$
		\|\nabla^k f_1\|_{L^2}+\|f_1\|_{L^p}
		\leq 
		C\|f_1\|_{L^2} 
		\ \ 
		(f\in L^2_{(1)}), 
		$$
		where $2\leq p\leq \infty$.
		
	\end{lem}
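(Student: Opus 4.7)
The plan is to prove both parts by moving to the Fourier side and exploiting the compact frequency support, using Plancherel for the $L^2$-based estimates and convolution with the Schwartz kernel $\chi_1 := \mathcal{F}^{-1}\hat\chi_1$ for the $L^p$ estimates.

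For part (i), I first observe that since $\hat\chi_1 \in C_c^\infty(\mathbb{R}^d)$, the inverse Fourier transform $\chi_1 = \mathcal{F}^{-1}\hat\chi_1$ lies in the Schwartz class $\mathcal{S}(\mathbb{R}^d)$; in particular $\chi_1 \in L^q$ for every $q \in [1,\infty]$, and $P_1 f = \chi_1 * f$. The derivative bound is immediate from Plancherel: since $\hat\chi_1$ is supported in $\{|\xi|\leq r_\infty\}$,
\begin{equation*}
\|\nabla^k P_1 f\|_{L^2}^2
= (2\pi)^{-d}\!\int_{|\xi|\leq r_\infty}|\xi|^{2k}|\hat\chi_1(\xi)|^2|\hat f(\xi)|^2\,d\xi
\leq r_\infty^{2k}\|\hat\chi_1\|_{L^\infty}^2\,\|f\|_{L^2}^2.
\end{equation*}
The $L^\infty$ bound follows from Cauchy--Schwarz applied to the convolution:
$\|P_1 f\|_{L^\infty}\leq\|\chi_1\|_{L^2}\|f\|_{L^2}$. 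Combining this with the trivial $L^2$ bound $\|P_1 f\|_{L^2}\leq\|\hat\chi_1\|_{L^\infty}\|f\|_{L^2}$ and interpolating covers all $p\in[2,\infty]$.

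For part (ii), the key point is that if $f_1\in L^2_{(1)}$ then $\operatorname{supp}\hat f_1\subset\{|\xi|\leq r_\infty\}$, so one can insert a Fourier multiplier freely. Fix a cutoff $\tilde\chi\in C_c^\infty(\mathbb{R}^d)$ with $\tilde\chi\equiv 1$ on $\{|\xi|\leq r_\infty\}$; then $\hat f_1 = \tilde\chi\,\hat f_1$, so $f_1 = \tilde\chi^\vee * f_1$ with $\tilde\chi^\vee\in\mathcal{S}$. Plancherel now gives
\begin{equation*}
\|\nabla^k f_1\|_{L^2}^2 \leq r_\infty^{2k}\|f_1\|_{L^2}^2,
\end{equation*}
while Cauchy--Schwarz applied to the convolution yields $\|f_1\|_{L^\infty}\leq\|\tilde\chi^\vee\|_{L^2}\|f_1\|_{L^2}$. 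Interpolation between $L^2$ and $L^\infty$ gives the stated inequality for all $p\in[2,\infty]$.

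There is no real obstacle here; this is a routine frequency-localization lemma. The only point worth flagging is to justify the use of the Schwartz/convolution kernel (which uses only the smoothness and compact support of $\hat\chi_1$ and $\tilde\chi$), since this is what drives the pointwise bound and therefore the $L^\infty$ estimate. The same argument shows, as asserted in the text, that $H^k\cap L^2_{(1)}=L^2_{(1)}$ for every nonnegative integer $k$.
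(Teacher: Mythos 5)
Your argument is correct and is the natural frequency-localization proof; the paper itself does not reproduce the proof but cites \cite[Lemma~4.3]{Kagei-Tsuda}, and the same kernel-convolution device (writing $f_1=\chi_0*f_1$ for a smooth cutoff $\chi_0$ identically one on the support of $\hat f_1$) is exactly what the paper uses a few lines later in the proof of Lemma~\ref{lemP_1 weightedLinfty}, so your approach agrees with the paper's in spirit. One small stylistic remark: the paper's phrasing ``as a result'' suggests deducing the $L^p$ bound from the already-proved $H^k$ bound via Sobolev embedding (taking $k>d/2$), whereas you obtain it directly from Young's/Cauchy--Schwarz plus interpolation; both routes are valid and equally short, and your direct route has the minor advantage of not invoking the restriction $k>d/2$.
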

	
	\vspace{2ex}
	
	The proofs of estimates {\rm (i)} and {\rm (ii)} are given in \cite[Lemma 4.3]{Kagei-Tsuda}.

	\vspace{2ex}
	
	The following inequality is concerned with the estimates of the weighted $L^{p}$ norm for the low frequency part.
	
	\begin{lem}\label{lemP_1 to weightedLinfty} 
		Let $\chi$ be a function which belongs to the Schwartz space 
		on $\mathbb{R}^d$. 
		Then for a nonnegative integer $\ell$ and $1\leq p\leq \infty$, there holds 
		\begin{eqnarray}
			\| |x|^{\ell}(\chi\ast f)\|_{L^p}
			\leq 
			C\{\| |x|^{\ell}\chi\|_{L^1}\|f\|_{L^p}
			+\|\chi\|_{L^1}\| |x|^{\ell}f\|_{L^p}\}\qquad (f\in L^{p}_{\ell}).\nonumber
		\end{eqnarray}
		Here $C$ is a positive constant depending only on $\ell$.
	\end{lem}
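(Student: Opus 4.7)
The plan is to bound the convolution pointwise and then apply Young's inequality. The key elementary fact is the subadditive-type inequality
\begin{equation*}
|x|^{\ell}\leq C_\ell(|x-y|^{\ell}+|y|^{\ell})\qquad(x,y\in\mathbb{R}^d,\,\ell\geq 0),
\end{equation*}
which follows from $|x|\le|x-y|+|y|$ together with the binomial expansion (or convexity of $t\mapsto t^\ell$). I would state this first and fix the constant $C_\ell$.

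Next, writing $(\chi\ast f)(x)=\int \chi(x-y)f(y)\,dy$, I multiply by $|x|^{\ell}$ and apply the bound above inside the integral to obtain the pointwise estimate
\begin{equation*}
|x|^{\ell}|(\chi\ast f)(x)|\leq C_\ell\bigl[(|\cdot|^{\ell}|\chi|)\ast|f|\bigr](x)+C_\ell\bigl[|\chi|\ast(|\cdot|^{\ell}|f|)\bigr](x).
\end{equation*}
The first convolution absorbs the weight into $\chi$, and the second absorbs the weight into $f$, so the weight has been redistributed without any cross terms.

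Finally, I would take the $L^p$ norm of both sides and apply Young's convolution inequality $\|g\ast h\|_{L^p}\leq\|g\|_{L^1}\|h\|_{L^p}$ to each term separately. This yields
\begin{equation*}
\bigl\||x|^{\ell}(\chi\ast f)\bigr\|_{L^p}\leq C_\ell\bigl\{\bigl\||\cdot|^{\ell}\chi\bigr\|_{L^1}\|f\|_{L^p}+\|\chi\|_{L^1}\bigl\||\cdot|^{\ell}f\bigr\|_{L^p}\bigr\},
\end{equation*}
which is exactly the claimed inequality with $C=C_\ell$. There is essentially no obstacle here, since $\chi$ is Schwartz so $\||\cdot|^{\ell}\chi\|_{L^1}$ is automatically finite and $f\in L^p_\ell$ makes the right-hand side finite; the only minor point is to carry the case $p=\infty$ correctly, but Young's inequality and the pointwise bound both remain valid without change.
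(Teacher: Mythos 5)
Your proposal is correct and follows essentially the same approach as the paper: bound $|x|^{\ell}\leq C_\ell(|x-y|^{\ell}+|y|^{\ell})$ inside the convolution integral to get a pointwise estimate, then apply Young's inequality to each of the two resulting convolutions. The paper carries out exactly these steps, differing only in that it does not explicitly isolate the elementary subadditivity inequality as a separate preliminary statement.
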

	
	\noindent
	\textbf{Proof.} 
	Let $\chi$ be a function which belongs to the Schwartz space 
	on $\mathbb{R}^d$. 
	Then
	\begin{eqnarray}
		| |x|^{\ell}(\chi\ast f)|&\leq& |x|^{\ell}\int_{\mathbb{R}^d}|\chi(x-y)f(y)|dy
		\nonumber\\
		&\leq& 
		C\int_{\mathbb{R}^d}|x-y|^{\ell}|\chi(x-y)||f(y)|dy+C\int_{\mathbb{R}^d}|\chi(x-y)||y|^{\ell}|f(y)|dy.\nonumber
	\end{eqnarray}
	Therefore, the Young inequality gives 
	\begin{eqnarray}
		\| |x|^{\ell}(\chi\ast f)\|_{L^p}\leq C\{\| |x|^{\ell}\chi\|_{L^1}\|f\|_{L^p}+\|\chi\|_{L^1}\| |x|^{\ell}f\|_{L^p}\}\qquad (f\in L^{p}_{\ell}).\nonumber
	\end{eqnarray}
	This completes the proof.$\hfill\square$\\
	
	\vspace{2ex}
	
	Applying Lemma \ref{lemP_1 to weightedLinfty}, we have the following inequality for the weighted $L^{p}$ norm of the low frequency part.
	\begin{lem}\label{lemP_1 weightedLinfty}
		Let $k$ and $\ell$ be nonnegative integers and let $1\leq p\leq \infty$. 
		Then there holds the estimate
		$$
		\||x|^{\ell}\nabla^{k}f_1\|_{L^p}\leq C\||x|^{\ell}f_1\|_{L^p}
		\ \ 
		(f\in L^2_{(1)}\cap L^p_{\ell}).
		$$
	\end{lem}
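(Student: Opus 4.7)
The idea is to realize $\nabla^k f_1$ as a convolution of $f_1$ with a fixed Schwartz kernel, so that the preceding lemma \ref{lemP_1 to weightedLinfty} can be applied directly. Since $f\in L^2_{(1)}$ means $\mathrm{supp}\,\hat f\subset\{|\xi|\leq r_\infty\}$, we have $f=f_1$ and $\mathrm{supp}\,\hat f_1\subset\{|\xi|\leq r_\infty\}$. I would pick a cutoff $\hat\eta\in C^\infty_c(\mathbb{R}^d)$ with $\hat\eta\equiv 1$ on $\{|\xi|\leq r_\infty\}$ and set $\hat\kappa(\xi):=(i\xi)^k\hat\eta(\xi)$. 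Then $\kappa:=\mathcal{F}^{-1}[\hat\kappa]$ belongs to the Schwartz space on $\mathbb{R}^d$, and
$$
\widehat{\nabla^k f_1}(\xi)=(i\xi)^k \hat f_1(\xi)=\hat\kappa(\xi)\hat f_1(\xi),
$$
which yields the representation $\nabla^k f_1=\kappa\ast f_1$.

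Applying Lemma \ref{lemP_1 to weightedLinfty} with $\chi=\kappa$ and $f$ replaced by $f_1$ then gives
$$
\||x|^\ell\nabla^k f_1\|_{L^p}
\leq
C\bigl(\||x|^\ell\kappa\|_{L^1}\|f_1\|_{L^p}+\|\kappa\|_{L^1}\||x|^\ell f_1\|_{L^p}\bigr),
$$
where both $\||x|^\ell\kappa\|_{L^1}$ and $\|\kappa\|_{L^1}$ are finite because $\kappa$ is Schwartz; note that $\kappa$ and hence all of these constants depend only on $k$, $\ell$, $d$, and on the fixed cutoff radius $r_\infty$ (which is already chosen in the paper), so they are genuinely universal for the conclusion.

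The essentially only remaining point is to absorb $\|f_1\|_{L^p}$ into $\||x|^\ell f_1\|_{L^p}$, which I expect to be the main technical point in a rigorous write-up. With the convention from the preliminaries that $L^p_\ell=\{u\in L^p:\|(1+|x|)^\ell u\|_{L^p}<\infty\}$, the natural interpretation of the right-hand side is with the weight $(1+|x|)^\ell$; since $(1+|x|)^\ell\geq 1$, one has $\|f_1\|_{L^p}\leq \|(1+|x|)^\ell f_1\|_{L^p}$ trivially, and this closes the estimate. If instead one insists on the literal pointwise weight $|x|^\ell$, then the $\|f_1\|_{L^p}$ factor on the right can be controlled via Lemma \ref{lemP_1}{\rm (ii)} (namely $\|f_1\|_{L^p}\leq C\|f_1\|_{L^2}$ for low-frequency $f_1$), giving an equivalent bound that uses precisely the hypothesis $f\in L^2_{(1)}\cap L^p_\ell$.
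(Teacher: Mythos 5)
Your proof is correct and is essentially the paper's own argument: the paper also realizes $\nabla^k f_1$ as $(\nabla^k\chi_0)*f_1$ for a fixed cut-off $\chi_0$ with $\hat\chi_0\equiv 1$ on $\{|\xi|\leq r_\infty\}$ and then invokes Lemma~\ref{lemP_1 to weightedLinfty}, and your kernel $\kappa$ is exactly $\nabla^k\chi_0$ once you identify $\hat\eta$ with $\hat\chi_0$. Your remark about absorbing $\|f_1\|_{L^p}$ into the weighted norm is a legitimate nuance that the paper glosses over (it implicitly reads the weights as $(1+|x|)^\ell$, the convention used throughout for $L^p_\ell$), and your discussion of the two readings is appropriate.
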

	
	\vspace{2ex}
	\noindent
	\textbf{Proof.} 
	We define a cut-off function $\chi_0=\mathcal{F}^{-1}\big(\hat{\chi}_0\big)$ with $\hat{\chi}_0$ satisfying 
	\begin{eqnarray}
		\hat{\chi}_0\in C^{\infty}(\mathbb{R}^{d}),
		\ \ 0\leq \hat{\chi}_0\leq 1, 
		\ \ \hat{\chi}_0=1 \ \ \mbox{on} \ \ \{|\xi| \leq  r_{\infty}\}
		\ \ \mbox{and} \ \  \supp\hat{\chi}_0 \subset \{|\xi| \leq 2 r_{\infty}\}.\label{subcutofflowpart}
	\end{eqnarray}
	We note that 
	\begin{eqnarray}
		|\partial_{x}^{\alpha}\chi_0(x)| \leq C(1+|x|)^{-(d+|\alpha|)}\ \ \mbox{for} \ \  |\alpha| \geq 0.\label{E_0estimate}
	\end{eqnarray}
	
	Since $f_{1}\in L^2_{(1)}$, we see that $\nabla^{k}f_1=(\nabla^{k}\chi_0 )\ast f_1$ $(k\geq 0)$. 
	Therefore, by Lemma \ref{lemP_1 to weightedLinfty}, we obtain the desired estimate. 
	This completes the proof.$\hfill\square$\\
	
	\vspace{2ex}





We look for a time periodic solution $u$ for the system $(\ref{eq:(4.2.2)})$ and $(\ref{eq:(4.2.3)})$. To solve the time periodic problem, we introduce solution operators for the following linear problems:
\begin{eqnarray}
	\left\{
	\begin{array}{ll}
		\partial_{t}u_{1}+Au_{1}=F_{1},\\
		u_{1}|_{t=0}=u_{01},
	\end{array}
	\right.\label{eq:(1)}
\end{eqnarray}
and
\begin{eqnarray}
	\left\{
	\begin{array}{ll}
		\partial_{t}u_{\infty}+Au_{\infty}+P_\infty (B[\tilde u]u_{\infty})=F_{\infty},\\
		u_{\infty}|_{t=0}=u_{0\infty},
	\end{array}
	\right.\label{eq:(2)}
\end{eqnarray}
where $\tilde{u}=\trans{(\tilde{a},\tilde{v})},u_{01},u_{0\infty},F_{1}$ and $F_{\infty}$ are given functions.
\vspace{2ex}

To formulate the time periodic problem, 
we denote by $S_{1}(t)$ the solution operator for $(\ref{eq:(1)})$ with $F_{1}=0$, 
and by ${\scr S}_{1}(t)$ the solution operator for $(\ref{eq:(1)})$ with $u_{01}=0$.
We also denote by $S_{{\infty},\tilde{u}}(t)$ the solution operator for $(\ref{eq:(2)})$ with $F_{\infty}=0$ 
and by $\scr{S}_{\infty,\tilde{u}}(t)$ the solution operator for $(\ref{eq:(2)})$ with $u_{0\infty}=0$. 
(The precise definition of these operators will be given later.)

Our main goal is to look for a $\{u_{1},u_{\infty}\}$ satisfying 
\begin{eqnarray}
	\left\{
	\begin{array}{ll}
		u_{1}(t)=S_{1}(t)u_{01}+{\scr S}_{1}(t)[F_{1}(u,g)],\label{eq:(3)}\\
		u_{\infty}(t)=S_{\infty,u}(t)u_{0\infty}+\scr{S}_{\infty,u}(t)[F_{\infty}(u,g)],
	\end{array}
	\right.
\end{eqnarray}
where
\begin{eqnarray}
	\left\{
	\begin{array}{lll}
		u_{01}=(I-S_{1}(T))^{-1}{\scr S}_{1}(T)[F_{1}(u,g)],\label{eq:(4)}\\
		u_{0\infty}=(I-S_{\infty,u}(T))^{-1}\scr{S}_{\infty,u}(T)[F_{\infty}(u,g)],\\
	\end{array}
	\right.
\end{eqnarray}
$u=\trans(a,v)$ is a function given by $u_{1}=\trans(a_1,v_1)$ and $u_{\infty}=\trans(a_\infty,v_{\infty})$ through the relation 
\begin{align*}
	a=a_1+a_\infty,\quad  v=v_1+v_{\infty}.
\end{align*}

Therefore if $(I-S_{1}(T))$ and $(I-S_{\infty,u}(T))$ are invertible in a suitable sense, 
then one obtains $(\ref{eq:(3)})$-$(\ref{eq:(4)})$. 
So, to obtain a  $T$-time periodic solution of \eqref{eq:(4.2.3)} and \eqref{eq:(4.2.2)}, we look for a pair of functions $\{u_{1},u_{\infty}\}$ 
satisfying \eqref{eq:(3)}-\eqref{eq:(4)}. 
We will investigate the solution operators $S_{1}(t)$ and ${\scr S}_{1}(t)$ in section \ref{S5}; and we state some properties of $S_{{\infty},{u}}(t)$ 
and $\scr{S}_{\infty,u}(t)$ in section \ref{S6}.

\vspace{2ex}
In the remaining of this section we introduce some lemmas which will be used in the proof of Theorem \ref{Theorem 3.1}. 

\vspace{2ex}
For the analysis of the low frequency part, we will use the following well-known inequalities.

\vspace{2ex}
\begin{lem}\label{Kuroda}
	Let $\alpha$ and $\beta$ be positive numbers satisfying $d<\alpha+\beta$. 
	Then there holds the following estimate.
	\begin{eqnarray*}
		\int_{\mathbb{R}^{d}}(1+|x-y|)^{-\alpha}(1+|y|^{2})^{-\frac{\beta}{2}}dy
		\leq C
		\left \{
		\begin{array}{ll}
			(1+|x|)^{d-(\alpha+\beta)} \ \ (\max \{\alpha,\beta\}<d),\\
			(1+|x|)^{-\min\{\alpha,\beta\}}\log |x| \ \ (\max \{\alpha,\beta\}=d),\\
			(1+|x|)^{-\min\{\alpha,\beta\}} \ \ (\max \{\alpha,\beta\}>d)
		\end{array}
		\right.
	\end{eqnarray*}
	for $x\in \mathbb{R}^{d}$
\end{lem}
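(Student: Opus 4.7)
The plan is to prove this estimate via a geometric decomposition of $\mathbb{R}^d$ into regions where one of the two factors is essentially constant, together with standard polar-coordinate integrations. First I would dispose of the compact range $|x|\leq 2$: here $1+|x-y|\sim 1+|y|$, so the integrand is bounded by $C(1+|y|)^{-(\alpha+\beta)}$, and the hypothesis $\alpha+\beta>d$ yields a uniform bound, which is absorbed into any of the three claimed right-hand sides since each of them is comparable to $1$ on bounded sets.

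For $|x|\geq 2$ I would partition $\mathbb{R}^d$ into three disjoint regions: $B_1=\{|y|\leq|x|/2\}$, $B_2=\{|x-y|\leq|x|/2\}$, and $B_3=\{|y|>|x|/2,\ |x-y|>|x|/2\}$. On $B_1$ the triangle inequality forces $|x-y|\geq|x|/2$, so $(1+|x-y|)^{-\alpha}\leq C(1+|x|)^{-\alpha}$ pulls out, leaving the radial integral $\int_{|y|\leq |x|/2}(1+|y|)^{-\beta}\,dy$, which by polar coordinates is comparable to $(1+|x|)^{d-\beta}$, $\log(1+|x|)$, or a bounded constant according as $\beta<d$, $\beta=d$, or $\beta>d$. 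Region $B_2$ is symmetric under the substitution $z=x-y$, producing the analogous bound with $\alpha$ and $\beta$ interchanged.

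The delicate piece is $B_3$, where neither factor is naturally dominated by the other. I would further split $B_3$ into the subregions $\{|y|\leq|x-y|\}$ and $\{|y|\geq|x-y|\}$. On the first, $(1+|x-y|)^{-\alpha}\leq C(1+|y|)^{-\alpha}$, so the integrand is bounded by $C(1+|y|)^{-(\alpha+\beta)}$; integrating over $|y|\geq|x|/2$ and using $\alpha+\beta>d$ yields $C(1+|x|)^{d-\alpha-\beta}$. The complementary subregion is treated symmetrically, so $B_3$ always contributes $C(1+|x|)^{d-\alpha-\beta}$.

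Summing the three contributions and comparing them in each regime of $\max\{\alpha,\beta\}$ relative to $d$ gives the claim. When $\max\{\alpha,\beta\}<d$, all three pieces coincide with $C(1+|x|)^{d-\alpha-\beta}$. When $\max\{\alpha,\beta\}>d$, say $\alpha>d$, the $B_2$-contribution is $C(1+|x|)^{-\beta}$, and the inequalities $d-\alpha<0$ and $d-\alpha-\beta<-\min\{\alpha,\beta\}$ imply that the $B_1$- and $B_3$-contributions are no larger, yielding $C(1+|x|)^{-\min\{\alpha,\beta\}}$; the symmetric case $\beta>d$ is identical. The borderline $\max\{\alpha,\beta\}=d$ picks up the logarithmic factor from the critical integral $\int_{|y|\leq |x|/2}(1+|y|)^{-d}\,dy\sim \log|x|$. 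The main obstacle is the combinatorial bookkeeping across these subcases rather than any deep analytic input; the only tools needed are the triangle inequality and polar coordinates.
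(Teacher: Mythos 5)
The paper states Lemma \ref{Kuroda} without proof, citing it as ``well-known,'' so there is no argument in the paper against which to compare yours. Your geometric decomposition into $B_1=\{|y|\leq|x|/2\}$, $B_2=\{|x-y|\leq|x|/2\}$, and the outer annular region $B_3$ (further split by $|y|\lessgtr|x-y|$), together with polar-coordinate evaluation of the resulting radial integrals, is the standard textbook proof of this convolution estimate, and the case-checking you outline is correct: in each of the three regimes $\max\{\alpha,\beta\}\lessgtr d$ the dominant contribution is as claimed, and the remaining pieces are absorbed. One small point worth making explicit when writing this up: the integral $\int_{|y|\leq|x|/2}(1+|y|)^{-d}dy$ behaves like $\log(1+|x|)$, not literally $\log|x|$, which is finite and positive for all $x$; the paper's right-hand side $\log|x|$ should be read with this convention (it is negative or undefined for $|x|\leq 1$), and your treatment of the compact region $|x|\leq 2$ as a separate bounded case quietly resolves this. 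The argument is complete and sound.
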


\vspace{2ex}

The following lemma is related to the estimates for the integral kernels which will appear in the analysis of  the low frequency part.

\vspace{2ex}
\begin{lem}\label{sekibunkaku}
	Let $\ell$ be a nonnegative integer and 
	let $E(x)={\scr F}^{-1}\hat{\Phi}_{\ell}$ $(x\in \mathbb{R}^{d})$, 
	where $\hat{\Phi}_{\ell}\in C^{\infty}(\mathbb{R}^d-\{0\})$ is a function satisfying 
	\begin{eqnarray*}
		\partial_{\xi}^{\alpha}\hat{\Phi}_{\ell}\in L^{1}\quad (|\alpha|\leq d-3+\ell), \\
		|\partial_{\xi}^{\beta}\hat{\Phi}_{\ell}| \leq C|\xi|^{-2-|\beta|+\ell}\quad (\xi \neq 0,\,|\beta|\geq 0).
	\end{eqnarray*}
	Then the following estimate holds for $x \not=0$.
	\begin{eqnarray*}
		|E(x)|\leq C|x|^{-(d-2+\ell)}.
	\end{eqnarray*}
\end{lem}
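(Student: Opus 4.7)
The plan is to fix $x\ne 0$ and work with
$$
E(x)=(2\pi)^{-d}\int_{\mathbb{R}^d}\hat{\Phi}_\ell(\xi)\,e^{ix\cdot\xi}\,d\xi,
$$
which converges absolutely because $\hat{\Phi}_\ell\in L^1$ (the $|\alpha|=0$ case of the first hypothesis, available since $d\geq 3$ forces $d-3+\ell\geq 0$). To produce the anticipated singular scaling $|x|^{-(d-2+\ell)}$, I would split the integral at the spatial scale $1/|x|$. Pick $\chi\in C^\infty_c(\mathbb{R}^d)$ with $\chi\equiv 1$ on $\{|\xi|\leq 1/2\}$ and $\chi\equiv 0$ on $\{|\xi|\geq 1\}$, and write $E=E_1+E_2$ with
\begin{align*}
E_1(x)&=(2\pi)^{-d}\int \hat{\Phi}_\ell(\xi)\,\chi(|x|\xi)\,e^{ix\cdot\xi}\,d\xi,\\
E_2(x)&=(2\pi)^{-d}\int \hat{\Phi}_\ell(\xi)\,(1-\chi(|x|\xi))\,e^{ix\cdot\xi}\,d\xi.
\end{align*}

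For the low frequency piece $E_1$, I would discard the oscillation and apply the bound $|\hat{\Phi}_\ell(\xi)|\leq C|\xi|^{-2+\ell}$ directly. Since the cutoff confines the integration to $\{|\xi|\leq 1/|x|\}$, passing to polar coordinates gives
$$
|E_1(x)|\leq C\int_0^{1/|x|} r^{d-3+\ell}\,dr\leq C|x|^{-(d-2+\ell)},
$$
which is already the target estimate. For the high frequency piece $E_2$ the integration runs over $\{|\xi|\geq 1/(2|x|)\}$, and I would harvest $|x|^{-N}$ decay by iterating the identity $(x\cdot\nabla_\xi)e^{ix\cdot\xi}=i|x|^2 e^{ix\cdot\xi}$ a total of $N:=d-1+\ell$ times and integrating by parts (justified by introducing an auxiliary outer cutoff at radius $M$ and letting $M\to\infty$, the resulting boundary contributions vanishing thanks to $N>d-2+\ell$):
$$
E_2(x)=\frac{(-1)^N}{(i|x|^2)^N(2\pi)^d}\int (x\cdot\nabla_\xi)^N\bigl[\hat{\Phi}_\ell(\xi)(1-\chi(|x|\xi))\bigr]\,e^{ix\cdot\xi}\,d\xi.
$$

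Expanding by the Leibniz rule and using $|\partial_\xi^\alpha\hat{\Phi}_\ell|\leq C|\xi|^{-2-|\alpha|+\ell}$ together with $|\partial_\xi^k[\chi(|x|\xi)]|\leq C|x|^k\,\mathbf{1}_{\{1/(2|x|)\leq|\xi|\leq 1/|x|\}}$, the term in which all $N$ derivatives hit $\hat{\Phi}_\ell$ is bounded by
$$
\frac{1}{|x|^N}\int_{|\xi|\geq 1/(2|x|)}|\xi|^{-2-N+\ell}\,d\xi\leq C|x|^{-N}\cdot|x|\,=\,C|x|^{-(d-2+\ell)},
$$
since with $N=d-1+\ell$ the radial integrand reduces to $r^{-2}$ and the integral evaluates to a quantity of size $|x|$. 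The mixed Leibniz terms, in which $k\geq 1$ derivatives fall on the cutoff, live on the thin shell $\{|\xi|\sim 1/|x|\}$ of measure $\sim|x|^{-d}$; on that shell $|\partial_\xi^{N-k}\hat{\Phi}_\ell|\sim |x|^{2+N-k-\ell}$ and $|\partial_\xi^k\chi(|x|\xi)|\sim|x|^k$, so a direct substitution shows each contributes $\leq C|x|^{2-\ell-d}=C|x|^{-(d-2+\ell)}$.

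The main obstacle I expect is the exponent bookkeeping that forces the specific choice $N=d-1+\ell$: one simultaneously needs $N>d-2+\ell$ for convergence of the radial integral at infinity in the leading Leibniz term, and an exact cancellation between the $|x|^k$ growth produced by differentiating the cutoff and the $|\xi|^{-2-(N-k)+\ell}$ decay of $\partial^{N-k}\hat{\Phi}_\ell$ on the dyadic shell, so that every Leibniz term displays the same scaling $|x|^{-(d-2+\ell)}$. Once this is checked, summing the estimates on $E_1$ and $E_2$ yields $|E(x)|\leq C|x|^{-(d-2+\ell)}$ for all $x\ne 0$.
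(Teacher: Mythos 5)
Your proposal is correct. The paper does not prove Lemma \ref{sekibunkaku} directly but simply cites \cite[Theorem 2.3]{Shibata-Shimizu}; the argument you give — splitting the inverse Fourier integral at frequency scale $1/|x|$, bounding the low-frequency piece by the raw symbol estimate, and integrating by parts $N=d-1+\ell$ times via $(x\cdot\nabla_\xi)$ on the high-frequency piece, with the exponent bookkeeping for both the leading Leibniz term and the mixed terms on the shell $|\xi|\sim 1/|x|$ — is precisely the standard proof of that cited theorem, so you are reproducing the same approach the paper implicitly invokes.
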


\vspace{2ex}
Lemma \ref{sekibunkaku} easily follows from a direct application of \cite[Theorem 2.3]{Shibata-Shimizu}; and we omit the proof.

\vspace{2ex}

We will also use the following lemma for the analysis of the low frequency part.

\vspace{2ex}

\begin{lem}\label{convolution}
	{\rm (i)}
	Let $E(x)$ $(x\in \mathbb{R}^{d})$ be a scalar function satisfying 
	\begin{eqnarray}
		|\partial^{\alpha}_{x}E(x)|\leq \frac{C}{(1+|x|)^{|\alpha|+d-2}} \ \ (|\alpha|=0,1,2)\label{Shibata-Tanaka katei}.
	\end{eqnarray}
	Assume that $f$ is a scalar function satisfying $\|f\|_{L^{\infty}_d \cap L^1}<\infty$.  
	Then there holds the following estimate for $|\alpha|=0,1$.
	\begin{eqnarray*}
		|[\partial^{\alpha}_{x}E\ast f](x)|\leq \frac{C}{(1+|x|)^{|\alpha|+d-2}}\|f\|_{L^{\infty}_d \cap L^1}.
	\end{eqnarray*}
	
	\vspace{2ex}
	{\rm (ii)} 
	Let $E(x)$ $(x\in \mathbb{R}^{d})$ be a scalar function satisfying $(\ref{Shibata-Tanaka katei})$. 
	Assume that $f$ is a scalar function of the form: $f=\partial_{x_j}f_1$ for some $1 \leq j \leq d$ satisfying $\|\partial_{x_j}f_1\|_{L^{\infty}_d}+\|f_1\|_{L^{\infty}_{d-1}}<\infty$. 
	Then there holds the following estimate for $|\alpha|=0,1$.
	\begin{eqnarray*}
		|[\partial^{\alpha}_{x}E\ast f](x)|\leq \frac{C}{(1+|x|)^{|\alpha|+d-2}}(\|\partial_{x_j}f_1\|_{L^{\infty}_d}+\|f_1\|_{L^{\infty}_{d-1}}).
	\end{eqnarray*}

	\vspace{1ex}
	{\rm (iii)} 
	Let $E(x)$ $(x\in \mathbb{R}^{d})$ be a scalar function satisfying 
	\begin{eqnarray*}
		|\partial^{\alpha}_{x}E(x)|\leq \frac{C}{(1+|x|)^{|\alpha|+d-1}} \ \ (|\alpha|=0,1).
	\end{eqnarray*}
	Assume that $f$ is a scalar function satisfying $\|f\|_{L^{\infty}_d}< \infty$.
	Then there holds the following estimate for $|\alpha|=0,1$.
	\begin{eqnarray*}
		|[\partial^{\alpha}_{x}E\ast f](x)|\leq \frac{C\log{|x|} }{(1+|x|)^{|\alpha|+d-1}}\|f\|_{L^{\infty}_d}.
	\end{eqnarray*}
\end{lem}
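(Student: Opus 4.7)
The plan is to reduce the pointwise estimate of $[\partial^\alpha_x E\ast f](x)$ in each part to integrals of the form $\int (1+|x-y|)^{-a}(1+|y|)^{-b}\,dy$ and invoke Lemma \ref{Kuroda}. In the borderline case $\max\{a,b\}=d$, where Kuroda produces a possibly spurious $\log|x|$ factor, I would replace the direct invocation by a three-region decomposition $\mathbb{R}^d=A\cup B\cup C$ with
\begin{align*}
A=\{|y|\leq|x|/2\},\quad B=\{|y|>|x|/2,\ |x-y|\leq|x|/2\},\quad C=\{|y|>|x|/2,\ |x-y|>|x|/2\},
\end{align*}
estimating each region with the most favorable of the available bounds on $E$ and $f$. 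The regime $|x|\leq 1$ is handled at once by Young's inequality since the kernels in question are uniformly bounded, so I would focus on $|x|\geq 1$.

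Part (iii) is the simplest: I would apply Lemma \ref{Kuroda} directly with $a=|\alpha|+d-1$ and $b=d$; for $|\alpha|=0,1$ one has $\max\{a,b\}=d$, and this equality case yields precisely the $\log|x|$ factor asserted in the statement.

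For part (i), the three-region decomposition gives the estimate without a log factor. On $A$ and $C$ the kernel factor $(1+|x-y|)^{-(|\alpha|+d-2)}$ pulls out as $C(1+|x|)^{-(|\alpha|+d-2)}$, so the remaining integral of $|f|$ is dominated by $\|f\|_{L^1}$. On $B$ we have $|y|\sim|x|$, so $|f(y)|\leq C\|f\|_{L^\infty_d}(1+|x|)^{-d}$, and the residual integral $\int_{|z|\leq|x|/2}(1+|z|)^{-(|\alpha|+d-2)}\,dz$ is of order $|x|^{2-|\alpha|}$ for $|\alpha|=0,1$, again yielding the desired rate.

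Part (ii) is the technically delicate one. On $A$ I would integrate by parts in $y_j$ to pass $\partial_{x_j}$ from $f$ onto $E$ and use $\|f_1\|_{L^\infty_{d-1}}$ to control both the boundary term on $|y|=|x|/2$ and the new interior integral $\int_A\partial^\alpha_x\partial_{x_j}E(x-y)f_1(y)\,dy$; the extra power of decay in the new kernel is compensated by the factor $|x|$ coming from $\int_A(1+|y|)^{-(d-1)}\,dy$. On $B$ and $C$ I would not integrate by parts and use $\|\partial_{x_j}f_1\|_{L^\infty_d}$ directly. The hard part is the region $C$: although $\int_{|y|>|x|/2}(1+|y|)^{-d}\,dy$ diverges, the joint integrand $(1+|x-y|)^{-(|\alpha|+d-2)}(1+|y|)^{-d}$ is integrable on $C$ once one uses that $|x-y|\sim|y|$ for $|y|>2|x|$; splitting $C$ into $\{|y|\leq 2|x|\}$ and $\{|y|>2|x|\}$ and estimating each piece separately gives $\int_C\leq C(1+|x|)^{-(|\alpha|+d-2)}$. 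The main obstacle is exactly this: recognizing that Lemma \ref{Kuroda} is too lossy in the borderline case, and that the spurious $\log|x|$ is killed by exploiting the correlation between $|x-y|$ and $|y|$ on $C$, combined with the complementary use of the two weights of $f_1$ ($L^\infty_{d-1}$ on $A$ via IBP, $L^\infty_d$ of $\partial_{x_j}f_1$ on $B\cup C$).
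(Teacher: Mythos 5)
Your argument is correct and complete. The paper itself omits the proof: it refers to \cite[Lemma 2.5]{Shibata-Tanaka} for (i)--(ii) and remarks that (iii) follows from Lemma \ref{Kuroda}, which coincides with your treatment of (iii). Your three-region decomposition for (i)--(ii), with the $\|f_1\|_{L^{\infty}_{d-1}}$ weight exploited via integration by parts on the near region $A$ and the $\|\partial_{x_j}f_1\|_{L^{\infty}_{d}}$ weight used directly on $B\cup C$ (together with the further split of $C$ at $|y|=2|x|$), correctly identifies and removes the spurious $\log|x|$ that a direct appeal to Lemma \ref{Kuroda} would produce in the borderline cases, and thus supplies precisely what the paper delegates to the cited reference.
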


\vspace{2ex}

Lemma $\ref{convolution}$  $\rm (i)$ and $\rm (ii)$ is given in \cite[Lemma 2.5]{Shibata-Tanaka} for $d=3$ and the case $d\geq 4$ can be proved similarly;
the assertion $\rm (iii)$ can be proved by a direct computation based on Lemma $\ref{Kuroda}$; and so 
the details are omitted here. 
\vspace{2ex}

The following inequalities will be used to estimate the low frequency part of nonlinear terms. 

\vspace{2ex} 

\begin{lem}\label{nonlinearestimatelemma}
	Let $\ell$ be a nonnegative integer satisfying $\ell \geq d-1$ and $E(x)$ be a scalar function satisfying that
	$$
	|E(x)| \leq \frac{C}{(1+|x|)^{\ell}}
	\ \ \mbox{for} \ \ x \in \mathbb{R}^{d}.
	$$
	Then for $f \in L^{2}_{d-1}$, it holds that
	$$
	\|E\ast f\|_{L^{\infty}_{d-1}} \leq C\{\|(1+|y|)^{-\ell}\|_{L^2}\|f\|_{L^2_{d-1}}+\|f\|_{L^2_{d-1}}\}.
	$$ 
	\vspace{1ex}
\end{lem}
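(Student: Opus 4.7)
The plan is to bound $(1+|x|)^{d-1}|(E\ast f)(x)|$ uniformly in $x$. The key preliminary observation is the pointwise splitting
$$
(1+|x|)^{d-1} \leq C\bigl((1+|x-y|)^{d-1}+(1+|y|)^{d-1}\bigr),
$$
which follows from $|x|\leq |x-y|+|y|$ together with the elementary binomial-type inequality $(a+b)^{d-1}\leq C(a^{d-1}+b^{d-1})$. Substituting into $(1+|x|)^{d-1}|(E\ast f)(x)|$ produces two integrals $I_1(x)$ and $I_2(x)$, which I would estimate separately.

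For $I_1(x)=\int(1+|x-y|)^{d-1}|E(x-y)||f(y)|\,dy$, the pointwise decay $|E(z)|\leq C(1+|z|)^{-\ell}$ combined with the assumption $\ell\geq d-1$ bounds $(1+|x-y|)^{d-1}|E(x-y)|$ by an absolute constant. I would then insert the factor $(1+|y|)^{-(d-1)}\cdot(1+|y|)^{d-1}$ and apply the Cauchy--Schwarz inequality in $y$: since $2(d-1)>d$ for $d\geq 3$, the weight $(1+|y|)^{-(d-1)}$ belongs to $L^2(\mathbb{R}^d)$, so this piece is controlled by $C\|(1+|y|)^{-(d-1)}\|_{L^2}\|f\|_{L^2_{d-1}}$, i.e.\ by a constant multiple of $\|f\|_{L^2_{d-1}}$.

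For $I_2(x)=\int|E(x-y)|(1+|y|)^{d-1}|f(y)|\,dy$, I would apply Cauchy--Schwarz directly in $y$, giving $I_2(x)\leq \|E(x-\cdot)\|_{L^2}\|f\|_{L^2_{d-1}}$. By translation invariance and the pointwise decay of $E$, one has $\|E(x-\cdot)\|_{L^2}\leq C\|(1+|y|)^{-\ell}\|_{L^2}$ uniformly in $x$; the assumption $\ell\geq d-1$ ensures $2\ell>d$ so this $L^2$ norm is finite. Collecting the estimates for $I_1$ and $I_2$ and taking the supremum over $x$ produces exactly
$$
\|E\ast f\|_{L^\infty_{d-1}} \leq C\bigl(\|(1+|y|)^{-\ell}\|_{L^2}\|f\|_{L^2_{d-1}}+\|f\|_{L^2_{d-1}}\bigr).
$$

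There is no genuine obstacle here; the argument is just a weight splitting followed by Cauchy--Schwarz. The only points requiring a moment's verification are the square-integrability of $(1+|y|)^{-(d-1)}$ and $(1+|y|)^{-\ell}$ on $\mathbb{R}^d$, both of which are guaranteed by the standing hypotheses $d\geq 3$ and $\ell\geq d-1$.
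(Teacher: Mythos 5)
Your argument is correct, and the paper in fact omits its own proof, saying only that the lemma ``easily follows from direct computations.'' The weight splitting $(1+|x|)^{d-1}\lesssim(1+|x-y|)^{d-1}+(1+|y|)^{d-1}$ followed by Cauchy--Schwarz on each resulting integral, with the integrability checks $2(d-1)>d$ and $2\ell>d$ supplied by $d\geq 3$ and $\ell\geq d-1$, is exactly the ``direct computation'' being referenced, so your proposal matches the intended route.
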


Lemma \ref{nonlinearestimatelemma} easily follows from direct computations; and we omit the proof.

\vspace{2ex}




\begin{eqnarray}
	|\partial_{x}^{\alpha}\chi_0(x)| \leq C(1+|x|)^{-(d+|\alpha|)}\ \ \mbox{for} \ \  |\alpha| \geq 0.\label{E_0estimate}
\end{eqnarray}

\vspace{2ex}

As for the high frequency part, we have the following inequalities given in \cite[Lemma 4.4]{Kagei-Tsuda}.

\vspace{2ex}
\begin{lem}\label{lemPinfty}
	{\rm (i)} 
	Let $k$ be a nonnegative integer. 
	Then $P_\infty$ is a bounded linear operator on $H^k$. 
	
	\vspace{1ex}
	{\rm (ii)} 
	There hold the inequalities 
	\begin{eqnarray*}
		\|P_\infty f\|_{L^2}
		& \leq &
		C\|\nabla f\|_{L^2} 
		\ \ (f\in H^1),
		\\[2ex]
		\|f_{\infty}\|_{L^2}
		& \leq & 
		C\|\nabla f_{\infty}\|_{L^2} 
		\ \ (f_{\infty}\in H^{1}_{(\infty)}).
	\end{eqnarray*}
\end{lem}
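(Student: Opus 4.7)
The plan is to prove both parts by direct Fourier-side computations using Plancherel's theorem, exploiting the fact that $P_\infty$ is a smooth Fourier multiplier and that the support condition $\supp \hat{\chi}_\infty \subset \{|\xi|\geq r_1\}$ provides a uniform lower bound on $|\xi|$ where it matters.

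For (i), I would first observe that $\widehat{P_\infty f}(\xi) = \hat{\chi}_\infty(\xi)\hat{f}(\xi)$ and, since $0\leq \hat{\chi}_\infty\leq 1$ is bounded, Plancherel gives $\|P_\infty f\|_{L^2}\leq \|f\|_{L^2}$. For general $k$, because the Fourier multiplier commutes with $\partial_x^\alpha$, we have $\partial_x^\alpha P_\infty f = P_\infty \partial_x^\alpha f$ for every multi-index $|\alpha|\leq k$, and applying the $L^2$ bound coordinate-wise yields $\|P_\infty f\|_{H^k}\leq \|f\|_{H^k}$.

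For (ii), the key is that on $\supp \hat{\chi}_\infty$ one has $|\xi|\geq r_1>0$, so the pointwise Fourier-side inequality
\begin{equation*}
|\hat{\chi}_\infty(\xi)|\,|\hat{f}(\xi)|
\leq \frac{|\xi|}{r_1}\,|\hat{\chi}_\infty(\xi)|\,|\hat{f}(\xi)|
\leq \frac{|\xi|}{r_1}\,|\hat{f}(\xi)|
\end{equation*}
holds on all of $\mathbb{R}^d$. Squaring, integrating and applying Plancherel gives
\begin{equation*}
\|P_\infty f\|_{L^2}^2 \leq \frac{1}{r_1^2}\int_{\mathbb{R}^d}|\xi|^2|\hat f(\xi)|^2\,d\xi = \frac{1}{r_1^2}\|\nabla f\|_{L^2}^2,
\end{equation*}
which is the first inequality. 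For the second inequality, since $f_\infty\in H^1_{(\infty)}$ satisfies $\supp\hat{f}_\infty\subset\{|\xi|\geq r_1\}$, the same lower bound applied directly to $\hat{f}_\infty$ itself gives $|\hat{f}_\infty(\xi)|\leq \frac{|\xi|}{r_1}|\hat{f}_\infty(\xi)|$ almost everywhere, and Plancherel again yields $\|f_\infty\|_{L^2}\leq \frac{1}{r_1}\|\nabla f_\infty\|_{L^2}$.

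There is essentially no obstacle here: the whole argument is a short Plancherel computation, and the only ingredient one must not forget is that the cutoff $\hat\chi_\infty$ vanishes on the ball $\{|\xi|\leq r_1\}$, which is precisely what converts a zero-order multiplier into a first-order gain. I would present (i) and (ii) together in a single short paragraph in the final write-up, and note that the same reasoning immediately extends to vector-valued functions componentwise.
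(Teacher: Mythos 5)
Your proof is correct and uses the same Plancherel-theorem approach that the paper outsources to \cite[Lemma 4.4]{Kagei-Tsuda} (indeed the paper explicitly mentions that the closely related Lemma \ref{lemPinftyweight} is proved in that reference "by using the Plancherel theorem"). The only cosmetic point is that with the paper's non-unitary Fourier convention the intermediate identity $\int|\xi|^2|\hat f|^2\,d\xi=\|\nabla f\|_{L^2}^2$ picks up a $(2\pi)^d$ factor, but since the factors cancel between $\|P_\infty f\|_{L^2}^2$ and $\|\nabla f\|_{L^2}^2$ this does not affect the conclusion.
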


\vspace{2ex}

\begin{lem}\label{lemPinftyweight}
	Let $\ell\in \mathbb{N}$. 
	Then there exists a positive constant $C$ depending only on $\ell$ 
	such that 
	\begin{eqnarray*}
		\| P_{\infty}f\|_{L^2_{\ell}}\leq C\|\nabla f\|_{L^2_{\ell}}.
	\end{eqnarray*}
\end{lem}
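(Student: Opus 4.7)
My plan is to represent $P_\infty$ as a sum of convolution operators acting on the first-order derivatives of $f$, and then reduce the weighted bound to a weighted $L^1$-estimate on the convolution kernels.

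First, I define the Fourier multiplier symbols $m_j(\xi) = -i\xi_j \hat{\chi}_\infty(\xi)/|\xi|^2$ for $j = 1,\dots, d$. Since $\hat{\chi}_\infty$ is smooth and vanishes on a neighborhood of the origin, the function $\hat{\chi}_\infty(\xi)/|\xi|^2$ extends to a $C^\infty(\mathbb{R}^d)$ function; hence each $m_j$ lies in $C^\infty(\mathbb{R}^d)$ and satisfies $|\partial_\xi^\alpha m_j(\xi)| \leq C_\alpha (1+|\xi|)^{-1-|\alpha|}$ for every multi-index $\alpha$. The pointwise identity $\hat{\chi}_\infty(\xi) = \sum_{j=1}^d m_j(\xi)\cdot i\xi_j$ then gives the convolution representation
$$
P_\infty f = \sum_{j=1}^d K_j \ast \partial_{x_j} f, \qquad K_j := \mathcal{F}^{-1}[m_j].
$$

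The core step is to prove that $(1+|y|)^\ell K_j \in L^1(\mathbb{R}^d)$ for every integer $\ell \geq 0$. For the decay of $K_j$ at infinity, the formula $y^\alpha K_j(y) = (\pm i)^{|\alpha|}\mathcal{F}^{-1}[\partial_\xi^\alpha m_j](y)$ together with the decay estimate on $\partial_\xi^\alpha m_j$ shows that $\partial_\xi^\alpha m_j \in L^1(\mathbb{R}^d)$ whenever $|\alpha| \geq d$, and consequently $|y|^N|K_j(y)| \leq C_N$ almost everywhere for every $N$, giving decay of any polynomial order. For the behavior near the origin, I split $m_j = -i\xi_j/|\xi|^2 + i\xi_j \hat{\chi}_1(\xi)/|\xi|^2$: the inverse Fourier transform of the homogeneous piece $-i\xi_j/|\xi|^2$ is the Riesz-type kernel $c_d\, y_j/|y|^d$, while $i\xi_j \hat{\chi}_1/|\xi|^2$ is an $L^1$ function with compact support, whose inverse transform is continuous and bounded. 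Hence $|K_j(y)| \leq C/|y|^{d-1}$ for $|y| \leq 1$, which is locally integrable since $d\geq 3$. Combining the two regimes delivers the desired weighted $L^1$-bound.

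The last step is a standard weighted convolution inequality. From the elementary bound $(1+|x|)^\ell \leq C_\ell[(1+|x-y|)^\ell + (1+|y|)^\ell]$ together with Young's inequality $L^1 \ast L^2 \to L^2$, one obtains
$$
\|K_j \ast g\|_{L^2_\ell} \leq C \bigl(\|(1+|y|)^\ell K_j\|_{L^1} + \|K_j\|_{L^1}\bigr) \|g\|_{L^2_\ell}.
$$
Applying this with $g = \partial_{x_j} f$ and summing over $j$ yields $\|P_\infty f\|_{L^2_\ell} \leq C\|\nabla f\|_{L^2_\ell}$, as required. The main obstacle is the kernel estimate, where one must simultaneously control the borderline singularity $|y|^{-(d-1)}$ of $K_j$ at the origin and secure rapid decay at infinity; these two effects arise from opposite ends of $m_j$ in Fourier space, and the splitting $\hat{\chi}_\infty = 1 - \hat{\chi}_1$ is what cleanly separates them.
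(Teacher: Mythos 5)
Your proof is correct, and it takes a genuinely different route from the paper's. The paper disposes of this lemma by invoking the Plancherel-based Poincar\'e-type inequalities
$\||x|^k\nabla f_\infty\|_{L^2}^2\geq\tfrac{r_1^2}{2}\||x|^k f_\infty\|_{L^2}^2-C\||x|^{k-1}f_\infty\|_{L^2}^2$
from \cite[Lemma~4.7]{Kagei-Tsuda}, which are iterated in $k$ and combined with Lemma~\ref{lemPinfty}~(ii) and the boundedness of $P_1$ on $L^2_\ell$ (so that $\|\nabla P_\infty f\|_{L^2_\ell}\leq C\|\nabla f\|_{L^2_\ell}$). You instead factor the symbol $\hat\chi_\infty(\xi)=\sum_j m_j(\xi)\,i\xi_j$ with $m_j=-i\xi_j\hat\chi_\infty/|\xi|^2$, representing $P_\infty f=\sum_j K_j\ast\partial_{x_j}f$, and reduce to the kernel bound $(1+|y|)^\ell K_j\in L^1$. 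That bound is obtained by two complementary arguments playing in disjoint regions: the derivative estimates $|\partial_\xi^\alpha m_j|\lesssim(1+|\xi|)^{-1-|\alpha|}$ give $|y|^N|K_j(y)|\lesssim 1$ for every $N$, hence rapid decay for $|y|\geq 1$; and the splitting $m_j=-i\xi_j/|\xi|^2+i\xi_j\hat\chi_1/|\xi|^2$, whose second piece has compactly supported $L^1$ symbol, isolates the Riesz-kernel singularity $|y|^{-(d-1)}$ near the origin (locally integrable). Note that the split is only useful near the origin---both pieces decay slowly at infinity, with the slow tails cancelling---so it is essential that the decay bound come separately from the symbol derivatives, as you do. The final step is the same weighted Young inequality that the paper proves as Lemma~\ref{lemP_1 to weightedLinfty}. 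Your route is self-contained and potential-theoretic, conceptually parallel to the low-frequency analysis of Section~\ref{S5}; the paper's route is shorter on the page but offloads the work to an external Fourier-side Poincar\'e inequality. Both use the spectral gap $\mathrm{supp}\,\hat\chi_\infty\subset\{|\xi|\geq r_1\}$ as the essential structural input.
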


\vspace{2ex}

Lemma \ref{lemPinftyweight} follows from the inequalities 
$$
\||x|^k \nabla f_{\infty}\|_{L^2}^2 \geq \frac{r_1^2}{2}\||x|^k f_{\infty}\|_{L^2}^2-C\||x|^{k-1}f_{\infty}\|_{L^2}^2 \ \ (k=1,\cdots,\ell)
$$
for $f_{\infty}\in H^1_{(\infty),\ell}$ which are proved in 
\cite[Lemma 4.7]{Kagei-Tsuda} 
by using the Plancherel theorem.

\vspace{2ex}

	\section{Properties of $S_{1}(t)$ and ${\scr S}_{1}(t)$}\label{S5}
	
	In this section we investigate $S_{1}(t)$ and ${\scr S}_{1}(t)$ and establish estimates for a solution $u_{1}$ of 
	\begin{eqnarray}
		\partial_{t}u_{1}+Au_{1}=F_{1}\label{lowparttimeeq}
	\end{eqnarray}
	satisfying $u_1(0)=u_1(T)$ where $F_1 =\trans(F_1^1,{F}_1^2)$.
	\vspace{2ex}
	
	We denote by $A_{1}$ the restriction of $A$ on ${\scr X}_{(1)}\times {\scr Y}_{(1)}$.
	
	\begin{prop}\label{S1}
		{\rm (i)} 
		$A_{1}$ is a bounded linear operator on ${\scr X}_{(1)}\times {\scr Y}_{(1)}$ 
		and $S_1(t)=e^{-tA_1}$ is a uniformly continuous semigroup on ${\scr X}_{(1)}\times {\scr Y}_{(1)}$. 
		Furthermore, $S_{1}(t)$ satisfies 
		$$
		S_{1}(t)u_{1} \in C^{1}([0,T'];{\scr X}_{(1)}\times {\scr Y}_{(1)}), \ \  
		\partial_{t} S_1(\cdot)u_1\in C([0,T'];L^{2})
		$$ 
		for each $u\in {\scr X}_{(1)}\times {\scr Y}_{(1)}$ 
		and all $T'>0$, 
		$$
		\partial_{t} S_1(t)u_1=-A_1 S_1(t)u_1\,(=-A S_1(t)u_1), 
		\ S_1(0)u_1=u_1  
		\ \ \mbox{for \ $u_1\in {\scr X}_{(1)}\times {\scr Y}_{(1)}$,} 
		$$
		$$
		\|\partial_{t}^k S_1(\cdot)u_1\|_{C([0,T'];{\scr X}_{(1),L^{\infty}}\times {\scr Y}_{(1),L^{\infty}})}\leq 
		C\|u_1\|_{{\scr X}_{(1),L^{\infty}}\times {\scr Y}_{(1),L^{\infty}}},
		$$
		$$
		\|\partial_{t}^k S_1(\cdot)u_1\|_{C([0,T'];{\scr X}_{(1),L^{2}}\times {\scr Y}_{(1),L^{2}})}\leq 
		C\|u_1\|_{{\scr X}_{(1),L^{2}}\times {\scr Y}_{(1),L^{2}}}
		$$
		for $u_1\in {\scr X}_{(1)}\times {\scr Y}_{(1)}$, $k=0,1$, 
		$$
		\|\partial_{t} S_1(t)u_1\|_{C([0,T'];L^{2})}\leq C\|u_1\|_{{\scr X}_{(1)}\times {\scr Y}_{(1)}}. 
		$$
		
		\vspace{1ex}
		{\rm (ii)} 
		Let the operator $\scr S_1(t)$ be defined by 
		$$
		\scr S_1(t)F_1=\int_0^t S_1(t-\tau)F_1(\tau)\,d\tau
		$$
		for $F_1\in C([0,T];{\scr X}_{(1)}{\scr Y}_{(1)})$. 
		Then 
		$$
		\scr S_1(\cdot)F_1 \in C^1([0,T];{\scr X}_{(1)}\times{\scr Y}_{(1)})
		$$
		for each $F_1\in C([0,T];{\scr X}_{(1)}\times {\scr Y}_{(1)})$ and
		$$
		\del_t \scr S_1(t)F_1+A_1 \scr S_1(t)F_1=F_1(t), 
		\  \scr S_1(0)F_1=0, 
		$$
		$$
		\| \scr S_1(\cdot)F_1\|_{C([0,T];{\scr X}_{(1),L^{p}}\times {\scr Y}_{(1),L^{p}} )}
		\leq C\|F_1\|_{C([0,T];{\scr X}_{(1),L^{p}} \times {\scr Y}_{(1)},L^{p})},
		$$
		$$
		\|\del_t \scr S_1(\cdot)F_1\|_{C([0,T];{\scr X}_{(1),L^{p}}\times {\scr Y}_{(1),L^{p}} )}
		\leq C\|F_1\|_{C([0,T];{\scr X}_{(1),L^{p}} \times {\scr Y}_{(1)},L^{p})},
		$$
		for $p=2,\infty$,
		where $C$ is a positive constant depending on $T$.
		If, in addition, $F_{1}\in C([0,T];L^2_1)$, then $\del_t \scr S_1(\cdot)F_1 \in C([0,T];L^{2})$, 
		$$
		\|\del_t \scr S_1(\cdot)F_{1}\|_{C([0,T];L^{2})}\leq C\|F_1\|_{C([0,T];L^{2})},
		$$
		where $C$ is a positive constant depending on $T$.
		
		\vspace{1ex}
		{\rm (iii)}
		It holds that 
		$$
		S_1(t)\scr S_1(t')F_1=\scr S_1(t')[S_1(t)F_1]
		$$
		for any $t\geq 0$, $t'\in [0,T]$ and $F_1\in C([0,T];{\scr X}_{(1)} \times {\scr Y}_{(1)})$.
	\end{prop}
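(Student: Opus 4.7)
My plan is to reduce everything to a single boundedness statement: that $A_1$ is a bounded linear operator on the Banach space ${\scr X}_{(1)} \times {\scr Y}_{(1)}$. Once that is granted, part (i) is essentially formal. Indeed, $S_1(t) = e^{-tA_1} = \sum_{k=0}^{\infty}\frac{(-tA_1)^k}{k!}$ then converges in operator norm and defines a uniformly continuous semigroup whose generator is $-A_1$; the orbit map $t\mapsto S_1(t)u_1$ is in fact $C^\infty$ into ${\scr X}_{(1)}\times{\scr Y}_{(1)}$; the identity $\del_t S_1(t)u_1=-A_1S_1(t)u_1=-AS_1(t)u_1$ holds pointwise; and the stated estimates in ${\scr X}_{(1),L^p}\times{\scr Y}_{(1),L^p}$ for $p=2,\infty$ follow from the component-wise bounds $\|S_1(t)\|\leq e^{t\|A_1\|}$. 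The extra bound $\|\del_tS_1(\cdot)u_1\|_{C([0,T'];L^2)}\leq C\|u_1\|_{{\scr X}_{(1)}\times{\scr Y}_{(1)}}$ then uses the same identity together with the obvious continuous embedding $A:{\scr X}_{(1)}\times{\scr Y}_{(1)}\to L^2$, noting that $\|\mathrm{div}\,v\|_{L^2}\leq C\|v\|_{L^2}$ by Lemma \ref{lemP_1}(i) since $v$ is a low-frequency function.

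To establish the boundedness of $A_1$, with $Au=\trans(\mathrm{div}\,v,\,\nabla a+v)$, the key tool is the convolution representation $f=\chi_0*f$ for any $f$ with $\supp\hat f\subset\{|\xi|\leq r_\infty\}$, where $\chi_0$ is the cut-off introduced in \eqref{subcutofflowpart} and whose derivatives satisfy \eqref{E_0estimate}. Transferring derivatives onto $\chi_0$ and applying Lemma \ref{lemP_1 weightedLinfty} with $p=\infty$ and $\ell=d-2,\,d-1$ immediately yields the needed $L^\infty$-weighted bounds $\|(1+|x|)^{d-2}\mathrm{div}\,v\|_{L^\infty}$ and $\|(1+|x|)^{d-1}\nabla\mathrm{div}\,v\|_{L^\infty}\leq C\|v\|_{{\scr Y}_{(1),L^\infty}}$, as well as $\|(1+|x|)^{d-1}\nabla a\|_{L^\infty}\leq C\|a\|_{{\scr X}_{(1),L^\infty}}$. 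For the $L^2$ components I first distribute a single derivative from $\chi_0$ onto the function and then apply Lemma \ref{lemP_1 to weightedLinfty} with $\ell=0,1$; this controls, for instance, $\|(1+|x|)\nabla^2\mathrm{div}\,v\|_{L^2}$ by $\|\nabla v\|_{L^2}+\|(1+|x|)\nabla v\|_{L^2}$, both of which sit inside $\|v\|_{{\scr Y}_{(1),L^2}}$. The identity term on $v$ is trivial.

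Parts (ii) and (iii) are then standard consequences. For (ii), I would regard ${\scr S}_1(t)F_1$ as a Bochner integral with values in ${\scr X}_{(1)}\times{\scr Y}_{(1)}$; the uniform bound on $S_1(t)$ for $t\in[0,T]$ provided by (i) gives the asserted $C([0,T];{\scr X}_{(1),L^p}\times{\scr Y}_{(1),L^p})$-estimates for ${\scr S}_1F_1$, and differentiating under the integral yields $\del_t{\scr S}_1(t)F_1=F_1(t)-A_1{\scr S}_1(t)F_1$, which together with the $L^2$-boundedness of $A_1$ established above produces the additional bound for $\del_t{\scr S}_1F_1$; the $L^2_1$-hypothesis enters only to guarantee the right regularity at $t=0$. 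Part (iii) is immediate from the fact that, as a bounded linear operator, $S_1(t)$ commutes with the Bochner integral defining ${\scr S}_1(t')F_1$. The main obstacle I anticipate is purely the weight bookkeeping: each subnorm of ${\scr X}_{(1)}$ and ${\scr Y}_{(1)}$ pairs a specific power $(1+|x|)^\ell$ with a specific derivative order, and one must choose judiciously how to split derivatives between $\chi_0$ and the function being convolved so that the weighted-convolution estimates output a bound controlled by the correct subnorm. Beyond this organisational step, no new analytic ingredient beyond the lemmas already supplied in Section \ref{S4} appears to be required.
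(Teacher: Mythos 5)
Your proposal is correct and follows essentially the same route as the paper: reduce everything to boundedness of $A_1$ on ${\scr X}_{(1)}\times {\scr Y}_{(1)}$ via Lemma~\ref{lemP_1 weightedLinfty} (equivalently the convolution estimate of Lemma~\ref{lemP_1 to weightedLinfty}, noting that $A$ preserves the low-frequency support), then invoke the bounded-generator theory for uniformly continuous semigroups, with (ii) and (iii) following from the Bochner-integral representation and the relation $\del_t\scr S_1(t)F_1=-A_1\scr S_1(t)F_1+F_1(t)$. The only difference is that you spell out the derivative-splitting in the convolution more explicitly than the paper, but the key lemmas and the overall structure are identical.
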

	
	\vspace{2ex}
	\noindent\textbf{Proof of Proposition} {\bf \ref{S1}.} 
	Let 
	\begin{eqnarray}
		\hat{A}_{\xi}=\begin{pmatrix}
			0 &i\trans{\xi}\\
			i\xi &I
		\end{pmatrix}
		\ \ \ (\xi\in \mathbb{R}^d). 
		\nonumber
	\end{eqnarray}
	Then, ${\cal F}(Au_1)=\hat A_{\xi}\hat u_1$. 
	Hence, if ${\rm supp}\,\hat{u}_{1}\subset \{\xi;|\xi|\leq r_\infty\},$ then 
	${\rm supp}\,\hat{A}_{\xi}\hat{u}_{1}\subset \{\xi;|\xi|\leq r_\infty\}.$ 
	Furthermore, 
	we see from Lemma \ref{lemP_1 weightedLinfty} that
	$$
	\|Au_{1}\|_{{\scr X}_{(1),L^p}\times {\scr Y}_{(1),L^p}}\leq C\|u_{1}\|_{{\scr X}_{(1),L^p}\times {\scr Y}_{(1),L^p}}
	$$
	for $p=2,\infty$. 
	Therefore, $A_{1}$ is a bounded linear operator on ${\scr X}_{(1)}\times {\scr Y}_{(1)}$. 
	It then follows that 
	$-A_1$ generates a uniformly continuous semigroup $S_1(t)=e^{-tA_1}$ 
	that is given by 
	$$
	S_{1}(t)u_1=\mathcal{F}^{-1}e^{-t \hat{A}_{\xi}}\mathcal{F}u_1
	\ \ \ (u_1\in {\scr X}_{(1)}\times {\scr Y}_{(1)}).
	$$
	Furthermore, $S_{1}(t)$ satisfies $S_{1}(\cdot)u_{1} \in C^{1}([0,T'];{\scr X}_{(1)}\times {\scr Y}_{(1)})$ 
	for each $u\in {\scr X}_{(1)}\times {\scr Y}_{(1)}$ 
	, and
	$$
	\partial_{t} S_1(t)u_1=-A_1 S_1(t)u_1\,(=-A S_1(t)u_1), 
	\ S_1(0)u_1=u_1  
	\ \ \mbox{for \ $u_1\in {\scr X}_{(1)}\times {\scr Y}_{(1)}$.} 
	$$
	
	It easily follows from the definition of $S_{1}(t)$ that
	$$
	\|S_1(\cdot)u_1\|_{C([0,T'];{\scr X}_{(1),L^p} \times {\scr Y}_{(1),L^p})}\leq 
	C\|u_1\|_{{\scr X}_{(1),L^p}\times {\scr Y}_{(1),L^p}}
	\ \ 
	(p=2,\infty)
	\ \ 
	\mbox{for \ $u_1\in {\scr X}_{(1)}\times {\scr Y}_{(1)}$,}
	$$
	and hence, by the relation that $\partial_{t} S_1(t)u_1=-A_1 S_1(t)u_1$ and Lemma \ref{lemP_1 weightedLinfty},
	$$
	\|\partial_{t}S_1(\cdot)u_1\|_{C([0,T'];{\scr X}_{(1),L^{p}} \times {\scr Y}_{(1),L^{p}})}\leq 
	C\|u_1\|_{{\scr X}_{(1),L^{p}}\times {\scr Y}_{(1),L^{p}}}
	\ \ 
	(p=2,\infty)
	\ \ 
	\mbox{for \ $u_1\in {\scr X}_{(1)}\times {\scr Y}_{(1)}$, }
	$$
	where $T'>0$ is any given positive number and $C$ is a positive constant depending on $T'$. 
	In addition, we see from the relation $\del_t S_1(t)u_1=-A_1S_1(t)u_1$ that 
	$\del_t S_1(\cdot)u_1\in C([0,T'];L^2)$, 
	$$
	\|\del_t S_1(\cdot)u_1\|_{C([0,T'];L^2)}
	\leq C\|u_{1}\|_{{\scr X}_{(1)}\times {\scr Y}_{(1)}}.
	$$
	
	The assertion (ii) follows from Lemma \ref{lemP_1 weightedLinfty}, the assertion (i) and the relation 
	$\del_t \scr S_1(t)[F_1]=-A_1 \scr S_1(t)[F_1]+F_1(t)$. 
	The assertion (iii) easily follows from the definitions of $S_1(t)$ and $\scr S_1(t)$. 
	This completes the proof. 
	$\hfill\square$

	\vspace{2ex}
	We next investigate invertibility of $I-S_{1}(T)$.
	
	\vspace{2ex}
	\begin{prop}\label{inverseprop}
		If ${F}_1$ satisfies the conditions given in ${\rm (i)}$-${\rm (iii)}$, then, 
		there uniquely exists $u\in {\scr X}_{(1)}\times {\scr Y}_{(1)}$ that satisfies $(I-S_{1}(T))u={F}_1$ and $u$ satisfies 
		the estimates in each case of ${\rm (i)}$-${\rm (iii)}$. 
		
		\vspace{1ex}
		{\rm (i)} ${F}_1 \in L^2_{(1),1}\cap L^{\infty}\cap L^{1}$; 
		\begin{eqnarray}
			&&\|u\|_{{\scr X}_{(1),L^{\infty}}\times {\scr Y}_{(1),L^{\infty}}}\leq C \{\|F_1\|_{L^{\infty}_d}+\|F_1\|_{L^1}\},\label{inveeseLinftyestimate1}\\
			&&\|u\|_{{\scr X}_{(1),L^{2}}\times {\scr Y}_{(1),L^{2}}}\leq C (\|F_1\|_{L^1}+\|F_1\|_{L^2_1})\label{inverseL2estimate1}.
		\end{eqnarray}
		
		{\rm (ii)} ${F}_1 =\partial_{x}^{\alpha} F^{(1)}_1 \in L^{\infty}_d \cap L^2_{(1),1}$ with $F^{(1)}_1 \in L^2_{(1)}\cap L^{\infty}_{d-1}$ 
		for some $\alpha$ satisfying $|\alpha|=1$;
		\begin{eqnarray*}
			&&\|u\|_{{\scr X}_{(1),L^{\infty}}\times {\scr Y}_{(1),L^{\infty}}}\leq C \{\|F_1\|_{L^{\infty}_d}+\|F^{(1)}_1\|_{L^{\infty}_{d-1}}\},\\
			&&\|u\|_{{\scr X}_{(1),L^{2}}\times {\scr Y}_{(1),L^{2}}}\leq C (\|F^{(1)}_1\|_{L^2}+\| F_1\|_{L^2_1}).
		\end{eqnarray*}
		
		{\rm (iii)} 
		${F}_1 =\partial_{x}^{\alpha} F^{(1)}_1 \in L^2_{(1)}$ with $F^{(1)}_1 \in L^2_{(1),1}\cap L^{\infty}_{d}$ 
		for some $\alpha$ satisfying $|\alpha|\geq 1$;
		\begin{eqnarray}
			&&\|u\|_{{\scr X}_{(1),L^{\infty}}\times {\scr Y}_{(1),L^{\infty}}}\leq C \|F^{(1)}_1\|_{L^{\infty}_{d}},\label{inveeseLinftyestimate3}\\
			&&\|u\|_{{\scr X}_{(1),L^{2}}\times {\scr Y}_{(1),L^{2}}}\leq C \|F^{(1)}_1\|_{L^2_1}\label{inverseL2estimate3}.
		\end{eqnarray}
		
	\end{prop}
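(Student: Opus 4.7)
The plan is to invert $I - S_1(T)$ explicitly in Fourier variables and then use Lemmas \ref{sekibunkaku} and \ref{convolution} to turn the resulting kernel bounds into the weighted $L^{\infty}$--$L^{1}$ estimates. On ${\rm supp}\,\hat{\chi}_0 \subset \{|\xi| \leq 2r_{\infty}\}$ the matrix $\hat{A}_{\xi}$ has eigenvalues $\lambda_{\pm}(\xi) = \tfrac{1}{2}(1 \pm \sqrt{1-4|\xi|^{2}})$: $\lambda_{+}(\xi)$ stays bounded away from zero while $\lambda_{-}(\xi) \sim |\xi|^{2}$ near the origin, so $(I - e^{-T\hat{A}_{\xi}})^{-1}\hat{\chi}_0(\xi)$ is smooth away from $\xi = 0$ with a singularity of order $|\xi|^{-2}$ at the origin. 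The candidate solution is $u := E \ast F_{1}$, where
\begin{equation*}
E(x) = \mathcal{F}^{-1}\bigl[(I - e^{-T\hat{A}_{\xi}})^{-1} \hat{\chi}_0(\xi)\bigr](x).
\end{equation*}
Since $F_{1} \in L^{2}_{(1),1}$ one has $\hat{F}_{1} = \hat{\chi}_0 \hat{F}_{1}$, and so $(I - S_{1}(T))u = F_{1}$ by construction. Uniqueness follows from $\det(I - e^{-T\hat{A}_{\xi}}) = (1-e^{-T\lambda_{+}})(1-e^{-T\lambda_{-}}) > 0$ for $0 < |\xi| \leq r_{\infty}$: any $u \in L^{2}_{(1)}$ with $(I-S_{1}(T))u = 0$ must have $\hat{u}$ supported at the origin, hence $u \equiv 0$.

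The technical heart of the argument is a pointwise kernel bound. Each scalar entry of $(I - e^{-T\hat{A}_{\xi}})^{-1}\hat{\chi}_0$ fits the hypothesis of Lemma \ref{sekibunkaku} with $\ell = 0$; after $|\alpha|$ differentiations on the Fourier side the symbol picks up $|\xi|^{|\alpha|}$ factors and fits the same lemma with $\ell = |\alpha|$, producing
\begin{equation*}
|\partial_{x}^{\alpha} E(x)| \leq C|x|^{-(d-2+|\alpha|)} \quad (x \neq 0, \, |\alpha| \geq 0),
\end{equation*}
which is exactly the shape required by Lemma \ref{convolution}.

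For the weighted $L^{\infty}$ estimates I would treat the three cases separately. In (i), Lemma \ref{convolution}\,(i) applied to $E$ and $\partial_{x}E$ together with the $L^{1} \cap L^{\infty}_{d}$ bounds on $F_{1}$ yields \eqref{inveeseLinftyestimate1}; higher derivatives in $\|\cdot\|_{{\scr X}_{(1),L^{\infty}}}$ follow from Lemma \ref{lemP_1 weightedLinfty} since $u \in L^{2}_{(1)}$. In (ii) I shift the derivative onto the kernel via $E \ast \partial^{\alpha} F^{(1)}_{1} = (\partial^{\alpha} E) \ast F^{(1)}_{1}$ and apply Lemma \ref{convolution}\,(ii). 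In (iii) all $|\alpha|$ derivatives are shifted onto $E$, giving a kernel that decays at least like $|x|^{-(d-1)}$, so Lemma \ref{convolution}\,(iii) furnishes \eqref{inveeseLinftyestimate3}.

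The $L^{2}$ bounds come from the Plancherel identity combined with a weighted convolution argument. Because $|\xi|(I - e^{-T\hat{A}_{\xi}})^{-1}\hat{\chi}_0$ has only a $|\xi|^{-1}$ singularity, its inverse Fourier kernel decays like $|x|^{-(d-1)}$ by Lemma \ref{sekibunkaku}; the weighted norms $\|(1+|x|)^{j-1}\nabla^{j} u\|_{L^{2}}$ entering $\|\cdot\|_{{\scr X}_{(1),L^{2}}}$ and $\|\cdot\|_{{\scr Y}_{(1),L^{2}}}$ then reduce, via the splitting $|x|^{j} \leq C(|x-y|^{j}+|y|^{j})$, to bounds in terms of $\|F_{1}\|_{L^{2}_{1}} + \|F_{1}\|_{L^{1}}$ in case (i) and the analogous quantities in (ii), (iii). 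The main obstacle I anticipate is calibrating how many derivatives to transfer to the kernel in each case so that the resulting decay rate matches exactly the weighted norm on $F_{1}$ or $F^{(1)}_{1}$; case (iii) is borderline and requires precisely $|\alpha|$ derivatives to land on $E$, which is why the $L^{\infty}_{d}$ hypothesis on $F^{(1)}_{1}$ is essential.
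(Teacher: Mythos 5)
Your overall strategy matches the paper's: both invert $I-S_1(T)$ explicitly in Fourier variables via $u = \mathcal{F}^{-1}[(I-e^{-T\hat A_\xi})^{-1}\hat F_1]$, then pass from singularity estimates on the symbol (Lemma \ref{sekibunkaku}) to pointwise kernel decay and finally to weighted $L^\infty$ bounds via Lemma \ref{convolution}. However, there is a genuine gap in the kernel-decay step. You treat $(I-e^{-T\hat A_\xi})^{-1}\hat\chi_0$ as if every scalar entry has a $|\xi|^{-2}$ singularity and conclude $|\partial_x^\alpha E(x)| \leq C|x|^{-(d-2+|\alpha|)}$ uniformly. Feeding this into Lemma \ref{convolution}\,(i) gives only $|v(x)| \leq C(1+|x|)^{-(d-2)}\|F_1\|_{L^\infty_d\cap L^1}$ for the velocity part of the solution, which is one power of $(1+|x|)$ short of what the target norm $\|v\|_{{\scr Y}_{(1),L^\infty}}=\|(1+|x|)^{d-1}v\|_{L^\infty}$ requires. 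The same shortfall appears in case (ii). So the weighted $L^\infty$ bound for $v$ in \eqref{inveeseLinftyestimate1} is not actually established by the argument as written.

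What closes the gap in the paper is the block-wise analysis of the matrix symbol that you omit. Writing the spectral resolution $e^{-t\hat A_\xi}=\sum_j e^{t\lambda_j}\Pi_j(\xi)$ with the explicit eigenprojections of Lemma \ref{eigenvalue}, one sees (Lemma \ref{eigenvalueestimate2}) that only the density-to-density $(1,1)$ entry $E_{+,11}$ of $(I-e^{T\lambda_+})^{-1}\Pi_+$ carries the full $|\xi|^{-2}$ singularity; all other entries $E_{k,ij}$ (in particular the entire bottom row feeding the velocity) have singularity only $|\xi|^{-1}$, because the off-diagonal blocks of $\Pi_\pm$ carry a factor of $\xi$. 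Lemma \ref{sekibunkaku} with $\ell=1$ then yields the stronger pointwise decay $(1+|x|)^{-(d-1+|\alpha|)}$ for those entries (Lemma \ref{inverse}\,(ii)), and it is precisely this extra power that produces the $(1+|x|)^{-(d-1)}$ decay of $v$ needed for $\|\cdot\|_{{\scr Y}_{(1),L^\infty}}$. Two minor points: you also drop the eigenvalue $\lambda_0=-1$ (which governs the $(d-1)$-dimensional velocity modes orthogonal to $\xi$; it contributes no singularity but is part of the kernel), and your sign convention for $\lambda_\pm$ is that of $\hat A_\xi$ rather than $-\hat A_\xi$ as in the paper — harmless, but worth making explicit so that the $|\xi|^2$ versus $O(1)$ split reads correctly.
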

	
	\vspace{2ex}
	
	To prove  Proposition \ref{inverseprop}, we prepare some lemmas.
	
	\vspace{2ex}
	\begin{lem}\label{eigenvalue} {\rm (\cite{Wang-Yang})} 
		{\rm (i)} 
		The set of all eigenvalues of $-\hat{A}_{\xi}$ consists of $\lambda_{j}(\xi)\,(j=0,\pm)$, 
		where
		\begin{eqnarray*}
			\lambda_0=-1,\quad \lambda_{\pm}(\xi)=\frac{1}{2}(-1\pm \sqrt{1-4|\xi|^2}).
		\end{eqnarray*}
		We note that 
		$$
		\lambda_+ = - |\xi|^2 +O(|\xi|^4),  \ \ 
		\lambda_- = -1 +|\xi|^2 + O(|\xi|^4)
		$$
		as $|\xi|\rightarrow 0$. 
		If $|\xi|>\frac{1}{2}$, then 
		$$
		{\rm Re}\,\lambda_{\pm}
		=-\frac{1}{2},\quad  
		{\rm Im}\, \lambda_{\pm}
		=\pm|\xi|\sqrt{1-\frac{1}{4|\xi|^2}}.
		$$
		
		{\rm (ii)} For $|\xi|<\frac{1}{2}$, 
		$e^{-t\hat{A}_{\xi}}$ has the spectral resolution 
		\begin{eqnarray*}
			e^{-t\hat{A}_{\xi}}=\sum_{j=\pm}e^{t\lambda_{j}(\xi)}{\Pi}_{j}(\xi),
		\end{eqnarray*}
		where ${\Pi}_{j}(\xi)$ is eigenprojections for $\lambda_{j}(\xi)\,(\pm)$, and $\Pi_{j}(\xi)\,(\pm)$ satisfy
		\begin{align*}
			\Pi_{0}(\xi)&=\begin{pmatrix}
				0 &0\\
				0 &I_{d}-\frac{\xi\trans{\xi}}{|\xi|^{2}}
			\end{pmatrix}
			,\\
			\Pi_{\pm}(\xi)&=\pm\frac{1}{\lambda_{+}-\lambda_{-}}\begin{pmatrix}
				-\lambda_{\mp} &-i\trans{\xi}\\
				-i\xi &\lambda_{\pm}\frac{\xi\trans{\xi}}{|\xi|^{2}}
			\end{pmatrix}.
		\end{align*}
		Furthermore, if $0<r_\infty<\frac{1}{2}$, 
		then there exist a constant $C>0$ such that the estimates
		\begin{eqnarray}
			\|\Pi_{j}(\xi)\|\leq C\,(j=0,\pm) 
			\label{Pi}
		\end{eqnarray}
		hold for $|\xi|\leq r_{\infty}$.
	\end{lem}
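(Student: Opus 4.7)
The plan is to diagonalise the $(d+1)\times(d+1)$ matrix $-\hat A_{\xi}$ by exploiting the natural block decomposition $\mathbb R^{d}=\mathrm{span}(\xi)\oplus\xi^{\perp}$ (for $\xi\ne 0$). Indeed, for any $v^{\perp}\in\mathbb R^{d}$ with $\xi\cdot v^{\perp}=0$ one computes $-\hat A_{\xi}\trans{(0,v^{\perp})}=\trans{(0,-v^{\perp})}$, so $\lambda_{0}=-1$ is already an eigenvalue of geometric multiplicity at least $d-1$, with spectral projection $\Pi_{0}(\xi)=\mathrm{diag}(0,\,I_{d}-\xi\trans{\xi}/|\xi|^{2})$. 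On the complementary $2$-dimensional subspace spanned by $\trans{(1,0,\dots,0)}$ and $\trans{(0,\xi/|\xi|)}$, the operator $-\hat A_{\xi}$ is represented by
\begin{eqnarray*}
M_{\xi}=\begin{pmatrix} 0 & -i|\xi| \\ -i|\xi| & -1 \end{pmatrix},
\end{eqnarray*}
whose characteristic polynomial $\lambda^{2}+\lambda+|\xi|^{2}=0$ yields the remaining eigenvalues $\lambda_{\pm}(\xi)=\tfrac12(-1\pm\sqrt{1-4|\xi|^{2}})$.

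All assertions in (i) follow from this formula. Expanding $\sqrt{1-4|\xi|^{2}}=1-2|\xi|^{2}+O(|\xi|^{4})$ as $|\xi|\to 0$ gives $\lambda_{+}=-|\xi|^{2}+O(|\xi|^{4})$ and $\lambda_{-}=-1+|\xi|^{2}+O(|\xi|^{4})$. For $|\xi|>1/2$ the square root is purely imaginary, $\sqrt{1-4|\xi|^{2}}=i\sqrt{4|\xi|^{2}-1}$, which produces at once $\mathrm{Re}\,\lambda_{\pm}=-1/2$ and $\mathrm{Im}\,\lambda_{\pm}=\pm|\xi|\sqrt{1-1/(4|\xi|^{2})}$.

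For (ii), when $0<|\xi|<1/2$ the roots $\lambda_{\pm}$ are real and distinct, so $M_{\xi}$ is diagonalisable and its spectral projectors are given by the classical formula $P^{(2)}_{\pm}=(M_{\xi}-\lambda_{\mp}I_{2})/(\lambda_{+}-\lambda_{-})$. Transferring this back to $\mathbb C^{d+1}$ via the identifications $\alpha\leftrightarrow(\xi/|\xi|)\cdot v$ and $\beta(\xi/|\xi|)\leftrightarrow(\xi\trans{\xi}/|\xi|^{2})v$, together with the Vieta identity $1+\lambda_{\pm}=-\lambda_{\mp}$, produces exactly the stated matrix formulas for $\Pi_{\pm}(\xi)$. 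The spectral resolution of $\exp(-t\hat A_{\xi})$ is then the direct sum of the two block resolutions (with the $\Pi_{0}$ contribution $e^{-t}\Pi_{0}(\xi)$ implicitly present on the degenerate block).

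For the uniform bound \eqref{Pi} on $|\xi|\le r_{\infty}<1/2$ it suffices to observe that $|\lambda_{+}-\lambda_{-}|=\sqrt{1-4|\xi|^{2}}\ge\sqrt{1-4r_{\infty}^{2}}>0$ is bounded away from zero, that $|\lambda_{\pm}|\le 1$, and that $\|\xi\trans{\xi}/|\xi|^{2}\|=1$ for every $\xi\ne 0$. Substituting these bounds into the explicit expressions for $\Pi_{0},\Pi_{\pm}$ yields \eqref{Pi}. The only minor subtlety is that $\Pi_{0}$ and $\Pi_{-}$ are each discontinuous at $\xi=0$ (their sum is not), but boundedness persists because the problematic factor $\xi\trans{\xi}/|\xi|^{2}$ is a rank-one orthogonal projector of norm one. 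Since the whole argument is elementary linear algebra, no genuine obstacle appears; the only idea to isolate is the $1\oplus 2\oplus(d-1)$ block decomposition that decouples the density/radial-velocity part from the transverse velocity part.
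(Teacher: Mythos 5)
Your argument is correct and complete. Note, however, that the paper offers no proof of this lemma at all---it is imported verbatim from the cited reference \cite{Wang-Yang}---so there is no ``paper proof'' against which to compare. The $1\oplus 2\oplus(d-1)$ block decomposition you use (density and longitudinal velocity coupled via the $2\times 2$ companion-type matrix $M_\xi$, transverse velocity decoupled with eigenvalue $-1$) is precisely the standard reduction for such symbols, and the spectral projector formula $(M_\xi-\lambda_\mp I)/(\lambda_+-\lambda_-)$ lifted via $\xi\trans\xi/|\xi|^2$ reproduces the stated $\Pi_\pm$ exactly, including via the Vieta identity $-1-\lambda_\mp=\lambda_\pm$. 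You are also right to flag that the displayed resolution $e^{-t\hat A_\xi}=\sum_{j=\pm}e^{t\lambda_j}\Pi_j$ as written omits the $e^{-t}\Pi_0$ contribution on the $(d-1)$-dimensional transverse subspace (since $\Pi_+ +\Pi_- = \mathrm{diag}(1,\xi\trans\xi/|\xi|^2)\ne I_{d+1}$); this is a minor omission in the statement rather than in your reasoning. The boundedness argument for \eqref{Pi} on $|\xi|\le r_\infty<1/2$ is correct: $\lambda_+-\lambda_-=\sqrt{1-4|\xi|^2}$ is bounded below, $|\lambda_\pm|\le 1$, and $\|\xi\trans\xi/|\xi|^2\|=1$, so each entry of $\Pi_0,\Pi_\pm$ is uniformly bounded even though $\Pi_0$ and $\Pi_-$ fail to extend continuously to $\xi=0$.
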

	
	\vspace{2ex}
	Hereafter we fix $0<r_1<r_\infty<\frac{1}{2}$ 
	so that $(\ref{Pi})$ in Lemma \ref{eigenvalue} holds for $|\xi|\leq r_\infty$. 
	
	\vspace{2ex}
	\begin{lem}\label{eigenvalueestimate} 
		Let $\alpha$ be a multi-index. 
		Then  the following estimates hold true uniformly for $\xi$ with  $|\xi|\leq r_{\infty}$ and $t\in [0,T]$. 
		\begin{itemize}
			\item[{\rm (i)}] 
			$|\partial_{\xi}^{\alpha}\lambda_{0}|\leq C|\xi|^{-|\alpha|}$,  $|\partial_{\xi}^{\alpha}\lambda_{+}|\leq C|\xi|^{2-|\alpha|}$,   
			\ \ $|\partial_{\xi}^{\alpha}\lambda_{-}|\leq C|\xi|^{-|\alpha|}$ $(|\alpha|\geq 0)$. 
			\item[{\rm (ii)}] 
			$|(\partial_{\xi}^{\alpha}\Pi_{0})\hat{F}_1|\leq C|\xi|^{-|\alpha|}|\hat{F}^2_{1}|$, 
			$|(\partial_{\xi}^{\alpha}\Pi_{\pm})\hat{F}_1|
			\leq C|\xi|^{-|\alpha|}|\hat{F}_{1}|$ $(|\alpha|\geq 0)$, 
			where ${F}_1=\trans(F^{1}_{1},{F}^2_{1})$.
			\item[{\rm (iii)}] 
			$|\partial_{\xi}^{\alpha}(e^{\lambda_{\pm}t})|
			\leq C|\xi|^{-|\alpha|}$ 
			$(|\alpha|\geq 0)$. 
			\item[{\rm (iv)}] 
			$|(\partial_{\xi}^{\alpha}e^{-t\hat{A}_{\xi}})\hat{F}_1|
			\leq  C(|\xi|^{-|\alpha|}|\hat{F}_1^1|+|\xi|^{-|\alpha|}|\hat{F}^2_{1}|)$ 
			$(|\alpha|\geq 1)$, 
			where ${F}_1=\trans(F^{1}_{1},{F}^2_{1})$. 
		\end{itemize}
	\end{lem}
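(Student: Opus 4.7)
My plan is to prove each assertion by direct computation from the explicit spectral data provided by Lemma~\ref{eigenvalue}, exploiting throughout the fact that a homogeneous function of nonnegative degree loses at most one power of $|\xi|$ per derivative.

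For (i), I would note that on $\{|\xi|\leq r_\infty\}$ with $r_\infty<1/2$ the radical $\sqrt{1-4|\xi|^2}$ is real-analytic, hence so are $\lambda_\pm$, while $\lambda_0\equiv -1$. From the Taylor expansions $\lambda_+=-|\xi|^2+O(|\xi|^4)$ and $\lambda_-=-1+|\xi|^2+O(|\xi|^4)$, together with the fact that derivatives of any fixed order are uniformly bounded on the compact set $\{|\xi|\leq r_\infty\}$, the stated estimates fall out: the vanishing of $\lambda_+$ to order two at $\xi=0$ delivers the gain $|\xi|^{2-|\alpha|}$ for $|\alpha|\leq 2$, while for $|\alpha|\geq 3$ the uniform bound $|\partial^\alpha\lambda_+|\leq C$ is absorbed by $|\xi|^{2-|\alpha|}\geq r_\infty^{2-|\alpha|}$. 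The bounds for $\lambda_-$ and $\lambda_0$ follow by the same reasoning.

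For (ii), I would observe from the formulas for $\Pi_0$ and $\Pi_\pm$ that every entry is a linear combination of functions of the form $c\,\xi^a/|\xi|^{2b}$ or $\lambda_\pm(\xi)\,\xi^a/|\xi|^{2b}$, multiplied by the smooth scalar factor $(\lambda_+-\lambda_-)^{-1}=(1-4|\xi|^2)^{-1/2}$, which is real-analytic on $\{|\xi|\leq r_\infty\}$. Since each $\xi^a/|\xi|^{2b}$ appearing here is positively homogeneous of nonnegative degree, its $\alpha$-th derivative is bounded by $C|\xi|^{-|\alpha|}$. Combined with (i) and the Leibniz rule, this yields $\|\partial^\alpha\Pi_\pm(\xi)\|\leq C|\xi|^{-|\alpha|}$. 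The sharper estimate for $\Pi_0$ uses the fact that its upper block row is zero, so the component $\hat F_1^1$ never enters the output, leaving only $|\hat F_1^2|$ on the right.

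For (iii) I would apply the multivariate Faà di Bruno formula to write $\partial^\alpha e^{t\lambda_\pm}$ as $e^{t\lambda_\pm}$ times a finite sum of monomials of the form $t^k\prod_j \partial^{\beta_j}\lambda_\pm$ with $\sum_j\beta_j=\alpha$ and $1\leq k\leq |\alpha|$. Since $\mathrm{Re}\,\lambda_\pm\leq 0$ on $\{|\xi|\leq r_\infty\}$, the exponential factor stays bounded for $t\in[0,T]$. The $\lambda_-$ product is $O(|\xi|^{-|\alpha|})$ by (i), while the $\lambda_+$ product is $O(|\xi|^{2k-|\alpha|})\leq O(|\xi|^{-|\alpha|})$ since $|\xi|\leq r_\infty<1$ and $k\geq 1$. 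Finally, (iv) would be obtained by combining the spectral representation $e^{-t\hat A_\xi}=e^{-t}\Pi_0+e^{t\lambda_+}\Pi_+ + e^{t\lambda_-}\Pi_-$ with the Leibniz rule and applying (ii) and (iii) term by term; the $\Pi_0$ contribution produces only an $|\hat F_1^2|$ term, while the $\Pi_\pm$ contributions produce both $|\hat F_1^1|$ and $|\hat F_1^2|$ terms. I expect the only real bookkeeping obstacle to be the careful verification that the singular factors arising from differentiating $\xi^a/|\xi|^{2b}$ accumulate no worse than $|\xi|^{-|\alpha|}$, which is settled by the homogeneity considerations noted above.
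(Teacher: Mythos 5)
Your proposal is correct and is essentially the argument the paper intends: the authors simply remark that the lemma ``can be verified by direct computations based on Lemma~\ref{eigenvalue},'' and what you write is precisely the direct computation, namely using analyticity of $\sqrt{1-4|\xi|^2}$ on $\{|\xi|\le r_\infty\}$, the vanishing of $\lambda_+$ to second order, the $0$-homogeneity of $\xi\xi^\top/|\xi|^2$, Fa\`a di Bruno for $e^{t\lambda_\pm}$, and the full spectral resolution $e^{-t\hat A_\xi}=e^{-t}\Pi_0+e^{t\lambda_+}\Pi_++e^{t\lambda_-}\Pi_-$ (correctly including the $\Pi_0$ term that the paper's display formula elides). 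Your observation that the top row of $\Pi_0$ vanishes, so that only $\hat F_1^2$ survives, matches the sharper estimate asserted for $\Pi_0$ in part (ii).
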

	
	\vspace{2ex}
	Lemma $\ref{eigenvalueestimate}$ can be verified by direct computations based on Lemma $\ref{eigenvalue}.$

	\vspace{2ex}
	
	We set 
	$$
	(I-e^{\lambda_{k}T})^{-1}\Pi_{k}(\xi):=\begin{pmatrix}
		E_{k, 11}(\xi) & E_{k, 12}(\xi) \\
		E_{k, 21}(\xi) & E_{k, 22}(\xi)
	\end{pmatrix}
	.
	$$
	
	\vspace{2ex}
	
	\begin{lem}\label{eigenvalueestimate2} 
		Let $\alpha$ be a multi-index. 
		Then  the following estimates hold true uniformly for $\xi$ with  $|\xi|\leq r_{\infty}$ and $t\in [0,T]$. 
		\begin{itemize}
			\item[{\rm (i)}] 
			$|\partial_{\xi}^{\alpha}E_{+, 11}\hat{F}_1|
			\leq C|\xi|^{-2-|\alpha|}|\hat{F}_{1}|$ 
			$(|\alpha|\geq 0)$
			\item[{\rm (ii)}] 
			$|\partial_{\xi}^{\alpha}E_{k, ij}\hat{F}_1|
			\leq C|\xi|^{-1-|\alpha|}|\hat{F}_{1}|$ 
			$(|\alpha|\geq 0)$, where $k=0,\pm$ and $ij \neq 11$ for $k=+$.  
		\end{itemize}
	\end{lem}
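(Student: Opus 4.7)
The plan is to combine the explicit eigenprojection formulas in Lemma~\ref{eigenvalue}(ii) with the derivative estimates of Lemma~\ref{eigenvalueestimate}. The key observation is that $(1-e^{\lambda_+ T})^{-1}$ picks up a singularity of order $|\xi|^{-2}$ at the origin: since $\lambda_+=-|\xi|^2+O(|\xi|^4)$ one has $1-e^{\lambda_+ T}=T|\xi|^2+O(|\xi|^4)$, and a direct calculation (combined with the fact that $\lambda_+$ is analytic in $|\xi|^2$ on $|\xi|\leq r_\infty<1/2$) yields
\[
|\partial_\xi^\alpha (1-e^{\lambda_+ T})^{-1}|\leq C|\xi|^{-2-|\alpha|}\quad(|\alpha|\geq 0).
\]
By contrast, $(1-e^{\lambda_0 T})^{-1}=(1-e^{-T})^{-1}$ is a constant, and $(1-e^{\lambda_- T})^{-1}$ is smooth in $|\xi|^2$ with $\lambda_-(0)=-1$, hence it and all its $\xi$-derivatives are uniformly bounded on $|\xi|\leq r_\infty$.

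For (i), I write $E_{+,11}=\frac{-\lambda_-}{(1-e^{\lambda_+ T})(\lambda_+-\lambda_-)}$. Since $-\lambda_-$ and $(\lambda_+-\lambda_-)^{-1}=(1-4|\xi|^2)^{-1/2}$ are analytic in $|\xi|^2$ on $|\xi|\leq r_\infty$, they are bounded together with all their $\xi$-derivatives. A direct Leibniz expansion, combined with the $|\xi|^{-2-|\alpha|}$ bound on $(1-e^{\lambda_+ T})^{-1}$, then delivers the stated estimate.

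For (ii), the same Leibniz scheme yields the improved $|\xi|^{-1-|\alpha|}$ bound because every remaining entry carries an extra power of $|\xi|$ relative to $E_{+,11}$. In $E_{+,12}$ and $E_{+,21}$ the algebraic factor is $\mp i\xi_j/(\lambda_+-\lambda_-)$; when a derivative in the Leibniz expansion hits $\xi_j$ we save one power of $|\xi|$ compared to the worst estimate for $(1-e^{\lambda_+ T})^{-1}$, and when all derivatives fall on $(1-e^{\lambda_+ T})^{-1}$ the surviving factor $\xi_j$ itself provides the extra $|\xi|$. For $E_{+,22}$ the crucial point is that $\lambda_+/(1-e^{\lambda_+ T})$ extends smoothly to $\xi=0$, since $z/(1-e^{zT})=-1/T+O(z)$ is analytic at $z=0$; combined with the entries of $\xi\trans\xi/|\xi|^2$, whose derivatives satisfy $|\partial_\xi^\alpha(\xi_i\xi_j/|\xi|^2)|\leq C|\xi|^{-|\alpha|}$, this yields $|\partial_\xi^\alpha E_{+,22}|\leq C|\xi|^{-|\alpha|}\leq C|\xi|^{-1-|\alpha|}$. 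The entries $E_{-,ij}$ and $E_{0,22}$ are treated analogously: $(1-e^{\lambda_- T})^{-1}$ and $(1-e^{-T})^{-1}$ contribute no singularity, so the worst term comes from $\partial_\xi^\alpha(\xi_i\xi_j/|\xi|^2)$, already bounded by $C|\xi|^{-|\alpha|}$.

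The only real obstacle is bookkeeping: carefully tracking the powers of $|\xi|$ produced when $\partial_\xi$ acts on $(1-e^{\lambda_+ T})^{-1}$ versus on the algebraic factors $\xi_j$ or $\xi_i\xi_j/|\xi|^2$, and exploiting the cancellation $z/(1-e^{zT})\in C^\infty$ near $z=0$ needed to pull $E_{+,22}$ into the $|\xi|^{-1-|\alpha|}$ class. No input beyond elementary symbol calculus for $-\hat A_\xi$ on $|\xi|\leq r_\infty<1/2$ is needed.
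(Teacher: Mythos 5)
Your proposal is correct and takes essentially the same approach as the paper: the paper's proof is a one-line sketch ("direct computations based on Lemma \ref{eigenvalue}" plus the remark that $(I-e^{\lambda_kT})^{-1}\Pi_k\sim(\lambda_kT)^{-1}\Pi_k$ as $|\xi|\to0$ by Taylor expansion), and your argument fills in precisely that Taylor-expansion computation, factoring out the $|\xi|^{-2}$ singularity from $(1-e^{\lambda_+T})^{-1}$, isolating the smooth factors $(\lambda_+-\lambda_-)^{-1}$, $-\lambda_-$, $\lambda_+/(1-e^{\lambda_+T})$, and applying Leibniz together with $|\partial_\xi^\alpha(\xi_i\xi_j/|\xi|^2)|\leq C|\xi|^{-|\alpha|}$. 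The observation that $z/(1-e^{zT})$ is analytic at $z=0$ is the same cancellation implicit in the paper's $(\lambda_kT)^{-1}\Pi_k$ hint applied to the $E_{+,22}$ block.
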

	
	
	\vspace{2ex}
	
	\vspace{2ex}
	Lemma $\ref{eigenvalueestimate2}$ can also be verified by direct computations based on Lemma $\ref{eigenvalue}.$ We note that 
	$$
	(I-e^{\lambda_{k}T})^{-1}\Pi_{k}\sim \dfrac{1}{\lambda_{k}T}\Pi_{k} 
	$$
	as $|\xi| \rightarrow 0$ by the Taylor expansion of $e^{\lambda_{k}T}$. 
	
	\vspace{2ex}
	
	Set
	\begin{eqnarray*}
		E_{1,k}(x)
		:={\cal F}^{-1}(\hat{\chi}_{0}(I-e^{\lambda_{k}T})^{-1}\Pi_{j}) \ \ (k=0,\pm),
		\ \ \mbox{$(x\in \mathbb{R}^{d})$}
	\end{eqnarray*}
	where $\chi_0$ is the cut-off function defined by $(\ref{subcutofflowpart})$. In addition, we write as    
	\begin{eqnarray*}
		E_{1,k}(x):= \begin{pmatrix}
			E_{1, k, 11} & E_{1, k, 12} \\
			E_{1, k, 21} & E_{1, k, 22}
		\end{pmatrix}
		, 
	\end{eqnarray*}
	where $E_{1, k, ij}= {\cal F}^{-1}(\hat{\chi}_{0}E_{k, ij})$. 
	
	\begin{lem}\label{inverse} 
		Let $\alpha$ be a multi-index satisfying $|\alpha|\geq 0$.
		Then  the following estimates hold true uniformly for $x\in \mathbb{R}^{d}$. 
		\begin{itemize}
			\item[{\rm (i)}]
			$|\partial_{x}^{\alpha}E_{1,+,11}(x)|\leq C(1+|x|)^{-(d-2+|\alpha|)}.$
			\item[{\rm (ii)}] 
			$|\partial_{x}^{\alpha}E_{1,k,ij}(x)|\leq C(1+|x|)^{-(d-1+|\alpha|)}$, where $k=0,\pm$ and  $ij \neq 11$ for $k=+$. 
		\end{itemize}
	\end{lem}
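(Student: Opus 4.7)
The plan is to derive both (i) and (ii) as direct applications of Lemma \ref{sekibunkaku}, with the role of $\hat{\Phi}_\ell$ played by the symbol $(i\xi)^\alpha \hat{\chi}_0(\xi) E_{k,ij}(\xi)$. Since $\partial_x^\alpha E_{1,k,ij}(x) = \mathcal{F}^{-1}[(i\xi)^\alpha \hat{\chi}_0 E_{k,ij}](x)$, it suffices to analyze these symbols and then split into $|x|\geq 1$ and $|x|\leq 1$.

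For part (i), I would combine Lemma \ref{eigenvalueestimate2} (i), which gives $|\partial_\xi^\beta E_{+,11}(\xi)|\leq C|\xi|^{-2-|\beta|}$ near $\xi=0$, with the Leibniz rule applied to $(i\xi)^\alpha \hat{\chi}_0(\xi)E_{+,11}(\xi)$. Using $|\partial_\xi^\gamma \xi^\alpha|\leq C|\xi|^{|\alpha|-|\gamma|}$ and the fact that $\hat{\chi}_0$ is smooth with compact support in $\{|\xi|\leq 2r_\infty\}$, one obtains
\begin{equation*}
\bigl|\partial_\xi^\beta\bigl[(i\xi)^\alpha \hat{\chi}_0(\xi)E_{+,11}(\xi)\bigr]\bigr|\leq C |\xi|^{-2-|\beta|+|\alpha|}\qquad(\xi\neq 0).
\end{equation*}
This is exactly the hypothesis of Lemma \ref{sekibunkaku} with $\ell=|\alpha|$. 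The integrability condition $\partial_\xi^\gamma \hat{\Phi}_{|\alpha|}\in L^1$ for $|\gamma|\leq d-3+|\alpha|$ is satisfied because the compact support removes any issue at infinity and, near $\xi=0$, $|\xi|^{-2-|\gamma|+|\alpha|}$ is integrable in $\mathbb{R}^d$ provided $|\gamma|<d-2+|\alpha|$. Lemma \ref{sekibunkaku} then yields $|\partial_x^\alpha E_{1,+,11}(x)|\leq C|x|^{-(d-2+|\alpha|)}$ for $x\neq 0$.

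For $|x|\leq 1$, I would observe that the symbol $(i\xi)^\alpha \hat{\chi}_0 E_{+,11}$ is compactly supported and behaves like $|\xi|^{-2+|\alpha|}$ near the origin, which is locally integrable in $\mathbb{R}^d$ whenever $d\geq 3$; hence it belongs to $L^1(\mathbb{R}^d)$ and $|\partial_x^\alpha E_{1,+,11}(x)|$ is bounded by a constant. Combining the two regimes gives the stated bound $|\partial_x^\alpha E_{1,+,11}(x)|\leq C(1+|x|)^{-(d-2+|\alpha|)}$.

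Part (ii) proceeds in exactly the same way, but starting from Lemma \ref{eigenvalueestimate2} (ii), which improves the singularity by one order to $|\partial_\xi^\beta E_{k,ij}|\leq C|\xi|^{-1-|\beta|}$. The analogous Leibniz computation produces a symbol satisfying $|\partial_\xi^\beta[\,\cdot\,]|\leq C|\xi|^{-2-|\beta|+(|\alpha|+1)}$, so Lemma \ref{sekibunkaku} applies with $\ell=|\alpha|+1$ and yields the decay $|x|^{-(d-1+|\alpha|)}$ for $|x|\geq 1$; the bounded estimate for $|x|\leq 1$ is obtained as before. I do not expect a genuine obstacle here — the main bookkeeping task is to verify carefully that the Leibniz derivatives in $\xi$ involving $\hat{\chi}_0$ are absorbed by the compact support (so that no worse singularity appears than the one coming from $E_{k,ij}$ itself), and that the threshold $|\gamma|\leq d-3+\ell$ in Lemma \ref{sekibunkaku} matches the integrability range of the symbol near $\xi=0$; both are straightforward.
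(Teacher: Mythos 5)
Your argument is correct and follows essentially the same route as the paper: both derive the symbol estimates $|\partial_\xi^\beta[(i\xi)^\alpha\hat\chi_0 E_{k,ij}]|\leq C|\xi|^{-2-|\beta|+\ell}$ (with $\ell=|\alpha|$ in case (i), $\ell=|\alpha|+1$ in case (ii)) from Lemma~\ref{eigenvalueestimate2} via the Leibniz rule and then invoke Lemma~\ref{sekibunkaku} to obtain the $|x|^{-(d-2+\ell)}$ decay, combining this with the constant bound obtained from local integrability of the compactly supported symbol. The only cosmetic difference is that you split explicitly into the regions $|x|\leq 1$ and $|x|\geq 1$, while the paper states both bounds globally and interpolates; the substance is identical.
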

	
	\vspace{2ex}
	\noindent\textbf{Proof.} It follows from Lemma $\ref{eigenvalueestimate}$ that
	\begin{eqnarray*}
		\sum_{k}|\partial_{x}^{\alpha}E_{1,k}(x)|\leq C\int_{|\xi|\leq  r_{\infty}}|\xi|^{-2}d\xi
		\ \ \mbox{$(x\in \mathbb{R}^{d})$}.
	\end{eqnarray*}
	Since $\int_{|\xi|\leq r_{\infty}}|\xi|^{-2}d\xi < \infty$ for $d\geq 3$, 
	we see that
	\begin{eqnarray}
		\sum_{k}|\partial_{x}^{\alpha}E_{1,k}(x)|\leq C
		\ \ \mbox{$(x\in \mathbb{R}^{d})$},\label{inverseconstant}
	\end{eqnarray}
	where $C>0$ is a constant depending on $\alpha$, $T$ and $d$.
	By Lemma $\ref{eigenvalueestimate}$, we have 
	\begin{eqnarray*}
		|\partial_{\xi}^{\beta}((i\xi)^{\alpha}\hat{\chi}_{0}E_{+,11}(\xi)| &\leq &C|\xi|^{-2+|\alpha|-|\beta|}
		\ \ \mbox{for} \ |\beta| \geq 0,\\
		|\partial_{\xi}^{\beta}((i\xi)^{\alpha}\hat{\chi}_{0}E_{k,ij}| &\leq &C|\xi|^{-1+|\alpha|-|\beta|}
		\ \ \mbox{for} \ |\beta| \geq 0,
	\end{eqnarray*}
	where $k=0,\pm$ and  $ij \neq 11$ for $k=+$. 
	It then follows from Lemma  $\ref{sekibunkaku}$ that 
	\begin{eqnarray}
		|\partial_{x}^{\alpha}E_{1,+,11}(x)|\leq C|x|^{-(d-2+|\alpha|)}
		\ \ \mbox{and} \ \ |\partial_{x}^{\alpha}E_{1,k,ij}(x)|\leq C|x|^{-(d-1+|\alpha|)}.\label{inversemugenennpou}
	\end{eqnarray}
	From $(\ref{inverseconstant})$ and $(\ref{inversemugenennpou})$, we obtain the desired estimates. 
	This completes the proof.$\hfill\square$
	
	\vspace{2ex}
	

	\vspace{2ex}
	Now we are in a position to prove Proposition \ref{inverseprop}.
	
	\vspace{2ex}
	
	\noindent
	\textbf{Proof of Proposition} {\bf \ref{inverseprop}.}
	We define a function $u$ by
	\begin{eqnarray*}
		u=\mathcal{F}^{- 1}(I-e^{-T\hat{A}_{\xi}})^{-1}\hat{F}_1.
	\end{eqnarray*}
	
	{\rm (i)}  
	By using Lemma \ref{eigenvalueestimate}, one can easily obtain $(\ref{inverseL2estimate1})$. 
	As for $(\ref{inveeseLinftyestimate1})$, note that  
	\begin{eqnarray*}
		u=\mathcal{F}^{- 1}
		((I-e^{-T\hat{A}_{\xi}})^{-1}\hat{F}_1)
		=\sum_{k}E_{1,k}\ast F_1,
	\end{eqnarray*}
	where $E_{1,k}$ is the ones defined in Lemma \ref{inverse}. We can rewrite by 
	\begin{eqnarray*}
		\sum_{k}E_{1,k}\ast F_1 &=
		\begin{pmatrix}
			\sum_{k} E_{k, 11}\ast {F}^1_1 +\sum_{k}E_{k, 12}\ast {F}^2_1 \\
			\sum_{k}E_{k, 21}\ast {F}^1_1 + \sum_{k}E_{k, 22}\ast {F}^2_1
		\end{pmatrix}
		&=\mathcal F^{-1}\Big[\hat{\chi}_{0}
		\begin{pmatrix}
			\sum_{k} E_{k, 11}\hat{F}^1_1 +\sum_{k}E_{k, 12}\hat{F}^2_1 \\
			\sum_{k}E_{k, 21}\hat{F}^1_1 + \sum_{k}E_{k, 22}\hat{F}^2_1
		\end{pmatrix}
		\Big]
	\end{eqnarray*}
	for $F_1=\trans(F_1^1, F_1^2)$. 
	Then by Lemma \ref{inverse}, we see that $\sum_{k}E_{1,k,ij}$ satisfies 
	$$
	|\partial_{x}^{\alpha}E_{1,+,11}(x)|\leq C(1+|x|)^{-(d-2+|\alpha|)} \ \ (|\alpha|\geq 0)
	$$
	and 
	$$
	|\partial_{x}^{\alpha}\sum_{k}E_{1,k,ij}(x)|\leq C(1+|x|)^{-(d-1+|\alpha|)} \ \ (|\alpha|\geq 0), 
	$$
	where $k=0,\pm$ and  $ij \neq 11$ for $k=+$.
	Therefore, applying Lemma \ref{convolution} {\rm (i)}, we obtain $(\ref{inveeseLinftyestimate1})$. 
	
	\vspace{1ex}
	The assertion ${\rm (ii)}$ similarly follows from Lemma \ref{convolution} {\rm (ii)}, Lemma \ref{eigenvalueestimate} and Lemma \ref{inverse}.
	
	\vspace{1ex}
	${\rm (iii)}$ By using Lemma \ref{eigenvalueestimate}, one can easily obtain $(\ref{inverseL2estimate3})$. 
	As for $(\ref{inveeseLinftyestimate3})$,  
	if there exists a function $F^{(1)}_1 \in L^2_{(1)}\cap L^{\infty}_d$ satisfying $F_1 =\partial_{x}^{\alpha} F^{(1)}_1$ 
	for some $\alpha$ satisfying $|\alpha|\geq 1$, then
	\begin{eqnarray*}
		u=\left(\sum_{k}\partial_{x}^{\alpha}  E_{1,k}\right)\ast F^{(1)}_1.
	\end{eqnarray*}
	Lemma  \ref{inverse} yields
	\begin{eqnarray*}
		|\sum_{k}\partial_{x}^{\alpha+\beta}  E_{1,k}(x)| \leq C(1+|x|)^{-(d-1+|\beta|)}
	\end{eqnarray*}
	for $x\in \mathbb{R}^{d}$, $|\alpha| \geq 1$ and $|\beta|\geq 0$. 
	It  then follows from Lemma \ref{convolution} {\rm (iii)} that
	$$
	\|u\|_{{\scr X}_{(1),L^{\infty}}\times {\scr Y}_{(1)},L^{\infty}}\leq C\|F^{(1)}_1\|_{L^{\infty}_d}.
	$$
	
	\noindent This completes the proof.$\hfill\square$
	
	\vspace{2ex}

	
	

		\vspace{2ex} 
		In view of Proposition \ref{inverseprop} {\rm (i)}, $I-S_{1}(T)$ has a bounded inverse $(I-S_{1}(T))^{-1}$: $L^2_{(1),1}\cap L^{\infty}\cap L^{1}
		\rightarrow {\scr X}_{(1)}\times {\scr Y}_{(1)}$ and 
		it holds that
		\begin{align*}
			&\|(I-S_{1}(T))^{-1}F_{1}\|_{{\scr X}_{(1),L^{\infty}}\times {\scr Y}_{(1),L^{\infty}}}\leq C \{\|F_1\|_{L^{\infty}_d}+\|F_1\|_{L^1}\},\\
			&\|(I-S_{1}(T))^{-1}F_{1}\|_{{\scr X}_{(1),L^2}\times {\scr Y}_{(1),L^2}}\leq C (\|F_1\|_{L^1}+\|F_1\|_{L^2_1}).
		\end{align*}
		
		If ${F}_1 =\partial_{x}^{\alpha} F^{(1)}_1 \in L^{\infty}_d \cap L^2_{(1),1}$ with $F^{(1)}_1 \in L^2_{(1)}\cap L^{\infty}_{d-1}$ 
		for some $\alpha$ satisfying $|\alpha|=1$, then $(I-S_{1}(T))^{-1}F_1 \in {\scr X}_{(1)}\times {\scr Y}_{(1)}$ and 
		\begin{eqnarray*}
			&&\|(I-S_{1}(T))^{-1}F_1\|_{{\scr X}_{(1),L^{\infty}}\times {\scr Y}_{(1),L^{\infty}}}\leq C \{\|F_1\|_{L^{\infty}_d}+\|F^{(1)}_1\|_{L^{\infty}_{d-1}}\},\\
			&&\|(I-S_{1}(T))^{-1}F_1\|_{{\scr X}_{(1),L^{2}}\times {\scr Y}_{(1),L^{2}}}\leq C (\|F_1\|_{L^2_1}+\|F^{(1)}_1\|_{L^2}).
		\end{eqnarray*}
		
		Furthermore, if ${F}_1 =\partial_{x}^{\alpha} F^{(1)}_1 \in L^2_{(1)}$ with $F^{(1)}_1 \in L^2_{(1),1}\cap L^{\infty}_{d}$ 
		for some $\alpha$ satisfying $|\alpha|\geq 1$, then $(I-S_{1}(T))^{-1}F_1 \in {\scr X}_{(1)}\times {\scr Y}_{(1)}$ and 
		\begin{eqnarray*}
			&&\|(I-S_{1}(T))^{-1}F_1\|_{{\scr X}_{(1),L^{\infty}}\times {\scr Y}_{(1),L^{\infty}}}\leq C \|F^{(1)}_1\|_{L^{\infty}_{d}},\\
			&&\|(I-S_{1}(T))^{-1}F_1\|_{{\scr X}_{(1),L^{2}}\times {\scr Y}_{(1),L^{2}}}\leq C \|F^{(1)}_1\|_{L^2_1}.
		\end{eqnarray*}
		
		\vspace{2ex}

		We next estimate $S_1(t)\scr{S}_1(T)(I-S_1 (T))^{-1}F_{1}$ and $\scr{S}_1(t)F_1$.
		Let $E_{1}(t,\sigma)$ and $E_{2}(t,\tau)$ be defined by
		\begin{eqnarray*}
			&&E_{1}(t,\sigma)=\mathcal{F}^{- 1}\{\hat{\chi}_{0}e^{-t\hat{A}_{\xi}}(I-e^{-T\hat{A}_{\xi}})^{-1}e^{-(T-\sigma )\hat{A}_{\xi}}\},\\
			&&E_{2}(t,\tau)=\mathcal{F}^{- 1}\{\hat{\chi}_{0}e^{-(t-\tau )\hat{A}_{\xi}}\}
		\end{eqnarray*}
		for $\sigma \in [0,T], $ $0\leq \tau \leq t \leq T$, where $\hat{\chi}_{0}$ is the cut-off function defined by (4.9).  
		Then $\scr{S}_1(t)F_1$ and $S_1(t)\scr{S}_1(T)(I-S_1 (T))^{-1}F_{1}$ are given by 
		\begin{eqnarray}
			&&S_1(t)\scr{S}_1(T)(I-S_1 (T))^{-1}F_{1}=\int_{0}^{T}E_1(t,\sigma) \ast F_{1}(\sigma)d\sigma \label{kaisayosotosekibunkaku2}\\
			&&\scr{S}_1(t)F_1=\int_{0}^{t}S_1(t-\tau)F_{1}(\tau)d\tau=\int_{0}^{t}E_2(t,\tau) \ast F_{1}(\tau)d\tau \label{kaisayosotosekibunkaku1}.
		\end{eqnarray}
		
		We have the following estimates for $E_1(t,\sigma)\ast F_1$ and $E_2(t,\tau)\ast F_1$.
		
		\vspace{2ex}
		\begin{lem}\label{skibunkakumarugotoestimate}
			If ${F}_1$ satisfies the conditions given in ${\rm (i)}$-${\rm (iii)}$, then, $E_1(t,\sigma) \ast F_{1}\in {\scr X}_{(1)}\times {\scr Y}_{(1)}$, $E_2(t,\tau) \ast F_{1}\in {\scr X}_{(1)}\times {\scr Y}_{(1)}$ $(t, \sigma, \tau \in [0,T],j=1,2)$ 
			and $E_1(t,\sigma)\ast F_1,E_2(t,\tau)\ast F_1$ satisfy the estimates in each case of ${\rm (i)}$-${\rm (iii)}$. 
			
			\vspace{1ex}
			${\rm (i)}$ ${F}_1 \in L^2_{(1),1}\cap L^{\infty}\cap L^{1}$; 
			$$
			\|E_1(t,\sigma) \ast F_{1} \|_{{\scr X}_{(1),L^{\infty}}\times {\scr Y}_{(1),L^{\infty}}}+
			\|E_2(t,\tau) \ast F_{1} \|_{{\scr X}_{(1),L^{\infty}}\times {\scr Y}_{(1),L^{\infty}}}\leq C \{\|F_1\|_{L^{\infty}_d}+\|F_1\|_{L^1}\}
			$$
			and 
			$$
			\|E_1(t,\sigma) \ast F_{1} \|_{{\scr X}_{(1),L^{2}}\times {\scr Y}_{(1),L^{2}}}+
			\|E_2(t,\tau) \ast F_{1}\|_{{\scr X}_{(1),L^{2}}\times {\scr Y}_{(1),L^{2}}}\leq C (\|F_1\|_{L^1}+\|F_1\|_{L^2_1})
			$$
			uniformly for $\sigma \in [0,T]$ and $0\leq \tau \leq t \leq T$. 
			
			\vspace{1ex}
			${\rm (ii)}$ ${F}_1 =\partial_{x}^{\alpha} F^{(1)}_1 \in L^{\infty}_d \cap L^2_{(1),1}$ with $F^{(1)}_1 \in L^2_{(1)}\cap L^{\infty}_{d-1}$ 
			for some $\alpha$ satisfying $|\alpha|=1$;
			$$
			\|E_1(t,\sigma) \ast F_{1} \|_{{\scr X}_{(1),L^{\infty}}\times {\scr Y}_{(1),L^{\infty}}}+
			\|E_2(t,\tau) \ast F_{1} \|_{{\scr X}_{(1),L^{\infty}}\times {\scr Y}_{(1),L^{\infty}}}
			\leq C \{\|F_1\|_{L^{\infty}_d}+\|F^{(1)}_1\|_{L^{\infty}_{d-1}}\}
			$$
			and 
			$$
			\|E_1(t,\sigma) \ast F_{1} \|_{{\scr X}_{(1),L^{2}}\times {\scr Y}_{(1),L^{2}}}+
			\|E_2(t,\tau) \ast F_{1} \|_{{\scr X}_{(1),L^{2}}\times {\scr Y}_{(1),L^{2}}}
			\leq C (\|F_1\|_{L^2_1}+\|F^{(1)}_1\|_{L^2})
			$$
			uniformly for $\sigma \in [0,T]$ and $0\leq \tau \leq t \leq T$. 
			
			\vspace{1ex}
			${\rm (iii)}$ ${F}_1 =\partial_{x}^{\alpha} F^{(1)}_1 \in L^2_{(1)}$ with $F^{(1)}_1 \in L^2_{(1),1}\cap L^{\infty}_{d}$ 
			for some $\alpha$ satisfying $|\alpha|\geq 1$; 
			$$
			\|E_1(t,\sigma) \ast F_{1} \|_{{\scr X}_{(1),L^{\infty}}\times {\scr Y}_{(1),L^{\infty}}}+
			\|E_2(t,\tau) \ast F_{1} \|_{{\scr X}_{(1),L^{\infty}}\times {\scr Y}_{(1),L^{\infty}}}\leq C \|F^{(1)}_1\|_{L^{\infty}_d}
			$$ 
			and 
			$$
			\|E_1(t,\sigma) \ast F_{1} \|_{{\scr X}_{(1),L^{2}}\times {\scr Y}_{(1),L^{2}}}+
			\|E_2(t,\tau) \ast F_{1} \|_{{\scr X}_{(1),L^{2}}\times {\scr Y}_{(1),L^{2}}}\leq C \|F^{(1)}_1\|_{L^2_1}
			$$ 
			uniformly for $\sigma \in [0,T]$ and $0\leq \tau \leq t \leq T$.
		\end{lem}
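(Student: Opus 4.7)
The plan is to mirror the proof of Proposition~\ref{inverseprop} by analyzing the Fourier-side structure of $E_1(t,\sigma)$ and $E_2(t,\tau)$ via the spectral decomposition of $e^{-t\hat{A}_\xi}$ from Lemma~\ref{eigenvalue}, establishing pointwise decay bounds on the resulting kernels in $x$, and then invoking Lemma~\ref{convolution} for the weighted $L^\infty$ estimates together with Plancherel for the $L^2$ estimates.

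First I would use Lemma~\ref{eigenvalue}(ii) to write, on the support of $\hat{\chi}_0$,
$$
\hat{\chi}_0\,e^{-t\hat{A}_\xi}(I-e^{-T\hat{A}_\xi})^{-1}e^{-(T-\sigma)\hat{A}_\xi}
= \hat{\chi}_0 \sum_{k} e^{\lambda_k(t+T-\sigma)}(1-e^{\lambda_k T})^{-1}\Pi_k(\xi),
$$
and similarly $\hat{\chi}_0\, e^{-(t-\tau)\hat{A}_\xi} = \hat{\chi}_0 \sum_{k} e^{\lambda_k(t-\tau)}\Pi_k(\xi)$. Since $\mathrm{Re}\,\lambda_k\leq 0$ on $\{|\xi|\leq r_\infty\}$, the extra exponential factors $e^{\lambda_k(t+T-\sigma)}$ and $e^{\lambda_k(t-\tau)}$ are bounded by $1$ uniformly for $t,\sigma,\tau\in[0,T]$, and Lemma~\ref{eigenvalueestimate}(iii) shows that their $\xi$-derivatives contribute at most $C|\xi|^{-|\alpha|}$ uniformly in these time parameters. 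Combining this with Lemma~\ref{eigenvalueestimate2} for the factor $(I-e^{\lambda_k T})^{-1}\Pi_k$ in $E_1$, the Fourier-side symbols satisfy exactly the same singularity estimates as those treated in the proof of Lemma~\ref{inverse}, with constants independent of $t,\sigma,\tau$.

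Consequently, applying Lemma~\ref{sekibunkaku} in the same way as in Lemma~\ref{inverse} produces, writing $E_1(t,\sigma)$ in components as $E_{1,(t,\sigma),k,ij}$,
$$
|\partial_x^\alpha E_{1,(t,\sigma),+,11}(x)|\leq C(1+|x|)^{-(d-2+|\alpha|)},
\qquad
|\partial_x^\alpha E_{1,(t,\sigma),k,ij}(x)|\leq C(1+|x|)^{-(d-1+|\alpha|)}
$$
for the remaining $(k,ij)$, uniformly in $(t,\sigma)\in[0,T]^2$; the corresponding bounds for $E_2(t,\tau)$ are at least as strong, since the absence of $(I-e^{-T\hat{A}_\xi})^{-1}$ makes the associated symbol less singular at $\xi=0$. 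With these kernel bounds in hand, each of cases (i)--(iii) is then handled exactly as in the proof of Proposition~\ref{inverseprop}: in case (i) one applies Lemma~\ref{convolution}(i) directly to $E_j*F_1$; in case (ii) one integrates by parts (equivalently, shifts the $\partial_x^\alpha$ in $F_1=\partial_x^\alpha F_1^{(1)}$ onto the kernel, which on the Fourier side multiplies the symbol by $i\xi^\alpha$, reducing the order of the $|\xi|$-singularity by one) and invokes Lemma~\ref{convolution}(ii); and in case (iii) one shifts all the derivatives in $F_1=\partial_x^\alpha F_1^{(1)}$ onto the kernel and applies Lemma~\ref{convolution}(iii). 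The $L^2$ estimates are obtained by Plancherel applied directly to the representations \eqref{kaisayosotosekibunkaku2} and \eqref{kaisayosotosekibunkaku1}, using the same symbol bounds that gave \eqref{inverseL2estimate1}--\eqref{inverseL2estimate3}.

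The main technical obstacle is verifying that the extra time-dependent exponential factors $e^{\lambda_k(t+T-\sigma)}$ and $e^{\lambda_k(t-\tau)}$ do not introduce any new $\xi$-side singularities beyond those already present in Lemma~\ref{inverse}, and that all resulting constants can be chosen uniformly in $\sigma\in[0,T]$ and $0\leq\tau\leq t\leq T$; once this uniformity is established, the rest of the argument is a direct transcription of the proof of Proposition~\ref{inverseprop}.
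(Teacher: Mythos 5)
Your proposal is correct and follows essentially the same route as the paper: establish uniform symbol estimates for $\hat{\chi}_0\, e^{-t\hat{A}_\xi}(I-e^{-T\hat{A}_\xi})^{-1}e^{-(T-\sigma)\hat{A}_\xi}$ and $\hat{\chi}_0\, e^{-(t-\tau)\hat{A}_\xi}$ via Lemmas \ref{eigenvalue} and \ref{eigenvalueestimate}, use Lemma \ref{sekibunkaku} to convert these into pointwise decay of the kernels $E_1$, $E_2$, and then repeat the argument of Proposition \ref{inverseprop} via Lemma \ref{convolution} for the weighted $L^\infty$ bounds and Plancherel-type arguments for the weighted $L^2$ bounds. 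Your component-by-component refinement of the kernel bounds (isolating the $(+,11)$ entry) matches the structure already present in Lemma \ref{inverse}, and your observation that the extra factors $e^{\lambda_k(t+T-\sigma)}$, $e^{\lambda_k(t-\tau)}$ are uniformly bounded with $|\xi|^{-|\beta|}$-controlled $\xi$-derivatives (Lemma \ref{eigenvalueestimate}(iii)) is precisely the uniformity check the paper relies on without spelling out.
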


		\vspace{2ex}
		
		\noindent
		\textbf{Proof of Lemma} {\bf \ref{skibunkakumarugotoestimate}.} 
		By Lemmas \ref{eigenvalue} and \ref{eigenvalueestimate}, we see that
		\begin{align*}
			&| \partial_{\xi}^{\beta}(\hat{\chi}_{0}(i\xi)^{\alpha}e^{-t\hat{A}_{\xi}}(I-e^{-T\hat{A}_{\xi}})^{-1}e^{-(T-\sigma )\hat{A}_{\xi}})| \leq C|\xi|^{-2+|\alpha|-|\beta|},\\
			&|\partial_{\xi}^{\beta}(\hat{\chi}_{0}(i\xi)^{\alpha}e^{-(t-\tau )\hat{A}_{\xi}})|\leq C|\xi|^{|\alpha|-|\beta|}
		\end{align*}
		for $\sigma \in [0,T]$, $0 \leq \tau \leq t \leq T$ and $|\beta| \geq 0$. It then follows from Lemma \ref{sekibunkaku} that
		\begin{eqnarray}
			|\partial_{x}^{\alpha}E_1(x)| \leq C(1+|x|)^{-(d-2+|\alpha|)}, \ \ 
			|\partial_{x}^{\alpha}E_2(x)| \leq C(1+|x|)^{-(d+|\alpha|)}\label{sekibunkakumarugotogennsuiestimate}
		\end{eqnarray}
		for  $|\alpha| \geq 0$. 
		Therefore, in a similarly manner to the proof of Proposition \ref{inverseprop}, we obtain the desired estimate by using Lemma $\ref{convolution}$ and Lemma $\ref{inverse}$.  This completes the proof. 
		$\hfill\square$

		\vspace{2ex}

		We see from Proposition \ref{S1} (i), (ii) and Lemma \ref{skibunkakumarugotoestimate} that
		the following estimates hold for $S_1(t)\scr{S}_1(T)(I-S_1 (T))^{-1}$ and $\scr{S}_{1}(t)$.

		\vspace{2ex}
		\begin{prop}\label{S_1andscrS_1estimate}
			Let $\Gamma_1$ and $\Gamma_2$ be defined by
			\begin{eqnarray}
				\Gamma_1[\tilde{F_1}](t)=S_1(t)\scr{S}_1(T)(I-S_1 (T))^{-1}
				\mathbb{F}^1
				, \ 
				\Gamma_2 [\tilde{F_1}](t)=\scr{S}_1(t)
				\mathbb{F}^1, 
				\label{kaisayousononlinearhyougenn}
			\end{eqnarray}
			where $\mathbb{F}^1=\begin{pmatrix}
				0\\
				\tilde{F_1}
			\end{pmatrix} 
			$ or $\mathbb{F}^1=\begin{pmatrix}
				\tilde{F_1}\\
				0
			\end{pmatrix}.
			$
			If $\tilde{F}_1$ satisfies the conditions given in ${\rm (i)}$-${\rm (iii)}$, then,  $\Gamma_j[\tilde{F_1}] \in $ $C^1([0,T];{\scr X}_{(1)} \times {\scr Y}_{(1)})$ $(j=1,2)$ and $\Gamma_j[\tilde{F_1}]$ satisfy the estimates in each case of ${\rm (i)}$-${\rm (iii)}$ for $j=1,2$. 
			
			\vspace{1ex}
			${\rm (i)}$ $\tilde{F}_1 \in C([0,T];L^2_{(1),1}\cap L^{\infty}\cap L^{1}\cap {\scr X}_{(1)}\times {\scr Y}_{(1)})$; 
			
			$$
			\|\Gamma_1[\tilde{F_1}]\|_{C([0,T];{\scr X}_{(1)}\times {\scr Y}_{(1)})} \leq 
			C\|\tilde{F}_1\|_{C([0,T];L^{\infty}_d \cap L^1\cap L^2_1)},
			$$
			$$
			\|\del_t \Gamma_1[\tilde{F_1}]\|_{C([0,T];{\scr X}_{(1)})\times {\scr Y}_{(1)})} \leq 
			C\|\tilde{F}_1\|_{C([0,T];L^{\infty}_d \cap L^1\cap L^2_1)}
			$$
			and 
			\begin{align*}
				&\| \Gamma_2[\tilde{F_1}]\|_{C([0,T];{\scr X}_{(1)}\times{\scr Y}_{(1)})} \leq 
				C\|\tilde{F}_1\|_{C([0,T];L^{\infty}_d \cap L^1\cap L^2_1)},
				\\
				&\|\del_t\Gamma_2[\tilde{F_1}]\|_{C([0,T];{\scr X}_{(1)})\times  {\scr Y}_{(1)})} \leq 
				C(\|\tilde{F}_1\|_{C([0,T];L^{\infty}_d \cap L^1\cap L^2_1)}+\|\tilde{F}_1\|_{ C([0,T];{\scr X}_{(1)}\times  {\scr Y}_{(1)})}).
			\end{align*}
			
			\vspace{1ex}
			${\rm (ii)}$ $\tilde{F}_1 =\partial_{x}^{\alpha} F^{(1)}_1 \in C([0,T];L^{\infty}_d \cap L^2_{(1),1}\cap {\scr X}_{(1)}\times {\scr Y}_{(1)})$ with $F^{(1)}_1 \in C([0,T];L^2_{(1)}\cap L^{\infty}_{d-1})$ 
			for some $\alpha$ satisfying $|\alpha|=1$;
			$$
			\|\Gamma_1[\tilde{F_1}]\|_{C([0,T];{\scr X}_{(1)}\times {\scr Y}_{(1)})} \leq 
			C(\|\tilde{F}_1\|_{C([0,T];L^{\infty}_d \cap L^2_1 )}+\|{F}^{(1)}_1\|_{C([0,T];L^{\infty}_{d-1}\cap L^2 )}),
			$$
			$$
			\|\del_t \Gamma_1[\tilde{F_1}]\|_{C([0,T];{\scr X}_{(1)})\times {\scr Y}_{(1)})} \leq 
			C(\|\tilde{F}_1\|_{C([0,T];L^{\infty}_d \cap L^2_1 )}+\|{F}^{(1)}_1\|_{C([0,T];L^{\infty}_{d-1}\cap L^2 )})
			$$
			and 
			\begin{eqnarray*}
				\| \Gamma_2[\tilde{F_1}]\|_{C([0,T];{\scr X}_{(1)}\times {\scr Y}_{(1)})} &\leq &
				C(\|\tilde{F}_1\|_{C([0,T];L^{\infty}_d\cap L^2_1  )}+\|{F}^{(1)}_1\|_{C([0,T];L^{\infty}_{d-1}\cap L^2 )}),
				\\
				\|\del_t\Gamma_2[\tilde{F_1}]\|_{C([0,T];{\scr X}_{(1)})\times  {\scr Y}_{(1)})} &\leq &
				C(\|\tilde{F}_1\|_{C([0,T];L^{\infty}_d \cap L^2_1 )}+\|{F}^{(1)}_1\|_{C([0,T];L^{\infty}_{d-1}\cap L^2 )}\\
				&\quad &+\|\tilde{F}_1\|_{ C([0,T];{\scr X}_{(1)}\times  {\scr Y}_{(1)})}).
			\end{eqnarray*}
			
			\vspace{1ex}
			${\rm (iii)}$ $\tilde{F}_1 =\partial_{x}^{\alpha} F^{(1)}_1 \in C([0,T];L^2_{(1)}\cap {\scr X}_{(1)}\times {\scr Y}_{(1)})$ with $F^{(1)}_1 \in C([0,T];L^2_{(1),1}\cap L^{\infty}_{d})$ 
			for some $\alpha$ satisfying $|\alpha|\geq 1$; 
			$$
			\|\Gamma_1[\tilde{F_1}]\|_{C([0,T];{\scr X}_{(1)}\times {\scr Y}_{(1)})} \leq 
			C\|F^{(1)}_1\|_{C([0,T];L^{\infty}_d  \cap L^2_1 )},
			$$
			$$
			\|\del_t \Gamma_1[\tilde{F_1}]\|_{C([0,T];{\scr X}_{(1)})\times {\scr Y}_{(1)})} \leq 
			C\|F^{(1)}_1\|_{C([0,T];L^{\infty}_d  \cap L^2_1 )}
			$$
			and 
			\begin{align*}
				&\| \Gamma_2[\tilde{F_1}]\|_{C([0,T];{\scr X}_{(1)}\times  {\scr Y}_{(1)})} \leq 
				C\|F^{(1)}_1\|_{C([0,T];L^{\infty}_d \cap L^2_1 )}
				,\\
				&\|\del_t \Gamma_2[\tilde{F_1}]\|_{C([0,T];{\scr X}_{(1)})\times  {\scr Y}_{(1)})} \leq 
				C(\|F^{(1)}_1\|_{C([0,T];L^{\infty}_d \cap L^2_1 )}
				+\|\tilde{F}_1\|_{C([0,T];{\scr X}_{(1)}\times  {\scr Y}_{(1)})}).
			\end{align*}

		\end{prop}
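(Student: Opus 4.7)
The plan is to combine the integral representations \eqref{kaisayosotosekibunkaku2}--\eqref{kaisayosotosekibunkaku1} with the pointwise-in-time bounds already established in Lemma \ref{skibunkakumarugotoestimate}, and to control the time derivatives using the semigroup identities from Proposition \ref{S1}. Since essentially all of the hard analytic content (the decay estimates on the kernels $E_1(t,\sigma)$, $E_2(t,\tau)$, the derivative transfer $\del_x^\alpha E_j \ast F_1^{(1)}$, and the weighted-$L^p$ convolution bounds) is already done, the proposition is obtained by a time-integration plus a direct application of the generator identities.

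First I would view each $\Gamma_j[\tilde F_1](t)$ as a Bochner integral of $E_j \ast \mathbb{F}^1(s)$ over $s\in [0,T]$ (or $[0,t]$). In each of the three cases (i)--(iii), Lemma \ref{skibunkakumarugotoestimate} furnishes a bound
\[
\|E_1(t,\sigma)\ast \mathbb{F}^1(\sigma)\|_{\scr X_{(1)}\times \scr Y_{(1)}}+\|E_2(t,\tau)\ast \mathbb{F}^1(\tau)\|_{\scr X_{(1)}\times \scr Y_{(1)}}\leq C\,\mathcal N\bigl(\tilde F_1(s)\bigr)
\]
uniformly in $t,\sigma,\tau\in [0,T]$, where $\mathcal N(\tilde F_1(s))$ stands for the appropriate pointwise-in-time norm (in (ii), (iii) this involves $F_1^{(1)}(s)$ as well). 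Taking the supremum in $t$ and replacing $\sup_s\mathcal N(\tilde F_1(s))$ by the $C([0,T];\cdot)$-norms stated in the hypotheses yields the desired bound on $\|\Gamma_j[\tilde F_1]\|_{C([0,T];\scr X_{(1)}\times \scr Y_{(1)})}$. Continuity in $t$ follows from dominated convergence applied to the integral representations, using the uniform kernel bounds in \eqref{sekibunkakumarugotogennsuiestimate}.

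To get $C^1([0,T];\scr X_{(1)}\times \scr Y_{(1)})$ regularity and the time-derivative bounds, I distinguish the two operators. For $\Gamma_1$, observe that $u_{01}:=\scr S_1(T)(I-S_1(T))^{-1}\mathbb{F}^1$ is independent of $t$ and belongs to $\scr X_{(1)}\times \scr Y_{(1)}$ by Proposition \ref{inverseprop} and Proposition \ref{S1}(ii); thus $\Gamma_1[\tilde F_1](t)=S_1(t)u_{01}$ and Proposition \ref{S1}(i) gives $\del_t\Gamma_1=-A_1\Gamma_1\in C([0,T];\scr X_{(1)}\times \scr Y_{(1)})$. The boundedness of $A_1$ on $\scr X_{(1)}\times \scr Y_{(1)}$ (also from Proposition \ref{S1}(i)) then converts the bound on $\Gamma_1$ into the bound on $\del_t\Gamma_1$ without introducing any new terms. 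For $\Gamma_2$, Proposition \ref{S1}(ii) yields $\del_t\Gamma_2[\tilde F_1]=-A_1\Gamma_2[\tilde F_1]+\mathbb{F}^1(t)$; the first term is estimated as above, and the second term is precisely the source of the additional $\|\tilde F_1\|_{C([0,T];\scr X_{(1)}\times \scr Y_{(1)})}$ summand appearing in the stated bound.

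What remains is essentially bookkeeping: in cases (ii) and (iii) one first transfers the derivative $\del_x^\alpha$ from $\tilde F_1$ onto the kernel $E_j(t,s)$ through the convolution, and then invokes the sharper decay bounds on $\del_x^\beta E_{1,k,ij}$ proved in Lemma \ref{inverse}. The only point that requires care — and the main (mild) obstacle — is matching the three different regularity/decay settings on $\tilde F_1$ with the corresponding kernel estimates and checking that integration in $\sigma$ or $\tau$ over the fixed finite interval $[0,T]$ costs no more than a constant factor. No new analytic ingredient is needed beyond Proposition \ref{S1}, Proposition \ref{inverseprop}, and Lemma \ref{skibunkakumarugotoestimate}.
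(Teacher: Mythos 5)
Your argument is correct and follows essentially the same route the paper takes: the paper's one-line proof likewise reduces the proposition to Proposition~\ref{S1}~(i),(ii) together with the uniform kernel estimates of Lemma~\ref{skibunkakumarugotoestimate}, and your explicit derivation of the time derivatives (via $\partial_t\Gamma_1=-A_1\Gamma_1$ using boundedness of $A_1$, and $\partial_t\Gamma_2=-A_1\Gamma_2+\mathbb{F}^1(t)$ accounting for the extra $\|\tilde F_1\|_{C([0,T];\scr X_{(1)}\times\scr Y_{(1)})}$ summand) is exactly what is implicit there.
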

		
		\vspace{2ex}
		
		As for $\|\tilde{F}_1\|_{ C([0,T];{\scr X}_{(1),L^p} \times {\scr Y}_{(1),L^p})}$ $(p=2,\infty)$, we have the following proposition.
		\begin{prop}\label{F1tandokuestimates}
			If $\tilde{F}_1$ satisfies the conditions given in ${\rm (i)}$-${\rm (iii)}$, then, 
			$\tilde{F}_1\in  C([0,T];{\scr X}_{(1)}\times  {\scr Y}_{(1)})$ and $\tilde{F}_1$ satisfies the estimates in each case of ${\rm (i)}$-${\rm (iii)}$. 
			
			\vspace{1ex}
			${\rm (i)}$ $\tilde{F}_1 \in C([0,T];L^2_{(1),1}\cap L^{\infty}\cap L^{1})$; 
			\begin{eqnarray*}
				&& \|\tilde{F}_1\|_{C([0,T];{\scr X}_{(1),\infty} \times {\scr Y}_{(1),L^{\infty}})} \leq C\|\tilde{F}_1\|_{L^{2}(0,T;L^{\infty}_d \cap L^1)}, \\
				&& \|\tilde{F}_1\|_{C([0,T];{\scr X}_{(1), L^2}\times {\scr Y}_{(1),L^2})} \leq C\|\tilde{F}_1\|_{L^{2}(0,T; L^1\cap L^2_1)}.
			\end{eqnarray*}
			
			\vspace{1ex}
			${\rm (ii)}$ $\tilde{F}_1 =\partial_{x}^{\alpha} F^{(1)}_1 \in C([0,T];L^{\infty}_d \cap L^2_{(1),1})$ with $F^{(1)}_1 \in C([0,T];L^2_{(1)}\cap L^{\infty}_{d-1})$ for some $\alpha$ satisfying $|\alpha|= 1$ ;
			\begin{eqnarray*}
				&&\|\tilde{F}_1\|_{C([0,T];{\scr X}_{(1),\infty} \times {\scr Y}_{(1),L^{\infty}})} \leq C(\|\tilde{F}_1\|_{L^{2}(0,T;L^{\infty}_d )}+\|{F}^{(1)}_1\|_{L^{2}(0,T;L^{\infty}_{d-1} )}),\\ 
				&&\|\tilde{F}_1\|_{C([0,T];{\scr X}_{(1), L^2}\times {\scr Y}_{(1),L^2})} \leq C(\|{F}^{(1)}_1\|_{L^{2}(0,T; L^2)}+\|\tilde{F}_1\|_{L^{2}(0,T; L^2_1)}).
			\end{eqnarray*}
			
			\vspace{1ex}
			${\rm (iii)}$ $\tilde{F}_1 =\partial_{x}^{\alpha} F^{(1)}_1 \in C([0,T];L^2_{(1)})$ with $F^{(1)}_1 \in C([0,T];L^2_{(1),1}\cap L^{\infty}_{d})$ 
			for some $\alpha$ satisfying $|\alpha|\geq 1$; 
			$$
			\|\tilde{F}_1\|_{C([0,T];{\scr X}_{(1),\infty} \times {\scr Y}_{(1),L^{\infty}})}\leq C\|F^{(1)}_{1}\|_{L^{2}(0,T;L^{\infty}_d )}, 
			$$
			$$
			\|\tilde{F}_1\|_{C([0,T];{\scr X}_{(1), L^2}\times {\scr Y}_{(1),L^2})} \leq C\|F^{(1)}_{1}\|_{L^{2}(0,T;L^2_1 )}.
			$$
		\end{prop}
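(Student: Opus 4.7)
My plan relies on the fact that $\tilde{F}_1(t,\cdot)$ has low-frequency support at each $t$ (implicit in the hypothesis $\tilde{F}_1 \in L^2_{(1),\cdot}$ in each of (i)--(iii)). Fixing the Schwartz cut-off $\chi_0$ defined in \eqref{subcutofflowpart}, this gives $\tilde{F}_1(t) = \chi_0 \ast \tilde{F}_1(t)$ and, more importantly, $\partial_x^\beta \tilde{F}_1(t) = (\partial_x^\beta \chi_0) \ast \tilde{F}_1(t)$ with the kernel bound $|\partial_x^\beta \chi_0(x)| \leq C(1+|x|)^{-(d+|\beta|)}$ from \eqref{E_0estimate}. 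Every weighted norm comprising ${\scr X}_{(1)} \times {\scr Y}_{(1)}$ is therefore a convolution estimate to which Lemma \ref{convolution} or Lemma \ref{lemP_1 weightedLinfty} applies directly, with no derivative loss because all spatial derivatives fall onto the smooth kernel $\chi_0$.

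For case (i), I would apply Lemma \ref{convolution} (i) to the kernels $\chi_0$ and $\nabla\chi_0$, which satisfy \eqref{Shibata-Tanaka katei} with $|\alpha|=0,1$, to obtain the pointwise decays required by ${\scr X}_{(1),L^\infty}$ and ${\scr Y}_{(1),L^\infty}$: namely $|\tilde{F}^1_1(x)| \leq C(1+|x|)^{-(d-2)}$, $|\nabla \tilde{F}^1_1(x)| \leq C(1+|x|)^{-(d-1)}$ and $|\tilde{F}^2_1(x)| \leq C(1+|x|)^{-(d-1)}$, each with constant $\|\tilde{F}_1\|_{L^\infty_d \cap L^1}$. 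The weighted $L^2$ bound then follows from Lemma \ref{lemP_1 weightedLinfty}, which transfers each gradient onto the weight without loss, combined with the Young inequality for $\chi_0 \ast \tilde{F}_1$ to absorb the low-frequency mass through the $L^1$ and $L^2_1$ contributions. Cases (ii) and (iii) follow the same template after noting that the structural assumption $\tilde{F}_1 = \partial_x^\alpha F^{(1)}_1$ lets one redistribute $|\alpha|$ derivatives onto $\chi_0$, producing kernels that fit the hypotheses of Lemma \ref{convolution} (ii) when $|\alpha|=1$ and Lemma \ref{convolution} (iii) when $|\alpha| \geq 1$; the additional weight-factor gain of one power of $(1+|x|)^{-1}$ supplied by each extra derivative on $\chi_0$ is exactly what converts the $L^\infty_d$ datum on $F^{(1)}_1$ into the prescribed $(d-2)$- and $(d-1)$-weighted decay on $\tilde{F}_1$.

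Continuity in $t$ into ${\scr X}_{(1)} \times {\scr Y}_{(1)}$ is then inherited from the assumed continuity of $\tilde{F}_1$ in the ambient $L^p$ spaces, because convolution with the fixed kernel $\chi_0$ is a bounded linear map from those spaces into ${\scr X}_{(1)} \times {\scr Y}_{(1)}$, as just established. The main obstacle I anticipate is the careful bookkeeping of weight exponents: deciding which part of Lemma \ref{convolution} to invoke and correctly matching the $(d-2)$, $(d-1)$, $d$ distinctions that define the norms ${\scr X}_{(1),L^\infty}$ and ${\scr Y}_{(1),L^\infty}$ in each of the three cases. This is tedious but routine, and essentially parallels the proof of Proposition \ref{inverseprop}, except that here one does not apply $(I - S_1(T))^{-1}$; the inverse-operator kernels $E_{1,k}$ of Lemma \ref{inverse} are replaced throughout by $\chi_0$, which by \eqref{E_0estimate} has strictly faster pointwise decay than any of the $E_{1,k}$, so each of the estimates established in Lemma \ref{skibunkakumarugotoestimate} has a direct and somewhat sharper analogue here.
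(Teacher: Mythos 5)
Your proof is correct and takes essentially the same route as the paper's: the paper also writes $\tilde{F}_1=\chi_0\ast F_1$ (using the cut-off $\chi_0$ of \eqref{subcutofflowpart}, whose derivatives satisfy \eqref{E_0estimate}) and then repeats the convolution arguments from the proof of Proposition~\ref{inverseprop} verbatim, with $E_{1,k}$ replaced by the faster-decaying kernel $\chi_0$.
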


		\vspace{2ex}

		\noindent
		\textbf{Proof of Proposition} {\bf \ref{F1tandokuestimates}.} 
		We see that $\tilde{F}_1=\chi_0\ast F_1$, where $\chi_0=\mathcal{F}^{-1}\hat{\chi}_{0}$, $\hat{\chi}_0$ is the cut-off function defined by $(\ref{subcutofflowpart})$  satisfying $(\ref{E_0estimate})$. 
		Therefore, in a similar manner to the proof of Proposition \ref{inverseprop}, we obtain the desired estimates.  
		This completes the proof. 
		$\hfill\square$

		\vspace{2ex}

				\vspace{2ex}

				We are now in a position to give estimates for a solution of \eqref{lowparttimeeq} satisfying $u_1(0)=u_1(T)$. 
				
				For $\mathbb{F}^1=\begin{pmatrix}
					0\\
					\tilde{F_1}
				\end{pmatrix} 
				$ or $\mathbb{F}^1=\begin{pmatrix}
					\tilde{F_1}\\
					0
				\end{pmatrix}, 
				$ we set 
				$$
				\Gamma[\tilde{F}_1] =S_1(t)\scr{S}_1(T)(I-S_1(T))^{-1}\mathbb{F}^1 +\scr{S}_1(t)\mathbb{F}^1.
				$$
				Then $\Gamma[\tilde{F}_1]$ is written as 
				\begin{eqnarray}
					\Gamma[\tilde{F}_1](t)=\Gamma_1[\tilde{F}_1]+\Gamma_2[\tilde{F}_1]\label{uiconclusion},
				\end{eqnarray}
				where $\Gamma_1$ and $\Gamma_2$ are the ones defined by $(\ref{kaisayousononlinearhyougenn})$.

				\begin{prop}\label{propuiestimate}
					If $\tilde{F}_1$ satisfies the conditions given in ${\rm (i)}$-${\rm (v)}$, then, $\Gamma[\tilde{F}_1]$ is a solution of \eqref{lowparttimeeq} with $\mathbb{F}^1$ in $\scr{Z}_{(1)}(0,T)$ 
					satisfying $\Gamma[\tilde{F}_1](0)=\Gamma[\tilde{F}_1](T)$ and $\Gamma[\tilde{F}_1]$ satisfies the estimate in each case of ${\rm (i)}$-${\rm (v)}$.
					
					\vspace{1ex}
					${\rm (i)}$ $\tilde{F}_1 \in C([0,T];L^2_{(1),1}\cap L^{\infty}\cap L^{1})$; 
					\begin{eqnarray}
						\|\Gamma[\tilde{F}_1]\|_{\scr{Z}_{(1)}(0,T)} \leq 
						C\|\tilde{F}_1\|_{C([0,T];L^{\infty}_d \cap L^1\cap L^2_1)}.\label{propuiestimate1}
					\end{eqnarray}
					
					\vspace{1ex}
					${\rm (ii)}$ $\tilde{F}_1 =\partial_{x}^{\alpha} F^{(1)}_1 \in C([0,T];L^{\infty}_d \cap L^2_{(1),1})$ with $F^{(1)}_1 \in C([0,T];L^2_{(1)}\cap L^{\infty}_{d-1})$ for some $\alpha$ satisfying $|\alpha|= 1$ ;
					\begin{eqnarray}
						\|\Gamma[\tilde{F}_1]\|_{\scr{Z}_{(1)}(0,T)}
						\leq
						C(\|\tilde{F}_1\|_{C([0,T];L^{\infty}_d \cap L^2_1 )}+\|{F}^{(1)}_1\|_{C([0,T];L^{\infty}_{d-1}\cap L^2 )}).\label{propuiestimate2}
					\end{eqnarray}
					
					\vspace{1ex}
					${\rm (iii)}$ $\tilde{F}_1 =\partial_{x}^{\alpha} F^{(1)}_1 \in C([0,T];L^2_{(1)})$ with $F^{(1)}_1 \in C([0,T];L^2_{(1),1}\cap L^{\infty}_{d})$ 
					for some $\alpha$ satisfying $|\alpha|\geq 1$; 
					\begin{eqnarray}
						\|\Gamma[\tilde{F}_1]\|_{\scr{Z}_{(1)}(0,T)} \leq 
						C\|{F}^{(1)}_1\|_{C([0,T];L^{\infty}_d \cap L^2_1 )}.\label{propuiestimate3}
					\end{eqnarray}
					
					\vspace{1ex}
					
						
					\end{prop}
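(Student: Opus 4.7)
The plan is to verify the three claims---that $\Gamma[\tilde F_1]$ solves \eqref{lowparttimeeq} with forcing $\mathbb{F}^1$ in $\scr{Z}_{(1)}(0,T)$, satisfies the periodicity $\Gamma[\tilde F_1](0)=\Gamma[\tilde F_1](T)$, and obeys the norm bounds in (i)--(iii)---by combining Propositions \ref{S1}, \ref{S_1andscrS_1estimate}, and \ref{F1tandokuestimates} applied to the decomposition $\Gamma=\Gamma_1+\Gamma_2$ in \eqref{uiconclusion}.

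For the equation and the $C^1$-in-time regularity, I would use Proposition \ref{S1} (i) to obtain $\Gamma_1\in C^1([0,T];\scr X_{(1)}\times\scr Y_{(1)})$ with $\del_t\Gamma_1=-A_1\Gamma_1$, and Proposition \ref{S1} (ii) to obtain $\Gamma_2\in C^1([0,T];\scr X_{(1)}\times\scr Y_{(1)})$ with $\del_t\Gamma_2=-A_1\Gamma_2+\mathbb{F}^1$. Adding these yields $\del_t\Gamma+A_1\Gamma=\mathbb{F}^1$ in $\scr X_{(1)}\times\scr Y_{(1)}$, which is \eqref{lowparttimeeq}. For periodicity, $\scr S_1(0)=0$ gives $\Gamma(0)=\scr S_1(T)(I-S_1(T))^{-1}\mathbb{F}^1$, and using the commutativity $S_1(T)\scr S_1(T)F=\scr S_1(T)[S_1(T)F]$ from Proposition \ref{S1} (iii) together with the algebraic identity $S_1(T)(I-S_1(T))^{-1}+I=(I-S_1(T))^{-1}$, one finds
$$
\Gamma(T)=\scr S_1(T)\bigl[S_1(T)(I-S_1(T))^{-1}+I\bigr]\mathbb{F}^1=\scr S_1(T)(I-S_1(T))^{-1}\mathbb{F}^1=\Gamma(0).
$$

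The norm bounds in each of cases (i)--(iii) would then follow by the triangle inequality from the corresponding estimates for $\Gamma_1$ and $\Gamma_2$ that are already recorded in Proposition \ref{S_1andscrS_1estimate}. The only minor wrinkle is that the bounds given there for $\del_t\Gamma_2[\tilde F_1]$ carry an additional term $\|\tilde F_1\|_{C([0,T];\scr X_{(1)}\times\scr Y_{(1)})}$ in each case; I would absorb it by invoking Proposition \ref{F1tandokuestimates}, which controls precisely this norm of $\tilde F_1$ in terms of $L^2(0,T;\cdot)$ norms, and these are dominated on the finite interval $[0,T]$ by the $C([0,T];\cdot)$ norms appearing on the right-hand sides of (i)--(iii) at the cost of a factor $\sqrt{T}$.

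I do not anticipate any serious obstacle: the time-$T$-map framework, the commutativity between $S_1(T)$ and $\scr S_1(T)$, and the weighted $L^\infty$/$L^2$ convolution estimates are all prepared by the earlier results. The main care needed is the bookkeeping of the three weighted-norm scales separately in (i), (ii), and (iii) and checking case-by-case that Proposition \ref{F1tandokuestimates} matches the structural hypothesis on $\tilde F_1$ (plain function, or first-order divergence form, or higher-order derivative form) so that the extra $\scr X_{(1)}\times\scr Y_{(1)}$ term can indeed be absorbed into the desired right-hand side.
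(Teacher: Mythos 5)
Your argument is correct and follows essentially the same route as the paper's: both reduce the claim to Propositions \ref{S1}, \ref{inverseprop} (implicit in your use of $(I-S_1(T))^{-1}$), \ref{S_1andscrS_1estimate}, and \ref{F1tandokuestimates}, with the decomposition $\Gamma=\Gamma_1+\Gamma_2$. You simply spell out the periodicity identity $S_1(T)(I-S_1(T))^{-1}+I=(I-S_1(T))^{-1}$ and the absorption of the extra $\|\tilde F_1\|_{C([0,T];\scr X_{(1)}\times\scr Y_{(1)})}$ term that the paper leaves implicit.
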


					\vspace{2ex}
					\noindent\textbf{Proof.} 
					We find from Proposition \ref{S1} (iii), Proposition \ref{inverseprop} and Proposition \ref{F1tandokuestimates} that 
					$\Gamma[\tilde{F}_1]$ is a solution of \eqref{lowparttimeeq} with $F_1=\trans (0,\tilde{F}_1)$ satisfying $\Gamma[\tilde{F}_1](0)=\Gamma[\tilde{F}_1](T)$. 
					The estimates of $\Gamma[\tilde{F}_1]$ in ${\rm (i)}$-${\rm (iii)}$ follow from Proposition \ref{S_1andscrS_1estimate} and Proposition \ref{F1tandokuestimates}.  
					This completes the proof.
					$\hfill\square$

					\section{Properties of $S_{{\infty},\tilde{u}}(t)$ and $\scr{S}_{\infty,\tilde{u}}(t)$}\label{S6}
					
					In this section we mainly investigate some properties of $S_{{\infty},\tilde{u}}(t)$ and $\scr{S}_{\infty,\tilde{u}}(t)$ in weighted Sobolev spaces. 
					
					First of all, we consider local existence of the following linear system. 
					
					\begin{equation}\label{Mainsystem-0}
						\left\{
						\begin{aligned}
							&\partial_ta_{\infty}+\text{div}v_\infty + \tilde{v}\cdot\nabla a_\infty=F_\infty^0,\\
							&\partial_tv_\infty+\nabla a_\infty+v_\infty+\tilde{v}\cdot\nabla v_\infty+ g_3(\tilde{\phi})\cdot\nabla a_\infty= F_\infty^1,\\
							&(a_\infty,v_\infty)|_{t=0}=(a_{0\infty},v_{0\infty}).
						\end{aligned}
						\right.
					\end{equation}
					
					\vspace{2ex}
					\begin{lem}\label{lemA2-0}
						Let $d\geq 3$ and $s$ be an integer satisfying $s\geq [\frac{d}{2}]+2$. Assume that $\nabla\tilde{\phi}\in C([0,T'];H^{s-1}),~\tilde v\in C([0,T'];H^s)$ with 
						$$
						\|\nabla\tilde{\phi}\|_{H^{s-1}} +\|\tilde{v}\|_{H^s} \leq 1,
						$$
						$F_\infty=\trans{(F_\infty^0,F_\infty^1)} \in L^2(0,T';H_{(\infty)}^s) \cap C(0,T';H_{(\infty)}^{s-1})$ and $\trans(a_{0\infty}, v_{0\infty})\in H_{(\infty)}^s$. Here $T'$ is a given positive number. Then \eqref{Mainsystem-0}  has a unique solution $\trans(a_\infty, v_{\infty}) \in C([0,T'];H_{(\infty)}^s)$ satisfying that 
						$$
						\|\trans(a_\infty, v_{\infty}) \|_{C([0,T'];H^s)} \leq C\|\trans(a_{0\infty}, v_{0\infty})\|_{H^s}+ \|F_\infty\|_{ L^2(0,T';H^s) \cap C(0,T';H^{s-1})},
						$$
						where $C>0$ depends on $T'$. 
					\end{lem}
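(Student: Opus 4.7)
The plan is to prove Lemma \ref{lemA2-0} by the standard energy method for linear symmetric hyperbolic systems with variable coefficients. First, I would observe that the principal part of \eqref{Mainsystem-0} is symmetric hyperbolic: the constant-coefficient operator $A$ has $\mathrm{div}$ and $\nabla$ as formal $L^2$-adjoints, so the cross terms cancel in the $L^2$ energy identity, while the transport operators $\tilde v\cdot\nabla$ are skew in $L^2$ up to a $\tfrac12\mathrm{div}\,\tilde v$ remainder, and $g_3(\tilde\phi)$ is a smooth, bounded multiplier. Since $s\geq[\tfrac{d}{2}]+2$, the Sobolev embedding $H^s\hookrightarrow W^{1,\infty}$ combined with the hypothesis $\|\nabla\tilde\phi\|_{H^{s-1}}+\|\tilde v\|_{H^s}\leq 1$ provides uniform $W^{1,\infty}$ control on all variable coefficients.

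Next, I would construct approximate solutions by mollifying the coefficients $\tilde v$, $\tilde\phi$ and the forcing $F_\infty$ in the space variable. For each smoothed problem, classical theory (Picard iteration in $C([0,T'];H^s)$) provides a unique smooth solution. To preserve the high-frequency support, I apply $P_\infty$ throughout; since $P_\infty$ commutes with $A$, $\partial_t$ and $\partial_x^\alpha$, the projected solution satisfies an equation of the same form with a $P_\infty$-projected nonlinearity, and its Fourier support remains in $\{|\xi|\geq r_1\}$.

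The heart of the argument is the uniform $H^s$ energy estimate. Applying $\partial_x^\alpha$ with $|\alpha|\leq s$ to \eqref{Mainsystem-0}, testing against $\partial_x^\alpha a_\infty$ and $\partial_x^\alpha v_\infty$ respectively and summing, one exploits the cancellation
\[
(\partial_x^\alpha \mathrm{div}\,v_\infty,\partial_x^\alpha a_\infty)+(\partial_x^\alpha\nabla a_\infty,\partial_x^\alpha v_\infty)=0
\]
together with the identity $(\tilde v\cdot\nabla w,w)=-\tfrac12(\mathrm{div}\,\tilde v\,w,w)$ to obtain
\[
\tfrac12 \tfrac{d}{dt}\|(a_\infty,v_\infty)\|_{H^s}^2+\|v_\infty\|_{H^s}^2\leq |\mathcal{R}|+|(F_\infty,(a_\infty,v_\infty))_{H^s}|,
\]
where the remainder $\mathcal{R}$ collects commutator contributions of the form $([\partial_x^\alpha,\tilde v\cdot\nabla]w,\partial_x^\alpha w)$ for $w=a_\infty,v_\infty$ and analogous terms generated by $g_3(\tilde\phi)\nabla a_\infty$. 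Lemma \ref{lem2.2.} together with Lemma \ref{lem2.3.} bounds $|\mathcal{R}|$ by $C(1+\|\tilde u\|_{H^s})\|(a_\infty,v_\infty)\|_{H^s}^2$, and Gr\"onwall's inequality then yields the desired $H^s$ bound; the $L^2(0,T';H^s)$ norm of $F_\infty$ appears via the Cauchy--Schwarz bound $\int_0^{T'}\|F_\infty\|_{H^s}\|(a_\infty,v_\infty)\|_{H^s}\,d\tau$, while the $C([0,T'];H^{s-1})$ norm is used to obtain continuity in time of $\partial_t(a_\infty,v_\infty)$.

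Finally, passing to the limit in the mollified problems using the uniform bound together with an Aubin--Lions compactness argument gives existence of a solution in $C([0,T'];H_{(\infty)}^s)$, while uniqueness follows at once by applying the same energy estimate to the difference of two solutions. The principal obstacle is controlling the commutator and composition terms $[\partial_x^\alpha,\tilde v\cdot\nabla]$ and $\partial_x^\alpha(g_3(\tilde\phi)\nabla a_\infty)$ at the top regularity $s$, as these are the terms that would generate a loss of derivative if not handled by the Moser-type product and commutator estimates; this is precisely where the hypothesis $s\geq[\tfrac{d}{2}]+2$ and the smallness $\|\nabla\tilde\phi\|_{H^{s-1}}+\|\tilde v\|_{H^s}\leq 1$ enter to close the estimate.
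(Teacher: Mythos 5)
The paper does not carry out a self-contained energy argument here: it simply notes that $g_3(\tilde\phi)$ has uniformly bounded $L^\infty$ and $H^{s-1}$-gradient norms and that $F_\infty\in L^1(0,T';H^s)\cap C([0,T'];H^{s-1})$, and then cites Kato's linear existence theorem for symmetrizable hyperbolic systems (\cite[Theorem I]{Kato1}) as a black box. Your plan to reprove the lemma from scratch is reasonable in principle, but it contains a genuine gap precisely at the point that Kato's framework was designed to handle.

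The problem is the term $g_3(\tilde\phi)\nabla a_\infty$. You classify it among the ``remainder'' terms controlled by Moser-type commutator estimates, but it is not a commutator. At top order $|\alpha|=s$ the piece $(g_3(\tilde\phi)\,\partial_x^\alpha\nabla a_\infty,\partial_x^\alpha v_\infty)$ involves $s+1$ derivatives of $a_\infty$ and cannot be bounded by $\|(a_\infty,v_\infty)\|_{H^s}^2$; integrating by parts merely trades this for $s+1$ derivatives of $v_\infty$. This is exactly the derivative-loss obstruction. The constant-coefficient cancellation you invoke handles only the $(\partial_x^\alpha\operatorname{div}v_\infty,\partial_x^\alpha a_\infty)+(\partial_x^\alpha\nabla a_\infty,\partial_x^\alpha v_\infty)$ pair, not the extra $(1+g_3)$-weighted coupling. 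The cure is to symmetrize: either multiply the density equation by $(1+g_3(\tilde\phi))$ before testing, or equivalently work with the modified energy $\|(a_\infty,v_\infty)\|_{H^s}^2+\sum_{|\alpha|\le s}\int g_3(\tilde\phi)|\partial_x^\alpha a_\infty|^2\,dx$, as the paper does explicitly in its subsequent weighted estimates (cf.\ the definition of $E^k_j$ and the manipulation around the term $-(g_3(\tilde\phi)\partial_x^\alpha a_\infty,\partial_x^\alpha\partial_t a_\infty)$ in the proof of Proposition~\ref{E0}); the hypotheses $\|\nabla\tilde\phi\|_{H^{s-1}}\le 1$ and $\partial_t\tilde\phi$ controlled ensure this symmetrizer is positive and its time derivative is lower order. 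Without that step your Gr\"onwall bound does not close.

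A secondary issue: you claim that applying $P_\infty$ preserves the form of the equation because $P_\infty$ commutes with $A$ and $\partial_t$. That is true for the constant-coefficient part, but $P_\infty$ does not commute with the variable-coefficient operators $\tilde v\cdot\nabla$ or $g_3(\tilde\phi)\nabla$, so the projected solution of \eqref{Mainsystem-0} (which has no built-in projection, unlike the systems \eqref{111601}, \eqref{111701}) does not satisfy a closed equation, and one cannot conclude propagation of the high-frequency support by the argument you sketch.
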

					\vspace{2ex}
					
					Since $$
					\sup_{0\leq t\leq T'} (\|g_3(\tilde{\phi})(t)\|_{L^\infty} + \|\nabla g_3(\tilde{\phi})(t)\|_{H^{s-1}})
					\leq C\sup_{0\leq t\leq T'} \|\nabla \tilde{\phi}(t)\|_{H^{s-1}} \leq C, 
					$$
					and $F_\infty\in L^1([0,T'];H_{(\infty)}^s) \cap C([0,T'];H_{(\infty)}^{s-1})$ for a given $T'$, 
					Lemma \ref{lemA2-0} follows directly by \cite[Theorem I]{Kato1}. 
					
					
					\BLACK 
					
					\vspace{2ex}
					
					At the beginning, we introduce the solvability of \eqref{eq:(2)}.
					
					\vspace{2ex}
					\subsection{Solvability of the system \eqref{eq:(2)}}
					We begin with the solvability of the following system for the density,
					\begin{equation}\label{111601}
						\left\{
						\begin{aligned}
							&\partial_ta_\infty+P_\infty(\tilde v\cdot\nabla a_\infty)=F_\infty^0,\\
							&a_\infty|_{t=0}=a_{0\infty}.	
						\end{aligned}	
						\right.
					\end{equation}

					After taking a same method developed in Lemma 6.2 in \cite{Kagei-Tsuda}, we obtain the following solvability of \eqref{111601}
					\begin{lem}\label{lemA2}
						Let $d\geq 3$ and $s$ be an integer satisfying $s\geq [\frac{d}{2}]+2$. Set $k=s-1$ or $s$. Assume that $\tilde v \in C([0,T'];H^s)\cap L^2(0,T';H^{s})$,
						$F_\infty^0\in L^2(0,T';H_{(\infty)}^k)\cap C([0,T'];H_{(\infty)}^{k-1})$ and $a_{0\infty}\in H^k_{(\infty)}$. Here $T'$ is a given positive number. Then \eqref{111601}  has a unique solution $a_\infty \in C([0,T'];H_{(\infty)}^k)$ and $a_\infty$ satisfies 
						\begin{align}
							\|a_\infty(t)\|_{H^k}^2\leq C\Big\{\|a_{0\infty}\|_{H^k}^2&+\int_0^t(\|\tilde v  (\tau)\|_{H^s}+\|\tilde v(\tau)\|_{H^s}^2)\|a_\infty(\tau)\|_{H^k}^2d\tau\nonumber\\
							&+\int_0^t\|F_\infty^0(\tau)\|_{H^k}\|a(\tau)\|_{H^k}d\tau\Big\},
						\end{align}
						and 
						\begin{align*}
							\|a_\infty(t)\|_{H^k}^2\leq Ce^{C\int_0^t(1+\|\tilde v(\tau)\|_{H^s}+\|\tilde v(\tau)\|_{H^s}^2)d\tau}\Big\{\|a_{0\infty}\|_{H^k}^2+\int_0^t\|F_\infty^0(\tau)\|_{H^k}^2d\tau\Big\},
						\end{align*}
						for $t\in [0,T']$.
					\end{lem}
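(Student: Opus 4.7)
The plan is to construct a solution by a linear iteration (or Friedrichs mollification), obtain uniform bounds from a Matsumura--Nishida--type energy estimate, and then pass to the limit. Since the cut-off $\hat\chi_\infty$ is smooth and $\tilde v\in C([0,T'];H^s)$ with $s\geq [d/2]+2$ embeds into $C([0,T'];W^{1,\infty})$ by Lemma \ref{lem2.1.}, standard linear transport theory on $\mathbb{R}^d$ applied at each iterate produces a sequence $\{a_\infty^{(n)}\}\subset C([0,T'];H^k)$ with $\partial_t a_\infty^{(n)}\in C([0,T'];H^{k-1})$. Because $F_\infty^0$ and $a_{0\infty}$ have Fourier support in $\{|\xi|\geq r_1\}$ and $P_\infty$ projects onto that set, every iterate automatically lies in $H^k_{(\infty)}$, so this invariance is preserved in the limit.

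The heart of the argument is the $H^k$ energy estimate. For $|\alpha|\leq k$, apply $\partial_x^\alpha$ to the equation and test against $\partial_x^\alpha a_\infty$. The key observation is that $a_\infty=P_\infty a_\infty$ and $P_\infty$ commutes with $\partial_x^\alpha$ and is self-adjoint on $L^2$, so
\[
\bigl(\partial_x^\alpha P_\infty(\tilde v\cdot\nabla a_\infty),\partial_x^\alpha a_\infty\bigr)
=\bigl(\partial_x^\alpha(\tilde v\cdot\nabla a_\infty),\partial_x^\alpha a_\infty\bigr).
\]
Splitting
\[
\partial_x^\alpha(\tilde v\cdot\nabla a_\infty)=\tilde v\cdot\nabla\partial_x^\alpha a_\infty+[\partial_x^\alpha,\tilde v\cdot\nabla]a_\infty,
\]
the first term gives $-\tfrac12\int(\divergence\tilde v)|\partial_x^\alpha a_\infty|^2\,dx$ after integration by parts, bounded by $C\|\nabla\tilde v\|_{L^\infty}\|a_\infty\|_{H^k}^2$. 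The commutator is controlled by Lemma \ref{lem2.3.} (together with Lemma \ref{lem2.2.}) as
\[
\|[\partial_x^\alpha,\tilde v\cdot\nabla]a_\infty\|_{L^2}\leq C\bigl(\|\nabla\tilde v\|_{L^\infty}\|\nabla^k a_\infty\|_{L^2}+\|\nabla^k\tilde v\|_{L^2}\|\nabla a_\infty\|_{L^\infty}\bigr).
\]
Using Lemma \ref{lem2.1.} to bound $\|\nabla\tilde v\|_{L^\infty}+\|\nabla a_\infty\|_{L^\infty}$ by the respective $H^s$- and $H^k$-norms (and Lemma \ref{lemPinfty} to trade $\|a_\infty\|_{L^2}$ for $\|\nabla a_\infty\|_{L^2}$ if needed), one gets, after summing over $|\alpha|\leq k$,
\[
\tfrac12\tfrac{d}{dt}\|a_\infty\|_{H^k}^2\leq C\bigl(\|\tilde v\|_{H^s}+\|\tilde v\|_{H^s}^2\bigr)\|a_\infty\|_{H^k}^2+\|F_\infty^0\|_{H^k}\|a_\infty\|_{H^k}.
\]
Integrating in $t$ yields the first inequality of the lemma, and the standard Gronwall inequality applied after absorbing $\|F_\infty^0\|_{H^k}\|a_\infty\|_{H^k}$ by $\tfrac12\|a_\infty\|_{H^k}^2+\tfrac12\|F_\infty^0\|_{H^k}^2$ gives the exponential bound. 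These a priori bounds, being uniform in the iteration index, permit the standard weak/strong-limit passage that produces a solution $a_\infty\in L^\infty(0,T';H^k_{(\infty)})$; time continuity in the strong $H^k$-topology is recovered by Bona--Smith-type regularization together with $\partial_t a_\infty\in L^2(0,T';H^{k-1})$. Uniqueness follows from applying the same energy inequality to the difference of two solutions, whose initial data and forcing both vanish.

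The only real subtlety is the interplay between $P_\infty$ and the commutator estimate. This is harmless once one notes that $P_\infty$ commutes with every constant-coefficient differential operator and is bounded on $H^k$ by Lemma \ref{lemPinfty}; the support condition is preserved throughout by construction, and the analysis reduces to the classical energy estimate for a transport equation with lower-order damping term absent. No further new idea is required beyond the commutator inequality of Lemma \ref{lem2.3.} and the Sobolev embedding of Lemma \ref{lem2.1.}.
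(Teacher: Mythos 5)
Your proof has a genuine gap in the central computation, stemming from the claim that $a_\infty = P_\infty a_\infty$. This is false for a general $a_\infty \in H^k_{(\infty)}$: the multiplier $\hat\chi_\infty = 1 - \hat\chi_1$ vanishes at $|\xi| = r_1$ and only reaches the value $1$ at $|\xi| = r_\infty$, whereas $H^k_{(\infty)}$ only requires $\supp\hat a_\infty \subset \{|\xi| \geq r_1\}$. So $\hat\chi_\infty\,\hat a_\infty \neq \hat a_\infty$ on the transition annulus $r_1 \leq |\xi| < r_\infty$. As a consequence, the asserted identity
\[
\bigl(\partial_x^\alpha P_\infty(\tilde v\cdot\nabla a_\infty),\partial_x^\alpha a_\infty\bigr)
=\bigl(\partial_x^\alpha(\tilde v\cdot\nabla a_\infty),\partial_x^\alpha a_\infty\bigr)
\]
does not hold. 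Self-adjointness and commutation with $\partial_x^\alpha$ only give
\[
\bigl(\partial_x^\alpha P_\infty(\tilde v\cdot\nabla a_\infty),\partial_x^\alpha a_\infty\bigr)
=\bigl(\partial_x^\alpha(\tilde v\cdot\nabla a_\infty), P_\infty\partial_x^\alpha a_\infty\bigr),
\]
and the right factor is not $\partial_x^\alpha a_\infty$. Your subsequent integration-by-parts step, which produces the $-\tfrac12(\div\tilde v,\,|\partial_x^\alpha a_\infty|^2)$ cancellation, therefore does not apply to the term you actually have. The same imprecision recurs when you call $P_\infty$ a projection that ``projects onto'' $H^k_{(\infty)}$: it maps into that space, but it is not idempotent and does not act as the identity there.

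The repair is the decomposition $P_\infty = I - P_1$, which is exactly the mechanism the paper uses (it is explicit in the proof of the companion Lemma \ref{lemA1} for the velocity, and this lemma is stated to follow the same method). One iterates on
\[
\partial_t a_\infty^{(n)} + \tilde v\cdot\nabla a_\infty^{(n)} = F_\infty^0 + P_1\bigl(\tilde v\cdot\nabla a_\infty^{(n-1)}\bigr),
\]
so that the operator on the left is the genuine transport operator (to which your commutator-plus-integration-by-parts estimate does apply), while the $P_1$ term on the right is a forcing of the correct regularity because $P_1$ is infinitely smoothing: Lemma \ref{lemP_1} (i) gives $\|P_1(\tilde v\cdot\nabla a_\infty^{(n-1)})\|_{H^k} \leq C\|\tilde v\cdot\nabla a_\infty^{(n-1)}\|_{L^2} \leq C\|\tilde v\|_{L^\infty}\|a_\infty^{(n-1)}\|_{H^k}$. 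With this correction the remainder of your sketch (commutator bound via Lemma \ref{lem2.3.}, Gronwall, support preservation, limit passage, uniqueness by subtracting two solutions) goes through as you describe. But as written, the key step rests on an identity that is false, so the argument does not close.
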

					
					Next, we prove the solvability of the following system of the velocity,
					\begin{equation}\label{111701}
						\left\{
						\begin{aligned}
							&\partial_t v_\infty+v_\infty+P_{\infty}(\tilde{v}\cdot\nabla v_\infty)=F_\infty^1,\\
							&v_\infty|_{t=0}=v_{0\infty}.	
						\end{aligned}
						\right.
					\end{equation}
					\begin{lem}\label{lemA1}
						Let $d\geq 3$ and $s$ be an integer satisfying $s\geq [\frac{d}{2}]+2$. 
						Assume that $\tilde v \in C([0,T'];H^s)\cap L^2(0,T';H^{s})$, $F_\infty^1\in L^2(0,T';H_{(\infty)}^{s})\cap C([0,T'];H_{(\infty)}^{s-1})$ and $v_{0\infty} \in H_{(\infty)}^s$. Here $T'$ is  a given positive number. Then \eqref{111701} admits a unique solution $v_\infty\in C([0,T'];H_{(\infty)}^s)\cap L^2(0,T';H_{(\infty)}^{s})\cap H^1(0,T';H_{(\infty)}^{s-1})$ satisfying
						\begin{align}
							\|v_\infty(t)\|_{H^s}^2\leq C\Big\{\|v_{0\infty}\|_{H^s}^2&+\int_0^t\|\tilde v(\tau)\|_{H^s}^2\|v_\infty(\tau)\|_{H^s}^2d\tau \nonumber\\ &+\int_0^t\|F_\infty^1(\tau)\|_{H^s}^2d\tau\Big\},
						\end{align}
						and
						\begin{align}
							\|v_\infty(t)\|_{H^s}^2\leq Ce^{C\int_0^t\|\tilde v(\tau)\|_{H^s}d\tau}\Big\{\|v_{0\infty}\|_{H^s}^2+C\int_0^t\|F_\infty^1(\tau)\|_{H^s}^2d\tau\Big\}
						\end{align}
						for $t\in [0,T']$.
					\end{lem}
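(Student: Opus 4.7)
\vspace{2ex}
\noindent\textbf{Proof proposal for Lemma \ref{lemA1}.}
The plan is to construct the solution via a Friedrichs-type mollification, following the same scheme used for the density equation in Lemma \ref{lemA2}. First I would introduce a mollifier $J_\epsilon$ and consider the regularized problem obtained by replacing $\tilde v\cdot\nabla v_\infty$ by $J_\epsilon\tilde v\cdot\nabla v_\infty$. With $J_\epsilon\tilde v$ smooth in $x$, the map $v_\infty\mapsto -v_\infty-P_\infty(J_\epsilon\tilde v\cdot\nabla v_\infty)+F_\infty^1$ is a bounded affine operator on $L^2$ (uniformly in $t$ thanks to the continuity in $t$ of $\tilde v$ and $F_\infty^1$), so the standard linear ODE theory in a Banach space yields a unique $v_\infty^\epsilon\in C^1([0,T'];L^2)$. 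Since $P_\infty$ and $\partial^\alpha$ commute and $P_\infty^2=P_\infty$, the Fourier support property $\mathrm{supp}\,\hat v_\infty^\epsilon\subset\{|\xi|\geq r_1\}$ is preserved, so $v_\infty^\epsilon\in C^1([0,T'];H_{(\infty)}^s)$ provided the estimates below hold uniformly in $\epsilon$.

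The main step is the $H^s$ energy estimate. For $|\alpha|\le s$, I would apply $\partial^\alpha$ to the equation and pair with $\partial^\alpha v_\infty$ in $L^2$, obtaining
$$
\tfrac{1}{2}\tfrac{d}{dt}\|\partial^\alpha v_\infty\|_{L^2}^2+\|\partial^\alpha v_\infty\|_{L^2}^2+\bigl(\partial^\alpha P_\infty(\tilde v\cdot\nabla v_\infty),\partial^\alpha v_\infty\bigr)=\bigl(\partial^\alpha F_\infty^1,\partial^\alpha v_\infty\bigr).
$$
Using that $P_\infty$ commutes with $\partial^\alpha$ and is self-adjoint on $L^2$, and that $P_\infty v_\infty=v_\infty$, the convection term equals $(\tilde v\cdot\nabla\partial^\alpha v_\infty,\partial^\alpha v_\infty)+([\partial^\alpha,\tilde v\cdot\nabla]v_\infty,P_\infty\partial^\alpha v_\infty)$. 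The first piece is handled by integration by parts, yielding a harmless $\|\mathrm{div}\,\tilde v\|_{L^\infty}\|\partial^\alpha v_\infty\|_{L^2}^2\leq C\|\tilde v\|_{H^s}\|v_\infty\|_{H^s}^2$ via Lemma \ref{lem2.1.}; the commutator piece is controlled by Lemma \ref{lem2.3.} and Lemma \ref{lem2.1.}, giving another $C\|\tilde v\|_{H^s}\|v_\infty\|_{H^s}^2$. Summing over $|\alpha|\le s$, using the absorption $\|v_\infty\|_{H^s}^2$ from the damping to dominate $|(\partial^\alpha F_\infty^1,\partial^\alpha v_\infty)|$ by Cauchy--Schwarz, and integrating in $t$, I obtain the first stated inequality.

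The second inequality then follows by a direct application of Gronwall's lemma. The uniform bound allows me to pass to the limit $\epsilon\to 0$ (weak-$\ast$ in $L^\infty(0,T';H_{(\infty)}^s)$, strongly in $C([0,T'];H_{(\infty)}^{s-1})$ by an Aubin--Lions argument) and identify the nonlinear transport term in the limit, producing a solution in $C([0,T'];H_{(\infty)}^s)\cap L^2(0,T';H_{(\infty)}^s)$. The regularity $v_\infty\in H^1(0,T';H_{(\infty)}^{s-1})$ is read off from the equation itself, since each term on the right-hand side belongs to $L^2(0,T';H_{(\infty)}^{s-1})$ under the hypotheses. Uniqueness is obtained by applying the same energy estimate to the difference of two solutions, for which $F_\infty^1$ and $v_{0\infty}$ vanish, and invoking Gronwall.

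The main obstacle I expect is the interplay between the projection $P_\infty$ and the variable-coefficient transport term. Because $P_\infty$ does not commute with multiplication by $\tilde v$, one might worry that the standard commutator estimate fails. The resolution is that $P_\infty$ does commute with $\partial^\alpha$ and is self-adjoint on $L^2$, so after transposing $P_\infty$ onto $\partial^\alpha v_\infty$ (which it fixes), only the commutator $[\partial^\alpha,\tilde v\cdot\nabla]$ remains, and this is exactly the quantity controlled by Lemma \ref{lem2.3.}. Once this is noted, the remainder of the argument is parallel to Lemma \ref{lemA2}.
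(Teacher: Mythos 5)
Your proposal takes a genuinely different route from the paper. The paper never confronts the nonlocal operator $P_\infty(\tilde v\cdot\nabla\,\cdot)$ directly; instead it writes $P_\infty=I-P_1$, moves $P_1(\tilde v\cdot\nabla v_\infty)$ to the right-hand side, and iterates: each step
$\partial_t v_\infty^{(n)}+v_\infty^{(n)}+\tilde v\cdot\nabla v_\infty^{(n)}=F_\infty^1+P_1(\tilde v\cdot\nabla v_\infty^{(n-1)})$
is a \emph{standard} damped transport equation with a known source, solvable by classical theory, and the iterates are shown to be Cauchy in $C([0,T'];H^s)$ by an $H^s$ energy estimate on the differences, Young's inequality, and Gronwall. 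Your mollification strategy is also a legitimate path, but as written it has two concrete gaps.

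First, mollifying only the coefficient $\tilde v$ does not make the linearization bounded on $L^2$: the operator $v_\infty\mapsto J_\epsilon\tilde v\cdot\nabla v_\infty$ is still first order in $v_\infty$, so the claim that $v_\infty\mapsto -v_\infty-P_\infty(J_\epsilon\tilde v\cdot\nabla v_\infty)+F_\infty^1$ is a bounded affine map on $L^2$ is false, and the Banach-space ODE theorem does not apply. To make Friedrichs mollification work you must regularize the unknown as well (e.g.\ replace $\tilde v\cdot\nabla v_\infty$ by $J_\epsilon(\tilde v\cdot\nabla J_\epsilon v_\infty)$), exactly so that $\nabla J_\epsilon$ is $L^2$-bounded.

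Second, the identities $P_\infty^2=P_\infty$ and $P_\infty v_\infty=v_\infty$ that you invoke are false with the cut-offs of this paper: $\hat\chi_\infty=1-\hat\chi_1$ takes values strictly in $(0,1)$ on the annulus $r_1<|\xi|<r_\infty$, so $P_\infty$ is \emph{not} a projection, and an element of $H_{(\infty)}^s$ (Fourier support in $\{|\xi|\geq r_1\}$, not in $\{|\xi|\geq r_\infty\}$) is generally not fixed by $P_\infty$. Consequently, after transposing $P_\infty$, the leading term becomes $(\tilde v\cdot\nabla\partial^\alpha v_\infty,P_\infty\partial^\alpha v_\infty)$, which is not in transport form and cannot be integrated by parts to produce $-\tfrac12(\div\tilde v,|\partial^\alpha v_\infty|^2)$. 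This is exactly the difficulty the paper sidesteps by iterating on the $P_1$ piece. A direct energy estimate can still be salvaged by writing $P_\infty=I-P_1$: the $I$ part gives the genuine transport structure, and the $P_1$ part is harmless because $P_1$ is smoothing (Lemma~\ref{lemP_1}), so $\|P_1(\tilde v\cdot\nabla v_\infty)\|_{H^s}\leq C\|\tilde v\|_{L^\infty}\|\nabla v_\infty\|_{L^2}\leq C\|\tilde v\|_{H^s}\|v_\infty\|_{H^s}$. With those two corrections your argument becomes sound and parallel in spirit, though not in implementation, to the paper's.
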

					\noindent\textbf{Proof.}
					Define $v_\infty^{(0)}=0$. Assume that $v_\infty^{(n)}(n\geq 1)$ satisfies
					\begin{equation*}
						\left\{
						\begin{aligned}
							&\partial_tv_\infty^{(n)}+v_\infty^{(n)}+\tilde{v}\cdot\nabla v_\infty^{(n)}
							=F_\infty^1+P_1(\tilde{v}\cdot\nabla v_\infty^{(n-1)}),\\
							&v_{\infty}^{(n)}|_{t=0}=v_{0\infty}.
						\end{aligned}
						\right.
					\end{equation*}
					Then we could derive the equation of $(v_\infty^{(n+1)}-v_\infty^{(n)})$ as follows,
					\begin{align*}
						&\partial_t(v_\infty^{(n+1)}-v_\infty^{(n)})+v_\infty^{(n+1)}-v_\infty^{(n)}+\tilde{v}\cdot\nabla (v_\infty^{(n+1)}-v_\infty^{(n)})\\
						&=P_1(\tilde{v}\cdot \nabla (v_\infty^{(n)}-v_\infty^{(n-1)})),
					\end{align*}
					and the initial data satisfy
					\begin{align*}
						(v_\infty^{(n+1)}-v_\infty^{(n)})|_{t=0}=0.
					\end{align*}
					Let  $s\geq [\frac{d}{2}]+2$. Taking $L^2$ energy estimates on the above equations and using H\"{o}lder inequality, we have
					\begin{align*}
						&\frac{1}{2}\frac{d}{dt}\|v_{\infty}^{(n+1)}-v_{\infty}^{(n)}\|_{H^s}^2+\|v_{\infty}^{(n+1)}-v_{\infty}^{(n)}\|_{H^s}^2\\
						&\leq C\|\tilde{v}\|_{H^s}\|v_{\infty}^{(n+1)}-v_{\infty}^{(n)}\|_{H^s}^2
						+C\|\tilde{v}\|_{H^s}\|v_{\infty}^{(n)}-v_{\infty}^{(n-1)}\|_{H^s}\|v_{\infty}^{(n+1)}-v_{\infty}^{(n)}\|_{H^s}\\
						&\leq C\|\tilde{v}\|_{H^s}\|v_{\infty}^{(n+1)}-v_{\infty}^{(n)}\|_{H^s}^2
						+C\|\tilde{v}\|_{H^s}^2\|v_{\infty}^{(n)}-v_{\infty}^{(n-1)}\|_{H^s}^2+\varepsilon\|v_{\infty}^{(n+1)}-v_{\infty}^{(n)}\|_{H^s}^2.
					\end{align*}
					Due to the smallness of $\varepsilon$, it holds that 
					\begin{align*}
						&\frac{d}{dt}\|v_{\infty}^{(n+1)}-v_{\infty}^{(n)}\|_{H^s}^2+\|v_{\infty}^{(n+1)}-v_{\infty}^{(n)}\|_{H^s}^2\\
						&\leq C\|\tilde{v}\|_{H^s}\|v_{\infty}^{(n+1)}-v_{\infty}^{(n)}\|_{H^s}^2
						+C\|\tilde{v}\|_{H^s}^2\|v_{\infty}^{(n)}-v_{\infty}^{(n-1)}\|_{H^s}^2.
					\end{align*}
					In terms of Gr\"{o}nwall's inequality, one has
					\begin{align*}
						\|v_\infty^{(n+1)}(t)-v_\infty^{(n)}(t)\|_{H^s}^2\leq M_0\frac{(M_1t)^{n+1}}{(n+1)!}\quad (n\geq 0),
					\end{align*}
					where 
					\begin{align}
						M_0=e^{C\int_0^{T'}\|\tilde v(t)\|_{H^s}dt}\Big\{\|v_{0\infty}\|_{H^s}^2+\int_0^{T'}\|F_\infty^1(t)\|_{H^s}^2dt\Big\},
					\end{align}
					and 
					\begin{align}
						M_1=C\|\tilde v\|_{C([0,T'];H^s)}^2e^{C\int_0^{T'}\|\tilde v(t)\|_{H^s}dt}.	
					\end{align}
					Therefore, one can see that $v_\infty^{(n)}$ converges in $C([0,T'];H^s)$ to a function $v_\infty\in C([0,T'];H^s)$ that satisfies 
					\begin{equation*}
						\left\{	
						\begin{aligned}
							&\partial_tv_\infty+v_\infty+\tilde{v}\cdot\nabla v_\infty=F_\infty^1+P_1(\tilde{v}\cdot\nabla v_\infty),\\
							&v_\infty|_{t=0}=v_{0\infty}.
						\end{aligned}
						\right.
					\end{equation*}
					Hence $v_\infty$ is a solution of \eqref{111701}.  Meanwhile, it holds that 
					\begin{align}
						\|v_\infty\|_{H^s}^2\leq Ce^{C\int_0^t\|\tilde v(\tau)\|_{H^s}d\tau}(\|v_{0\infty}\|_{H^s}^2+C\int_0^t\|F_\infty(\tau)\|_{H^s}^2d\tau).
					\end{align}
					
					Since the initial data $\supp \hat{v}_{0\infty}(t)\subset \{|\xi|\geq r_1\}$, according to the above estimates, we can prove $\supp \hat{v}_\infty(t)\subset \{|\xi|\geq r_1\}$ for $t\in [0,T']$. Therefore we complete the proof.
					$\hfill\square$
					
					With the Lemmas \ref{lemA1} and \ref{lemA2} in hands, we are able to prove the solvability of \eqref{Mainsystem}.
					\begin{prop}\label{solvabilityhighpart}
						Let $d\geq 3$ and $s$ be an integer satisfying $s\geq [\frac{d}{2}]+2.$  
						Assume that
						\begin{align}
							&\nabla\tilde{\phi}\in C([0,T'];H^{s-1})\cap L^2([0,T'];H^{s-1}),~\partial_t\tilde{\phi}\in C([0,T'];H^{s-1})\nonumber\\
							&\tilde{v}\in C([0,T'];H^{s})\cap L^{2}(0,T';H^s),\nonumber\\
							& 
							u_{0\infty}\in H^{s}_{(\infty)},\quad
							F_\infty=\trans{(F_\infty^0,F_\infty^1)}\in L^{2}(0,T';H^{s}_{(\infty)})\cap C([0,T'];H_{(\infty)}^{s-1}). 
						\end{align}
						Here $T'$ is a given positive number. 
						Then there exists a unique solution 
						$u_{\infty}=\trans{(a_{\infty},v_\infty)}$ of \eqref{Mainsystem} satisfying 
						\begin{align}
							a_{\infty}\in C([0,T'];H^{s}_{(\infty)}),v_\infty\in C([0,T'];H^{s}_{(\infty)})\cap L^{2}(0,T';H^{s}_{(\infty)})
							\cap H^1(0,T';H^{s-1}_{(\infty)}).
						\end{align}
					\end{prop}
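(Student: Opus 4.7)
I would express \eqref{Mainsystem} in the equivalent form
\begin{align*}
&\delt a_\infty + \div v_\infty + \tilde{v}\cdot\nabla a_\infty = F_\infty^0 + P_1(\tilde{v}\cdot\nabla a_\infty),\\
&\delt v_\infty + v_\infty + \nabla a_\infty + \tilde{v}\cdot\nabla v_\infty + g_3(\tilde{\phi})\nabla a_\infty = F_\infty^1 + P_1(\tilde{v}\cdot\nabla v_\infty + g_3(\tilde{\phi})\nabla a_\infty),
\end{align*}
obtained by inserting $P_\infty = I - P_1$, and then set up the iteration $(a^{(0)}_\infty, v^{(0)}_\infty) = 0$ and, for $n \geq 0$, define $(a^{(n+1)}_\infty, v^{(n+1)}_\infty)$ as the unique solution produced by Lemma~\ref{lemA2-0} for the same system with $(a^{(n)}_\infty, v^{(n)}_\infty)$ inserted into the $P_1$-terms on the right. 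Since Lemma~\ref{lemP_1}(i) tells us that $P_1$ is smoothing from $L^2$ into $H^k$ for every nonnegative integer $k$, the composite source on the right sits comfortably in $L^2(0,T';H^s)\cap C([0,T'];H^{s-1})$, as required by Lemma~\ref{lemA2-0}.

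The differences $(A^{(n)},V^{(n)}):=(a^{(n+1)}_\infty-a^{(n)}_\infty,\,v^{(n+1)}_\infty-v^{(n)}_\infty)$ then solve the same symmetric hyperbolic system with zero initial data and with only the $P_1$-differences of the previous step on the right. Using the Matsumura-Nishida $H^s$ energy estimate built into Lemma~\ref{lemA2-0}, together with the bounds on $\tilde{v}$ and $g_3(\tilde{\phi})$ afforded by $s \geq [d/2]+2$ and Lemma~\ref{lem2.1.}, I would invoke Gronwall on a sufficiently small interval $[0,T_*]$ to produce a contraction factor $1/2$ in $C([0,T_*];H^s)$. The fixed point solves \eqref{Mainsystem} on $[0,T_*]$; since the system is linear in $u_\infty$, the same a priori estimate iterates across sub-intervals of length $T_*$ to cover all of $[0,T']$.

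Two final points remain: the high-frequency support condition and the extra regularity of $v_\infty$. For the support, I would Fourier-transform \eqref{Mainsystem} and restrict to $\{|\xi|<r_1\}$; there $\hat{\chi}_\infty\equiv 0$, so the $P_\infty$-nonlinear terms vanish, and $\hat{u}_\infty(\xi,\cdot)$ satisfies a closed linear ODE in $t$ with zero source (since $\hat{F}_\infty=0$ on $\{|\xi|<r_1\}$ by $F_\infty\in H^s_{(\infty)}$) and zero initial datum (by $u_{0\infty}\in H^s_{(\infty)}$); ODE uniqueness forces $\hat{u}_\infty\equiv 0$ on $\{|\xi|<r_1\}$. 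For the extra control $v_\infty \in L^2(0,T';H^s_{(\infty)}) \cap H^1(0,T';H^{s-1}_{(\infty)})$, the damping term $v_\infty$ in the momentum equation, combined with the symmetric cancellation $(\div v_\infty, a_\infty) + (\nabla a_\infty, v_\infty)=0$, yields the $L^2_t H^s_x$ bound after pairing with $v_\infty$, and $\delt v_\infty \in C([0,T'];H^{s-1})$ is then read off the equation. Uniqueness follows from the same energy estimate on the difference of two solutions. The main obstacle throughout is the derivative loss threatened by $\tilde{v}\cdot\nabla a_\infty$ and $\tilde{v}\cdot\nabla v_\infty$; this is neutralized by the commutator estimate Lemma~\ref{lem2.3.}, which pushes the top-order derivatives off $\tilde{v}$, leaving terms absorbable via $\|\nabla\tilde{v}\|_{L^\infty} \leq C\|\tilde{v}\|_{H^s}$.
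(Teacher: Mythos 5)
Your proposal matches the paper's proof: both set up the same Picard iteration (inserting $P_1$ of the nonlinear terms from the previous iterate on the right-hand side, so that the limit solves the system with $P_\infty(B[\tilde u]u_\infty)$), invoke Lemma~\ref{lemA2-0} at each step, and pass to the limit in $C([0,T'];H^s_{(\infty)})$; the paper closes the iteration with a factorial bound $M_3(M_2t)^{n+1}/(n+1)!$ valid on all of $[0,T']$ in one stroke, while you propose contraction on short subintervals followed by gluing, which is a cosmetic difference. Your explicit Fourier-side argument for preservation of the high-frequency support and your verification of the extra regularity $v_\infty\in L^2(0,T';H^s_{(\infty)})\cap H^1(0,T';H^{s-1}_{(\infty)})$ are details the paper leaves largely implicit, but both are correct.
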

					
					\noindent\textbf{Proof.}
					We define $u_\infty^{(n)}=\trans{(a_\infty^{(n)},v_\infty^{(n)})}, n=0,1,2,...$  For $n=0$, $v_\infty^{(0)}=0$ and $a_\infty^{(0)}$ is the solution of 
					\begin{equation}
						\left\{
						\begin{aligned}
							&\partial_ta_\infty^{(0)}+P_\infty(\tilde{v}\cdot\nabla a_\infty^{(0)})=F_\infty^0,\\
							&a_\infty^{(0)}|_{t=0}=a_{0\infty}.	
						\end{aligned}
						\right.
					\end{equation}
					For $n\ge 1$, $\trans{(a_\infty^{(n)}, v_\infty^{(n)})}$ is the solution of 
					\begin{equation}\label{91903}
						\left\{
						\begin{aligned}
							&\partial_ta_\infty^{(n)}+\text{div}v_\infty^{(n)}+\tilde {v}\cdot\nabla a_\infty^{(n)}=F_\infty^0+P_1(\tilde{v}\cdot\nabla a_\infty^{(n-1)}),\\
							&a_\infty^{(n)}|_{t=0}=a_{0\infty},
						\end{aligned}
						\right.
					\end{equation}
					and
					\begin{equation}
						\left\{
						\begin{aligned}
							&\partial_tv_\infty^{(n)}+\nabla a_\infty^{(n)}+v_\infty^{(n)}+\tilde{v}\cdot\nabla v_\infty^{(n)}+g_3(\tilde\phi)\nabla a_\infty^{(n)}\\
							&\quad =F_\infty^{1}+P_1(\tilde{v}\cdot\nabla v_\infty^{(n-1)})+P_1(g_3(\tilde\phi)\nabla v_\infty^{(n-1)}),\\
							&v_\infty^{(0)}|_{t=0}=v_{0\infty}.
						\end{aligned}
						\right.
					\end{equation}
					Then we can get the equation of $\trans{(a_\infty^{(n+1)}-a_\infty^{(n)},v_\infty^{(n+1)}-v_\infty^{(n)})}$ as follows,
					\begin{equation}
						\left\{
						\begin{aligned}
							&\partial_t(a_\infty^{(n+1)}-a_\infty^{(n)})+\text{div}(v_\infty^{(n+1)}-v_\infty^{(n)})+\tilde{v}\cdot\nabla(a_\infty^{(n+1)}-a_\infty^{(n)})=P_1(\tilde{v}\cdot\nabla(a_\infty^{(n)}-a_\infty^{(n-1)})),\nonumber\\
							&\partial_t(v_\infty^{(n+1)}-v_\infty^{(n)})+\nabla(a_\infty^{(n+1)}-a_\infty^{(n)})+
							v_\infty^{(n+1)}-v_\infty^{(n)}+\tilde{v}\cdot\nabla(v_\infty^{(n+1)}-v_\infty^{(n)})\nonumber\\
							&+g_3(\tilde\phi)\nabla(a_\infty^{(n+1)}-a_\infty^{(n)})=P_1(\tilde{v}\cdot\nabla(v_\infty^{(n)}-v_\infty^{(n-1)}))+P_1(g_3(\tilde\phi)\nabla(v_\infty^{(n)}-v_\infty^{(n-1)})),
						\end{aligned}
						\right.
					\end{equation}
					with the initial data
					\begin{align*}
						(a_\infty^{(n+1)}-a_\infty^{(n)},v_\infty^{(n+1)}-v_\infty^{(n)})|_{t=0}=(0,0).
					\end{align*}
					Then by $L^2$ energy method, one has 
					\begin{align*}
						\|(a_\infty^{(n+1)}-a_\infty^{(n)},v_\infty^{(n+1)}-v_\infty^{(n)})\|_{H^s}^2\leq M_3\frac{(M_2t)^{n+1}}{(n+1)!},   
					\end{align*}
					where 
					\begin{align}
						M_2=e^{C\int_0^{T'}(\|\nabla\tilde \phi(t)\|_{H^{s-1}}+\|\tilde v(t)\|_{H^s})dt}&\Big\{\|a_{0\infty}\|_{H^s}^2+\|v_{0\infty}\|_{H^s}^2\nonumber\\
						&+\int_0^{T'}(\|F_\infty^0(t)\|_{H^s}^2+\|F_\infty^1(t)\|_{H^s}^2)dt\Big\},
					\end{align}
					and 
					\begin{align}
						M_3=C(\|\nabla\tilde \phi\|_{C([0,T'];H^{s-1})}^2+\|\tilde v\|_{C([0,T'];H^s)}^2)e^{C\int_0^{T'}(\|\nabla\tilde \phi(t)\|_{H^{s-1}}+\|\tilde v(t)\|_{H^s})dt}.	
					\end{align}
					To this end, one can show that $u_\infty^{(n)}=\trans{(a_\infty^{(n)}, v_\infty^{(n)})}$ converges to a pair of function $u_\infty=\trans{(a_\infty,v_\infty)}$ in $C([0,T'];H_{(\infty)}^s)$. It is not difficult to see that
					$u_\infty=\trans{(a_\infty,v_\infty)}$ is a unique solution of \eqref{Mainsystem}. This completes the proof. 
					$\hfill\square$
					\vspace{2ex}
					\subsection{Estimates of operators $S_{{\infty},\tilde{u}}(t)$  and $\scr{S}_{\infty,\tilde{u}}(t)$ }
					We first consider the following equation
					\begin{equation}\label{Mainsystem}
						\left\{
						\begin{aligned}
							&\partial_ta_{\infty}+\text{div}v_\infty+\tilde{v}\cdot\nabla a_\infty=F_\infty^0,\\
							&\partial_tv_\infty+\nabla a_\infty+v_\infty+\tilde{v}\cdot\nabla v_\infty+ g_3(\tilde{\phi})\cdot\nabla a_\infty=F_\infty^1,\\
							&(a_\infty,v_\infty)|_{t=0}=(a_{0\infty},v_{0\infty}).
						\end{aligned}
						\right.
					\end{equation}
					
					For nonnegative integers $k$ and $\ell$, we define $|f|_{H_j^k}$ by 
					\begin{align*}
						|f|_{H_j^k}=\sum_{|\alpha|=0}^k\Big\||x|^j\partial_x^\alpha f \Big\|_{L^2}.
					\end{align*}
					For simplicity, we introduce the energy $E_{j}^k[u_{\infty}](t)$ and  dissipation $D_{j}^k[u_{\infty}](t)$ as follows 
					\begin{align}
						E_j^k[u_{\infty}](t)&=|a_{\infty}|_{H_j^k}^2+|v_{\infty}|_{H_j^k}^2+\frac{1}{2} \sum_{|\alpha|=0}^k\int_{\mathbb{R}^3}g_3(\tilde\phi)\big||x|^j\partial_x^\alpha (\eta_R a_\infty)\big|^2\mathrm{d}x,\label{121001}\\
						D_{j}^k[u_{\infty}](t)&=|\nabla a_\infty|_{H_j^{k-1}}^2+|v_\infty|_{H_j^{k}}^2.\label{121002}
					\end{align}
					In terms of the Poincar\'{e} type inequality for the high-frequency part of the solution,  {\rm i.e., } Lemma  \ref{lemPinftyweight} with that $\|g_3 (\tilde{\phi})\|_{L^\infty} \leq C$,  we have
					\begin{equation}\label{EUestimate}
						E_j^k[u_{\infty}](t)\leq CD_{j}^k[u_{\infty}](t),
					\end{equation}
					for some constant $C>0$.
					
					First, we establish the energy estimates of $E_0^s[u_{\infty}](t)$ and $D_0^s[u_{\infty}](t)$.
					\begin{prop}\label{E0} 
						Let $d\geq 3$ and $s$ be a nonnegative integer satisfying $s\geq [\frac{d}{2}]+2$. Assume that  
						\begin{align}
							& \nabla \tilde{\phi}\in C([0,T'];H^{s-1})\cap L^2([0,T'];H^{s-1}), 
							~\partial_t\tilde{\phi}\in C([0,T'];H^{s-1})\nonumber\\
							&\tilde{v}\in C([0,T];H^{s})\cap L^{2}(0,T;H^{s}),\nonumber\\
							& 
							u_{0\infty}=\trans(a_{0\infty},v_{0\infty})\in H_{(\infty)}^{s},\nonumber\\
							& 
							F_\infty=\trans(F_\infty^0,F_{\infty}^{1})
							\in L^{2}(0,T';H^{s})\cap C([0,T'];H^{s-1}). 
							\nonumber
						\end{align}
						Then the initial value problem \eqref{Mainsystem} exists a unique solution $u_\infty=\trans(a_{\infty},v_{\infty})$ in  $C([0,T'];H^s)\cap L^2(0,T;H^s)$. 
						
						Furthermore, there exist a positive constant $\kappa_0$ such that the estimates 
						\begin{align}
							&\frac{\mathrm{d}}{\mathrm{d}t}E_0^s[u_\infty](t)+\kappa_0 D_0^s[u_\infty](t)\nonumber\\
							&\leq  \varepsilon \|a_\infty\|_{L^2}^2+  C(\|\partial_t\tilde\phi\|_{H^{s-1}}+\|\nabla\tilde{\phi}\|_{H^{s-1}}+\|\nabla\tilde \phi\|_{H^{s-1}}^2+\|\tilde{v}\|_{H^{s}}+\|\tilde v\|_{H^{s}}^2)\|u_\infty\|_{H^{s}}^2+C\|F_{\infty}\|^2_{H^s}
							\label{91302}
						\end{align}
						holds on $t\in (0,T')$ with $T'$  a given positive number, where $C>0$ is a constant independent of time.
					\end{prop}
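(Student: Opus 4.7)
The existence and uniqueness assertion is essentially contained in Proposition~\ref{solvabilityhighpart}, so the substance of the claim is the differential inequality \eqref{91302}. I would prove it by a Matsumura--Nishida symmetric energy estimate in which $\tilde v$ and $g_3(\tilde\phi)$ play the role of given variable coefficients rather than unknowns, exactly as advertised in the introduction.

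The first step is to symmetrize the principal part. Multiplying the continuity equation in \eqref{Mainsystem} by $(1+g_3(\tilde\phi))a_\infty$ and the velocity equation by $v_\infty$, the cross-term combination $\int (1+g_3(\tilde\phi))[a_\infty\,\mathrm{div}\,v_\infty+\nabla a_\infty\cdot v_\infty]\,dx$ collapses, after integration by parts, to a single remainder $-\int \nabla g_3(\tilde\phi)\cdot a_\infty v_\infty\,dx$, controlled by $C\|\nabla\tilde\phi\|_{H^{s-1}}\|u_\infty\|_{H^s}^2$. The damping term produces $\|v_\infty\|_{L^2}^2$ of dissipation, the transport terms integrated by parts yield remainders involving $\|\mathrm{div}(\tilde v(1+g_3(\tilde\phi)))\|_{L^\infty}$, bounded by $\|\tilde v\|_{H^s}+\|\nabla\tilde\phi\|_{H^{s-1}}$ via Lemma~\ref{lem2.1.}, and the time derivative of the weighted density quadratic generates $\int\partial_t g_3(\tilde\phi)\,a_\infty^2$, controlled by $\|\partial_t\tilde\phi\|_{H^{s-1}}\|a_\infty\|_{L^2}^2$. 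This gives the $L^2$-level inequality. Applying $\partial_x^\alpha$ for $|\alpha|\le s$ and repeating the symmetrization produces the same structure plus commutators $[\partial_x^\alpha,\tilde v\cdot\nabla]u_\infty$ and $[\partial_x^\alpha,g_3(\tilde\phi)\nabla]a_\infty$, both controlled in $L^2$ by Lemma~\ref{lem2.3.} combined with Moser composition, bounded by $C(\|\tilde v\|_{H^s}+\|\nabla\tilde\phi\|_{H^{s-1}})\|u_\infty\|_{H^s}$. The cutoff $\eta_R$ in the definition \eqref{121001} of $E_0^s$ makes the $g_3(\tilde\phi)$-weighted inner product a priori well-defined during the computation, and one sends $R\to\infty$ at the end.

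The main obstacle is that the estimate above supplies dissipation only for $v_\infty$; to recover $\|\nabla a_\infty\|_{H^{s-1}}^2$ I would run the Matsumura--Nishida trick. For each $|\alpha|\le s-1$, take the $L^2$ inner product of $\partial_x^\alpha$ applied to the velocity equation with $\partial_x^\alpha\nabla a_\infty$. The term $(\partial_x^\alpha\nabla a_\infty,\partial_x^\alpha\nabla a_\infty)$ provides the sought coercivity, with the $(g_3(\tilde\phi)\partial_x^\alpha\nabla a_\infty,\partial_x^\alpha\nabla a_\infty)$ contribution being a small perturbation by smallness of $\tilde\phi$. The obstructive piece $(\partial_t\partial_x^\alpha v_\infty,\partial_x^\alpha\nabla a_\infty)$ is reorganized as
\[
(\partial_t\partial_x^\alpha v_\infty,\partial_x^\alpha\nabla a_\infty)=\frac{d}{dt}(\partial_x^\alpha v_\infty,\partial_x^\alpha\nabla a_\infty)-(\partial_x^\alpha v_\infty,\partial_x^\alpha\nabla\partial_t a_\infty),
\]
and substituting $\partial_t a_\infty=-\mathrm{div}\,v_\infty-\tilde v\cdot\nabla a_\infty+F_\infty^0$ from the continuity equation converts the last term into $\|\partial_x^\alpha\mathrm{div}\,v_\infty\|_{L^2}^2$ plus remainders bounded by $C(\|\tilde v\|_{H^s}\|u_\infty\|_{H^s}^2+\|F_\infty\|_{H^s}\|u_\infty\|_{H^s})$. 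The lower-order pairings $(v_\infty+\tilde v\cdot\nabla v_\infty-F_\infty^1,\partial_x^\alpha\nabla a_\infty)$ are split by Cauchy--Schwarz with a small parameter into $\tfrac{\varepsilon}{2}\|\partial_x^\alpha\nabla a_\infty\|_{L^2}^2+C_\varepsilon(\|v_\infty\|_{H^s}^2+\|F_\infty\|_{H^s}^2)$.

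Finally, I would form the modified energy $E_0^s+\lambda\sum_{|\alpha|\le s-1}(\partial_x^\alpha v_\infty,\partial_x^\alpha\nabla a_\infty)$ with $\lambda>0$ small. Since the cross term is dominated by $\tfrac{1}{2}\|u_\infty\|_{H^s}^2$, this remains equivalent to $E_0^s$; its time derivative combines the two inequalities so that the left-hand side is bounded below by $\frac{d}{dt}E_0^s+\kappa_0 D_0^s[u_\infty]$ for some $\kappa_0>0$, while the right-hand side matches exactly the form in \eqref{91302}. The stray $\varepsilon\|a_\infty\|_{L^2}^2$ is the leftover from a Cauchy--Schwarz split of $(a_\infty,\mathrm{div}\,v_\infty)$; because $a_\infty$ is supported at high frequency, the Poincar\'e inequality in Lemma~\ref{lemPinfty} bounds $\|a_\infty\|_{L^2}$ by $\|\nabla a_\infty\|_{L^2}$, so this term is harmless and can be absorbed into $D_0^s$ whenever needed downstream.
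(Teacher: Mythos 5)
Your plan is correct and follows the same backbone as the paper's argument: an $H^s$ energy estimate giving dissipation in $v_\infty$, a Matsumura--Nishida cross pairing giving dissipation in $\nabla a_\infty$, and a $g_3(\tilde\phi)$-weighted correction to the energy to absorb the $\partial_t g_3(\tilde\phi)$ remainder. The one genuine variation is at the start: you pre-symmetrize the system by pairing the continuity equation with $(1+g_3(\tilde\phi))\partial_x^\alpha a_\infty$, so that the pressure cross-terms collapse to a single $-\int \nabla g_3(\tilde\phi)\cdot \partial_x^\alpha a_\infty\,\partial_x^\alpha v_\infty$ remainder and the $g_3$-weighted density quadratic appears immediately inside the time derivative. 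The paper instead pairs with $\partial_x^\alpha a_\infty$ only, leaving the problematic $-(g_3(\tilde\phi)\partial_x^\alpha\nabla a_\infty,\partial_x^\alpha v_\infty)$ on the right, then integrates by parts and substitutes the continuity equation for $\partial_x^\alpha\div v_\infty$, which is what produces the weighted quadratic there; see \eqref{92401}. Your symmetrized presentation is arguably tidier because the cancellation and the weighted energy both fall out in one stroke, at the price of carrying $(1+g_3(\tilde\phi))$ through every commutator estimate. Your Matsumura--Nishida pairing $(\partial_x^\alpha v_\infty,\partial_x^\alpha\nabla a_\infty)$, $|\alpha|\le s-1$, is the same as the paper's $(\partial_x^{\alpha-1}v_\infty,\partial_x^\alpha a_\infty)$, $1\le|\alpha|\le s$, up to an integration by parts and relabeling. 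Two small inaccuracies that do not affect the argument: the $\varepsilon\|a_\infty\|_{L^2}^2$ term comes from Cauchy--Schwarz applied to $(F_\infty^0,a_\infty)$, not from $(a_\infty,\div v_\infty)$ (that pairing cancels against $(\nabla a_\infty, v_\infty)$ by integration by parts); and after you substitute the continuity equation the leading term is $-\|\partial_x^\alpha\div v_\infty\|_{L^2}^2$, which has a favorable sign, so it is even better than ``bounded by $\|\partial_x^\alpha\div v_\infty\|_{L^2}^2$.''
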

					
					\noindent
					\textbf{Proof.} 
					The existence is due to \cite[Theorem I]{Kato1} as in the Proposition \ref{solvabilityhighpart}.
					We mainly focus the proof of \eqref{91302}. 
					
					Multiplying \eqref{Mainsystem} by $u_\infty$, and integrating  over $\mathbb{R}^d$ by parts then adding them together, we obtain 
					\begin{align}
						&\frac{1}{2}\frac{\mathrm{d}}{\mathrm{d}t}(\|a_\infty\|_{L^2}^2+\|v_\infty\|_{L^2}^2)+\|v_\infty\|_{L^2}^2\nonumber\\
						&=\int_{\mathbb{R}^3}(-\tilde v\cdot\nabla a_\infty a_\infty-\tilde{v}\cdot\nabla v_\infty\cdot v_\infty-g_3(\tilde\phi)\nabla a_\infty\cdot v_\infty +F_\infty^0 a_\infty+ F_\infty^1\cdot v_\infty)\mathrm{d}x.
					\end{align} 
					By H\"{o}lder inequality, integration by parts and Sobolev inequality, we obtain 
					\begin{align*}
						&	\frac{1}{2}\frac{\mathrm{d}}{\mathrm{d}t}(\|a_\infty\|_{L^2}^2+\|v_\infty\|_{L^2}^2)+\|v_\infty\|_{L^2}^2\nonumber\\
						&\leq C\Big\{\|\text{div}\tilde v\|_{L^\infty}\|a_\infty\|_{L^2}^2+\|\text{div}\tilde v\|_{L^\infty}\|v_\infty\|_{L^2}^2+\|g_3(\tilde\phi)\|_{L^\infty}\|\nabla a_\infty\|_{L^2}\|v_\infty\|_{L^2}\nonumber\\
						&+\|F_\infty^0\|_{L^2} \|a_\infty\|_{L^2}+\|F_\infty^1\|_{L^2}\|v_\infty\|_{L^2}\Big\}\nonumber\\
						&\leq C\Big\{\|\tilde v\|_{H^s}\|a_\infty\|_{L^2}^2+\|\tilde v\|_{H^s}\|v_\infty\|_{L^2}^2+\|\nabla\tilde \phi\|_{H^{s-1}}\|\nabla a_\infty\|_{L^2}\|v_\infty\|_{L^2}\Big\}\nonumber\\
						&+\varepsilon\| a_\infty\|_{L^2}^2+\varepsilon\|v_\infty\|_{L^2}^2+C\|F_\infty\|_{L^2}^2,
					\end{align*}
					where $\varepsilon>0$ is a sufficiently small constant. 
					
					Due to the smallness of $\varepsilon$, one has 
					\begin{align}
						&	\frac{\mathrm{d}}{\mathrm{d}t}(\|a_\infty\|_{L^2}^2+\|v_\infty\|_{L^2}^2)+\|v_\infty\|_{L^2}^2\nonumber\\
						&\leq C\Big\{\|\tilde v\|_{H^s}\|a_\infty\|_{L^2}^2+\|\tilde v\|_{H^s}\|v_\infty\|_{L^2}^2+\|\nabla\tilde \phi\|_{H^{s-1}}\|\nabla a_\infty\|_{L^2}\|v_\infty\|_{L^2}\Big\}\nonumber\\
						&+\varepsilon\| a_\infty\|_{L^2}^2+C\|F_\infty\|_{L^2}^2
						,\label{92402}
					\end{align}
					
					For $|\alpha|\geq 1$, applying $\partial_x^\alpha $ to \eqref{Mainsystem} and multiplying by $\partial_x^\alpha u_\infty$, and integrating over $\mathbb{R}^d$, then adding them together, we obtain 
					\begin{align}
						&	\frac{1}{2}\frac{\mathrm{d}}{\mathrm{d}t}(\|\partial_x^\alpha a_\infty\|_{L^2}^2+\|\partial_x^\alpha v_\infty\|_{L^2}^2)+\|\partial_x^\alpha v_\infty\|_{L^2}^2\nonumber\\
						&=-(\partial_x^\alpha(\tilde{v}\cdot \nabla a_\infty),\partial_x^\alpha a_\infty)-(\partial_x^\alpha(\tilde{v}\cdot\nabla v_\infty),\partial_x^\alpha v_\infty)\nonumber\\
						&-(\partial_x^\alpha(g_3(\tilde \phi)\cdot\nabla a_\infty),\partial_x^\alpha v_\infty)+(\partial_x^\alpha F_\infty^0,\partial_x^\alpha a_\infty)
						+(\partial_x^\alpha F_\infty^1,\partial_x^\alpha v_\infty).\label{601}
					\end{align}
					Noting that 
					\begin{align}
						&-(\partial_x^\alpha(\tilde{v}\cdot \nabla a_\infty),\partial_x^\alpha a_\infty)\nonumber\\
						&=-(\partial_x^\alpha(\tilde{v}\cdot \nabla a_\infty)-\tilde{v}\cdot\partial_x^\alpha\nabla a_\infty,\partial_x^\alpha a_\infty)-(\tilde{v}\cdot\partial_x^\alpha\nabla a_\infty,\partial_x^\alpha a_\infty) \nonumber\\
						&\leq C(\|\nabla\tilde{v}\|_{L^\infty}\|\partial_x^\alpha a_\infty\|_{L^2}+\|\nabla a_\infty\|_{L^\infty}\|\partial_x^\alpha \tilde{v}\|_{L^2})\|\partial_x^\alpha a_\infty\|_{L^2}+C\|\text{div}\tilde{v}\|_{L^\infty}\|\partial_x^\alpha a_\infty\|_{L^2}^2\nonumber\\
						&\leq C\|\tilde{v}\|_{H^s}\|\partial_x^\alpha a_\infty\|_{L^2}^2+C\|a_\infty\|_{H^s}\|\partial_x^\alpha\tilde{v}\|_{L^2}\|\partial_x^\alpha a_\infty\|_{L^2},\label{91301}
					\end{align}	
					
					\begin{align*}
						&-(\partial_x^\alpha(\tilde{v}\cdot\nabla v_\infty),\partial_x^\alpha v_\infty)\\
						&=-(\partial_x^\alpha(\tilde{v}\cdot\nabla v_\infty)-\tilde{v}\cdot\partial_x^\alpha\nabla v_\infty,\partial_x^\alpha v_\infty)-(\tilde{v}\cdot\partial_x^\alpha\nabla v_\infty,\partial_x^\alpha v_\infty)\\
						&\leq C(\|\nabla \tilde v\|_{L^\infty}\|\partial_x^\alpha v_\infty\|_{L^2}+\|\partial_x^\alpha\tilde v\|_{L^2}\|\nabla v_\infty\|_{L^\infty})\|\partial_x^\alpha v_\infty\|_{L^2}+C\|\text{div}\tilde v\|_{L^\infty}\|\partial_x^\alpha v_\infty\|_{L^2}^2,\\
						&\leq C\|\tilde v\|_{H^s}\|\partial_x^\alpha v_\infty\|_{L^2}^2+C\|\partial_x^\alpha\tilde{v}\|_{L^2}\|v_\infty\|_{H^s}\|\partial_x^\alpha v_\infty\|_{L^2},
					\end{align*}
					and
					\begin{align*}
						&-(\partial_x^\alpha(g_3(\tilde\phi)\cdot\nabla a_\infty),\partial_x^\alpha v_\infty)\\
						&=-(\partial_x^\alpha(g_3(\tilde\phi)\cdot\nabla a_\infty)-g_3(\tilde{\phi})\cdot\partial_x^\alpha\nabla a_\infty,\partial_x^\alpha v_\infty)-(g_3(\tilde{\phi})\cdot\partial_x^\alpha\nabla a_\infty,\partial_x^\alpha v_\infty)\\
						&\leq C(\|\nabla g_3(\tilde \phi)\|_{L^\infty}\|\partial_x^\alpha a_\infty\|_{L^2}+\|\partial_x^\alpha g_3(\tilde \phi)\|_{L^2}\|\nabla a_\infty\|_{L^\infty})\|\partial_x^\alpha v_\infty\|_{L^2}-(g_3(\tilde{\phi})\cdot\partial_x^\alpha\nabla a_\infty,\partial_x^\alpha v_\infty)\\
						&\leq C(\|\nabla \tilde \phi\|_{H^{s-1}}\|\partial_x^\alpha a_\infty\|_{L^2}+ \|\nabla  \tilde \phi\|_{H^{s-1}}  \|\nabla a_\infty\|_{L^\infty})\|\partial_x^\alpha v_\infty\|_{L^2}-(g_3(\tilde{\phi})\cdot\partial_x^\alpha\nabla a_\infty,\partial_x^\alpha v_\infty)\\
						&\leq C(\|\nabla\tilde \phi\|_{H^{s-1}}\|\partial_x^\alpha a_\infty\|_{L^2}+\|\partial_x^\alpha \tilde \phi\|_{L^2}\| a_\infty\|_{H^s})\|\partial_x^\alpha v_\infty\|_{L^2}-(g_3(\tilde{\phi})\cdot\partial_x^\alpha\nabla a_\infty,\partial_x^\alpha v_\infty).
					\end{align*}
					The last term on the right-hand side should be handled carefully. It is shown that 
					\begin{align}
						&-(g_3(\tilde{\phi})\cdot\partial_x^\alpha\nabla a_\infty,\partial_x^\alpha v_\infty)\nonumber\\
						&=(g_3(\tilde\phi)\partial_x^\alpha a_\infty,\partial_x^\alpha\text{div}v_\infty)+(\nabla g_3(\tilde\phi)\partial_x^\alpha a_\infty,\partial_x^\alpha v_\infty)\nonumber\\
						&=-(g_3(\tilde\phi)\partial_x^\alpha a_\infty,\partial_x^\alpha \partial_ta_\infty)-(g_3(\tilde\phi)\partial_x^\alpha a_\infty,\partial_x^\alpha(\tilde{v}\cdot\nabla a_\infty))+(\nabla g_3(\tilde\phi)\partial_x^\alpha a_\infty,\partial_x^\alpha v_\infty).\label{92401}
					\end{align}
					The first term on the right-hand side of \eqref{92401} is given by 
					\begin{align*}
						-(g_3(\tilde\phi)\partial_x^\alpha a_\infty,\partial_x^\alpha \partial_ta_\infty)
						&=-\frac{1}{2}\frac{d}{dt}\int_{\mathbb{R}^3}g_3(\tilde\phi)|\partial_x^\alpha a_\infty|^2dx+\frac{1}{2}\int_{\mathbb{R}^3}\partial_tg_3(\tilde\phi)|\partial_x^\alpha a_\infty|^2dx\\
						&\leq -\frac{1}{2}\frac{d}{dt}\int_{\mathbb{R}^3}g_3(\tilde\phi)|\partial_x^\alpha a_\infty|^2dx+C\|\partial_t\tilde\phi\|_{H^{s-1}}\|\partial_x^\alpha a_\infty\|_{L^2}^2.
					\end{align*}
					The second term on the right-hand side of \eqref{92401} is given by 
					\begin{align*}
						&-(g_3(\tilde\phi)\partial_x^\alpha a_\infty,\partial_x^\alpha(\tilde{v}\cdot\nabla a_\infty))\\
						&=-(g_3(\tilde\phi)\partial_x^\alpha a_\infty,\partial_x^\alpha(\tilde{v}\cdot\nabla a_\infty)-\tilde{v}\cdot\partial_x^\alpha\nabla a_\infty)-(g_3(\tilde\phi)\partial_x^\alpha a_\infty,\tilde{v}\cdot\partial_x^\alpha\nabla a_\infty)\\
						&\leq C\|g_3(\tilde\phi)\|_{L^\infty}\|\partial_x^\alpha a_\infty\|_{L^2}(\|\nabla \tilde{v}\|_{L^\infty}\|\partial_x^\alpha a_\infty\|_{L^2}+\|\partial_x^\alpha\tilde{v}\|_{L^2}\|\nabla a_\infty\|_{L^\infty})+C\|\text{div}(g_3(\tilde\phi)\tilde{v})\|_{L^\infty}\|\partial_x^\alpha a_\infty\|_{L^2}^2\\
						&\leq C\|\nabla\tilde{\phi}\|_{H^{s-1}}\|\partial_x^\alpha a_\infty\|_{L^2}(\|\tilde{v}\|_{H^s}\|\partial_x^\alpha a_\infty\|_{L^2}+\|\partial_x^\alpha\tilde{v}\|_{L^2}\|a_\infty\|_{H^s})+C\|\nabla\tilde{\phi}\|_{H^{s-1}}\|\tilde{v}\|_{H^s}\|\partial_x^\alpha a_\infty\|_{L^2}^2.
					\end{align*}
					The third term on the right-hand side of \eqref{92401} is given by 
					\begin{align*}
						(\nabla g_3(\tilde\phi)\partial_x^\alpha a_\infty,\partial_x^\alpha v_\infty)&\leq
						\|\nabla g_3(\tilde\phi)\|_{L^\infty}\|\partial_x^\alpha a_\infty\|_{L^2}\|\partial_x^\alpha v_\infty\|_{L^2}\\
						&\leq C\|\nabla\tilde\phi\|_{L^\infty}\|\partial_x^\alpha a_\infty\|_{L^2}\|\partial_x^\alpha v_\infty\|_{L^2}\\
						&\leq C \|\nabla\tilde\phi\|_{H^{s-1}}\|\partial_x^\alpha a_\infty\|_{L^2}\|\partial_x^\alpha v_\infty\|_{L^2}.
					\end{align*}
					It then follows from above estimates to show that 
					\begin{align*}
						&-(\partial_x^\alpha(g_3(\tilde\phi)\cdot\nabla a_\infty),\partial_x^\alpha v_\infty)\\
						&\leq C(\|\tilde a\|_{H^s}\|\partial_x^\alpha a_\infty\|_{L^2}+\|\partial_x^\alpha \tilde a\|_{L^2}\| a_\infty\|_{H^s})\|\partial_x^\alpha v_\infty\|_{L^2}\\
						&-\frac{1}{2}\frac{d}{dt}\int_{\mathbb{R}^3}g_3(\tilde\phi)|\partial_x^\alpha a_\infty|^2dx+C\|\partial_t\tilde\phi\|_{H^{s-1}}\|\partial_x^\alpha a_\infty\|_{L^2}^2\\
						&+C\|\nabla\tilde{\phi}\|_{H^{s-1}}\|\partial_x^\alpha a_\infty\|_{L^2}(\|\tilde{v}\|_{H^s}\|\partial_x^\alpha a_\infty\|_{L^2}+\|\partial_x^\alpha\tilde{v}\|_{L^2}\|a_\infty\|_{H^s})\\
						&+C \|\nabla\tilde\phi\|_{H^{s-1}}\|\partial_x^\alpha a_\infty\|_{L^2}\|\partial_x^\alpha v_\infty\|_{L^2}.
					\end{align*}
					Noting that 
					\begin{align*}
						&(\partial_x^\alpha F_\infty^0,\partial_x^\alpha a_\infty)+(\partial_x^\alpha F_\infty^1,\partial_x^\alpha v_\infty)\\
						&\leq \|\partial_x^\alpha F_\infty^0\|_{L^2}\|\partial_x^\alpha a_\infty\|_{L^2}+\|\partial_x^\alpha F_\infty^1\|_{L^2}\|\partial_x^\alpha v_\infty\|_{L^2}\\
						&\leq \varepsilon (\|\partial_x^\alpha a_\infty\|_{L^2}^2+\|\partial_x^\alpha v_\infty\|_{L^2}^2)+C\|\partial_x^\alpha F_\infty\|_{L^2}^2.
					\end{align*}
					Substituting the above estimates into \eqref{601} and using the smallness of $\varepsilon$, we are able to prove that
					\begin{align*}
						&	\frac{1}{2}\frac{\mathrm{d}}{\mathrm{d}t}(\|\partial_x^\alpha a_\infty\|_{L^2}^2+\|\partial_x^\alpha v_\infty\|_{L^2}^2)+\frac{1}{2}\|\partial_x^\alpha v_\infty\|_{L^2}^2+\frac{1}{2}\frac{d}{dt}\int_{\mathbb{R}^3}g_3(\tilde\phi)|\partial_x^\alpha a_\infty|^2dx\nonumber\\
						&\leq C\|\tilde{v}\|_{H^s}\|\partial_x^\alpha a_\infty\|_{L^2}^2+C\|a_\infty\|_{H^s}\|\partial_x^\alpha\tilde{v}\|_{L^2}\|\partial_x^\alpha a_\infty\|_{L^2}\\
						&+C(\|\tilde v\|_{H^s}\|\partial_x^\alpha v_\infty\|_{L^2}+\|\partial_x^\alpha\tilde v\|_{L^2}\|v_\infty\|_{H^s})\|\partial_x^\alpha v_\infty\|_{L^2}+C\|\tilde v\|_{H^s}\|\partial_x^\alpha v_\infty\|_{L^2}^2\\
						&+C(\|\nabla\tilde \phi\|_{H^{s-1}}\|\partial_x^\alpha a_\infty\|_{L^2}+\|\partial_x^\alpha \tilde a\|_{L^2}\| a_\infty\|_{H^s})\|\partial_x^\alpha v_\infty\|_{L^2}\\
						&+C\|\partial_t\tilde\phi\|_{H^{s-1}}\|\partial_x^\alpha a_\infty\|_{L^2}^2
						+C\|\nabla\tilde{\phi}\|_{H^{s-1}}\|\partial_x^\alpha a_\infty\|_{L^2}(\|\tilde{v}\|_{H^s}\|\partial_x^\alpha a_\infty\|_{L^2}+\|\partial_x^\alpha\tilde{v}\|_{L^2}\|a_\infty\|_{H^s})\\
						&+C \|\nabla\tilde\phi\|_{H^{s-1}}\|\partial_x^\alpha a_\infty\|_{L^2}\|\partial_x^\alpha v_\infty\|_{L^2}
						+\varepsilon\|\partial_x^\alpha a_\infty\|_{L^2}^2+C\|\partial_x^\alpha F_\infty\|_{L^2}^2.
					\end{align*}
					Summing the above inequality with respect to $|\alpha|$ from $1$ to $s$ and using \eqref{92402}, we get 
					\begin{align*}
						&	\frac{1}{2}\frac{\mathrm{d}}{\mathrm{d}t}(\| a_\infty\|_{H^s}^2+\| v_\infty\|_{H^s}^2)+\frac{1}{2}\| v_\infty\|_{H^s}^2+\frac{1}{2}\sum_{|\alpha|=0}^s\frac{d}{dt}\int_{\mathbb{R}^3}g_3(\tilde\phi)|\partial_x^\alpha a_\infty|^2dx\nonumber\\
						&\leq C(\|\nabla\tilde\phi\|_{H^{s-1}}+\|\nabla\tilde\phi\|_{H^{s-1}}^2+\|\tilde v\|_{H^s}+\|\tilde v\|_{H^s}^2)\|u_\infty\|_{H^s}^2\nonumber\\
						&\quad+C\|\partial_t\tilde\phi\|_{H^{s-1}}\|a_\infty\|_{H^s}^2
						+\varepsilon\|a_\infty\|_{H^{s-1}}^2+C\|F_\infty\|_{H^s}^2.
					\end{align*}
					In terms of the definition in \eqref{121001}, we prove that 
					\begin{align}
						&\frac{\mathrm{d}}{\mathrm{d}t}E_0^s[u_\infty](t)+\| v_\infty\|_{H^s}^2\nonumber\\
						&\leq C(\|\nabla\tilde\phi\|_{H^s}+\|\nabla\tilde\phi\|_{H^{s-1}}^2+\|\tilde v\|_{H^s}+\|\tilde v\|_{H^s}^2)\|u_\infty\|_{H^s}^2\nonumber\\
						&+C\|\partial_t\tilde\phi\|_{H^{s-1}}\|a_\infty\|_{H^s}^2+ \varepsilon\|a_\infty\|_{H^{s-1}}^2  +C\|F_\infty\|_{H^s}^2.
						\label{O3}
					\end{align}
					
					In the next section, we consider the dissipation estimates for the density $a_\infty$.
					
					For $|\alpha|\geq 1$, applying $\partial_x^{\alpha-1}$ to $\eqref{Mainsystem}_2$, then taking inner product with $\partial_x^\alpha a_\infty$ on $\mathbb{R}^d$, we have 
					\begin{align}
						&\frac{\mathrm{d}}{\mathrm{d}t}(\partial_x^{\alpha-1}v_\infty,\partial_x^\alpha a_\infty)+\|\partial_x^\alpha a_\infty\|_{L^2}^2\nonumber\\
						&=(\partial_x^{\alpha-1}v_\infty,\partial_x^\alpha\partial_ta_\infty)
						-(\partial_x^{\alpha-1}v_\infty,\partial_x^\alpha a_\infty)
						+(\partial_x^{\alpha}F_\infty^0,\partial_x^{\alpha-1} v_\infty)+(\partial_x^{\alpha-1}F_\infty^1,\partial_x^\alpha a_\infty)\nonumber\\
						&-(\partial_x^{\alpha-1}(\tilde{v}\cdot\nabla v_\infty),\partial_x^\alpha a_\infty)-(\partial_x^{\alpha-1}(g_3(\tilde\phi)\cdot\nabla a_\infty),\partial_x^\alpha a_\infty).
					\end{align}
					By $\eqref{Mainsystem}_1$ and integration by parts, one has
					\begin{align}
						&\frac{\mathrm{d}}{\mathrm{d}t}(\partial_x^{\alpha-1}v_\infty,\partial_x^\alpha a_\infty)+\|\partial_x^\alpha a_\infty\|_{L^2}^2\nonumber\\
						&=\|\partial_x^\alpha v_\infty\|_{L^2}^2-(\partial_x^{\alpha-1}v_\infty,\partial_x^\alpha(\tilde{v}\cdot\nabla a_\infty))\nonumber\\
						&\quad-(\partial_x^{\alpha-1}v_\infty,\partial_x^\alpha a_\infty)+(\partial_x^{\alpha}F_\infty^0,\partial_x^{\alpha-1} v_\infty)+(\partial_x^{\alpha-1}F_\infty^1,\partial_x^\alpha a_\infty)\nonumber\\
						&\quad-(\partial_x^{\alpha-1}(\tilde{v}\cdot\nabla v_\infty),\partial_x^\alpha a_\infty)-(\partial_x^{\alpha-1}(g_3(\tilde\phi)\cdot\nabla a_\infty),\partial_x^\alpha a_\infty).\label{120101}
					\end{align}
					Noting that 
					\begin{align*}
						(\partial_x^{\alpha-1} v_\infty,\partial_x^\alpha(\tilde v\cdot\nabla a_\infty))
						& =(\partial_x^{\alpha-1}v_\infty,\partial_x^\alpha(\tilde v\cdot\nabla a_\infty)-\tilde v\cdot\partial_x^\alpha\nabla a_\infty)+(\partial_x^{\alpha-1}v_\infty,\tilde v\cdot\partial_x^\alpha\nabla a_\infty)\\
						&\leq C\|\partial_x^{\alpha-1} v_\infty\|_{L^2}(\|\nabla \tilde v\|_{L^\infty}\|\partial_x^\alpha a_\infty\|_{L^2}+\|\partial_x^\alpha\tilde v \|_{L^2}\|\nabla a_\infty\|_{L^\infty})\\
						&+C(\|\partial_x^\alpha v_\infty\|_{L^2}\|\tilde v\|_{L^\infty}\|\partial_x^\alpha a_\infty\|_{L^2}+\|\partial_x^{\alpha-1}v_\infty\|_{L^2}\|\nabla \tilde v\|_{L^\infty}\|\partial_x^\alpha a_\infty\|_{L^2})\nonumber\\
						&\leq \varepsilon \|\partial_x^\alpha a_\infty\|_{L^2}^2+C\|\tilde v\|_{H^s}^2\|\partial_x^\alpha v_\infty\|_{L^2}^2+C\|a_\infty\|_{H^s}\|\partial_x^\alpha \tilde v\|_{L^2}\|\partial_x^\alpha v_\infty\|_{L^2},
					\end{align*}
					\begin{align*}
						&-(\partial_x^{\alpha-1}v_\infty,\partial_x^\alpha a_\infty)+(\partial_x^{\alpha}F_\infty^0,\partial_x^{\alpha-1} v_\infty)+(\partial_x^{\alpha-1}F_\infty^1,\partial_x^\alpha a_\infty)\\
						&\leq 2\varepsilon\|\partial_x^\alpha a_\infty\|_{L^2}^2+C\|\partial_x^{\alpha}F_\infty^0\|_{L^2}^2+C\|\partial_x^{\alpha-1}v_\infty\|_{L^2}^2+C\|\partial_x^{\alpha-1}F_\infty^1\|_{L^2}^2\\
						&\leq 2\varepsilon\|\partial_x^\alpha a_\infty\|_{L^2}^2+C(\|\partial_x^{\alpha}v_\infty\|_{L^2}^2+\|\partial_x^{\alpha}F_\infty\|_{L^2}^2),
					\end{align*}
					and
					\begin{align*}
						&-(\partial_x^{\alpha-1}(\tilde{v}\cdot\nabla v_\infty),\partial_x^\alpha a_\infty)\\
						&=(\partial_x^{\alpha}(\tilde{v}\cdot\nabla v_\infty),\partial_x^{\alpha-1} a_\infty)\\
						&=(\partial_x^{\alpha}(\tilde{v}\cdot\nabla v_\infty)-\tilde{v}\cdot\partial_x^\alpha\nabla v_\infty,\partial_x^{\alpha-1} a_\infty)+(\tilde{v}\cdot\partial_x^\alpha\nabla v_\infty,\partial_x^{\alpha-1}a_\infty)\\
						&\leq C(\|\nabla \tilde v\|_{L^\infty}\|\partial_x^\alpha v_\infty\|_{L^2}+\|\partial_x^\alpha \tilde v\|_{L^2}\|\nabla v_\infty\|_{L^\infty})\|\partial_x^{\alpha-1}a_\infty\|_{L^2}\\
						&\quad+C(\|\nabla \tilde v\|_{L^\infty}\|\partial_x^\alpha v_\infty\|_{L^2}\|\partial_x^{\alpha-1}a_\infty\|_{L^2}+\|\tilde v\|_{L^\infty}\|\partial_x^\alpha v_\infty\|_{L^2}\|\partial_x^\alpha a_\infty\|_{L^2})\\
						&\leq C(\|\tilde v\|_{H^s}\|\partial_x^\alpha v_\infty\|_{L^2}+\|\partial_x^\alpha \tilde v\|_{L^2}\|v_\infty\|_{H^s})\|\partial_x^{\alpha}a_\infty\|_{L^2}\\
						&\quad+C(\|\tilde v\|_{H^s}\|\partial_x^\alpha v_\infty\|_{L^2}\|\partial_x^{\alpha}a_\infty\|_{L^2}+\|\tilde v\|_{H^s}\|\partial_x^\alpha v_\infty\|_{L^2}\|\partial_x^\alpha a_\infty\|_{L^2})\\
						&\leq 2\varepsilon\|\partial_x^\alpha a_\infty\|_{L^2}^2+C(\|\tilde v\|_{H^s}^2\|\partial_x^\alpha v_\infty\|_{L^2}^2+\|v_\infty\|_{H^s}^2\|\partial_x^\alpha \tilde v\|_{L^2}^2).
					\end{align*}
					The remaining term on the right-hand side of \eqref{120101} is given by 
					\begin{align*}
						&-(\partial_x^{\alpha-1}(g_3(\tilde\phi)\cdot\nabla a_\infty),\partial_x^\alpha a_\infty)\\
						&=(\partial_x^{\alpha}(g_3(\tilde\phi)\cdot\nabla a_\infty),\partial_x^{\alpha-1} a_\infty)\\
						&=(\partial_x^{\alpha}(g_3(\tilde\phi)\cdot\nabla a_\infty)-g_3(\tilde\phi)\partial_x^\alpha \nabla a_\infty,\partial_x^{\alpha-1} a_\infty)+(g_3(\tilde\phi)\partial_x^\alpha \nabla a_\infty,\partial_x^{\alpha-1} a_\infty)\\
						&\leq C(\|\nabla g_3(\tilde\phi)\|_{L^\infty}\|\partial_x^\alpha a_\infty\|_{L^2}+\|\partial_x^\alpha g_3(\tilde\phi)\|_{L^2}\|\nabla a_\infty\|_{L^\infty})\|\partial_x^{\alpha-1}a_\infty\|_{L^2}\\
						&\quad+C\|\nabla g_3(\tilde\phi)\|_{L^\infty}\|\partial_x^\alpha a_\infty\|_{L^2}\|\partial_x^{\alpha-1}a_\infty\|_{L^2}+C\|g_3(\tilde\phi)\|_{L^\infty}\|\partial_x^\alpha a_\infty\|_{L^2}^2\\
						&\leq C(\|\nabla\tilde\phi\|_{H^{s-1}}\|\partial_x^\alpha a_\infty\|_{L^2}+\|\partial_x^\alpha\tilde \phi\|_{L^2}\| a_\infty\|_{H^s})\|\partial_x^{\alpha}a_\infty\|_{L^2}\\
						&\quad+C\|\nabla\tilde\phi\|_{H^{s-1}}\|\partial_x^\alpha a_\infty\|_{L^2}^2+C\|\nabla\tilde\phi\|_{H^{s-1}}\|\partial_x^\alpha a_\infty\|_{L^2}^2\\
						&\leq \varepsilon \|\partial_x^\alpha a_\infty\|_{L^2}^2+C\|\nabla\tilde\phi\|_{H^{s-1}}\|\partial_x^\alpha a_\infty\|_{L^2}^2+C\|a_\infty\|_{H^s}^2\|\partial_x^\alpha\tilde\phi\|_{L^2}^2.
					\end{align*}
					Combining the above estimates together, we have
					\begin{align}
						&\frac{\mathrm{d}}{\mathrm{d}t}(\partial_x^{\alpha-1}v_\infty,\partial_x^\alpha a_\infty)+\|\partial_x^\alpha a_\infty\|_{L^2}^2\nonumber\\
						&\leq 6\varepsilon \|\partial_x^\alpha a_\infty\|_{L^2}^2+ C(\|\tilde{v}\|_{H^s}^2\|\partial_x^\alpha v_\infty\|_{L^2}^2+\|a_\infty\|_{H^s}\|\partial_x^\alpha\tilde{v}\|_{L^2}\|\partial_x^\alpha v_\infty\|_{L^2}\nonumber\\
						&+\|\partial_x^\alpha F_\infty\|_{L^2}^2+\|v_\infty\|_{H^s}^2\|\partial_x^\alpha \tilde v\|_{L^2}^2+\|\nabla\tilde\phi\|_{H^{s-1}}\|\partial_x^\alpha a_\infty\|_{L^2}^2\nonumber\\
						&+\|a_\infty\|_{H^s}^2\|\partial_x^\alpha \tilde\phi\|_{L^2}^2)+C\|\partial_x^\alpha v_\infty\|_{L^2}^2.
					\end{align}
					Taking the summation of $|\alpha|$ from $1$ to $s$  and using the smallness of $\varepsilon$, one has
					\begin{align}
						&\frac{\mathrm{d}}{\mathrm{d}t}\sum_{1\leq |\alpha|\leq s}(\partial_x^{\alpha-1}v_\infty,\partial_x^\alpha a_\infty)+\frac{1}{2}\|\nabla a_\infty\|_{H^{s-1}}^2\nonumber\\
						&\leq  C(\|\tilde{v}\|_{H^s}^2\|\nabla v_\infty\|_{H^{s-1}}^2+\|a_\infty\|_{H^s}\|\nabla\tilde{v}\|_{H^{s-1}}\|\nabla v_\infty\|_{H^{s-1}}\nonumber\\
						&+\|\nabla F_\infty\|_{H^{s-1}}^2+\|v_\infty\|_{H^s}^2\|\nabla \tilde v\|_{H^{s-1}}^2+\|\nabla\tilde\phi\|_{H^{s-1}}\|\nabla a_\infty\|_{H^{s-1}}^2\nonumber\\
						&+\|a_\infty\|_{H^s}^2\|\nabla \tilde\phi\|_{H^{s-1}}^2)+C\|\nabla v_\infty\|_{H^{s-1}}^2. \label{O2}
					\end{align}
					Taking  $\beta_1\times$\eqref{O3}+\eqref{O2} with $\beta_1$ a sufficiently small positive constant, we obtain 
					\begin{align}
						& \frac{\mathrm{d}}{\mathrm{d}t}\Big\{E_0^s[u_\infty](t)+\beta_1\sum_{1\leq |\alpha|\leq s}(\partial_x^{\alpha-1}v_\infty,\partial_x^\alpha a_\infty)\Big\}
						+\frac{\beta_1}{2}\|\nabla a_\infty\|_{H^{s-1}}^2+\frac{1}{2}\| v_\infty\|_{H^s}^2\nonumber\\
						&\leq \varepsilon \|a_\infty\|_{L^2}^2+ C(\|\nabla\tilde\phi\|_{H^{s-1}}^2+\|\tilde v\|_{H^s}^2+\|\tilde v\|_{H^s})\|u_\infty\|_{H^s}^2+C\|\partial_t\tilde\phi\|_{H^{s-1}}\|a_\infty\|_{H^s}^2+C\varepsilon\|\nabla a_\infty\|_{H^{s-1}}^2\nonumber\\
						&+C\|F_\infty\|_{H^s}^2
						+C \beta_1\Big(\|\tilde{v}\|_{H^s}^2\|\nabla v_\infty\|_{H^{s-1}}^2+\|a_\infty\|_{H^s}\|\nabla\tilde{v}\|_{H^{s-1}}\|\nabla v_\infty\|_{H^{s-1}}\nonumber\\
						&+\|\nabla F_\infty\|_{H^{s-1}}^2+\|v_\infty\|_{H^s}^2\|\nabla \tilde v\|_{H^{s-1}}^2+\|\nabla\tilde\phi\|_{H^{s-1}}\|\nabla a_\infty\|_{H^{s-1}}^2\nonumber\\
						&+\|a_\infty\|_{H^s}^2\|\nabla \tilde\phi\|_{H^{s-1}}^2+\|\nabla v_\infty\|_{H^{s-1}}^2\Big).
						\label{O4}
					\end{align}
					Due to the smallness of $\beta_1$ and $\varepsilon$, there exists a positive constant $\bar{\kappa}_0>0$ such that 
					\begin{align}
						& \frac{\mathrm{d}}{\mathrm{d}t}E_0^s[u_\infty](t)
						+\bar{\kappa}_0D_0^s[u_\infty](t)\nonumber\\
						&\leq  \varepsilon \|a_\infty\|_{L^2}^2+  C(\|\nabla\tilde\phi\|_{H^{s-1}}+\|\nabla\tilde\phi\|_{H^{s-1}}^2+\|\tilde{v}\|_{H^s}+\|\tilde{v}\|_{H^s}^2)\|u_\infty\|_{H^s}^2\nonumber\\
						&+C\|\partial_t\tilde\phi\|_{H^{s-1}}\|a_\infty\|_{H^s}^2+C\|F_\infty^1\|_{H^s}^2\nonumber\\
						&\leq  C(\|\partial_t\tilde\phi\|_{H^{s-1}}+\|\nabla\tilde\phi\|_{H^{s-1}}+\|\nabla\tilde\phi\|_{H^{s-1}}^2+\|\tilde{v}\|_{H^s}+\|\tilde{v}\|_{H^s}^2)\|u_\infty\|_{H^s}^2+C\|F_\infty\|_{H^s}^2.
					\end{align}
					Let $\kappa_0=\max\{\bar{\kappa}_0,\tilde{\kappa}_0\}$. In terms of the definition of $E_0^s[u_\infty](t)$ and $D_0^s[u_\infty](t)$, we get \eqref{91302}. This completes the proof of this proposition.
					$\hfill\square$
					
					\vspace{2ex}
					
					In the next content, we aim to establish the estimates for $E_\ell^s[u_\infty](t)$ and $D_\ell^s[u_\infty](t)$ with $\ell\geq 1$. Let us introduce a cut-off function. We fix a nonincreasing function $\tilde{\eta}\in C([0,\infty))$ satisfying $0\leq \tilde{\eta}\leq 1$ and 
					\begin{align*}
						\tilde{\eta}=\left\{
						\begin{aligned}
							&1  &&(|r|\leq 1),\\
							&0  &&(|r|>2).
						\end{aligned}
						\right.
					\end{align*}
					Define $\eta_R\in C_0^\infty(\mathbb{R}^d)$ by $\eta_R(x)=\tilde{\eta}(\frac{|x|}{R})$. Observe that, for any multi-index $\alpha$ with $|\alpha|\geq 1$, it holds that 
					\begin{align*}
						\supp (\partial_x^\alpha \eta_{R}) \subset \{R\leq |x| \leq 2R \},\quad \Big||x|\partial_x^\alpha(\eta_R(x)) \Big|\leq C_\alpha R^{1-|\alpha|}, ~(x\in \mathbb{R}^d).
					\end{align*}
					We first prove the following proposition
					\begin{prop}
						Let $d\geq 3$ and $s$ be a nonnegative integer satisfying $s\geq [\frac{d}{2}]+2$ and let $\ell$ be an integer satisfying $\ell\geq 1$. Assume that 
						\begin{align*}
							u_{0\infty}=\trans{(a_{0\infty},v_{0\infty})}\in H^s,~F_\infty=\trans{(F_\infty^0,F_\infty^1)}\in L^2(0,T';H^s)\cap C([0,T'];H^{s-1}). 
						\end{align*}
						Here $T'$ is a given positive number. Assume also that $u_{\infty}=\trans{(a_{\infty},v_{\infty})}$ is the solution of system \eqref{Mainsystem}. If $\nabla\tilde{\phi}\in C([0,T];H^{s-1})\cap  L^{2}(0,T;H^{s-1})$, $\tilde{v}\in C([0,T'];H^s)\cap L^2(0,T';H^{s})$, $\partial_t\tilde{\phi}\in C([0,T];H^{s-1})$, and that $u_{\infty}$ satisfies 
						\begin{align*}
							a_{\infty} \in C([0,T'];H^s), v_{\infty}\in C([0,T'];H^s)\cap L^2(0,T';H^s),
						\end{align*}
						Then there exists a positive constant $\kappa_1>0$ such that the estimate 
						\begin{align}
							&\frac{\mathrm{d}}{\mathrm{d}t}E_{\ell}^s[\eta_R u_{\infty}](t)+\kappa_1D_{\ell}^s[\eta_R u_{\infty}](t)\nonumber\\
							&\leq \varepsilon\||x|^\ell\eta_Ra_\infty\|_{L^2}^2 + C(\|\partial_t\tilde\phi\|_{H^{s-1}}+\|\nabla\tilde{\phi}\|_{H^{s-1}}+\|\tilde{v}\|_{H^s}+\|\nabla\tilde{\phi}\|_{H^{s-1}}^2+\|\tilde{v}\|_{H^s}^2)|\eta_R u_{\infty}|_{H_{\ell}^s}^2\nonumber\\
							&+C(1+\|\nabla\tilde{\phi}\|_{H^{s-1}(N_R)}+\|\tilde{v}\|_{H^s(N_R)}+\|\nabla\tilde{\phi}\|_{H^{s-1}(N_R)}^2+\|\tilde{v}\|_{H^s(N_R)}^2)\|u_{\infty}\|_{H^s(N_R)}^2,\nonumber\\
							&+C|\eta_Ru_\infty|_{H_{\ell-1}^s}^2+C|\eta_R F_{\infty}|_{H_{\ell}^s}^2,
							\label{etaRU1}
						\end{align}
						holds on $(0,T')$. Here $C$ is a positive constant independent of $T'$, and $R\geq 1$; and $N_R$ denotes the set $N_R=\{ x\in \mathbb{R}^d; R\leq |x|\leq 2R\}$.
					\end{prop}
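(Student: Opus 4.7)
The plan is to mimic the proof of Proposition \ref{E0} but applied to the cutoff-localised quantity $\eta_R u_\infty$ with spatial weight $|x|^\ell$, exploiting an inductive structure in $\ell$. First, multiplying each equation of \eqref{Mainsystem} by $\eta_R$ yields a symmetric hyperbolic system of the same form for $(\eta_R a_\infty,\eta_R v_\infty)$ with source $(\eta_R F_\infty^0,\eta_R F_\infty^1)$ plus commutator terms that schematically read $[\eta_R,\mathrm{div}]v_\infty$, $[\eta_R,\tilde v\cdot\nabla]a_\infty$, $[\eta_R,\nabla]a_\infty$, and $[\eta_R,g_3(\tilde\phi)\nabla]a_\infty$. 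Each such commutator has at least one derivative falling on $\eta_R$, is therefore supported on $N_R$, and is pointwise bounded by $CR^{-1}$ times $|u_\infty|$; the resulting contributions will be absorbed into the factor $C(1+\|\nabla\tilde\phi\|_{H^{s-1}(N_R)}+\|\tilde v\|_{H^s(N_R)}+\|\nabla\tilde\phi\|_{H^{s-1}(N_R)}^2+\|\tilde v\|_{H^s(N_R)}^2)\|u_\infty\|_{H^s(N_R)}^2$ on the right-hand side.

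Next, for each multi-index $\alpha$ with $|\alpha|\le s$, I would apply $\partial_x^\alpha$ to the localised system and take the $L^2$ inner product with $|x|^{2\ell}\partial_x^\alpha(\eta_R u_\infty)$, then sum over $\alpha$. The manipulations from the proof of Proposition \ref{E0} transfer with one extra ingredient: the weight $|x|^{2\ell}$ does not commute with $\partial_x^\alpha$ or with integration by parts in the convective terms $\tilde v\cdot\nabla$. Whenever integration by parts produces a factor $\nabla(|x|^{2\ell})$, that factor is pointwise dominated by $C|x|^{2\ell-1}$ and yields a contribution bounded by $C\|\tilde v\|_{H^s}|\eta_R u_\infty|_{H^s_\ell}|\eta_R u_\infty|_{H^s_{\ell-1}}$; Cauchy--Schwarz splits this into $\varepsilon|\eta_R u_\infty|_{H^s_\ell}^2+C|\eta_R u_\infty|_{H^s_{\ell-1}}^2$, the first of which is absorbed into the small-coefficient norm on the right-hand side and the second is exactly the lower-weight remainder appearing in the statement. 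Leibniz commutators between $|x|^\ell$ and $\partial_x^\alpha$ are treated identically and absorbed into $C|\eta_R u_\infty|_{H^s_{\ell-1}}^2$.

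The delicate term is, as in Proposition \ref{E0}, the inner product $(|x|^{2\ell}g_3(\tilde\phi)\partial_x^\alpha\nabla(\eta_R a_\infty),\partial_x^\alpha(\eta_R v_\infty))$, since one derivative of $a_\infty$ is at top order and cannot be paired against a $v_\infty$-dissipation. I would integrate by parts in $x$ and substitute the localised continuity equation for $\partial_t(\eta_R a_\infty)$ to produce $-\frac{1}{2}\frac{d}{dt}\int g_3(\tilde\phi)\bigl||x|^\ell\partial_x^\alpha(\eta_R a_\infty)\bigr|^2\,dx$, which is moved into $E_\ell^s[\eta_R u_\infty]$ at the price of remainders involving $\partial_t g_3(\tilde\phi)$ (dominated by $\|\partial_t\tilde\phi\|_{H^{s-1}}$), $\mathrm{div}(g_3(\tilde\phi)\tilde v)$, and the $\eta_R$-commutators already accounted for on $N_R$. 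To recover the dissipation $\||x|^\ell\nabla a_\infty\|_{H^{s-1}}^2$ for $|\alpha|\ge 1$, I would additionally form the Matsumura--Nishida type cross quantity $(|x|^{2\ell}\partial_x^{\alpha-1}(\eta_R v_\infty),\partial_x^\alpha(\eta_R a_\infty))$, differentiate in $t$ exactly as in the derivation of \eqref{O2}, and combine with a sufficiently small multiplier $\beta_1>0$ to close the system.

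The main obstacle is the bookkeeping: every nonlinear term must be classified as either (a) carrying the smallness factor $\|\partial_t\tilde\phi\|_{H^{s-1}}+\|\nabla\tilde\phi\|_{H^{s-1}}+\|\tilde v\|_{H^s}+\|\nabla\tilde\phi\|_{H^{s-1}}^2+\|\tilde v\|_{H^s}^2$ multiplying $|\eta_R u_\infty|^2_{H^s_\ell}$, (b) reducible to the lower-weight quantity $|\eta_R u_\infty|^2_{H^s_{\ell-1}}$ through a weight-derivative or weight-index shift, (c) localised on $N_R$ via a derivative of $\eta_R$, or (d) absorbable into the weighted norm of $F_\infty$. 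Lemma \ref{lem2.2.} and Lemma \ref{lem2.3.} are used to distribute derivatives in the composite function $g_3(\tilde\phi)$ and in the higher-order commutators without derivative loss. The key reason the closure works in the Euler setting, as stressed in the introduction, is that the additional weighted remainders always place at least one derivative on $|x|^{2\ell}$ rather than on a top-order derivative of $\tilde v$ or $a_\infty$, which is what allows the weighted energy estimate to bypass the absence of a dispersive term in the velocity equation.
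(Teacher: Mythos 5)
Your proposal reproduces the paper's argument step by step: localise with $\eta_R$ so the commutator terms land on $N_R$, test with $|x|^{2\ell}\partial_x^\alpha(\eta_R u_\infty)$, route the extra factors $\nabla(|x|^{2\ell})$ into the lower-weight remainder $|\eta_R u_\infty|_{H_{\ell-1}^s}^2$, absorb the top-order $g_3(\tilde\phi)\partial_x^\alpha\nabla(\eta_R a_\infty)$ term into the time derivative inside $E_\ell^s$ by integrating by parts and substituting the continuity equation, and recover the $a_\infty$-dissipation via the Matsumura--Nishida cross term $(\partial_x^{\alpha-1}(\eta_R v_\infty),|x|^{2\ell}\partial_x^\alpha(\eta_R a_\infty))$ combined with a small multiplier. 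This is essentially the same proof as in the paper.
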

					\noindent\textbf{Proof.} 
					By multiplying $\eta_R$ to \eqref{Mainsystem}, we have
					\begin{equation}
						\left\{
						\begin{aligned}
							&\partial_t(\eta_Ra_\infty)+\tilde{v}\cdot \nabla(\eta_R a_\infty)+\text{div}(\eta_Rv_{\infty})=\eta_RF_\infty^0+K_1,\\
							&\partial_t(\eta_Rv_\infty)+\nabla(\eta_R a_{\infty})+\eta_{R} v_{\infty}+\tilde{v}\cdot\nabla(\eta_Rv_\infty)+g_3(\tilde\phi)\nabla(\eta_Ra_\infty)=\eta_R F_\infty^1+K_2,\label{Mainsystem1}
						\end{aligned}
						\right.
					\end{equation}
					where the nonlinear terms $K_1$ and $K_2$ are given by 
					\begin{align}
						&K_1=\tilde{v}\cdot\nabla\eta_R a_{\infty}+v_{\infty}\cdot\nabla\eta_R,\nonumber\\
						&K_2=a_{\infty}\nabla \eta_R+\tilde{v}\cdot\nabla\eta_Rv_\infty+g_3(\tilde\phi)\nabla\eta_Ra_\infty.
					\end{align}
					
					Multiplying \eqref{Mainsystem1} by $|x|^{2\ell}\eta_Ru_\infty$, and integrating  over $\mathbb{R}^d$ by parts then adding them together, we obtain 
					\begin{align*}
						&	\frac{1}{2}\frac{\mathrm{d}}{\mathrm{d}t}(\big\||x|^{\ell}\eta_Ra_\infty
						\big\|_{L^2}^2+\big\||x|^{\ell}\eta_Rv_\infty
						\big\|_{L^2}^2)+\big\||x|^{\ell}\eta_Rv_\infty
						\big\|_{L^2}^2\nonumber\\
						&=\int_{\mathbb{R}^3}\eta_Rv_\infty \nabla(|x|^{2\ell})\eta_Ra_\infty\mathrm{d}x
						-\int_{\mathbb{R}^3}\tilde{v}\cdot\nabla(\eta_Ra_\infty)|x|^{2\ell}\eta_Ra_\infty\mathrm{d}x
						\nonumber\\
						&+\int_{\mathbb{R}^3}(\tilde{v}\cdot\nabla\eta_Ra_\infty+\nabla\eta_Rv_\infty)\cdot |x|^{2\ell}\eta_Ra_\infty\mathrm{d}x+\int_{\mathbb{R}^3}(\nabla\eta_R a_\infty+\tilde{v}\cdot\nabla\eta_Rv_\infty+g_3(\tilde\phi)\nabla\eta_Ra_\infty)|x|^{2\ell}\eta_Rv_\infty\mathrm{d}x\nonumber\\
						&-\int_{\mathbb{R}^3}\tilde{v}\cdot\nabla(\eta_Rv_\infty)|x|^{2\ell}\eta_Rv_\infty \mathrm{d}x
						-\int_{\mathbb{R}^3}g_3(\tilde\phi)\nabla(\eta_Ra_\infty)|x|^{2\ell}\eta_Rv_\infty\mathrm{d}x\\ &+\int_{\mathbb{R}^3}(\eta_RF_\infty^0|x|^{2\ell}\eta_Ra_\infty+\eta_RF_\infty^1|x|^{2\ell}\eta_Rv_\infty)\mathrm{d}x\nonumber\\
						&:= I_1+I_2+I_3+I_4+I_5+I_6+I_7.
					\end{align*} 
					It is shown that 
					\begin{align*}
						I_1&\leq \||x|^{\ell-1}\eta_Ra_\infty\|_{L^2}\||x|^\ell\eta_Rv_\infty\|_{L^2}\\
						&\leq \varepsilon\||x|^\ell\eta_Rv_\infty\|_{L^2}^2+C\||x|^{\ell-1}\eta_Ra_\infty\|_{L^2}^2,
					\end{align*}
					\begin{align*}
						I_2&=-\frac{1}{2}\int_{\mathbb{R}^3}\tilde{v} \cdot|x|^{2\ell}\nabla(|\eta_Ra_\infty|^2)\mathrm{d}x\\
						&=\frac{1}{2}\int_{\mathbb{R}^3}\text{div}\tilde{v} |x|^{2\ell}(\eta_Ra_\infty)^2\mathrm{d}x+\frac{1}{2}\int_{\mathbb{R}^3}\tilde{v}\cdot\nabla(|x|^{2\ell})|\eta_Ra_\infty|^2\mathrm{d}x\\
						&\leq C\|\text{div}\tilde v\|_{L^\infty}\||x|^\ell\eta_Ra_\infty\|_{L^2}^2+C\|\tilde v\|_{L^\infty}\||x|^\ell\eta_Ra_\infty\|_{L^2}\||x|^{\ell-1}\eta_Ra_\infty\|_{L^2}\\
						&\leq C\|\tilde v\|_{H^s}\||x|^\ell\eta_Ra_\infty\|_{L^2}^2+C\|\tilde v\|_{H^s}^2\||x|^{\ell}\eta_Ra_\infty\|_{L^2}^2+C\||x|^{\ell-1}\eta_Ra_\infty\|_{L^2}^2,
					\end{align*}
					\begin{align*}
						I_3&\leq \|\tilde{v}\|_{L^\infty(N_R)}\|\nabla\eta_Ra_\infty\|_{L^2(N_R)}\||x|^\ell\eta_Ra_\infty\|_{L^2(N_R)}
						\\
						&+\|\nabla\eta_R\|_{L^\infty(N_R)})\|v_\infty\|_{L^2(N_R)}\||x|^\ell\eta_Ra_\infty\|_{L^2(N_R)}\\
						&\leq C\|\tilde{v}\|_{H^s(N_R)}\|a_\infty\|_{L^2(N_R)}^2+C\|a_\infty\|_{L^2(N_R)}\|v_\infty\|_{L^2(N_R)}\\
						&\leq C(1+\|\tilde{v}\|_{H^s(N_R)})\|u_\infty\|_{L^2(N_R)}^2,
					\end{align*}
					\begin{align*}
						I_4&\leq (\|\nabla\eta_R\|_{L^\infty(N_R)}\|a_\infty\|_{L^2(N_R)}+\|\tilde v\|_{L^\infty(N_R)}\|\nabla\eta_R\|_{L^\infty(N_R)}\|v_\infty\|_{L^2(N_R)}\\
						&+\|g_3(\tilde\phi)\|_{L^\infty(N_R)}\|\nabla\eta_R\|_{L^\infty(N_R)}\|a_\infty\|_{L^2(N_R)})\||x|^\ell \eta_Rv_\infty\|_{L^2(N_R)}\\
						&\leq C(1+\|\tilde v\|_{H^s(N_R)}+\|\nabla\tilde \phi\|_{H^{s-1}(N_R)})(\|a_\infty\|_{L^2(N_R)}^2+\|v_\infty\|_{L^2(N_R)}^2)\\
						&\leq C(1+\|\tilde v\|_{H^s(N_R)}+\|\nabla\tilde \phi\|_{H^{s-1}(N_R)})\|u_\infty\|_{L^2(N_R)}^2,
					\end{align*}
					\begin{align*}
						I_5&=-\frac{1}{2}\int_{\mathbb{R}^3}\tilde{v} \cdot|x|^{2\ell}\nabla(|(\eta_Rv_\infty)|^2)\mathrm{d}x\\
						&=\frac{1}{2}\int_{\mathbb{R}^3}\text{div}\tilde{v}|x|^{2\ell}(\eta_Rv_\infty)^2\mathrm{d}x+\frac{1}{2}\int_{\mathbb{R}^3}\tilde{v}\cdot\nabla(|x|^{2\ell})|\eta_Rv_\infty|^2\mathrm{d}x\\
						&\leq C\|\text{div}\tilde v\|_{L^\infty}\||x|^\ell\eta_Rv_\infty\|_{L^2}^2+C\|\tilde v\|_{L^\infty}\||x|^\ell\eta_Rv_\infty\|_{L^2}\||x|^{\ell-1}\eta_Rv_\infty\|_{L^2}^2\\
						&\leq C\|\tilde v\|_{H^s}\||x|^\ell\eta_Rv_\infty\|_{L^2}^2+C\|\tilde v\|_{H^s}^2\||x|^{\ell}\eta_Rv_\infty\|_{L^2}^2+C\||x|^{\ell-1}\eta_Rv_\infty\|_{L^2}^2,
					\end{align*}
					\begin{align*}
						I_6&\leq \|g_3(\tilde\phi)\|_{L^\infty}\||x|^\ell\nabla(\eta_R a_\infty)\|_{L^2}\||x|^\ell\eta_Rv_\infty\|_{L^2}\\
						&\leq C\|\nabla\tilde\phi\|_{H^{s-1}}	\||x|^\ell\nabla(\eta_Ra_\infty)\|_{L^2}\||x|^\ell\eta_Rv_\infty\|_{L^2}\\
						&\leq \varepsilon\||x|^\ell\eta_Rv_\infty\|_{L^2}^2+C\|\nabla\tilde\phi\|_{H^{s-1}}^2	\||x|^\ell\nabla(\eta_Ra_\infty)\|_{L^2}^2,
					\end{align*}
					\begin{align*}
						I_7&\leq \||x|^\ell\eta_RF_\infty^0\|_{L^2}\||x|^\ell\eta_Ra_\infty\|_{L^2}+\||x|^\ell\eta_RF_\infty^1\|_{L^2}\||x|^\ell\eta_Rv_\infty\|_{L^2}\\
						&\leq \varepsilon(\||x|^\ell\eta_Ra_\infty\|_{L^2}^2+\||x|^\ell\eta_Rv_\infty\|_{L^2}^2)+C\||x|^\ell\eta_RF_\infty^0\|_{L^2}^2+C\||x|^\ell\eta_RF_\infty^1\|_{L^2}^2\\
						&\leq \varepsilon(\||x|^\ell\eta_Ra_\infty\|_{L^2}^2+\||x|^\ell\eta_Rv_\infty\|_{L^2}^2)+C\||x|^\ell\eta_RF_\infty\|_{L^2}^2.
					\end{align*}
					Combining the estimates of $I_1\sim I_7$ together, one has 
					\begin{align}
						&	\frac{1}{2}\frac{\mathrm{d}}{\mathrm{d}t}(\big\||x|^{\ell}\eta_Ra_\infty
						\big\|_{L^2}^2+\big\||x|^{\ell}\eta_Rv_\infty
						\big\|_{L^2}^2)+\big\||x|^{\ell}\eta_Rv_\infty
						\big\|_{L^2}^2\nonumber\\
						&\leq 3\varepsilon\||x|^\ell\eta_R v_\infty\|_{L^2}^2 + \varepsilon\||x|^\ell\eta_Ra_\infty\|_{L^2}^2  +C(\|\tilde{v}\|_{H^s}+\|\tilde{v}\|_{H^s}^2)\||x|^\ell\eta_Ru_\infty\|_{L^2}^2+C\||x|^{\ell-1}\eta_Ru_\infty\|_{L^2}^2\nonumber\\
						&+C\|\nabla\tilde\phi\|_{H^{s-1}}^2\||x|^\ell\nabla\eta_Ra_\infty)\|_{L^2}^2+C(1+\|\nabla\tilde\phi\|_{H^{s-1}(N_R)}+\|\tilde v\|_{H^s(N_R)})\|u_\infty\|_{L^2(N_R)}^2\nonumber\\
						&+C\||x|^\ell\eta_RF_\infty\|_{L^2}^2.
					\end{align} 
					It then follows from above and the smallness of $\varepsilon$ to prove that
					\begin{align}\label{102801}
						&\frac{\mathrm{d}}{\mathrm{d}t}(\big\||x|^{\ell}\eta_Ra_\infty
						\big\|_{L^2}^2+\big\||x|^{\ell}\eta_Rv_\infty
						\big\|_{L^2}^2)+\big\||x|^{\ell}\eta_Rv_\infty
						\big\|_{L^2}^2\nonumber\\
						&\leq  \varepsilon\||x|^\ell \eta_Ra_\infty\|_{L^2}^2 + C(\|\tilde{v}\|_{H^s}+\|\tilde{v}\|_{H^s}^2)\||x|^\ell\eta_Ru_\infty\|_{L^2}^2+C\||x|^{\ell-1}\eta_Ru_\infty\|_{L^2}^2\nonumber\\
						&+C\|\nabla\tilde\phi\|_{H^{s-1}}^2\||x|^\ell\nabla(\eta_Ra_\infty)\|_{L^2}^2+C(1+\|\nabla\tilde\phi\|_{H^{s-1}(N_R)}+\|\tilde v\|_{H^s(N_R)})\|u_\infty\|_{L^2(N_R)}^2\nonumber\\
						&+C\||x|^\ell\eta_RF_\infty\|_{L^2}^2.
					\end{align} 
					For a multi-index $\alpha$ satisfying $1\leq |\alpha|\leq s$, we take the inner product of $\partial_x^\alpha \eqref{Mainsystem1}_1$ with $|x|^{2\ell}\partial_x^\alpha (\eta_Ra_\infty)$ to prove 
					\begin{align}
						&\frac{1}{2}\frac{\mathrm{d}}{\mathrm{d}t}\big\||x|^{\ell}\partial_x^\alpha(\eta_Ra_\infty)
						\big\|_{L^2}^2+(\partial_x^\alpha\text{div}(\eta_Rv_\infty),|x|^{2\ell}\partial_x^\alpha (\eta_Ra_\infty))\nonumber\\
						&=\frac{1}{2}(\text{div}\tilde{v},|x|^{2\ell}|\partial_x^\alpha(\eta_Ra_{\infty})|^2)-([\partial_x^\alpha,\tilde{v}]\cdot \nabla(\eta_Ra_{\infty}),|x|^{2\ell}\partial_x^\alpha(\eta_R a_{\infty}))\nonumber\\
						&+\frac{1}{2}(\tilde{v}\cdot\nabla(|x|^{2\ell}),|\partial_x^\alpha (\eta_R a_{\infty})|^2)+(\partial_x^\alpha K_1,|x|^{2\ell}\partial_x^\alpha(\eta_R a_{\infty}))\nonumber\\
						&+(\partial_x^\alpha(\eta_R F_{\infty}^0),|x|^{2\ell}\partial_x^\alpha(\eta_Ra_{\infty}))\nonumber\\
						&:= J_1^\alpha+J_2^\alpha+J_3^\alpha+J_4^\alpha+J_5^\alpha,\label{M1}		
					\end{align}
					where we have used the fact that 
					\begin{align*}
						&(\partial_x^\alpha(\tilde{v}\cdot \nabla(\eta_R a_{\infty})),|x|^{2\ell}\partial_x^\alpha(\eta_R a_{\infty}))\\
						&=(\tilde{v}\cdot\nabla \partial_x^\alpha(\eta_R a_{\infty}),|x|^{2\ell}\partial_x^\alpha(\eta_R a_{\infty}))+([\partial_x^\alpha,\tilde{v}]\cdot\nabla(\eta_Ra_{\infty}),|x|^{2\ell}\partial_x^\alpha(\eta_Ra_{\infty}))\\
						&=\frac{1}{2}(|x|^{2\ell}\tilde{v},\nabla|\partial_x^\alpha(\eta_R a_{\infty})|^2)+([\partial_x^\alpha,\tilde{v}]\cdot\nabla(\eta_Ra_{\infty}),|x|^{2\ell}\partial_x^\alpha(\eta_R a_{\infty}))\\
						&=-\frac{1}{2}(\text{div}\tilde{v},|x|^{2\ell}|\partial_x^\alpha(\eta_Ra_{\infty})|^2)-\frac{1}{2}(\tilde{v}\cdot\nabla(|x|^{2\ell}),|\partial_x^\alpha(\eta_Ra_{\infty})|^2)\\
						&~~+([\partial_x^\alpha,\tilde{v}]\cdot\nabla(\eta_Ra_{\infty}),|x|^{2\ell}\partial_x^\alpha(\eta_Ra_{\infty})).
					\end{align*}
					This calculation can be justified by using the standard Friedrichs commutator argument.

					Then we estimate $J_1^\alpha\sim J_4^\alpha$ as follows,
					\begin{align*}
						\sum_{|\alpha|=1}^sJ_1^\alpha\leq C\sum_{|\alpha|=1}^s\|\text{div}\tilde{v}\|_{L^\infty} \big\||x|^{\ell}\partial_x^\alpha(\eta_Ra_\infty)
						\big\|_{L^2}^2\leq C\|\tilde{v}\|_{H^s}|\nabla(\eta_Ra_\infty)|_{H_{\ell}^{s-1}}^2,
					\end{align*}
					\begin{align*}
						\sum_{|\alpha|=1}^sJ_2^\alpha&\leq C\|\tilde{v}\|_{H^s}|\nabla(\eta_R a_\infty)|_{H_\ell^{s-1}}^2,
					\end{align*}
					\begin{align*}
						\sum_{|\alpha|=1}^sJ_3^\alpha&\leq C\sum_{|\alpha|=1}^s\|\tilde{v}\|_{L^\infty}\||x|^\ell\partial_x^\alpha (\eta_Ra_\infty)\|_{L^2}\||x|^{\ell-1}\partial_x^\alpha(\eta_Ra_\infty)\|_{L^2}\\
						&\leq C\|\tilde{v}\|_{H^s}|\nabla (\eta_Ra_\infty)|_{H_\ell^{s-1}}|\nabla(\eta_Ra_\infty)|_{H_{\ell-1}^{s-1}}\\
						&\leq C |\nabla (\eta_Ra_\infty)|_{H_{\ell-1}^{s-1}}^2+
						C \|\tilde{v}\|_{H^s}^2|\nabla(\eta_Ra_\infty)|_{H_{\ell}^{s-1}}^2,
					\end{align*}
					\begin{align*}
						\sum_{|\alpha|=1}^sJ_4^\alpha&=\sum_{|\alpha|=1}^s(\partial_x^\alpha(\tilde v\cdot\nabla\eta_Ra_\infty+v_\infty\cdot\nabla\eta_R),|x|^{2\ell}\partial_x^\alpha(\eta_Rv_\infty))\\
						&\leq C(\|\tilde v\|_{H^s(N_R)}\|\nabla\eta_R\|_{H^s(N_R)}\|a_\infty\|_{H^s(N_R)}+\|v_\infty\|_{H^s(N_R)}\|\nabla\eta_R\|_{H^s(N_R)})\|\nabla(\eta_Rv_\infty)\|_{H^{s-1}(N_R)}\\
						&\leq C (\|\tilde v\|_{H^s(N_R)}\|a_\infty\|_{H^s(N_R)}+\|v_\infty\|_{H^s(N_R)})\|v_\infty\|_{H^s(N_R)},
					\end{align*}
					\begin{align*}
						\sum_{|\alpha|=1}^s	J_5^\alpha&\leq \sum_{|\alpha|=1}^s	\||x|^\ell\partial_x^\alpha(\eta_RF_\infty^0)\|_{L^2}\||x|^{\ell}\partial_x^\alpha(\eta_Ra_\infty)\|_{L^2}\\
						&\leq C \varepsilon |\nabla(\eta_Ra_\infty)|_{H_\ell^{s-1}}^2+C|\nabla(\eta_RF_\infty^0)|_{H_\ell^{s-1}}^2.
					\end{align*}
					Summing $|\alpha|$ from $1$ to $s$, we are able to show that 
					\begin{align}
						&\frac{1}{2}\frac{\mathrm{d}}{\mathrm{d}t}\big|\nabla(\eta_Ra_\infty)|_{H_\ell^{s-1}}^2+\sum_{|\alpha|=1}^s(\partial_x^\alpha\text{div}(\eta_Rv_\infty),|x|^{2\ell}\partial_x^\alpha (\eta_Ra_\infty))\nonumber\\
						&\leq C(\|\tilde v\|_{H^s}+\|\tilde v\|_{H^s}^2)|\nabla(\eta_Ra_\infty)|_{H_\ell^{s-1}}^2+C|\nabla(\eta_Ra_\infty)|_{H_{\ell-1}^{s-1}}^2\nonumber\\
						&+C\varepsilon |\nabla(\eta_Ra_\infty)|_{H_\ell^{s-1}}^2+C\|\tilde v\|_{H^s(N_R)}(\|a_\infty\|_{H^s(N_R)}^2+\|v_\infty\|_{H^s(N_R)}^2)+C\|v_\infty\|_{H^s(N_R)}^2.\label{101002}
					\end{align}
					We take the inner product of $\partial_x^\alpha \eqref{Mainsystem1}_2$ with $|x|^{2\ell}\partial_x^\alpha(\eta_Rv_{\infty})$ and integrate by parts to produce 
					\begin{align}
						&\frac{1}{2}\frac{\mathrm{d}}{\mathrm{d}t}\big\| 
						|x|^\ell\partial_x^\alpha(\eta_Rv_{\infty})\big\|_{L^2}^2-(\partial_x^\alpha(\eta_Ra_{\infty}),|x|^{2\ell}\partial_x^\alpha\text{div}(\eta_Rv_{\infty}))+\big\||x|^\ell\partial_x^\alpha(\eta_Rv_{\infty})\big\|_{L^2}^2\nonumber\\
						&=
						(\partial_x^\alpha(\eta_Ra_{\infty}),\nabla(|x|^{2\ell})\partial_x^\alpha(\eta_Rv_{\infty}))
						+(\partial_x^\alpha K_2,|x|^{2\ell}\partial_x^\alpha(\eta_Rv_{\infty}))\nonumber\\
						&-(\partial_x^\alpha(\tilde{v}\cdot\nabla(\eta_Rv_\infty)),|x|^{2\ell}\partial_x^\alpha(\eta_Rv_\infty))-(\partial_x^\alpha(g_3(\tilde\phi)\nabla(\eta_Ra_\infty)),|x|^{2\ell}\partial_x^\alpha(\eta_Rv_\infty))
						\nonumber\\
						&+(\partial_x^\alpha(\eta_R F_{\infty}^1),|x|^{2\ell}\partial_x^\alpha(\eta_Rv_{\infty}))\nonumber\\
						&:= J_6^\alpha+J_7^\alpha+J_8^\alpha+J_9^\alpha+J_{10}^\alpha.\label{M4}
					\end{align}
					It is directly proved that 
					\begin{align*}
						\sum_{|\alpha|=1}^sJ_6^\alpha
						&\leq C \sum_{|\alpha|=1}^s \||x|^{\ell-1}\partial_x^\alpha(\eta_Ra_\infty)\|_{L^2}\||x|^{\ell}\partial_x^\alpha(\eta_Rv_\infty)\|_{L^2}\\
						&\leq 	\varepsilon |\nabla(\eta_Rv_\infty)|_{H_\ell^{s-1}}^2+C|\nabla(\eta_Ra_\infty)|_{H_{\ell-1}^{s-1}}^2.
					\end{align*}
					For $J_7^\alpha$, we consider it below,
					\begin{align*}
						\sum_{|\alpha|=1}^sJ_7^\alpha&=\sum_{|\alpha|=1}^s(\partial_x^\alpha(\nabla\eta_R a_\infty+\tilde{v}\cdot\nabla\eta_Rv_\infty+g_3(\tilde\phi)\nabla\eta_R a_\infty),|x|^{2\ell}\partial_x^\alpha(\eta_Rv_\infty))\\
						&\leq C(\|\nabla\eta_R\|_{H^{s}(N_R)}\|a_\infty\|_{H^s(N_R)}+\|\tilde v\|_{H^s(N_R)}\|\nabla\eta_R\|_{H^s(N_R)}\|v_\infty\|_{H^s(N_R)}\\
						&+\|g_3(\tilde\phi)\|_{H^s(N_R)}\|\nabla\eta_R\|_{H^s(N_R)}\|a_\infty\|_{H^s(N_R)})\| \nabla(\eta_Rv_\infty)\|_{H^{s-1}(N_R)}\\
						&\leq C(\|a_\infty\|_{H^s(N_R)}+\|\tilde v\|_{H^s(N_R)}\|v_\infty\|_{H^s(N_R)}+\|\nabla\tilde\phi\|_{H^{s-1}(N_R)}\|a_\infty\|_{H^s(N_R)})\|v_\infty\|_{H^s(N_R)}\\
						&\leq C(1+\|\nabla\tilde\phi\|_{H^{s-1}(N_R)}+\|\nabla\tilde v\|_{H^{s-1}(N_R)})\|u_\infty\|_{H^s(N_R)}^2.
					\end{align*}
					For $J_8^\alpha$, it holds that 
					\begin{align*}
						\sum_{|\alpha|=1}^sJ_8^\alpha&=-(\partial_x^\alpha(\tilde{v}\cdot\nabla(\eta_Rv_\infty))-\tilde{v}\cdot\partial_x^\alpha\nabla(\eta_R v_\infty),|x|^{2\ell}\partial_x^\alpha(\eta_Rv_\infty))-(\tilde{v}\cdot\partial_x^\alpha\nabla(\eta_R v_\infty),|x|^{2\ell}\partial_x^\alpha(\eta_Rv_\infty))\\
						& = (\partial_x^\alpha(\tilde{v}\cdot\nabla(\eta_Rv_\infty))-\tilde{v}\cdot\partial_x^\alpha\nabla(\eta_R v_\infty),|x|^{2\ell}\partial_x^\alpha(\eta_Rv_\infty)) \\
						&\quad+(\div \tilde{v}, |x|^{2\ell}|\partial_x^\alpha(\eta_Rv_\infty)|^2) +(\tilde{v}, \nabla(|x|^{2\ell})|\partial_x^\alpha(\eta_Rv_\infty)|^2)  \\ 
						&\leq C\|\tilde v\|_{H^s}|\nabla(\eta_Rv_\infty)|_{H_\ell^{s-1}}^2+C\|\tilde v\|_{H^s}|\nabla(\eta_Rv_\infty)|_{H_\ell^{s-1}}|\nabla(\eta_Rv_\infty)|_{H_{\ell-1}^{s-1}}\\
						&\leq C(\|\tilde v\|_{H^s}+\|\tilde v\|_{H^s}^2)|\nabla(\eta_Rv_\infty)|_{H_\ell^{s-1}}^2+C|\nabla(\eta_Rv_\infty)|_{H_{\ell-1}^{s-1}}^2.
					\end{align*}
					For $J_9^\alpha$, we have
					\begin{align*}
						J_9^\alpha&=-(\partial_x^\alpha(g_3(\tilde\phi)\nabla(\eta_R a_\infty))-g_3(\tilde\phi)\partial_x^\alpha(\nabla(\eta_R a_\infty)),|x|^{2\ell}\partial_x^\alpha(\eta_R v_\infty))\\
						&-(g_3(\tilde\phi)\partial_x^\alpha(\nabla(\eta_R a_\infty)),|x|^{2\ell}\partial_x^\alpha(\eta_Rv_\infty))\\
						&:= J_{91}^\alpha+J_{92}^\alpha.
					\end{align*}
					The first term $J_{91}^\alpha$ is estimated as follows,
					\begin{align}\label{103001}
						\sum_{|\alpha|=1}^sJ_{91}^\alpha
						&\leq C\|\nabla\tilde\phi\|_{H^{s-1}}|\nabla(\eta_Ra_\infty)|_{H_\ell^{s-1}}|\nabla(\eta_Rv_\infty)|_{H_\ell^{s-1}}
						\nonumber\\
						&\leq \varepsilon |\nabla(\eta_Rv_\infty)|_{H_\ell^{s-1}}^2+C\|\nabla\tilde\phi\|_{H^{s-1}}^2|\nabla(\eta_Ra_\infty)|_{H_\ell^{s-1}}^2.
					\end{align}
					In terms of integration by parts, we have 
					\begin{align}
						J_{92}^\alpha&=(\nabla g_3(\tilde\phi)\partial_x^\alpha(\eta_R a_\infty),|x|^{2\ell}\partial_x^\alpha(\eta_Rv_\infty ))+(g_3(\tilde\phi)\partial_x^\alpha(\eta_R a_\infty),\nabla(|x|^{2\ell})\partial_x^\alpha(\eta_Rv_\infty ))\nonumber\\
						&+(g_3(\tilde\phi)\partial_x^\alpha(\eta_R a_\infty),|x|^{2\ell}\partial_x^\alpha\text{div}(\eta_Rv_\infty )).\label{100401}
					\end{align}
					It is shown that 
					\begin{align*}
						&(\nabla g_3(\tilde\phi)\partial_x^\alpha(\eta_R a_\infty),|x|^{2\ell}\partial_x^\alpha(\eta_Rv_\infty ))+(g_3(\tilde\phi)\partial_x^\alpha(\eta_R a_\infty),\nabla(|x|^{2\ell})\partial_x^\alpha(\eta_Rv_\infty ))\\
						&\leq \|\nabla g_3(\tilde\phi)\|_{L^\infty}\||x|^\ell\partial_x^\alpha (\eta_R a_\infty)\|_{L^2}
						\||x|^\ell\partial_x^\alpha (\eta_R v_\infty)\|_{L^2}\\
						&~~+C\|g_3(\tilde\phi)\|_{L^\infty}\||x|^\ell\partial_x^\alpha (\eta_R a_\infty)\|_{L^2}
						\||x|^{\ell-1}\partial_x^\alpha (\eta_R v_\infty)\|_{L^2}\\
						&\leq C\|\nabla\tilde\phi\|_{H^{s-1}}\||x|^\ell\partial_x^\alpha (\eta_R a_\infty)\|_{L^2}
						\||x|^\ell\partial_x^\alpha (\eta_R v_\infty)\|_{L^2}\\
						&~~+C\|\nabla\tilde\phi\|_{H^{s-1}}\||x|^\ell\partial_x^\alpha (\eta_R a_\infty)\|_{L^2}
						\||x|^{\ell-1}\partial_x^\alpha (\eta_R v_\infty)\|_{L^2}\\
						&\leq \varepsilon \||x|^\ell\partial_x^\alpha (\eta_R v_\infty)\|_{L^2}^2+C\|\nabla\tilde\phi\|_{H^{s-1}}^2\||x|^\ell\partial_x^\alpha (\eta_R a_\infty)\|_{L^2}^2+C\||x|^{\ell-1}\partial_x^\alpha (\eta_R v_\infty)\|_{L^2}^2.
					\end{align*}
					Thus we have
					\begin{align*}
						&\sum_{|\alpha|=1}^s((\nabla g_3(\tilde\phi)\partial_x^\alpha(\eta_R a_\infty),|x|^{2\ell}\partial_x^\alpha(\eta_Rv_\infty ))+(g_3(\tilde\phi)\partial_x^\alpha(\eta_R a_\infty),\nabla(|x|^{2\ell})\partial_x^\alpha(\eta_Rv_\infty )))\\
						&\leq \varepsilon |\nabla(\eta_Rv_\infty)|_{H_\ell^{s-1}}^2+
						C\|\nabla\tilde\phi\|_{H^{s-1}}^2|\nabla(\eta_Ra_\infty)|_{H_\ell^{s-1}}^2+C|\nabla(\eta_Ra_\infty)|_{H_{\ell-1}^{s-1}}^2.
					\end{align*}
					The remaining term on the right-hand side of \eqref{100401} is given by
					\begin{align}
						&	(g_3(\tilde\phi)\partial_x^\alpha(\eta_R a_\infty),|x|^{2\ell}\partial_x^\alpha\text{div}(\eta_Rv_\infty ))\nonumber\\
						&=-(g_3(\tilde\phi)\partial_x^\alpha(\eta_R a_\infty)|x|^{2\ell},\partial_x^\alpha\partial_t(\eta_R a_\infty))\nonumber\\
						&-(g_3(\tilde\phi)\partial_x^\alpha(\eta_R a_\infty)|x|^{2\ell},
						\partial_x^\alpha(\tilde{v}\cdot\nabla(\eta_Ra_\infty)))\nonumber\\
						&+(g_3(\tilde\phi)\partial_x^\alpha(\eta_R a_\infty),\partial_x^\alpha(\tilde{v}\cdot\nabla\eta_R a_\infty+v_\infty\cdot\nabla\eta_R)).\label{102901}
					\end{align}
					Noting that 
					\begin{align*}
						&-\sum_{|\alpha|=1}^s(g_3(\tilde\phi)\partial_x^\alpha(\eta_R a_\infty),|x|^{2\ell}\partial_x^\alpha\partial_t(\eta_R a_\infty))\\
						&=-\frac{1}{2}\sum_{|\alpha|=1}^s\frac{d}{dt}\int_{\mathbb{R}^3}g_3(\tilde\phi)\big||x|^\ell\partial_x^\alpha (\eta_R a_\infty)\big|^2\mathrm{d}x+\frac{1}{2}\sum_{|\alpha|=1}^s\frac{d}{dt}\int_{\mathbb{R}^3}\partial_tg_3(\tilde\phi)\big||x|^\ell\partial_x^\alpha (\eta_R a_\infty)\big|^2\mathrm{d}x\\
						&\leq -\frac{1}{2}\sum_{|\alpha|=1}^s\frac{d}{dt}\int_{\mathbb{R}^3}g_3(\tilde\phi)\big||x|^\ell\partial_x^\alpha (\eta_R a_\infty)\big|^2\mathrm{d}x+C\|\partial_t\tilde\phi\|_{H^{s-1}}|\nabla(\eta_Ra_\infty)|_{H_\ell^{s-1}}^2,
					\end{align*}
					and
					\begin{align*}
						&	-(g_3(\tilde\phi)\partial_x^\alpha(\eta_R a_\infty)|x|^{2\ell},\partial_x^\alpha(\tilde{v}\cdot\nabla(\eta_R a_\infty))\\
						&=-(g_3(\tilde\phi)\partial_x^\alpha(\eta_R a_\infty)|x|^{2\ell},\partial_x^\alpha(\tilde{v}\cdot\nabla(\eta_R a_\infty))-\tilde{v}\cdot\partial_x^\alpha\nabla(\eta_R a_\infty))\\
						&-(g_3(\tilde\phi)\partial_x^\alpha(\eta_R a_\infty)|x|^{2\ell},\tilde{v}\cdot\partial_x^\alpha\nabla(\eta_R a_\infty)).
					\end{align*}
					Then we have
					\begin{align*}
						&\sum_{|\alpha|=1}^s(-(g_3(\tilde\phi)\partial_x^\alpha(\eta_R a_\infty)|x|^{2\ell},\partial_x^\alpha(\tilde{v}\cdot\nabla(\eta_R a_\infty))-\tilde{v}\cdot\partial_x^\alpha\nabla(\eta_R a_\infty)))\\
						&\leq C\|g_3(\tilde\phi)\|_{L^\infty}|\nabla(\eta_Ra_\infty)|_{H_\ell^{s-1}}\|\tilde v\|_{H^s}|\nabla(\eta_Ra_\infty)|_{H_\ell^{s-1}}	\\
						&\leq C(\|\nabla\tilde\phi\|_{H^{s-1}}^2+\|\tilde v\|_{H^s}^2)|\nabla(\eta_Ra_\infty)|_{H_\ell^{s-1}}^2,
					\end{align*}
					and 
					\begin{align*}
						&\sum_{|\alpha|=1}^s((g_3(\tilde\phi)\partial_x^\alpha(\eta_R a_\infty)|x|^{2\ell},\tilde{v}\cdot\partial_x^\alpha\nabla(\eta_R a_\infty))\\
						& = -\sum_{|\alpha|=1}^s(\div ((g_3(\tilde\phi)\tilde{v})\partial_x^\alpha(\eta_R a_\infty), |x|^{2\ell}\partial_x^\alpha \eta_R a_\infty) -\sum_{|\alpha|=1}^s( (g_3(\tilde\phi)\tilde{v} \partial_x^\alpha(\eta_R a_\infty), \nabla(|x|^{2\ell}) \partial_x^\alpha \eta_R a_\infty)\\  
						&\leq C \|\nabla \tilde{v}\|_{H^{s-1}} \|\nabla g_3(\tilde\phi)\|_{L^\infty}|\nabla(\eta_R a_\infty)|_{H_\ell^{s-1}}^2+C\|\nabla \tilde{v}\|_{H^{s-1}} \|g_3(\tilde\phi)\|_{L^\infty}|\nabla(\eta_Ra_\infty)|_{H_\ell^{s-1}}|\nabla(\eta_Ra_\infty)|_{H_{\ell-1}^{s-1}}\\
						&\leq C\|\nabla \tilde{v}\|_{H^{s-1}} \|\nabla\tilde\phi\|_{H^{s-1}}|\nabla(\eta_Ra_\infty)|_{H_\ell^{s-1}}^2+C\|\nabla \tilde{v}\|_{H^{s-1}} \|\nabla\tilde\phi\|_{H^{s-1}}|\nabla(\eta_Ra_\infty)|_{H_\ell^{s-1}}|\nabla(\eta_Ra_\infty)|_{H_{\ell-1}^{s-1}}\\
						&\leq  C(\|\nabla \tilde{v}\|_{H^{s-1}}^2 +\|\nabla\tilde\phi\|_{H^{s-1}}^2)|\nabla(\eta_Ra_\infty)|_{H_\ell^{s-1}}^2+C|\nabla(\eta_Ra_\infty)|_{H_{\ell-1}^{s-1}}^2.
					\end{align*}
					It then concludes from above to show that 
					\begin{align*}
						&\sum_{|\alpha|=1}^s(	-(g_3(\tilde\phi)\partial_x^\alpha(\eta_R a_\infty)|x|^{2\ell},\partial_x^\alpha(\tilde{v}\cdot\nabla(\eta_R a_\infty))\\
						&\leq C(\|\nabla\tilde\phi\|_{H^{s-1}}+\|\nabla\tilde\phi\|_{H^{s-1}}^2+\|\tilde v\|_{H^s}^2)|\nabla(\eta_Ra_\infty)|_{H_\ell^{s-1}}^2+C|\nabla(\eta_Ra_\infty)|_{H_{\ell-1}^{s-1}}^2.
					\end{align*}
					The third term on the right-hand side of \eqref{102901} is estimates as
					\begin{align*}
						&\sum_{|\alpha|=1}^s(g_3(\tilde\phi)\partial_x^\alpha(\eta_R a_\infty),|x|^{2\ell}\partial_x^\alpha(\tilde{v}\cdot\nabla\eta_R a_\infty+v_\infty\cdot\nabla\eta_R))\\
						& \leq C \|g_3(\tilde\phi)\|_{L^\infty(N_R)}\|\nabla (\eta_Ra_\infty)\|_{H^{s-1}(N_R)}
						(\|\tilde{v}\|_{H^s(N_R)}\|a_\infty\|_{H^s(N_R)}+\|v_\infty\|_{H^s(N_R)})\\
						&\leq C  \|\nabla\tilde\phi\|_{H^{s-1}(N_R)}\|\nabla(\eta_Ra_\infty)\|_{H^{s-1}(N_R)}(\|\tilde{v}\|_{H^s(N_R)}\|a_\infty\|_{H^s(N_R)}+\|v_\infty\|_{H^s(N_R)})\\
						&\leq C \|\nabla\tilde\phi\|_{H^{s-1}(N_R)}\|a_\infty\|_{H^s(N_R)}(\|\tilde{v}\|_{H^s(N_R)}\|a_\infty\|_{H^s(N_R)}+\|v_\infty\|_{H^s(N_R)})\\
						&\leq C  (\|\nabla\tilde\phi\|_{H^{s-1}(N_R)}+\|\nabla\tilde\phi\|_{H^{s-1}(N_R)}^2+\|\tilde v\|_{H^s(N_R)}^2)(\|a_\infty\|_{H^s(N_R)}^2+\|v_\infty\|_{H^s(N_R)}^2).
					\end{align*}
					Therefore we conclude from above that 
					\begin{align*}
						\sum_{|\alpha|=1}^sJ_{92}^\alpha
						&\leq-\frac{1}{2}\sum_{|\alpha|=1}^s\frac{d}{dt}\int_{\mathbb{R}^3}g_3(\tilde\phi)\big||x|^\ell\partial_x^\alpha (\eta_R a_\infty)\big|^2\mathrm{d}x+C\|\partial_t\tilde\phi\|_{H^{s-1}}|\nabla(\eta_Ra_\infty)|_{H_\ell^{s-1}}^2\nonumber\\
						&+C(\|\nabla\tilde\phi\|_{H^{s-1}}+\|\nabla\tilde\phi\|_{H^{s-1}}^2+\|\tilde v\|_{H^s}^2)|\nabla(\eta_Ra_\infty)|_{H_\ell^{s-1}}^2+C|\nabla(\eta_Ra_\infty)|_{H_{\ell-1}^{s-1}}^2\nonumber\\
						&+C(\|\nabla\tilde\phi\|_{H^{s-1}(N_R)}+\|\nabla\tilde\phi\|_{H^{s-1}(N_R)}^2+\|\tilde v\|_{H^s(N_R)}^2)(\|a_\infty\|_{H^s(N_R)}^2+\|v_\infty\|_{H^s(N_R)}^2),
					\end{align*}
					which together with \eqref{103001} yields that 
					\begin{align*}
						\sum_{|\alpha|=1}^s J_9^\alpha
						&\leq-\frac{1}{2}\sum_{|\alpha|=1}^s\frac{d}{dt}\int_{\mathbb{R}^3}g_3(\tilde\phi)\big||x|^\ell\partial_x^\alpha (\eta_R a_\infty)\big|^2\mathrm{d}x+C\|\partial_t\tilde\phi\|_{H^{s-1}}|\nabla(\eta_Ra_\infty)|_{H_\ell^{s-1}}^2\nonumber\\
						&+C(\|\nabla\tilde\phi\|_{H^{s-1}}+\|\nabla\tilde\phi\|_{H^{s-1}}^2+\|\tilde v\|_{H^s}^2)|\nabla(\eta_Ra_\infty)|_{H_\ell^{s-1}}^2+C|\nabla(\eta_Ra_\infty)|_{H_{\ell-1}^{s-1}}^2\nonumber\\
						&+C(\|\nabla\tilde\phi\|_{H^{s-1}(N_R)}+\|\nabla\tilde\phi\|_{H^{s-1}(N_R)}^2+\|\tilde v\|_{H^s(N_R)}^2)(\|a_\infty\|_{H^s(N_R)}^2+\|v_\infty\|_{H^s(N_R)}^2).
					\end{align*}
					The last term $J_{10}^\alpha$ is estimated as follows,
					\begin{align*}
						\sum_{|\alpha|=1}^s	J_{10}^\alpha&\leq \sum_{|\alpha|=1}^s	\||x|^\ell\partial_x^\alpha(\eta_RF_\infty^1)\|_{L^2}\||x|^{\ell}\partial_x^\alpha(\eta_Rv_\infty)\|_{L^2}\\
						&\leq \varepsilon |\nabla(\eta_Rv_\infty)|_{H_\ell^{s-1}}^2+C|\nabla(\eta_RF_\infty^1)|_{H_\ell^{s-1}}^2.
					\end{align*}
					Summing $|\alpha|$ from $1$ to $s$ and noting the smallness of $\varepsilon$, we deduce that 
					\begin{align}
						&\frac{1}{2}\frac{\mathrm{d}}{\mathrm{d}t}\big| \nabla(
						\eta_Rv_{\infty})|_{H_\ell^{s-1}}^2-\sum _{|\alpha|=1}^s(\partial_x^\alpha(\eta_Ra_{\infty}),|x|^{2\ell}\partial_x^\alpha\text{div}(\eta_Rv_{\infty}))\nonumber\\
						&+\frac{1}{2}\big|\nabla(\eta_Rv_{\infty})\big|_{H_\ell^{s-1}}^2
						+\frac{1}{2}\sum_{|\alpha|=1}^s\frac{\mathrm{d}}{\mathrm{d}t}\int_{\mathbb{R}^3}g_3(\tilde\phi)\big||x|^\ell\partial_x^\alpha (\eta_R a_\infty)\big|^2\mathrm{d}x\nonumber\\
						&\leq C|\nabla(\eta_RF_\infty^1)|_{H_\ell^{s-1}}^2
						+C(\|\nabla\tilde\phi\|_{H^{s-1}}+\|\nabla\tilde\phi\|_{H^{s-1}}^2+\|\tilde v\|_{H^s}^2)|\nabla(\eta_Ra_\infty)|_{H_\ell^{s-1}}^2\nonumber\\
						&+C\|\partial_t\tilde\phi\|_{H^{s-1}}|\nabla(\eta_Ra_\infty)|_{H_\ell^{s-1}}^2+C|\nabla(\eta_Ra_\infty)|_{H_{\ell-1}^{s-1}}^2\nonumber\\
						&+C(1+\|\nabla\tilde\phi\|_{H^{s-1}(N_R)}+\|\tilde v\|_{H^s(N_R)}+\|\nabla\tilde\phi\|_{H^{s-1}(N_R)}^2+\|\tilde v\|_{H^s(N_R)}^2)\|u_\infty\|_{H^s(N_R)}^2.
						\label{101001}
					\end{align}
					Combining the estimates \eqref{101001} and \eqref{101002} together, we get 
					\begin{align*}
						&\frac{1}{2}\frac{\mathrm{d}}{\mathrm{d}t}\big(\big| 
						\nabla(\eta_Ra_{\infty})|_{H_\ell^{s-1}}^2+\big|\nabla(\eta_Rv_\infty)\big|_{H_\ell^{s-1}}^2\big)+\frac{1}{2}\big|\nabla(\eta_Rv_{\infty})|_{H_\ell^{s-1}}^2\\
						&+\frac{1}{2}\sum_{|\alpha|=1}^s\frac{\mathrm{d}}{\mathrm{d}t}\int_{\mathbb{R}^3}g_3(\tilde\phi)\big||x|^\ell\partial_x^\alpha (\eta_R a_\infty)\big|^2\mathrm{d}x\\
						&\leq C|\nabla(\eta_RF_\infty)|_{H_\ell^{s-1}}^2+C\|\partial_t\tilde\phi\|_{H^{s-1}}|\nabla(\eta_Ra_\infty)|_{H_\ell^{s-1}}^2+C\varepsilon  |\nabla(\eta_Ra_\infty)|_{H_\ell^{s-1}}^2 \nonumber\\
						&+C(\|\nabla\tilde\phi\|_{H^{s-1}}+\|\nabla\tilde\phi\|_{H^{s-1}}^2+\|\tilde v\|_{H^s}+\|\tilde v\|_{H^s}^2)|\nabla(\eta_Ra_\infty)|_{H_\ell^{s-1}}^2\\
						&+C|\nabla(\eta_Ra_\infty)|_{H_{\ell-1}^{s-1}}^2+C(1+\|\nabla\tilde\phi\|_{H^{s-1}(N_R)}+\|\tilde v\|_{H^s(N_R)}+\|\nabla\tilde\phi\|_{H^{s-1}(N_R)}^2+\|\tilde v\|_{H^s(N_R)}^2)\|u_\infty\|_{H^s(N_R)}^2,
					\end{align*}
					which together with \eqref{102801} yields that 
					\begin{align}
						&\frac{\mathrm{d}}{\mathrm{d}t}\big(\big| 
						\eta_Ra_{\infty}|_{H_\ell^{s}}^2+\big|\eta_Rv_\infty\big|_{H_\ell^s}^2\big)+\big|\eta_Rv_{\infty}|_{H_\ell^s}^2+\sum_{|\alpha|=0}^s\frac{\mathrm{d}}{\mathrm{d}t}\int_{\mathbb{R}^3}g_3(\tilde\phi)\big||x|^\ell\partial_x^\alpha (\eta_R a_\infty)\big|^2\mathrm{d}x\nonumber\\
						&\leq C|\eta_RF_\infty|_{H_\ell^{s}}^2+C|\eta_Ra_\infty|_{H_{\ell-1}^s}^2+C\varepsilon  |\nabla(\eta_Ra_\infty)|_{H_\ell^{s-1}}^2\nonumber\\
						&+C(\|\partial_t\tilde\phi\|_{H^{s-1}}+\|\nabla\tilde\phi\|_{H^{s-1}}+\|\nabla\tilde\phi\|_{H^{s-1}}^2+\|\tilde v\|_{H^s}+\|\tilde v\|_{H^s}^2)|\eta_Ra_\infty|_{H_\ell^{s}}^2
						\nonumber\\
						&+C(1+\|\nabla\tilde\phi\|_{H^{s-1}(N_R)}+\|\tilde v\|_{H^s(N_R)}+\|\nabla\tilde\phi\|_{H^{s-1}(N_R)}^2+\|\tilde v\|_{H^s(N_R)}^2)\|u_\infty\|_{H^s(N_R)}^2.\label{110101}
					\end{align}
					We next consider the estimates of $\big\||x|^{2\ell}\partial_x^\alpha a_{\infty}\big\|_{L^2}^2$ for $1\leq |\alpha|\leq s$. For multi-index $\alpha$ satisfying $1\leq |\alpha|\leq s$, we take the inner product of $\partial_x^{\alpha-1}\eqref{Mainsystem1}_3$
					with $|x|^{2\ell}\partial_x^\alpha (\eta_Ra_{\infty})$ to prove 
					\begin{align*}
						& (\partial_t\partial_x^{\alpha-1}(\eta_Rv_\infty),|x|^{2\ell}\partial_x^{\alpha}(\eta_Ra_{\infty}))+\|\partial_x^{\alpha}(\eta_Ra_{\infty})\|_{L_\ell^2}^2 \\
						&=-(\partial_x^{\alpha-1}(\eta_Rv_\infty),|x|^{2\ell}\partial_x^\alpha(\eta_R a_{\infty}))+(\partial_x^{\alpha}(\eta_R F_{\infty}^0),|x|^{2\ell}\partial_x^{\alpha-1}(\eta_Rv_{\infty}))\\
						&+(\partial_x^{\alpha-1}(\eta_R F_{\infty}^1),|x|^{2\ell}\partial_x^\alpha(\eta_Ra_{\infty}))+(\partial_x^{\alpha-1}K_2,|x|^{2\ell}\partial_x^\alpha(\eta_Ra_{\infty}))\\
						&-(\partial_x^{\alpha-1}(\tilde{v}\cdot\nabla(\eta_R v_\infty)),|x|^{2\ell}\partial_x^\alpha (\eta_R a_\infty))-(\partial_x^{\alpha-1}(g_3(\tilde\phi)\nabla(\eta_R a_\infty)),|x|^{2\ell}\partial_x^\alpha(\eta_Ra_\infty)).
					\end{align*}
					Note 
					\begin{align*}
						&(\partial_t\partial_x^{\alpha-1} (\eta_Rv_\infty),|x|^{2\ell}\partial_x^\alpha(\eta_Ra_{\infty}))\\
						&=\frac{\mathrm{d}}{\mathrm{d}t}(\partial_x^{\alpha-1}(\eta_Rv_\infty),|x|^{2\ell}\partial_x^\alpha(\eta_Ra_{\infty}))-(\partial_x^{\alpha-1}(\eta_Rv_\infty),|x|^{2\ell}\partial_x^\alpha\partial_t(\eta_R a_{\infty}))\\
						&=\frac{\mathrm{d}}{\mathrm{d}t}(\partial_x^{\alpha-1}(\eta_Rv_\infty),|x|^{2\ell}\partial_x^\alpha(\eta_Ra_{\infty}))+(\partial_x^{\alpha-1}\text{div}(\eta_Rv_\infty),|x|^{2\ell}\partial_x^{\alpha-1}\partial_t(\eta_R a_{\infty}))\\
						&~~+(\partial_x^{\alpha-1}(\eta_Rv_\infty),\nabla(|x|^{2\ell})\partial_x^{\alpha-1}\partial_t(\eta_R a_{\infty})),
					\end{align*}
					and
					\begin{align*}
						\partial_t(\eta_Ra_{\infty})=-\tilde{v}\cdot\nabla(\eta_Ra_{\infty})-\text{div}(\eta_Rv_\infty)+\eta_RF_\infty^0+K_1.
					\end{align*}
					Then we have
					\begin{align*}
						&\frac{\mathrm{d}}{\mathrm{d}t}(\partial_x^{\alpha-1}(\eta_Rv_\infty),|x|^{2\ell}\partial_x^\alpha(\eta_Ra_{\infty}))
						+\|\partial_x^\alpha(\eta_Ra_{\infty})\|_{L_\ell^2}^2 \\
						&=(\partial_x^{\alpha-1}\text{div}(\eta_Rv_\infty),|x|^{2\ell}\partial_x^{\alpha-1}(\tilde{v}\cdot\nabla(\eta_Ra_{\infty})+\text{div}(\eta_Rv_\infty)-K_1)\\
						&+(\partial_x^{\alpha-1}(\eta_Rv_\infty),\nabla(|x|^{2\ell})\partial_x^{\alpha-1}(\tilde{v}\cdot\nabla(\eta_Ra_{\infty})+\text{div}(\eta_Rv_\infty)-\eta_{R}F_{\infty}^1-K_1)\\
						&-(\partial_x^{\alpha-1}(\eta_Rv_\infty),|x|^{2\ell}\partial_x^\alpha(\eta_R a_{\infty}))
						+(\partial_x^{\alpha-1}(\eta_R F_{\infty}^1),|x|^{2\ell}\partial_x^\alpha(\eta_Ra_{\infty}))\\
						&+(\partial_x^{\alpha-1}K_2,|x|^{2\ell}\partial_x^\alpha(\eta_Ra_{\infty}))
						-(\partial_x^{\alpha-1}(\tilde{v}\cdot\nabla(\eta_R v_\infty)),|x|^{2\ell}\partial_x^\alpha (\eta_R a_\infty))\\
						&-(\partial_x^{\alpha-1}(g_3(\tilde\phi)\nabla(\eta_R a_\infty)),|x|^{2\ell}\partial_x^\alpha(\eta_Ra_\infty))+(\partial_x^{\alpha}(\eta_R F_{\infty}^0),|x|^{2\ell}\partial_x^{\alpha-1}(\eta_Rv_{\infty}))\\
						&:= L_1^\alpha+L_2^\alpha+L_3^\alpha+L_4^\alpha+L_5^\alpha+L_6^\alpha+L_7^\alpha+L_8^\alpha.
					\end{align*}
					It is easily seen that 
					\begin{align*}
						\sum_{|\alpha|=1}^sL_1^\alpha&\leq \sum_{|\alpha|=1}^s(\partial_x^{\alpha-1}\text{div}(\eta_Rv_\infty), |x|^{2\ell}\partial_x^{\alpha-1}(\tilde{v}\cdot\nabla(\eta_R a_\infty)))\\
						&+\sum_{|\alpha|=1}^s(\partial_x^{\alpha-1}\text{div}(\eta_Rv_\infty),|x|^{2\ell}\partial_x^{\alpha-1}\text{div}(\eta_Rv_\infty)) \\
						&-\sum_{|\alpha=1|^s}(\partial_x^{\alpha-1}\text{div}(\eta_Rv_\infty),|x|^{2\ell}\partial_x^{\alpha-1}K_1)\\
						&\leq C\|\tilde v\|_{H^s}|\nabla(\eta_R v_\infty)|_{H_\ell^{s-1}}^2+ |\nabla(\eta_Rv_\infty)|_{H_\ell^{s-1}}^2+C(1+\|\tilde{v}\|_{H^s(N_R)})\|u_\infty\|_{H^s(N_R)}^2.
					\end{align*}
					The second term $L_2^\alpha$ is given by 
					\begin{align*}
						\sum_{|\alpha|=1}^sL_2^\alpha
						&=\sum_{|\alpha|=1}^s(\partial_x^{\alpha-1}(\eta_Rv_\infty),\nabla(|x|^{2\ell})\partial_x^{\alpha-1}(\tilde{v}\cdot\nabla(\eta_Ra_\infty)+\text{div}(\eta_Rv_\infty)-\eta_RF_\infty^1-K_1))\\
						&\leq C|\eta_Rv_\infty|_{H_{\ell-1}^{s-1}}\|\tilde v\|_{H^s}|\nabla(\eta_Ra_\infty)|_{H_\ell^{s-1}}
						+C|\eta_Rv_\infty|_{H_{\ell-1}^{s-1}}|\nabla(\eta_Rv_\infty)|_{H_{\ell}^{s-1}}\\
						&+C|\eta_Rv_\infty|_{H_{\ell-1}^{s-1}}|\eta_RF_\infty^1|_{H_{\ell}^{s-1}}+C\|v_\infty\|_{H^s(N_R)}(\|\tilde v\|_{H^s(N_R)}\|a_\infty\|_{H^s(N_R)}+\|v_\infty\|_{H^s(N_R)})\\
						&\leq C\|\tilde v\|_{H^s}^2|\nabla(\eta_Ra_\infty)|_{H_\ell^{s-1}}^2+C|\eta_Rv_\infty|_{H_{\ell-1}^{s-1}}^2
						+C|\nabla(\eta_Rv_\infty)|_{H_\ell^{s-1}}^2+C|\eta_RF_\infty^1|_{H_\ell^{s-1}}^2\\
						&+C(1+\|\tilde v\|_{H^s(N_R)})\|u_\infty\|_{H^s(N_R)}^2.
					\end{align*}
					The third term $L_3^\alpha$ is given by 
					\begin{align*}
						\sum_{|\alpha|=1}^sL_3^\alpha&\leq 
						\sum_{|\alpha|=1}^s\||x|^{\ell-1}\partial_x^\alpha(\eta_Rv_\infty)\|_{L^2}\||x|^\ell\partial_x^\alpha(\eta_Ra_\infty)\|_{L^2}\\
						&\leq |\nabla(\eta_Rv_\infty)|_{H_{\ell-1}^{s-1}}|\nabla(\eta_Ra_\infty)|_{H_\ell^{s-1}}\\ 
						&\leq \varepsilon|\nabla(\eta_Ra_\infty)|_{H_\ell^{s-1}}^2+C|\nabla(\eta_Rv_\infty)|_{H_{\ell-1}^{s-1}}^2.
					\end{align*}
					The fourth term $L_4^\alpha$ is given by 
					\begin{align*}
						\sum_{|\alpha|=1}^sL_4^\alpha&=\sum_{|\alpha|=1}^s(\partial_x^{\alpha-1}(\eta_R F_\infty^1),|x|^{2\ell}\partial_x^\alpha(\eta_Ra_\infty))\\
						&\leq \varepsilon|\nabla(\eta_R a_\infty)|_{H_\ell^{s-1}}^2+C|\eta_RF_\infty^1|_{H_\ell^{s-1}}^2\\
						&\leq \varepsilon|\nabla(\eta_R a_\infty)|_{H_\ell^{s-1}}^2+C|\eta_RF_\infty^1|_{H_\ell^{s}}^2.
					\end{align*}
					The fifth term $L_5^\alpha$ is given by 
					\begin{align*}
						\sum_{|\alpha|=1}^sL_5^\alpha&=\sum_{|\alpha|=1}^s(\partial_x^{\alpha-1}K_2,|x|^{2\ell}\partial_x^\alpha(\eta_Ra_\infty))\\
						&=\sum_{|\alpha|=1}^s(\partial_x^{\alpha-1}(a_\infty\nabla\eta_R+\tilde{v}\cdot\nabla\eta_R v_\infty+g_3(\tilde\phi)\nabla\eta_R a_\infty),|x|^{2\ell}\partial_x^\alpha(\eta_Ra_\infty))\\
						&\leq C(\|a_\infty\|_{H^s(N_R)}+\|\tilde v\|_{H^s(N_R)}\|v_\infty\|_{H^s(N_R)}+\|\nabla\tilde{\phi}\|_{H^{s-1}(N_R)}\|a_\infty\|_{H^s(N_R)})\|a_\infty\|_{H^s(N_R)}\\
						&\leq C(1+\|\nabla\tilde\phi\|_{H^{s-1}(N_R)}+\|\tilde v\|_{H^s(N_R)})\|u_\infty\|_{H^s(N_R)}^2.
					\end{align*}
					The sixth term $L_6^\alpha$ is given by 
					\begin{align*}
						\sum_{|\alpha|=1}^sL_6^\alpha &=\sum_{|\alpha|=1}^s(\partial_x^{\alpha-1}(\tilde{v}\cdot\nabla(\eta_Rv_\infty)),|x|^{2\ell}\partial_x^\alpha(\eta_R a_\infty))\\
						&\leq C\|\tilde{v}\|_{H^s}|\nabla(\eta_Rv_\infty)|_{H_\ell^{s-1}}|\nabla(\eta_Ra_\infty)|_{H_\ell^{s-1}}\\
						&\leq \varepsilon|\nabla(\eta_Ra_\infty)|_{H_\ell^{s-1}}^2+C\|\tilde{v}\|_{H^s}^2|\nabla(\eta_Rv_\infty)|_{H_\ell^{s-1}}^2.
					\end{align*}
					The seventh term $L_7^\alpha$ is given by 
					\begin{align*}
						\sum_{|\alpha|=1}^sL_7^\alpha
						&= \sum_{|\alpha|=1}^s(\partial_x^{\alpha-1}(g_3(\tilde\phi)\nabla(\eta_R a_\infty)),|x|^{2\ell}\partial_x^\alpha(\eta_R a_\infty))\\
						&\leq C\|\nabla\tilde \phi\|_{H^{s-1}}|\nabla(\eta_Ra_\infty)|_{H_\ell^{s-1}}^2.
					\end{align*}
					The last term $L_8^\alpha$ is given by 
					\begin{align*}
						\sum_{|\alpha|=1}^sL_8^\alpha&=\sum_{|\alpha|=1}^s(\partial_x^{\alpha}(\eta_R F_\infty^0),|x|^{2\ell}\partial_x^{\alpha-1}(\eta_Rv_\infty))\\
						&\leq \varepsilon|\eta_R v_\infty|_{H_\ell^{s-1}}^2+C|\eta_RF_\infty^0|_{H_\ell^{s}}^2\\
						&\leq C\varepsilon|\nabla(\eta_R v_\infty)|_{H_\ell^{s-1}}^2+C|\eta_RF_\infty^0|_{H_\ell^{s}}^2.
					\end{align*}
					It then concludes from above that 
					\begin{align*}
						&\sum_{|\alpha|=1}^s\frac{\mathrm{d}}{\mathrm{d}t}(\partial_x^{\alpha-1}(\eta_Rv_\infty),|x|^{2\ell}\partial_x^\alpha(\eta_Ra_{\infty}))
						+|\nabla(\eta_Ra_{\infty})|_{H_\ell^{s-1}}^2 \\
						&\leq 3\varepsilon |\nabla(\eta_R a_\infty)|_{H_\ell^{s-1}}^2+C(\|\nabla\tilde\phi\|_{H^{s-1}}+\|\tilde v\|_{H^s}+\|\tilde v\|_{H^s}^2)|\nabla(\eta_Ra_\infty)|_{H_\ell^{s-1}}^2+C|\eta_Rv_\infty|_{H_{\ell-1}^{s-1}}^2\\
						&+C|\nabla(\eta_Rv_\infty)|_{H_\ell^{s-1}}^2+C|\eta_RF_\infty|_{H_\ell^{s}}^2+C(1+\|\nabla\tilde\phi\|_{H^{s-1}(N_R)}+\|\tilde v\|_{H^s(N_R)})\|u_\infty\|_{H^s(N_R)}^2.
					\end{align*}
					Using the smallness of $\varepsilon$, we get 
					\begin{align}
						&\sum_{|\alpha|=1}^s\frac{\mathrm{d}}{\mathrm{d}t}(\partial_x^{\alpha-1}(\eta_Rv_\infty),|x|^{2\ell}\partial_x^\alpha(\eta_Ra_{\infty}))
						+\frac{1}{2}|\nabla(\eta_Ra_{\infty})|_{H_\ell^{s-1}}^2 \nonumber\\
						&\leq C(\|\nabla\tilde\phi\|_{H^{s-1}}+\|\tilde v\|_{H^s}+\|\tilde v\|_{H^s}^2)|\nabla(\eta_Ra_\infty)|_{H_\ell^{s-1}}^2+C|\eta_Rv_\infty|_{H_{\ell-1}^{s}}^2\nonumber\\
						&+C|\nabla(\eta_Rv_\infty)|_{H_\ell^{s-1}}^2+C|\eta_RF_\infty|_{H_\ell^{s}}^2+C(1+\|\nabla\tilde\phi\|_{H^{s-1}(N_R)}+\|\tilde v\|_{H^s(N_R)})\|u_\infty\|_{H^s(N_R)}^2.
						\label{101003}
					\end{align}
					Taking $\beta_2\times \eqref{101003}+\eqref{110101}$ with $\beta_2$ a small positive constant, we have
					\begin{align*}
						&\frac{\mathrm{d}}{\mathrm{d}t}\Big\{\big| 
						\eta_Ra_{\infty}|_{H_\ell^s}^2+\big|\eta_Rv_\infty\big|_{H_\ell^s}^2+\beta_2 (\partial_x^{\alpha-1}(\eta_Rv_\infty),|x|^{2\ell}\partial_x^\alpha(\eta_Ra_{\infty}))\Big\}+\big|\eta_Rv_{\infty}|_{H_\ell^s}^2\nonumber\\
						&+\sum_{|\alpha|=0}^s\frac{\mathrm{d}}{\mathrm{d}t}\int_{\mathbb{R}^3}g_3(\tilde\phi)\big||x|^\ell\partial_x^\alpha (\eta_R a_\infty)\big|^2\mathrm{d}x
						+\frac{1}{2}\beta_3|\nabla(\eta_Ra_{\infty})|_{H_\ell^{s-1}}^2\nonumber\\
						&\leq C|\eta_RF_\infty|_{H_\ell^{s}}^2
						+C(\|\nabla\tilde\phi\|_{H^{s-1}}+\|\nabla\tilde\phi\|_{H^{s-1}}^2+\|\tilde v\|_{H^s}+\|\tilde v\|_{H^s}^2)|\eta_Ra_\infty|_{H_\ell^{s}}^2+C\|\partial_t\tilde\phi\|_{H^{s-1}}|\nabla(\eta_Ra_\infty)|_{H_\ell^{s-1}}^2\nonumber\\
						&+C|\eta_Ra_\infty|_{H_{\ell-1}^s}^2+C(1+\|\nabla\tilde\phi\|_{H^{s-1}(N_R)}+\|\tilde v\|_{H^s(N_R)}+\|\nabla\tilde\phi\|_{H^{s-1}(N_R)}^2+\|\tilde v\|_{H^s(N_R)}^2)\|u_\infty\|_{H^s(N_R)}^2\nonumber\\
						&+C\|\partial_t\tilde\phi\|_{H^{s-1}}|\nabla(\eta_Ra_\infty)|_{H_\ell^{s-1}}^2+\beta_2\Big\{C(\|\nabla\tilde\phi\|_{H^{s-1}}+\|\tilde v\|_{H^s}+\|\tilde v\|_{H^s}^2)|\nabla(\eta_Ra_\infty)|_{H_\ell^{s-1}}^2+C|\eta_Rv_\infty|_{H_{\ell-1}^{s-1}}^2\nonumber\\
						&+C|\nabla(\eta_Rv_\infty)|_{H_\ell^{s-1}}^2+C|\eta_RF_\infty|_{H_\ell^{s-1}}^2+C(1+\|\nabla\tilde\phi\|_{H^{s-1}(N_R)}+\|\tilde v\|_{H^s(N_R)})\|u_\infty\|_{H^s(N_R)}^2\Big\}\\
						&  + \varepsilon |\eta_Ra_\infty|_{L^2_\ell}^2.
					\end{align*}
					Since $\beta_2$ and $\varepsilon$ are small positive constants, we deduce that there exists a positive constant $\bar{\kappa}_1$ such that 
					\begin{align}\label{91901}
						&\frac{\mathrm{d}}{\mathrm{d}t}E_{\ell}^s[\eta_R u_{\infty}](t)+\bar{\kappa}_1D_{\ell}^s[\eta_R u_{\infty}](t)\nonumber\\
						&\leq \varepsilon |\eta_Ra_\infty|_{L^2_\ell}^2+ C(\|\partial_t\tilde\phi\|_{H^{s-1}}+\|\nabla\tilde{\phi}\|_{H^{s-1}}+\|\tilde{v}\|_{H^s}+\|\nabla\tilde{\phi}\|_{H^{s-1}}^2+\|\tilde{v}\|_{H^s}^2)|\eta_R u_{\infty}|_{H_{\ell}^s}^2\nonumber\\
						&+C(1+\|\nabla\tilde{\phi}\|_{H^{s-1}(N_R)}+\|\tilde{v}\|_{H^s(N_R)}+\|\nabla\tilde{\phi}\|_{H^{s-1}(N_R)}^2+\|\tilde{v}\|_{H^s(N_R)}^2)\|u_{\infty}\|_{H^s(N_R)}^2\nonumber\\
						&+C|\eta_Ru_\infty|_{H_{\ell-1}^s}^2+C|\eta_R F_{\infty}|_{H_{\ell}^s}^2.
					\end{align}
					Similarly, there exists a positive constant $\tilde{\kappa}_1$ such that 
					\begin{align}\label{91902}
						&\frac{\mathrm{d}}{\mathrm{d}t}E_{\ell}^{s-1}[\eta_R u_{\infty}](t)+\tilde{\kappa}_1D_{\ell}^{s-1}[\eta_R u_{\infty}](t)\nonumber\\
						&\leq \varepsilon |\eta_Ra_\infty|_{L^2_\ell}^2 +C(\|\partial_t\tilde\phi\|_{H^{s-1}}+\|\nabla\tilde{\phi}\|_{H^{s-1}}+\|\tilde{v}\|_{H^s}+\|\nabla\tilde{\phi}\|_{H^{s-1}}^2+\|\tilde{v}\|_{H^s}^2)|\eta_R u_{\infty}|_{H_{\ell}^{s-1}}^2\nonumber\\
						&+C(1+\|\nabla\tilde{\phi}\|_{H^{s-1}(N_R)}+\|\tilde{v}\|_{H^s(N_R)}+\|\nabla\tilde{\phi}\|_{H^{s-1}(N_R)}^2+\|\tilde{v}\|_{H^s(N_R)}^2)\|u_{\infty}\|_{H^{s-1}(N_R)}^2\nonumber\\
						&+C|\eta_Ru_\infty|_{H_{\ell-1}^{s-1}}^2+C|\eta_R F_{\infty}|_{H_{\ell}^{s-1}}^2.
					\end{align}
					Let $\kappa_1=\max\{\bar{\kappa}_1,\tilde{\kappa}_1\}$. We combine \eqref{91901} and \eqref{91902} together to complete the proof of the proposition.
					$\hfill\square$
					\begin{rem}\label{remR'}
						It should be noted that the above proposition also holds with $\eta_R$ and $N_R$ replaced by $\eta_R-\eta_R'$ and $N_{R,R'}$ for $R'>R\geq 1$, where $N_{R,R'}$ denoted the set $N_{R,R'}=\{x\in \mathbb{R}^d; R\leq |x|\leq 2R' \}$.  
					\end{rem}
					\begin{prop}\label{proR1}
						Let $d\geq 3$ and $s$ be a nonnegative integer satisfying $s\geq [\frac{d}{2}]+2$ and let $\ell$ be an integer satisfying $\ell\geq 1$. Assume that 
						\begin{align*}
							u_{0\infty}=\trans{(a_{0\infty},v_{0\infty})}\in H^s,~F_\infty=\trans{(F_\infty^0,F_\infty^1)}\in L^2(0,T';H^s)\cap C([0,T'];H^{s-1}). 
						\end{align*}
						Here $T'$ is a given positive number. Assume also that $u_{\infty}=\trans{(a_{\infty},v_\infty)}$ is the solution of system \eqref{Mainsystem}. If $\nabla\tilde{\phi}\in  C([0,T'];H^{s-1})\cap  L^2(0,T';H^{s-1}), \partial_t\tilde{\phi}\in C([0,T'];H^{s-1})$, $\tilde{v}\in C([0,T'];H^s)\cap L^2(0,T';H^{s})$ and that $u_{\infty}$ satisfies 
						\begin{align*}
							a_{\infty} \in C([0,T'];H^s),~v_\infty\in C([0,T'];H^s)\cap L^2(0,T';H^s),
						\end{align*}
						Then there exists a positive constant $\kappa_2$ such that the estimate 
						\begin{align}
							&\frac{\mathrm{d}}{\mathrm{d}t}E_{\ell}^s[u_{\infty}](t)+\kappa_2 D_{\ell}^s[u_{\infty}](t)\nonumber\\
							&\leq  \varepsilon \|a_\infty\|_{L^2_\ell}^2 +C(\|\partial_t\tilde\phi\|_{H^{s-1}}+\|\nabla\tilde{\phi}\|_{H^{s-1}}+\|\nabla\tilde{\phi}\|_{H^{s-1}}^2+\|\tilde{v}\|_{H^s}+\|\tilde{v}\|_{H^s}^2)|u_{\infty}|_{H_\ell^{s}}^2\nonumber\\
							&+C|u_\infty|_{H_{\ell-1}^s}^2+C|F_{\infty}|_{H_\ell^s}^2\label{113001}
						\end{align}
						holds on $(0,T')$, where  $C>0$ is a constant independent of $T'$.
					\end{prop}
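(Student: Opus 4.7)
The plan is to remove the spatial cut-off $\eta_R$ from the estimate \eqref{etaRU1} established in the preceding proposition by passing to the limit $R\to\infty$, exploiting the decay factor available on the annular region $N_R$ when $\ell\geq 1$.

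First, I would apply the preceding proposition with $R\geq 1$ arbitrary to obtain \eqref{etaRU1}. The only right-hand side contributions that do not immediately match the form of \eqref{113001} are those localized to $N_R=\{R\leq|x|\leq 2R\}$. On $N_R$ we have the elementary pointwise inequality $1\leq R^{-(\ell-1)}|x|^{\ell-1}$, valid for $R\geq 1$ and $\ell\geq 1$, hence
$$\|u_\infty\|_{H^s(N_R)}^2 \leq R^{-2(\ell-1)}\bigl\||x|^{\ell-1}u_\infty\bigr\|_{H^s(N_R)}^2 \leq |u_\infty|_{H_{\ell-1}^s}^2.$$
Combined with the trivial bounds $\|\nabla\tilde\phi\|_{H^{s-1}(N_R)}\leq\|\nabla\tilde\phi\|_{H^{s-1}}$ and $\|\tilde v\|_{H^s(N_R)}\leq\|\tilde v\|_{H^s}$, this absorbs the entire composite $N_R$-term into $C|u_\infty|_{H_{\ell-1}^s}^2$, with constant independent of $R$ (using the uniform boundedness of $\tilde\phi$, $\tilde v$).

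Second, I would pass to the limit $R\to\infty$. Since $\eta_R\nearrow 1$ pointwise and $0\leq\eta_R\leq 1$, the monotone convergence theorem yields
$$E_\ell^s[\eta_R u_\infty](t)\to E_\ell^s[u_\infty](t),\qquad D_\ell^s[\eta_R u_\infty](t)\to D_\ell^s[u_\infty](t),$$
together with analogous convergences for $|\eta_R u_\infty|_{H_{\ell-1}^s}^2$ and $|\eta_R F_\infty|_{H_\ell^s}^2$. Integrating the reshaped differential inequality on $[0,t]$, then applying Fatou's lemma on the left and dominated convergence on the right, provides the integrated form of \eqref{113001}; differentiating recovers the pointwise differential statement.

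The main obstacle is securing uniform-in-$R$ control of the right-hand side of \eqref{etaRU1}, which hinges on the inequality $R^{-2(\ell-1)}\leq 1$ — this is precisely where the hypothesis $\ell\geq 1$ enters. A secondary subtlety is that the monotone limit argument is meaningful only when the limiting weighted quantities are finite; this is a consequence of iteratively applying the present bound starting from suitably weighted initial data, as will be exploited in the nonlinear iteration scheme of Section~\ref{S7}.
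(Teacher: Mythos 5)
Your idea of bounding the $N_R$-term on the annulus via the pointwise inequality $1\leq R^{-(\ell-1)}|x|^{\ell-1}$ (so that it is absorbed into $C|u_\infty|_{H_{\ell-1}^s}^2$) is valid and is close in spirit to what makes the limit manageable. However, the crux is not the size of the $N_R$-term but how one passes to the limit $R\to\infty$ in the differential inequality \eqref{etaRU1}, and here your argument has a real gap. Monotone convergence does \emph{not} apply to $E_\ell^s[\eta_R u_\infty]$ and $D_\ell^s[\eta_R u_\infty]$: these are not $\eta_R$-times-global quantities, since $|\eta_R a_\infty|_{H_\ell^s}^2=\sum_{|\alpha|\leq s}\||x|^\ell\partial_x^\alpha(\eta_R a_\infty)\|_{L^2}^2$ contains cross terms $\||x|^\ell(\partial_x^\beta\eta_R)(\partial_x^{\alpha-\beta}a_\infty)\|_{L^2}^2$ with $\beta\neq 0$ that are not monotone in $R$; moreover, the $g_3(\tilde\phi)$-weighted integral in $E_\ell^s$ has no definite sign. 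Your fallback — dominated convergence on the right — requires a dominating function for $b(\tau)|\eta_R u_\infty(\tau)|_{H_\ell^s}^2$, which would have to be $Cb(\tau)|u_\infty(\tau)|_{H_\ell^s}^2$; but the finiteness of $|u_\infty(\tau)|_{H_\ell^s}$ is precisely what still needs to be established, and your acknowledgment that this ``is a consequence of iteratively applying the present bound'' is circular, not a proof.

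The paper breaks this circularity by a Cauchy-sequence argument combined with induction on $\ell$. One applies the previous proposition not only to $\eta_R$ but (via Remark~\ref{remR'}) to $\eta_R-\eta_{R'}$, whose support lies in $N_{R,R'}$; applying Gr\"{o}nwall to $\varphi_{\ell,R,R'}(t)=|\eta_R u_\infty(t)-\eta_{R'}u_\infty(t)|_{H_\ell^s}$ shows $\{\eta_R u_\infty\}$ is Cauchy in $C([0,T'];H_\ell^s)\cap L^2(0,T';H_\ell^s)$, with the base case $\ell=1$ using only $u_\infty\in C([0,T'];H^s)$ and the inductive step using the already-proved case $\ell=m$ to control the $N_{R,R'}$-terms. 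This both establishes $u_\infty\in C([0,T'];H_\ell^s)$ (so the right-hand side is finite and the time derivative of $E_\ell^s[u_\infty]$ is meaningful, i.e.\ absolutely continuous) and legitimizes passing to the limit term by term in \eqref{etaRU1}. To repair your proposal you would need to supply an argument of this type; the monotonicity/domination route cannot be made to close as written.
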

					\noindent\textbf{Proof.}
					To complete the proof of this proposition, we apply the induction method to prove
					\begin{align*}
						\eta_Ru_{\infty}\rightarrow u_{\infty}~\text{in}~ C([0,T'];H_\ell^s)\cap L^2(0,T';H_\ell^s)
					\end{align*}
					as $R\rightarrow{\infty}$. 
					It is easy to observe that
					\begin{align}
						\eta_Ru_{\infty}\rightarrow u_{\infty}~\text{in}~ C([0,T'];H^s)\cap L^2(0,T';H^s),\label{etaR}
					\end{align}
					and 
					\begin{align*}
						\supp(\eta_R-\eta_{R'})\subset N_{R,R'}=\{x\in\mathbb{R}^d; R\leq |x|\leq 2R' \}~\text{for}~ R'>R.
					\end{align*}
					Thus it holds 
					\begin{align*}
						|\eta_Ru_{\infty}-\eta_{R'}u_{\infty}|_{H_\ell^s}\leq C|u_{\infty}|_{H_\ell^s(N_{R,R'})}.
					\end{align*}
					Let 
					\begin{align*}
						\varphi_{\ell,R,R'}(t)&=|\eta_Ru_{\infty}(t)-\eta_{R'}u_{\infty}(t)|_{H_\ell^s},\\
						b(t)&=1+\|\partial_t\tilde{\phi}\|_{H^{s-1}} +\|\nabla\tilde{\phi}(t)\|_{H^{s-1}}+\|\nabla\tilde{\phi}(t)\|_{H^{s-1}}^2+\|\tilde{v}(t)\|_{H^{s}}+\|\tilde{v}(t)\|_{H^s}^2 \in L^1(0,T'),\\
						a_{\ell,R,R'}&= |\eta_Ru_{0\infty}-\eta_{R'}u_{0\infty}|_{H_\ell^s}^2
						+\int_0^t|\eta_RF_{\infty}-\eta_{R'}F_{\infty}|_{H_\ell^s}^2\mathrm{d}\tau\\
						&+\int_0^t(1+\|\nabla\tilde{\phi}(\tau)\|_{H^{s-1}(N_R)}+\|\nabla\tilde{\phi}(\tau)\|_{H^{s-1}(N_R)}^2\\
						&+\|\tilde{v}(\tau)\|_{H^s(N_R)}+\|\tilde{v}(\tau)\|_{H^s(N_R)}^2)\|u_{\infty}(\tau)\|_{H^s(N_{R,R'})}\mathrm{d}\tau.
					\end{align*}
					Then for $\ell=1$, we have
					\begin{align}\label{phiR}
						\varphi_{1,R,R'}(t)+\int_0^tD_1^s[\eta_Ru_{\infty}-\eta_{R'}u_{\infty}]\mathrm{d}\tau \leq C\Big\{ 
						a_{1,R,R'}(T')+\int_0^t b(\tau)\varphi_{1,R,R'}(\tau)\mathrm{d}\tau\Big\}.
					\end{align}
					By Gr\"{o}nwall inequality, we have
					\begin{align*}
						\varphi_{1,R,R'}(t)\leq Ca_{1,R,R'}(T')e^{C\int_0^{T'}b(\tau)\mathrm{d}\tau}
					\end{align*}
					for $t\in [0,T']$. Since $a_{1,R,R'}(t)\rightarrow{0}$ \text{as} $R,R'\rightarrow{\infty}$, we see that 
					$\underset{0\leq t\leq T'}{\sup}\varphi_{1,R,R'}(t)\rightarrow{0}$~\text{as}~ $R, R' \rightarrow {\infty}.$ This, together with \eqref{phiR}, yields that
					\begin{align*}
						\int_0^{T'} D_1^s[\eta_Ru_{\infty}-\eta_{R'}u_{\infty}]\mathrm{d}\tau \rightarrow 0~\text{as}~R,R'\rightarrow{\infty}.
					\end{align*}
					In view of \eqref{etaR}, we conclude that $\{\eta_Ru_{\infty}\}$ is a Cauchy sequence in $C([0,T'];H_1^s)\cap L^2([0,T'];H_1^s\times H_1^s)$ and 
					\begin{align*}
						\eta_R u_{\infty}\rightarrow{u_{\infty}}~\text{in}~ 
						C([0,T'];H_1^s)\cap L^2(0,T';H_1^s),
					\end{align*}
					$\text{as}~R\rightarrow {\infty}$. Let $R \rightarrow{\infty}$ in with $\ell=1$, we have the desired estimate \eqref{113001} in Proposition  \ref{proR1} with $\ell=1$.
					
					We next suppose that Proposition $\ref{proR1}$ holds for $\ell=m$. We will prove that it also holds for $\ell=m+1$. By \eqref{etaRU1} and Remark \ref{remR'}, we have 
					\begin{align}
						&\varphi_{m+1,R,R'}(t)+\int_0^tD_{m+1}^s[\eta_Ru_{\infty}-\eta_{R'}u_{\infty}]\mathrm{d}\tau\nonumber\\
						&\leq C\Big\{ 
						a_{m+1,R,R'}(T')+\int_0^tb(\tau)\varphi_{m+1,R,R'}(\tau)\mathrm{d}\tau
						\Big\}.
					\end{align}    
					In terms of Gr$\ddot{\text{o}}$nwall inequality, it holds 
					\begin{align}\label{varphim}
						\varphi_{m+1,R,R'}(t)\leq 
						C a_{m+1,R,R'}(T')e^{C\int_0^{T'}b(\tau)\mathrm{d}\tau}.
					\end{align}
					By the induction assumption, we see that 
					\begin{align*}
						a_{m+1,R,R'}(T') \rightarrow{0}~\text{as}~R,R' \rightarrow{\infty},
					\end{align*}
					and hence, by \eqref{varphim} we have
					\begin{align*}
						\sup_{0\leq t\leq T} \varphi_{m+1,R,R'}(t) \rightarrow{0}, ~\text{as}~R,R' \rightarrow{\infty}.
					\end{align*}
					It then follows that 
					$\eta_Ru_{\infty}$ is a Cauchy sequence in $C([0,T'];H_{m+1}^s)\cap L^2(0,T';H_{m+1}^s)$ and 
					\begin{align*}
						\eta_Ru_{\infty}\rightarrow{u_\infty}~\text{in}~C([0,T'];H_{m+1}^s)\cap L^2(0,T';H_{m+1}^s),
					\end{align*}
					as $R\rightarrow{\infty}$. Denote the right-hand side of \eqref{etaRU1} by $G_\ell^s(t)$. Then we have 
					\begin{align*}
						\frac{\mathrm{d}}{\mathrm{d}t}E_\ell^s[u_{\infty}](t)=G_\ell^s(t)
					\end{align*}
					on $(0,T')$ for some $G_\ell^s(t)\in L^1(0,T')$. Thus $E_\ell^s[U_\infty](t)$ is absolutely continuous in $t\in[0,T']$. Let $R\rightarrow{\infty}$, we have the desired estimate in Proposition \ref{proR1} with $\ell=m+1$.
					This completes the proof.
					$\hfill\square$

					\vspace{2ex}
					
					In view of Proposition $\ref{solvabilityhighpart}$, 
					$S_{{\infty},\tilde{u}}(t)$ $(t\geq 0)$ 
					and $\scr{S}_{\infty,\tilde{u}}(t)$ $(t\in [0,T])$ are defined as follows.
					
					\vspace{2ex}
					We fix an integer $s$ satisfying $s\geq [\frac{d}{2}]+2$ 
					and a function $\tilde{u}=\trans{(\tilde{a},\tilde{v})}$ satisfying 
					\begin{eqnarray}\label{periodic-assumption}
						\nabla\tilde{\phi}\in C_{per}(\mathbb{R};H^{s-1})\cap L^{2}_{per}(\mathbb{R};H^{s-1}),\quad \tilde{v}\in C_{per}(\mathbb{R};H^{s})\cap L^{2}_{per}(\mathbb{R};H^{s}).
					\end{eqnarray}
					
					\vspace{2ex} 
					The operator $S_{{\infty},\tilde{u}}(t):H^{s}_{(\infty)}\, \longrightarrow \, H^{s}_{(\infty)}$ $(t\geq 0)$ 
					is defined by 
					$$
					u_{\infty}(t)=S_{{\infty},\tilde{u}}(t)u_{0\infty} \ \ \mbox{\rm for} 
					\ \ 
					u_{0\infty}=\trans(a_{0\infty},v_{0\infty})\in H^{s}_{(\infty)},
					$$
					where $u_{\infty}(t)$ is the solution of $(\ref{eq:(2)})$ 
					with $F_{\infty}=0$; 
					and the operator 
					$\scr{S}_{\infty,\tilde{u}}(t):L^{2}(0,T;H^{s}_{(\infty)}
					\times H^{s}_{(\infty)})\,\longrightarrow\, H^{s}_{(\infty)}$ 
					$(t\in [0,T])$ is defined by  
					\begin{align*}
						u_{\infty}(t)=\scr{S}_{\infty,\tilde{u}}(t)[F_{\infty}] \ \ \mbox{\rm for} \ \ 
						F_{\infty}=\trans{(F_\infty^0,F_{\infty}^1)}
						\in L^{2}(0,T;H^{s}_{(\infty)})\cap C([0,T'];H_{(\infty)}^{s-1}),
					\end{align*}
					where $u_{\infty}(t)$ is the solution of $(\ref{eq:(2)})$ with $u_{0\infty}=0$. 
					
					\vspace{2ex}
					
					\begin{prop}\label{p8} 
						Let $d\geq 3$ and $s$ be an integer satisfying $s\geq [\frac{d}{2}]+2.$
						Assume that
						\begin{align}
							&u_{0\infty}=\trans{(a_{0\infty},v_{0\infty})}\in H^s_{(\infty),\ell},\nonumber\\
							& F_{\infty}=\trans{(F_\infty^0,F_\infty^1)}\in L^{2}(0,T';H^{s}_{(\infty),\ell})\cap C([0,T'];H_{(\infty),\ell}^{s-1}). 
						\end{align}
						for all $T'>0$ and $\tilde{u}=\trans{(\tilde a,\tilde v)}$ satisfies \eqref{periodic-assumption}. Assume also that $u_\infty=\trans{(a_\infty,v_\infty)}$ is the solution of \eqref{eq:(2)} satisfying 
						\begin{align}
							a_{\infty}\in C([0,T'];H^{s}_{(\infty)}),v_\infty\in C([0,T'];H^{s}_{(\infty)})\cap L^{2}(0,T';H^{s}_{(\infty)})
						\end{align}
						for all $T'>0$.
						
						Then there exist a positive constant $\delta$ and an energy function $\mathcal{E}^s[u_\infty]$ such that if 
						\begin{align*}
							\|\partial_t\tilde{\phi}\|_{C([0,T];H^{s-1})}+\|\nabla\tilde{\phi}\|_{C([0,T];H^{s-1})}+\|\tilde{v}\|_{C([0,T];H^{s})}\leq \delta,
						\end{align*}
						there holds the estimate 
						\begin{align}
							&\frac{d}{dt}\mathcal{E}^s[u_\infty](t)+\kappa(\|\nabla a_\infty(t)\|_{H_{\ell}^{s-1}}+\|v_\infty(t)\|_{H_\ell^{s}})\nonumber\\
							&\leq C\Big\{(\|\partial_t\tilde{\phi}\|_{H^{s-1}}+\|\nabla\tilde \phi\|_{H^{s-1}}+\|\nabla\tilde \phi\|_{H^{s-1}}^2+\|\tilde v\|_{H^s}+\|\tilde v\|_{H^s}^2)\|u_\infty\|_{H_\ell^s}^2+\|F_\infty\|_{H_\ell^s}^2\Big\},\label{112701}
						\end{align}
						on $(0,T')$ for all $T'>0$. Here $\kappa$ is a positive constant; C is a positive constant depending of $T$ bu not no $T'$; $\mathcal{E}^s[u_\infty]$ is equivalent to $\|u_\infty\|_{H_{\ell}^s}^2$, i.e.,
						\begin{align*}
							C^{-1}\|u_\infty\|_{H_\ell^s}\leq \mathcal{E}^s[u_\infty]\leq C\|u_\infty\|_{H_\ell^s}^2,
						\end{align*}
						and 
						$\mathcal{E}^s[u_\infty](t)$ is absolutely continuous in $t\in [0,T']$ for all $T'>0$.
					\end{prop}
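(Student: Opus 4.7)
The plan is to construct $\mathcal{E}^s[u_\infty]$ as a hierarchical linear combination
\[
\mathcal{E}^s[u_\infty] = \sum_{j=0}^{\ell} \alpha_j E_j^s[u_\infty],
\]
with $\alpha_0 = 1$ and $\alpha_1 \gg \alpha_2 \gg \cdots \gg \alpha_\ell$ positive constants to be chosen, and then to add the differential inequalities provided by Proposition \ref{E0} (for $j=0$) and Proposition \ref{proR1} (for $j=1,\ldots,\ell$) weighted by the $\alpha_j$'s. To begin with, because $\|g_3(\tilde{\phi})\|_{L^\infty} \leq C\|\nabla\tilde{\phi}\|_{H^{s-1}} \leq C\delta$ by the smallness hypothesis, each $E_j^s[u_\infty]$ is equivalent to $|u_\infty|_{H^s_j}^2$ for $\delta$ small, so that $\mathcal{E}^s[u_\infty]$ is comparable to $\sum_{j=0}^\ell |u_\infty|_{H^s_j}^2 = \|u_\infty\|_{H^s_\ell}^2$, as required.

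After summing, the dissipation side is $\sum_{j=0}^\ell \alpha_j \kappa_j D_j^s[u_\infty]$, which by definition of $D_j^s$ equals $\sum_{j=0}^\ell \alpha_j \kappa_j (|\nabla a_\infty|_{H^{s-1}_j}^2 + |v_\infty|_{H^s_j}^2)$ and therefore, after the absorptions below, controls $\kappa(\|\nabla a_\infty\|_{H^{s-1}_\ell}^2 + \|v_\infty\|_{H^s_\ell}^2)$ with $\kappa = \tfrac{1}{2}\min_j \alpha_j\kappa_j$. The principal task is to absorb the three types of bad terms on the right-hand sides. First, the $\varepsilon\|a_\infty\|_{L^2_j}^2$ contributions are handled via the weighted high-frequency Poincar\'e inequality (Lemma \ref{lemPinftyweight}), which gives $\|a_\infty\|_{L^2_j}^2 \leq C\|\nabla a_\infty\|_{L^2_j}^2 \leq CD_j^s[u_\infty]$, and are absorbed into $\alpha_j\kappa_j D_j^s/4$ by taking $\varepsilon$ small. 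Second, the cross terms $C|u_\infty|_{H^s_{j-1}}^2$ for $j\geq 1$ are again controlled, via Lemma \ref{lemPinftyweight}, by $CD_{j-1}^s[u_\infty]$; these are absorbed into the preceding level provided the hierarchy $\alpha_j \leq \alpha_{j-1}\kappa_{j-1}/(4C)$ is imposed, which fixes the $\alpha_j$ inductively. Third, the coefficient terms $C(\|\partial_t\tilde\phi\|_{H^{s-1}}+\|\nabla\tilde\phi\|_{H^{s-1}}+\|\nabla\tilde\phi\|_{H^{s-1}}^2+\|\tilde v\|_{H^s}+\|\tilde v\|_{H^s}^2)|u_\infty|_{H^s_j}^2$ are bounded, once more by Lemma \ref{lemPinftyweight}, by $C'\cdot(\text{coefficient})\cdot D_j^s[u_\infty]$; since the coefficient is at most $C\delta$ pointwise in $t$ by the smallness hypothesis, these are absorbed by taking $\delta$ sufficiently small relative to $\min_j\alpha_j\kappa_j$.

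The remaining terms assemble into $C\sum_{j=0}^\ell |F_\infty|_{H^s_j}^2 \leq C\|F_\infty\|_{H^s_\ell}^2$, which is the inhomogeneous right-hand side of \eqref{112701}. Absolute continuity of $\mathcal{E}^s[u_\infty]$ in $t$ follows from that of each $E_j^s[u_\infty]$, which was already established in the inductive passage $R\to\infty$ in the proof of Proposition \ref{proR1}. The principal obstacle, more organizational than conceptual, is the careful selection of the constants: one first fixes the $\ell+1$ weights $\alpha_j$ by the hierarchy $\alpha_j \leq \alpha_{j-1}\kappa_{j-1}/(4C)$, then chooses $\varepsilon$ (the internal parameter from Propositions \ref{E0} and \ref{proR1}) small enough to absorb the $\|a_\infty\|_{L^2_j}^2$ pieces, and finally fixes $\delta$ small in terms of all of these constants. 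The decisive structural ingredient throughout is the weighted Poincar\'e inequality of Lemma \ref{lemPinftyweight} for high-frequency functions, which converts each weighted $L^2$ bound on $u_\infty$ into a piece of the weighted dissipation and thereby closes the absorption scheme.
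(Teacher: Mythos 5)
Your absorption scheme — hierarchical sum $\mathcal{E}^s = \sum_j \alpha_j E_j^s$ with $\alpha_j$ decaying geometrically, using the weighted high-frequency Poincar\'e inequality (Lemma \ref{lemPinftyweight}) to trade the $\varepsilon\|a_\infty\|_{L^2_j}^2$ terms and the cross terms $C|u_\infty|_{H^s_{j-1}}^2$ against the dissipation — is correct in substance, and is the same device the paper uses, packaged there as an induction on $\ell$ rather than as an explicit weighted sum. (The paper keeps the small-coefficient terms on the right-hand side rather than absorbing them into $D^s_\ell$, because they are needed in that form later in Proposition \ref{Sinftyproperty}; absorbing them as you do gives a formally stronger inequality, which is harmless here.)

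There is, however, a genuine gap you do not address: Propositions \ref{E0} and \ref{proR1}, which you invoke directly, are statements about solutions of \eqref{Mainsystem}, i.e.\ the system with the \emph{unprojected} nonlinear term $B[\tilde u]u_\infty$. The $u_\infty$ in Proposition \ref{p8} solves \eqref{eq:(2)}, which carries $P_\infty B[\tilde u]u_\infty$. To apply Propositions \ref{E0}/\ref{proR1} to it you must rewrite \eqref{eq:(2)} as \eqref{Mainsystem} with the modified forcing $\tilde F_\infty := F_\infty + P_1 B[\tilde u]u_\infty$, and then two issues arise. First, the new right-hand piece must be controlled: using Lemma \ref{lemP_1 to weightedLinfty} one has $\|P_1 B[\tilde u]u_\infty\|_{H^s_\ell} \leq C\delta\|{}^{\top}(\nabla a_\infty, v_\infty)\|_{H^{s-1}_\ell\times H^s_\ell}$, a $C\delta^2 D^s_\ell[u_\infty]$-sized contribution after squaring that must be absorbed by the dissipation; you never mention this term. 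Second, and more delicately, that estimate — and indeed the very finiteness of $E^s_\ell[u_\infty]$ — presupposes that $u_\infty$ lies in the weighted spaces $H^s_\ell$, which is \emph{not} part of the hypotheses (only unweighted $C([0,T'];H^s_{(\infty)})$ regularity is assumed). Because the modified forcing $\tilde F_\infty$ itself depends on $u_\infty$, one cannot deduce weighted regularity by a single application of Proposition \ref{proR1}; there is a circularity that the paper breaks with the iteration scheme \eqref{92101}, constructing approximations $U_\infty^{(n)}$ that converge to $u_\infty$ in $C([0,T'];H^s_\ell)\cap L^2(0,T';H^s_\ell)$ and identifying the limit by uniqueness. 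Your proposal starts from the differential inequalities as if they were already available for $u_\infty$; without the reduction of \eqref{eq:(2)} to \eqref{Mainsystem} and the bootstrap that puts $u_\infty$ into the weighted class, the argument does not get off the ground.
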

					
					\noindent\textbf{Proof.}
					Let $U=\trans{(\Phi,V)}$ with  $\trans(\nabla \Phi, V) \in  C([0,T']; H_\ell^{s-1} \times H_\ell ^{s})\cap L^2([0,T'];H_\ell^{s-1} \times H_\ell ^{s})$. Then by Lemma \ref{lemP_1 to weightedLinfty}, we see that 
					\begin{align*}
						\|P_1B[\tilde u]U\|_{H_\ell^s} \leq  C\delta \|\trans (\nabla \Phi, V)\|_{ H_\ell^{s-1} \times H_\ell ^{s}}.
					\end{align*}
					It then follows from the Propositions \ref{E0} and \ref{proR1} that there exists a unique solution $U_\infty\in C([0,T'];H_\ell^s\times H_\ell^s)\cap L^2(0,T';H_\ell^s\times H_\ell^s)$ of 
					\begin{align}
						\partial_tU_\infty+AU_\infty+B[\tilde{u}]U_\infty=F_\infty+P_1\big(B[\tilde u]U\big),\quad U_\infty|_{t=0}=u_{0\infty},	\label{92101}
					\end{align}
					and $U_\infty$ satisfies  
					\begin{align}
						&\|U_\infty(t)\|_{H_\ell^s}^2+\kappa\int_0^t(\|\nabla \Phi_\infty(\tau)\|_{H_\ell^{s-1}}^2+\|V_\infty\|_{H_\ell^{s}}^2)d\tau\nonumber\\
						&\leq C\Big\{
						\|u_{0\infty}\|_{H_\ell^s}^2+\int_0^t\|F_\infty\|_{H_\ell^s}^2d\tau\nonumber\\
						&+\delta^2\int_0^t  \|\trans (\nabla \Phi, V)\|_{ H_\ell^{s-1} \times H_\ell ^{s}}^2  d\tau+\int_0^t b(\tau)\|U_\infty\|_{H^s}^2d\tau
						\Big\}.	\label{92102}
					\end{align}
					Here $b(\tau)=1+(\|\partial_t\tilde{\phi}\|_{H^{s-1}}+\|\nabla\tilde \phi\|_{H^{s-1}}+\|\nabla\tilde \phi\|_{H^{s-1}}^2+\|\tilde v\|_{H^s}+\|\tilde v\|_{H^s}^2)$.
					
					We set $U_\infty^{(0)}=0$ and define $U_\infty^{(n)}, (n=1,2,...)$ inductively by the solution of \eqref{92101} with $U=U_\infty^{(n-1)}$. Applying the Gr\"{o}nwall inequality to  \eqref{92102} with $U_\infty=U_\infty^{(1)}$ and $U=0$, we have 
					\begin{align*}
						\|U_\infty^{(1)}(t)\|_{L^2}\leq M_4
					\end{align*}
					for $t\in[0,T']$, where 
					\begin{align*}
						M_4=C\Big\{\|u_{0\infty}\|_{H_1^s}^2+\int_0^{T'}\|F_\infty\|_{H_\ell^s}^2d\tau\Big\}e^{C\|b\|_{L^1(0,T')}}.
					\end{align*}
					Similarly, using \eqref{92102} with $U_\infty=U_\infty^{(n)}-U_\infty^{(n-1)}$ and $U=U_\infty^{(n-1)}-u_\infty^{(n-2)}$ for $n=2,3,...$, one can inductively show that
					\begin{align*}
						\|U_\infty^{(n)}(t)-U_\infty^{(n-1)}\|_{H_\ell^s}&\leq \frac{M_4(CK_0\delta^2t)^{n-1}}{(n-1)!},\\
						\int_0^t\|U_\infty^{(n)}(t)-U_\infty^{(n-1)}\|_{H_\ell^s}d\tau&\leq \frac{M_4}{K_0}\Big\{
						\frac{(CK_0\delta^2t)^{n-1}}{(n-1)!}+\frac{\|b\|_{L^1(0,T')}}{\delta^2}\frac{(CK_0\delta^2t)^n}{n!}.
						\Big\}
					\end{align*}
					Here $K_0=1+\|b\|_{L^1(0,T')}Ce^{C\|b\|_{L^1(0,T')}}$. It then follows that $U_\infty^{(n)}$ converges to a function $U_\infty$ in $C([0,T'];H_\ell^s)\cap L^2([0,T'];H_\ell^s)$ as $n\rightarrow\infty$. One can easily see that $U_\infty$ satisfies \eqref{92101} with $U=U_\infty$, i.e., $U_\infty$ is a solution of \eqref{eq:(2)} and $U_\infty\in H_{(\infty)}^s$ for all $t\in [0,T']$. By the uniqueness of solutions of \eqref{eq:(2)}, we see that $U_\infty=u_\infty$.
					
					It holds 
					\begin{align}
						&\frac{\mathrm{d}}{\mathrm{d}t}E_{\ell}^s[u_{\infty}](t)+\kappa_2 D_{\ell}^s[u_{\infty}](t)\nonumber\\
						&\leq C(\|\partial_t\tilde{\phi}\|_{H^{s-1}}+\|\nabla\tilde{\phi}\|_{H^{s-1}}+\|\nabla\tilde{\phi}\|_{H^{s-1}}^2+\|\tilde{v}\|_{H^s}+\|\tilde{v}\|_{H^s}^2)|u_{\infty}|_{H_\ell^{s}}^2\nonumber\\
						&+C|F_{\infty}|_{H_\ell^s}^2+C|u_\infty|_{H_{\ell-1}^s}^2,\label{112703}
					\end{align}
					and 
					\begin{align}\label{112702}
						&\frac{\mathrm{d}}{\mathrm{d}t}E_0^s[u_\infty](t)+\kappa_0 D_0^s[u_\infty](t)\nonumber\\
						&\leq C(\|\partial_t\tilde{\phi}\|_{H^{s-1}}+\|\nabla\tilde{\phi}\|_{H^{s-1}}+\|\nabla\tilde \phi\|_{H^{s-1}}^2+\|\tilde{v}\|_{H^{s}}+\|\tilde v\|_{H^{s}}^2)\|u_\infty\|_{H^{s}}^2+C\|F_{\infty}\|^2_{H^s}.
					\end{align}
					
					We now prove \eqref{112701} by induction on $\ell$. When $\ell=0$, it is true by \eqref{112702}. Assume that \eqref{112701} holds for $\ell=j-1$. Then by adding 
					$\frac{\kappa}{2C}\times \eqref{112703}$ to \eqref{112701} with $\ell=j-1$, we obtain that there exists a positive constant $\kappa$ such that the desired inequality \eqref{112701} for $\ell=j$ holds, where  $\mathcal{E}^s[u_\infty](t)=E_{\ell}^s[u_{\infty}](t)+\frac{\kappa}{2C}E_0^s[u_\infty](t)$. 
					
					It is directly shown that  $\mathcal{E}^s[u_\infty]$ is equivalent to $\|u_\infty\|_{H_{\ell}^s}^2$, i.e.,
					\begin{align*}
						C^{-1}\|u_\infty\|_{H_\ell^s}\leq \mathcal{E}^s[u_\infty]\leq C\|u_\infty\|_{H_\ell^s}^2.
					\end{align*}
					This completes the proof.
					$\hfill\square$
					
					\vspace{2ex}

					The operators $S_{{\infty},\tilde{u}}(t)$  and $\scr{S}_{\infty,\tilde{u}}(t)$ have the following properties.
					
					\vspace{2ex}
					\begin{prop}\label{Sinftyproperty} 
						Let $d\geq 3$ and $s$ be a nonnegative integer satisfying $s\geq [\frac{d}{2}]+2$. 
						Let $\ell$ be a nonnegative integer. 
						Assume that $\tilde u=\trans(\tilde a,\tilde v)$ satisfies \eqref{periodic-assumption}. 
						Then there exists a constant $\delta>0$ such that the following assertions hold true 
						if 
						\begin{align}
							\|\partial_t\tilde{\phi}\|_{C([0,T];H^{s-1})}+\|\nabla\tilde{\phi}\|_{C([0,T];H^{s-1})}+\|\tilde{v}\|_{C([0,T];H^{s})}\leq \delta,
						\end{align}

						\vspace{2ex}
						{\rm (i)} 
						It holds that $S_{{\infty},\tilde{u}}(\cdot)u_{0\infty}\in C([0,\infty);H_{(\infty),\ell}^{s})$ 
						for each $u_{0\infty}=\trans(a_{0\infty},v_{0\infty})\in H_{(\infty),\ell}^{s}$ 
						and there exist constants $c_1>0$ and $C>0$ such that 
						$S_{{\infty},\tilde{u}}(t)$ satisfies the estimate
						\begin{eqnarray}
							\|S_{{\infty},\tilde{u}}(t)u_{0\infty}\|_{H_{(\infty),\ell}^{s}}
							\leq Ce^{-c_1t}\|u_{0\infty}\|_{H_{(\infty),\ell}^{s}}
							\nonumber
						\end{eqnarray}
						for all $t\geq 0$ and $u_{0\infty}\in H_{(\infty),\ell}^{s}$.
						
						\vspace{2ex}
						{\rm (ii)} 
						It holds that $\scr{S}_{\infty,\tilde{u}}(\cdot)F_\infty\in C([0,T];H_{(\infty),\ell}^{s})$ 
						for each 
						$F_\infty=\trans(F_\infty^0,F_\infty^1)
						\in L^{2}(0,T;H_{(\infty),\ell}^{s})\cap C([0,T'];H^{s-1}_{(\infty),\ell})$ 
						and $\scr{S}_{\infty,\tilde{u}}(t)$ satisfies the estimate 
						\begin{eqnarray}
							\|\scr{S}_{\infty,\tilde{u}}(t)[F_{\infty}]\|_{H_{(\infty),\ell}^{s}}
							\leq 
							C\left\{\int_{0}^{t}e^{-c_1(t-\tau)}\|F_{\infty}\|_{H_{(\infty),\ell}^{s} }^{2}d\tau\right\}^{\frac{1}{2}}\nonumber
						\end{eqnarray}
						for $t\in [0,T]$ and 
						$F_{\infty}\in L^{2}(0,T;H_{(\infty),\ell}^{s})$ 
						with a positive constant $C$ depending on $T$.
						
						\vspace{2ex}
						{\rm (iii)} 
						It holds that $r_{H_{(\infty),\ell}^{s}}(S_{{\infty},\tilde{u}}(T))<1$.
						
						\vspace{2ex}
						{\rm (iv)} 
						$I-S_{{\infty},\tilde{u}}(T)$ has a bounded inverse $(I-S_{{\infty},\tilde{u}}(T))^{-1}$ on $H_{(\infty),\ell}^{s}$ 
						and  $(I-S_{{\infty},\tilde{u}}(T))^{-1}$ satisfies 
						\begin{align*}
							\|(I-S_{{\infty},\tilde{u}}(T))^{-1}u\|_{H_{(\infty),\ell}^{s}}
							\leq C\|u\|_{H_{(\infty),\ell}^{s}}\quad\mbox{for}\quad u\in H_{(\infty),\ell}^{s}.
						\end{align*}
					\end{prop}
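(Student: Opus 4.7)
Parts (i) and (ii) follow from the weighted energy estimate of Proposition~\ref{p8} by Gr\"onwall, and then (iii) and (iv) are obtained from (i) by the $T$-periodicity of $\tilde u$ and a Neumann series argument. The key preliminary observation is that on the high-frequency space the dissipation in Proposition~\ref{p8} controls the full energy.

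First I would show that the dissipation dominates the energy. For $u_\infty=\trans(a_\infty,v_\infty)\in H^s_{(\infty),\ell}$, Lemma~\ref{lemPinftyweight} applied to each derivative of $a_\infty$ at each weight $j=0,1,\dots,\ell$ yields
\begin{equation*}
\|a_\infty\|_{H^s_\ell}^2\leq C\|\nabla a_\infty\|_{H^{s-1}_\ell}^2,
\end{equation*}
and since $\|v_\infty\|_{H^s_\ell}^2$ already appears in the dissipation, the dissipation controls $\|u_\infty\|_{H^s_\ell}^2$, hence $\mathcal{E}^s[u_\infty]$, up to a fixed constant. Writing $b(t)=\|\partial_t\tilde\phi\|_{H^{s-1}}+\|\nabla\tilde\phi\|_{H^{s-1}}+\|\nabla\tilde\phi\|_{H^{s-1}}^2+\|\tilde v\|_{H^s}+\|\tilde v\|_{H^s}^2$, the hypothesis gives $b(t)\leq C\delta$ uniformly in $t$. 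Choosing $\delta$ small enough that the $Cb(t)\|u_\infty\|_{H^s_\ell}^2$ term in \eqref{112701} can be absorbed into the dissipation, one arrives at
\begin{equation*}
\frac{d}{dt}\mathcal{E}^s[u_\infty](t)+2c_1\,\mathcal{E}^s[u_\infty](t)\leq C\|F_\infty(t)\|_{H^s_\ell}^2
\end{equation*}
for some $c_1>0$. Multiplying by $e^{2c_1 t}$ and integrating gives
\begin{equation*}
\mathcal{E}^s[u_\infty](t)\leq e^{-2c_1 t}\mathcal{E}^s[u_\infty](0)+C\int_0^t e^{-2c_1(t-\tau)}\|F_\infty(\tau)\|_{H^s_\ell}^2\,d\tau.
\end{equation*}

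Setting $F_\infty\equiv 0$ and using the equivalence $\mathcal{E}^s[u_\infty]\sim \|u_\infty\|_{H^s_\ell}^2$ proves (i), and setting $u_{0\infty}=0$ proves (ii) after taking square roots. For (iii), because $\tilde u$ is $T$-periodic, the coefficient $B[\tilde u]$ in \eqref{eq:(2)} is $T$-periodic in $t$, so by uniqueness of solutions one has $S_{\infty,\tilde u}(nT)=S_{\infty,\tilde u}(T)^n$ for every $n\in\mathbb{N}$. Applying (i) at $t=nT$ yields
\begin{equation*}
\|S_{\infty,\tilde u}(T)^n u\|_{H^s_{(\infty),\ell}}\leq Ce^{-c_1 nT}\|u\|_{H^s_{(\infty),\ell}},
\end{equation*}
so the Gelfand formula gives $r_{H^s_{(\infty),\ell}}(S_{\infty,\tilde u}(T))\leq e^{-c_1 T}<1$. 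For (iv), since the spectral radius is strictly less than $1$, the Neumann series $\sum_{n\geq 0}S_{\infty,\tilde u}(T)^n$ converges absolutely in the operator norm on $H^s_{(\infty),\ell}$ (it is dominated by the geometric series $\sum C e^{-c_1 nT}$), and its sum is a bounded inverse of $I-S_{\infty,\tilde u}(T)$ with the claimed bound $C/(1-e^{-c_1 T})$.

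The main obstacle is the first step: ensuring that the weighted dissipation $\|\nabla a_\infty\|_{H^{s-1}_\ell}^2+\|v_\infty\|_{H^s_\ell}^2$ in Proposition~\ref{p8} actually bounds $\mathcal{E}^s[u_\infty]$. This fails on all of $H^s$ but holds on $H^s_{(\infty),\ell}$ thanks to the high-frequency Poincar\'e-type inequality of Lemma~\ref{lemPinftyweight}; without this, the unweighted $L^2_j$-part of $a_\infty$ would be uncontrolled and no exponential decay could be extracted. Once this inequality is in place, together with the $T$-periodicity of $\tilde u$ used to iterate the one-period map, the remaining arguments are routine Gr\"onwall and Neumann-series manipulations.
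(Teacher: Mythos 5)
Your proof is correct and follows essentially the same route as the paper: use the weighted energy inequality of Proposition~\ref{p8} (with $F_\infty=0$ for (i) and $u_{0\infty}=0$ for (ii)), use the high-frequency Poincar\'e-type inequality to show that the dissipation dominates $\mathcal{E}^s[u_\infty]$, absorb the coefficient terms using the $C([0,T])$ smallness of $\tilde u$, integrate, and then deduce (iii) and (iv) from (i) via $T$-periodicity and the Gelfand/Neumann-series argument.

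One small difference worth noting: the paper does not absorb $b(t)\,\mathcal{E}^s$ directly. Instead it splits $b(t)$ into its time average $\omega_1(T)$ over one period and the mean-zero oscillation $\omega_2(t)$, absorbs only the average into the dissipation (using $\delta$ small), and treats the oscillation with an auxiliary integrating factor $e^{-Cz(t)}$ where $z(t)=\int_0^t\omega_2$ is $T$-periodic and hence uniformly bounded. Under the stated hypothesis (a pointwise $C([0,T])$ bound of size $\delta$) your direct absorption is equally valid and somewhat cleaner; the paper's decomposition would additionally survive a weakening where only the period-average of $b$ is small while its pointwise values are merely bounded, since the factor $e^{-Cz(t)}$ only costs a fixed multiplicative constant independent of $t$. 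Also, the paper cites Lemma~\ref{lemPinfty} at the Poincar\'e step; your citation of Lemma~\ref{lemPinftyweight} is the more precise one for the weighted norms (both are needed, via induction on the weight exponent, which you implicitly acknowledge).
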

					
					\noindent\textbf{Proof.}
					Denote 
					\begin{equation}\label{ZT}
						\begin{aligned}
							\omega_1(T) &=\frac{1}{T}\int_0^T(
							\|\partial_t\tilde\phi\|_{H^{s-1}}+\|\nabla\tilde\phi\|_{H^{s-1}}+\|\nabla\tilde\phi\|_{H^{s-1}}^2+
							\|\tilde{v}\|_{H^s}+\|\tilde{v}\|_{H^s}^2)dt,\\
							\omega_2(t)&=(\|\partial_t\tilde\phi\|_{H^{s-1}}+\|\nabla\tilde\phi\|_{H^{s-1}}+\|\nabla\tilde\phi\|_{H^{s-1}}^2+
							\|\tilde{v}\|_{H^s}+\|\tilde{v}\|_{H^s}^2)-\omega_1(T),\\
							z(t)&=\int_0^t\omega_2(\tau)\mathrm{d}\tau.
						\end{aligned}
					\end{equation}
					Note $z(t+T)=z(t)$, which implies 
					\begin{align*}
						\sup_{t\in \mathbb{R}}|z(t)|\leq \sup_{\tau \in [0,T]}|z(\tau)|\leq C\delta.
					\end{align*}
					According to Proposition  \ref{p8}  with $F_\infty=0$, we verify that 
					\begin{align*}
						&\frac{\mathrm{d}}{\mathrm{d}t}\mathcal{E}_{\ell}^s[u_{\infty}](t)+\kappa(\|\nabla a_\infty(t)\|_{H_{\ell}^{s-1}}+\|v_\infty(t)\|_{H_\ell^{s}})\\
						&\leq C(\|\partial_t\tilde\phi\|_{H^{s-1}}+\|\nabla\tilde{\phi}\|_{H^{s-1}}+\|\nabla\tilde{\phi}\|_{H^{s-1}}^2+\|\tilde{v}\|_{H^s}+\|\tilde{v}\|_{H^{s}}^2)\|u_{\infty}\|_{H_\ell^{s}}^2\\
						&\leq C\omega_2(t)\mathcal{E}_\ell^s[u_\infty](t)+C\omega_1(T) \mathcal{E}_\ell^s[u_\infty](t).
					\end{align*}
					By Lemma \ref{lemPinfty}, we see that there exists a positive constant $\kappa_3$ such that 
					\begin{align*}
						\kappa(\|\nabla a_\infty(t)\|_{H_{\ell}^{s-1}}+\|v_\infty(t)\|_{H_\ell^{s}})\geq \kappa_3\mathcal{E}_\ell^s[u_\infty](t).
					\end{align*}
					In terms of \eqref{ZT} and the above inequality, we achieve that 
					\begin{align*}
						\begin{aligned}
							\frac{\mathrm{d}}{\mathrm{d}t}\mathcal{E}_\ell^s[u_\infty](t)+\kappa_3 \mathcal{E}_\ell^s[u_\infty](t)&\leq 
							C\omega_2(t)\mathcal{E}_\ell^s[u_\infty](t)+C\omega_1(T) \mathcal{E}_\ell^s[u_\infty](t).
						\end{aligned}
					\end{align*}
					Choosing $\delta\leq \frac{\kappa_3}{2C}$, we have $\omega_1(T)\leq \delta\leq \frac{\kappa_3}{2C}$. Then it holds  
					\begin{align*}
						\frac{\mathrm{d}}{\mathrm{d}t}\mathcal{E}_\ell^s[u_\infty](t)+\frac{1}{2}\kappa_3\mathcal{E}_\ell^s[u_\infty](t)\leq C\omega_2(t)\mathcal{E}_\ell^s[u_\infty](t).
					\end{align*}
					We thus obtain 
					\begin{align*}
						\frac{\mathrm{d}}{\mathrm{d}t}
						\Big(
						e^{\frac{\kappa_3}{2}t}e^{-Cz(t)}\mathcal{E}_\ell^s[u_\infty](t)
						\Big)\leq 0.
					\end{align*}
					Integrating it with time yields
					\begin{align*}
						\mathcal{E}_\ell^s[u_\infty](t)
						&\leq \mathcal{E}_\ell^s[u_\infty](0)e^{-\frac{\kappa_3}{2}t}e^{Cz(t)}\\
						&\leq e^{-\frac{\kappa_3}{2}t+C\delta}\mathcal{E}_\ell^s[u_\infty](0)\\
						&\leq  e^{-\frac{\kappa_3}{4}t}\mathcal{E}_\ell^s[u_\infty](0).
					\end{align*}
					Consequently, choosing $c_1=\frac{\kappa_3}{4}$, we obtain 
					\begin{align*}
						\|S_{\infty,\tilde{u}}(t) u_{0\infty}\|_{H_{(\infty),\ell}^s}\leq C
						e^{-c_1t}
						\|u_{0\infty}\|_{H_{(\infty),\ell}}^s.
					\end{align*}
					This proves {\rm (i)}. The assertion {\rm (ii)} is proved similarly, and we omit the proof. 
					
					As for {\rm (iii)}, since $\tilde{u}=\trans(\tilde{a},\tilde{v})
					\in C_{per}(\mathbb{R};H^s)$, it follows from {\rm (i)} that, for each $j\in N$,
					$$
					\|(S_{\infty,\tilde{u}}(T))^ju\|_{H_{(\infty),\ell}^s}=\|S_{(\infty),\tilde{u}}(jT)u\|_{H_{(\infty),\ell}^s}\leq 
					Ce^{-c_1jT}\|u\|_{H_{(\infty),\ell}}.
					$$
					Hence, we have 
					\begin{align*}
						\|(S_{(\infty),\tilde{u}}(T))^j\|\leq Ce^{-c_1jT}.
					\end{align*}
					We thus obtain 
					\begin{align*}
						\lim_{j\rightarrow{\infty}}\|(S_{(\infty),\tilde{u}}(T))^j\|^{\frac{1}{j}}\leq \lim_{j\rightarrow{\infty}}C^{\frac{1}{j}}e^{-c_1T}=e^{-c_1T}<1.
					\end{align*}
					This show {\rm (iii)}. The assertion {\rm (iv)} is an immediate consequence of {\rm (iii)}.
					$\hfill\square$
					\vspace{2ex}
					
					Applying Proposition \ref{Sinftyproperty}, we easily obtain the following estimate for a solution $u_\infty$ of  
					(\ref{eq:(2)}) satisfying $u_\infty(0)=u_\infty(T)$. 
					
					\vspace{2ex}
					\begin{prop}\label{conclusionhighpart}
						Let $d\geq 3$ and $s$ be a nonnegative integer satisfying $s\geq [\frac{d}{2}]+2$. 
						Assume that 
						\begin{align*}
							F_{\infty}=\trans (F_{\infty}^0,F_{\infty}^1)
							\in L^{2}(0,T;H^{s}_{(\infty),d-1}) \cap C([0,T']; H^{s-1}_{(\infty),d-1}). 
						\end{align*}
						
						Assume also that $\tilde{u}=\trans(\tilde a,\tilde v)$ satisfies \eqref{periodic-assumption}. Then there exists a positive constant $\delta$ such that the following assertion holds true if
						\begin{align*}
							\|\partial_t\tilde{\phi}\|_{C([0,T];H^{s-1})}+\|\nabla\tilde{\phi}\|_{C([0,T];H^{s-1})}+\|\tilde{v}\|_{C([0,T];H^{s})}\leq \delta,
						\end{align*}
						
						The function 
						\begin{align}
							u_{\infty}(t)
							:=
							S_{\infty,\tilde{u}}(t)(I-S_{{\infty},\tilde{u}}(T))^{-1}\scr{S}_{\infty,\tilde{u}}(T)[F_{\infty}]
							+\scr{S}_{\infty,\tilde{u}}(t)[F_{\infty}]
							\label{uinftyestimate1}
						\end{align}
						is a solution of \eqref{eq:(2)} in $\scr Z^s_{(\infty),d-1}(0,T)$ 
						satisfying $u_\infty(0)=u_\infty(T)$ and the estimate
						\begin{align*}
							\|u_{\infty}\|_{\scr Z^s_{(\infty),d-1}(0,T)}
							\leq 
							C\|F_{\infty}\|_{L^{2}(0,T;H^{s}_{(\infty),d-1}\times H^{s-1}_{(\infty),d-1})}.
						\end{align*}
					\end{prop}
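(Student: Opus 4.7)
The plan is to build $u_\infty$ in three layers: first locate a suitable initial value $u_{0\infty}$ in $H^{s}_{(\infty),d-1}$, then verify that the formula \eqref{uinftyestimate1} produces a function enjoying the required regularity in $\scr Z^s_{(\infty),d-1}(0,T)$, and finally check $T$-periodicity and the norm estimate. Concretely, I would set
$$u_{0\infty}:=(I-S_{\infty,\tilde{u}}(T))^{-1}\scr{S}_{\infty,\tilde{u}}(T)[F_{\infty}].$$
By Proposition \ref{Sinftyproperty}(ii) applied with $\ell=d-1$, the element $\scr{S}_{\infty,\tilde{u}}(T)[F_{\infty}]$ lies in $H_{(\infty),d-1}^{s}$ with norm bounded by a constant times $\|F_\infty\|_{L^{2}(0,T;H_{(\infty),d-1}^{s})}$. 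Proposition \ref{Sinftyproperty}(iv) then guarantees that $u_{0\infty}\in H_{(\infty),d-1}^{s}$ and
$$\|u_{0\infty}\|_{H_{(\infty),d-1}^{s}}\leq C\|F_\infty\|_{L^{2}(0,T;H_{(\infty),d-1}^{s})}.$$

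Next I would identify $u_\infty$ in \eqref{uinftyestimate1} as the solution of the initial value problem \eqref{eq:(2)} with initial datum $u_{0\infty}$. Since $u_{0\infty}\in H_{(\infty),d-1}^{s}\subset H_{(\infty)}^{s}$ and $F_\infty$ satisfies the assumptions of Proposition \ref{solvabilityhighpart}, that proposition produces a unique $u_\infty$ with $a_\infty\in C([0,T];H^{s}_{(\infty)})$ and $v_\infty\in C([0,T];H^{s}_{(\infty)})\cap L^2(0,T;H^{s}_{(\infty)})\cap H^1(0,T;H^{s-1}_{(\infty)})$. To upgrade this regularity to the weighted space $\scr Z^s_{(\infty),d-1}(0,T)$, I would invoke the weighted energy inequality of Proposition \ref{p8} with $\ell=d-1$ on the interval $[0,T]$, which gives, after integration,
$$\sup_{0\leq t\leq T}\|u_\infty(t)\|_{H_{d-1}^{s}}^{2}+\int_{0}^{T}(\|\nabla a_\infty\|_{H_{d-1}^{s-1}}^{2}+\|v_\infty\|_{H_{d-1}^{s}}^{2})\,d\tau\leq C\bigl(\|u_{0\infty}\|_{H_{d-1}^{s}}^{2}+\|F_\infty\|_{L^{2}(0,T;H_{d-1}^{s})}^{2}\bigr),$$
the smallness of $\tilde u$ being used to absorb the linear-in-$\tilde u$ terms. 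Combining with the already-derived bound on $\|u_{0\infty}\|_{H_{d-1}^{s}}$ yields the desired estimate, and the time derivative bound on $\del_t\phi_\infty$ required by the space $\scr Z^s_{(\infty),d-1}(0,T)$ is read off from the equation, since $\del_t a_\infty=-\div v_\infty-\tilde v\cdot\nabla a_\infty+F_\infty^0$ and each term on the right is controlled in $C([0,T];H^{s-1})$ by the above bounds.

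For $T$-periodicity, by Proposition \ref{S1}-type semigroup relations (namely $u_\infty(T)=S_{\infty,\tilde u}(T)u_{0\infty}+\scr S_{\infty,\tilde u}(T)[F_\infty]$ for the initial value problem) together with the identity $(I-S_{\infty,\tilde u}(T))u_{0\infty}=\scr S_{\infty,\tilde u}(T)[F_\infty]$, I would compute
$$u_\infty(T)=S_{\infty,\tilde u}(T)u_{0\infty}+\scr S_{\infty,\tilde u}(T)[F_\infty]=S_{\infty,\tilde u}(T)u_{0\infty}+(I-S_{\infty,\tilde u}(T))u_{0\infty}=u_{0\infty}=u_\infty(0),$$
which is the $T$-periodicity condition. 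The main difficulty I anticipate is not the algebra of the fixed point but verifying that the weighted energy estimate of Proposition \ref{p8} truly applies with $\ell=d-1$ uniformly in $t\in[0,T]$; in particular the $|u_\infty|_{H_{\ell-1}^{s}}^{2}$ remainder terms that appear in \eqref{113001} must be controlled inductively through the hierarchy $\ell=0,1,\dots,d-1$, which is precisely the content of Proposition \ref{proR1}, so the proof reduces to a careful bookkeeping of those constants together with the smallness hypothesis on $\tilde u$ ensured by \eqref{periodic-assumption}.
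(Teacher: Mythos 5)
Your proof is correct and follows essentially the same route the paper intends: apply Proposition \ref{Sinftyproperty} (ii) to bound $\scr S_{\infty,\tilde u}(T)[F_\infty]$, apply (iv) to the inverse to control $u_{0\infty}$, use (i) together with (ii) again (equivalently the weighted energy estimate of Proposition \ref{p8}) to propagate the bound over $[0,T]$, and verify $u_\infty(0)=u_\infty(T)$ by the algebraic identity coming from $(I-S_{\infty,\tilde u}(T))u_{0\infty}=\scr S_{\infty,\tilde u}(T)[F_\infty]$. The paper compresses all of this to the remark that the proposition follows from Proposition \ref{Sinftyproperty}; your expansion of the details, including reading off the $C^1$-in-time regularity of $a_\infty$ from the equation, is accurate and complete.
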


					
					\section{Proof of Theorem \ref{Theorem 3.1}}\label{S7}
					In this section we give a proof of Theorem \ref{Theorem 3.1}.

					\vspace{2ex}
					We first establish the estimates for the nonlinear and inhomogeneous terms 
					$F_{1}(u,g)$ and $F_\infty(u,g)$:
					\begin{align*}
						F_{1}(u,g)&=P_1\begin{pmatrix}-v\cdot\nabla a\\
							-(v\cdot\nabla v+g_3(\phi)\nabla a)+g
						\end{pmatrix}:=\begin{pmatrix}
							F_1^1(u)\\
							{F}_{1}^2(u,g)
						\end{pmatrix}
						,\\\\
						F_{\infty}(u,g)
						&=P_{\infty}\begin{pmatrix}
							-v\cdot \nabla a_{1}\\
							-(v\cdot\nabla v_1+g_3(\phi)\nabla a_1)+g
						\end{pmatrix}
						:=\begin{pmatrix}
							F_{\infty}^{1}(u) \\
							F_\infty^{2}(u,g)
						\end{pmatrix},
					\end{align*}
					where $u=\trans(a,v)$ is a function given by $u_{1}=\trans(a_1,v_1)$ and $u_{\infty}=\trans(a_\infty,v_{\infty})$ through the relation 
					\begin{eqnarray*}
						&&a=a_1+a_\infty
						,  \ \ v=v_1+v_{\infty}.
					\end{eqnarray*}
					We first state the estimates for $F_{1}(u,g)$ and $F_{\infty}(u,g)$. We note that $$g_3(\phi)=P'(1+\phi)-P'(1)=P'(e^a)-1$$ and we can rewrite the term $g_3(\phi)\nabla a_1$ as 
					$$
					g_3(\phi)\nabla a=(1+g_2(\phi)\phi)\nabla(P^{(2)}(\phi)(\phi)^2), 
					$$
					where 
					$$
					g^{(2)}(\phi)=\displaystyle\int_0^1 h(1+\theta \phi) d\theta, \ \ h(t)=\frac{1}{t}$$
					and 
					$$
					P^{(2)}(\phi) =\displaystyle\int_0^1 P'' (1+\theta \phi) d\theta
					$$
					for $\phi =e^a -1$. 
					For the estimates of the low frequency part, we recall that
					$$
					\Gamma[{F}_1](t):=S_1(t)\scr{S}_{1}(T)(I-S_1(T))^{-1}
					\begin{pmatrix}
						F^1_1 \\
						{F}^2_{1}
					\end{pmatrix}
					+\scr{S}_1(t)
					\begin{pmatrix}
						F^1_1 \\
						{F}_{1}^2
					\end{pmatrix}
					$$
					
					We first show the estimate of $\|  \Gamma[{F}_{1}(u,g)]\|_{\scr{Z}(0,T)}$.

					\vspace{2ex}
					\begin{prop}\label{nonlinearestimatelowpart}
						Let $u_{1}=\trans(a_1,v_1)$ and $u_{\infty}=\trans(a_\infty,v_{\infty})$ satisfy 
						$$
						\sup_{0\leq t \leq T}\|u_{1}(t)\|_{{\scr X}_{(1)}\times {\scr Y}_{(1)}}+\sup_{0\leq t \leq T}\|u_{\infty}(t)\|_{H^s_{d-1}}  \leq \frac{1}{2},
						$$
						Then it holds that
						\begin{eqnarray*}
							\| \Gamma[{F}_{1}(u,g)]\|_{\scr{Z}^{(1)}(0,T)}
							\leq 
							C\|\{u_{1},u_{\infty}\}\|_{X^s(0,T)}^2 +C\Bigl(1+\|\{u_{1},u_{\infty}\}\|_{X^s(0,T)}\Bigr)[g]_s
						\end{eqnarray*}
						uniformly for $u_{1}$ and $u_{\infty}$.
					\end{prop}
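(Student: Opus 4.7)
The plan is to apply Proposition \ref{propuiestimate} separately to each summand of
$$F_{1}(u,g)=P_{1}\trans\!\bigl(-v\cdot\nabla a,\,-v\cdot\nabla v - g_{3}(\phi)\nabla a + g\bigr),$$
choosing among cases ${\rm (i)}$--${\rm (iii)}$ of that proposition so as to exploit the structure of each piece. Since Proposition \ref{propuiestimate} is hit with $\mathbb{F}^{1}$ of the form $\trans(0,\tilde F_{1})$ or $\trans(\tilde F_{1},0)$, every term is routed through the matching component. The external force $g$ lies in $C([0,T];L^{1}\cap L^{\infty}_{d})\cap L^{2}(0,T;H^{s-1}_{d-1})$, so case ${\rm (i)}$ applies directly and yields a contribution $C[g]_{s}$; the remaining quadratic pieces must first be rewritten in divergence or gradient form before case ${\rm (iii)}$ can be used.

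For the two convective nonlinearities I would use the identities
\begin{align*}
v\cdot\nabla a &= \partial_{x_{j}}(v_{j}a)-(\div v)\,a,\\
v\cdot\nabla v &= \partial_{x_{j}}(v_{j}v)-(\div v)\,v,
\end{align*}
invoking case ${\rm (iii)}$ of Proposition \ref{propuiestimate} on the divergence-form parts with $F^{(1)}_{1}$ equal to $v_{j}a$ or $v_{j}v$, and case ${\rm (i)}$ on the residuals $(\div v)a$ and $(\div v)v$. The pressure term is in fact a pure gradient: a direct computation gives $g_{3}(\phi)\nabla a=\nabla\widetilde G(\phi)$ where $\widetilde G(\phi)=\int_{0}^{\phi}\frac{P'(1+s)-1}{1+s}\,ds=O(\phi^{2})$, so it fits case ${\rm (iii)}$ with $F^{(1)}_{1}=\widetilde G(\phi)$.

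To bound the weighted norms demanded by these three cases, I decompose $u=u_{1}+u_{\infty}$ and treat each product componentwise. For purely low-frequency products such as $v_{1}a_{1}$, $v_{1}\otimes v_{1}$ and $\widetilde G(\phi_{1})$, the pointwise weighted decay built into the space $\scr X_{(1)}\times\scr Y_{(1)}$, together with the Leibniz rule and Lemma \ref{lemP_1 to weightedLinfty}--\ref{lemP_1 weightedLinfty}, yields $L^{\infty}_{d}$ and $L^{2}_{1}$ bounds on $F^{(1)}_{1}$ that are quadratic in $\|\{u_{1},u_{\infty}\}\|_{X^{s}(0,T)}$. Products involving at least one high-frequency factor are controlled by combining the Sobolev embedding $H^{s-1}\hookrightarrow L^{\infty}$ from Lemma \ref{lem2.1.} with the weighted $H^{s}_{d-1}$ control furnished by the space ${\scr Z}^{s}_{(\infty),d-1}(0,T)$ and with Lemma \ref{lemPinftyweight}; the composite $\widetilde G(\phi)$ is estimated via the composite-function Lemma \ref{lem2.2.} together with the Taylor bound $|\widetilde G(\phi)|\leq C\phi^{2}$. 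Summing everything reproduces the quadratic term $C\|\{u_{1},u_{\infty}\}\|_{X^{s}(0,T)}^{2}$; cross terms between $u$ and $g$, which appear only through the Taylor expansion of $g_{3}(\phi)$ around the background $\phi$, are then absorbed into $C\|\{u_{1},u_{\infty}\}\|_{X^{s}(0,T)}[g]_{s}$, producing the prefactor $C(1+\|\{u_{1},u_{\infty}\}\|_{X^{s}(0,T)})[g]_{s}$ in the final bound.

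The main obstacle I anticipate is the pressure nonlinearity $g_{3}(\phi)\nabla a$. Its spatial decay as $|x|\to\infty$ is the slowest of the three quadratics, saturating the $L^{\infty}_{d}$ threshold demanded by case ${\rm (iii)}$ in Proposition \ref{propuiestimate}, and the weighted bounds on the composite $\widetilde G(\phi)$ must be obtained after splitting $\phi=\phi_{1}+\phi_{\infty}$ and tracking all cross terms $\widetilde G'(\phi_{1})\phi_{\infty}+\cdots$ using Lemma \ref{lem2.2.} together with Lemmas \ref{lemP_1 to weightedLinfty}--\ref{lemPinftyweight}. Arranging these estimates so that every remainder is genuinely quadratic in $\|\{u_{1},u_{\infty}\}\|_{X^{s}(0,T)}$, with constants depending only on $P$ and the smallness parameter $\tfrac12$, is the most delicate part of the argument.
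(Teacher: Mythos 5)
The core of your plan agrees with the paper's: route each summand of $F_1(u,g)$ through one of the three cases of Proposition \ref{propuiestimate}. But the key choice you make for the pressure term is wrong at $d=3$, which is the hardest case the proposition has to cover.

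Your identity $g_3(\phi)\nabla a=\nabla\widetilde G(\phi)$ with $\widetilde G(\phi)=\int_0^{\phi}\frac{P'(1+s)-1}{1+s}\,ds=O(\phi^2)$ is correct and indeed cleaner than the paper's factorization $g_3(\phi)\nabla a=(1+g^{(2)}(\phi)\phi)\nabla(P^{(2)}(\phi)\phi^2)$. The problem is that you feed it into case {\rm (iii)}, which requires $F_1^{(1)}=\widetilde G(\phi)\in L^\infty_d\cap L^2_{(1),1}$. In the purely low-frequency contribution, $\phi_1=e^{a_1}-1$ inherits only the $|x|^{-(d-2)}$ decay from $a_1\in\scr X_{(1)}$, so $\widetilde G(\phi_1)\sim a_1^2\sim|x|^{-2(d-2)}$. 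For $d=3$ this is $|x|^{-2}$, which is strictly slower than the required $|x|^{-d}=|x|^{-3}$; moreover $(1+|x|)\widetilde G(\phi_1)\sim|x|^{-1}\notin L^2(\mathbb{R}^3)$, so the $L^2_{(1),1}$ hypothesis also fails. Your remark that the decay ``saturates'' the $L^\infty_d$ threshold is therefore incorrect: at $d=3$ it falls one full power short. The paper avoids this by using case {\rm (ii)} (with $|\alpha|=1$ and $F_1^{(1)}$ quadratic in $\phi_1$), which only requires $F_1^{(1)}\in L^2_{(1)}\cap L^\infty_{d-1}$; since $2(d-2)=d-1$ when $d=3$, that hypothesis is met exactly, and $\widetilde G(\phi_1)\in L^2(\mathbb{R}^3)$ because $2\cdot2(d-2)>d$. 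The fix is simply to replace your case-{\rm (iii)} application by case {\rm (ii)} for the low-low pressure contribution.

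Two smaller points. First, your divergence rewriting $v\cdot\nabla a=\partial_{x_j}(v_ja)-(\div v)\,a$ (and likewise for $v\cdot\nabla v$) is correct but unnecessary here: because $v_1$ decays like $|x|^{-(d-1)}$ rather than $|x|^{-(d-2)}$ — precisely the gain the hyperbolic system buys over Navier--Stokes, as emphasized in the introduction — the products $v\cdot\nabla a$ and $v\cdot\nabla v$ already land in $L^\infty_d\cap L^1\cap L^2_1$ and can be handled directly by case {\rm (i)}, which is what the paper does. Second, there is no genuine cross term between $u$ and $g$ in $F_1$: $g$ appears only in the inhomogeneous component $G(g)$, so the paper's proof actually returns the slightly sharper bound $C\|\{u_1,u_\infty\}\|_{X^s}^2+C[g]_s$, without the factor $(1+\|\{u_1,u_\infty\}\|_{X^s})$.
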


					\vspace{2ex}
					
					\noindent\textbf{Proof.} 
					For $u^{(j)}=\trans(a^{(j)},v^{(j)})$ with 
					$\phi^{(j)} =e^{a^{(j)}} -1$
					$(j=1,\infty)$, we set 
					\begin{eqnarray*}
						G_1 (u^{(1)},u^{(2)}) &=&-P_1 ( v^{(1)} \cdot \nabla a^{(2)}),\\
						G_2 (u^{(1)},u^{(2)}) &=& -P_1 ( v^{(1)} \cdot \nabla v^{(2)}),\\
						G_3(\phi,u^{(1)},u^{(2)})&=& -P_1(\nabla (P^{(1)}(\phi)\phi^{(1)}\phi^{(2)}),\\
						G_4(\phi,u^{(1)},u^{(2)})&=& -P_1((g^{(2)}(\phi))\nabla (P^{(1)}(\phi)\phi^{(1)}\phi^{(2)}),\\
						H_k (u^{(1)},u^{(2)})&=& G_k (u^{(1)},u^{(2)})+G_k (u^{(2)},u^{(1)}),  \ \ (k=1,2),\\
						H_k (\phi, u^{(1)},u^{(2)}) &=& G_k (\phi, u^{(1)},u^{(2)})+G_k (\phi, u^{(2)},u^{(1)}) \ \ (k=3,4).
					\end{eqnarray*}
					Then, $\Gamma[\tilde{F}_{1}(u,g)]$ is written as 
					\begin{eqnarray*}
						\Gamma[{F}_{1}(u,g)]&=&\sum_{k=1}^2 \left(\Gamma[G_k(u_1,u_1)] +\Gamma [H_k(u_1,u_{\infty})] +\Gamma [G_k(u_{\infty},u_{\infty})]\right) \\
						&\quad& \sum_{k=3}^4 (\Gamma[G_k(\phi,u_1,u_1)] +\Gamma [H_k(\phi,u_1,u_{\infty})] +\Gamma [G_k(\phi,u_{\infty},u_{\infty})])\\
						&\quad &+\Gamma \left[g\right].
					\end{eqnarray*}
					Applying $(\ref{propuiestimate1})$ to $\Gamma [G_1(u_1,u_1)]$, and $\Gamma [G_2(u_1,u_1)]$  we have
					$$
					\|\Gamma [G_1(u_1,u_1)]\|_{\scr{Z}_{(1)}(0,T)}+ \Gamma [G_2(u_1,u_1)] \leq C\|\{u_1,u_{\infty}\}\|_{X^s(0,T)}^2.
					$$
					As for $\Gamma [G_3(\phi, u_1,u_1)]$, we apply $(\ref{propuiestimate2})$ with  $F^{(1)}_1=P^{(1)}(\phi)\phi_1^{2}$ 
					$(|\alpha| =1)$ to obtain
					\begin{eqnarray*}
						&&\|\Gamma [G_3(u_1,u_1)]\|_{\scr{Z}_{(1)}(0,T)} \leq C\|\{u_1,u_{\infty}\}\|_{X^s(0,T)}^2.
					\end{eqnarray*}
					By $(\ref{propuiestimate1})$, we have 
					\begin{eqnarray*}
						&&\|\sum_{k=1}^2\Gamma [G_k(u_{\infty},u_{\infty})]\|_{\scr{Z}_{(1)}(0,T)} \leq C\|\{u_1,u_{\infty}\}\|_{X^s(0,T)}^2,\\
						&&\|\Gamma [G_3(\phi,u_{\infty},u_{\infty})]\|_{\scr{Z}_{(1)}(0,T)} \leq C\|\{u_1,u_{\infty}\}\|_{X^s(0,T)}^2.
					\end{eqnarray*}
					By $(\ref{propuiestimate1})$, we also have 
					\begin{eqnarray*}
						&&\|\sum_{k=1}^2\Gamma [G_k(u_1,u_{\infty})]\|_{\scr{Z}_{(1)}(0,T)} \leq C\|\{u_1,u_{\infty}\}\|_{X^s(0,T)}^2,\\
						&&\|\Gamma [G_3(\phi,u_1,u_{\infty})]\|_{\scr{Z}_{(1)}(0,T)} \leq C\|\{u_1,u_{\infty}\}\|_{X^s(0,T)}^2.
					\end{eqnarray*}
					Concerning $\Gamma [g]$, we see from $(\ref{propuiestimate1})$  that
					\begin{eqnarray*}
						\|\Gamma[g]\|_{\scr{Z}_{(1)}(0,T)}
						\leq C[g]_s.
					\end{eqnarray*}
					Therefore, we find that 
					\begin{eqnarray*}
						\| \Gamma[{F}_{1}(u,g)]\|_{\scr{Z}^{(1)}(0,T)}
						\leq 
						C\|\{u_{1},u_{\infty}\}\|_{X^s(0,T)}^2 +C[g]_s.
					\end{eqnarray*} 
					This completes the proof.
					$\hfill\square$

					\vspace{2ex}

					We next show the estimates  for the nonlinear and inhomogeneous terms of the high frequency part.
					
					\vspace{2ex}
					
					\begin{prop}\label{nonlinearestimateinfty}
						Let $u_{1}=\trans(a_1,v_1)$ and $u_{\infty}=\trans(a_\infty,v_{\infty})$ satisfy 
						$$
						\sup_{0\leq t \leq T}\|u_{1}(t)\|_{{\scr X}_{(1)}\times {\scr Y}_{(1)}}+\sup_{0\leq t \leq T}\|u_{\infty}(t)\|_{H^s_{d-1}} \leq \frac{1}{2}.
						$$
						Then it holds that
						\begin{eqnarray*}
							\lefteqn{\| F_{\infty}(u,g)\|_{L^2(0,T;H^s_{d-1}\times H^{s-1}_{d-1})}}\\
							&\leq &
							C\|\{u_{1},u_{\infty}\}\|_{X^s(0,T)}^2 +C[g]_s
						\end{eqnarray*}
						uniformly for $u_{1}$ and $u_{\infty}$.
					\end{prop}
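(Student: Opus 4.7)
\noindent\textbf{Proof proposal for Proposition \ref{nonlinearestimateinfty}.}
The plan is to reduce the weighted estimate to product estimates by first stripping off $P_\infty$, then decomposing $v=v_1+v_\infty$ (and similarly $a=a_1+a_\infty$ inside $g_3(\phi)$) so that each resulting piece is a product of factors in one of the three regimes: a low-frequency factor carrying its pointwise weighted decay, a high-frequency factor in the weighted Sobolev space $H^s_{d-1}$, or the source $g$. I first note that $P_\infty=I-P_1$ and that $P_1$ is given by convolution with a Schwartz function, so Lemma \ref{lemP_1 to weightedLinfty} together with Lemma \ref{lemPinfty} yields the weighted boundedness $\|P_\infty f\|_{H^s_{d-1}}\le C\|f\|_{H^s_{d-1}}$ and $\|P_\infty f\|_{H^{s-1}_{d-1}}\le C\|f\|_{H^{s-1}_{d-1}}$. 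Hence it suffices to bound the three bilinear expressions $v\cdot\nabla a_1$, $v\cdot\nabla v_1$ and $g_3(\phi)\nabla a_1$ in the appropriate weighted Sobolev norms, plus the inhomogeneity $g$ which is directly controlled by $[g]_s$.

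For the bilinear pieces I would split $v=v_1+v_\infty$. The low$\times$low terms (for instance $v_1\cdot\nabla a_1$) are handled by placing the whole weight $|x|^{d-1}$ on the slowly decaying factor: using the ${\scr X}_{(1)}$ and ${\scr Y}_{(1)}$ norms I control $(1+|x|)^{d-1}v_1$ in $L^\infty$ and $\nabla a_1$ in $L^2$, while for derivatives of order $\ge 1$ of a low-frequency factor I apply Lemma \ref{lemP_1 weightedLinfty} to convert derivatives into a finite number of weighted norms already included in $\|u_1\|_{\scr{Z}_{(1)}}$. The mixed low$\times$high terms (e.g. $v_\infty\cdot\nabla a_1$) are treated by Moser-type product estimates from Lemma \ref{lem2.2.} combined with the Sobolev embedding of Lemma \ref{lem2.1.}: the weight $|x|^{d-1}$ is absorbed by the $L^\infty_{d-1}$-norm of the low-frequency factor $\nabla a_1$, leaving a plain $H^s$-norm of the high-frequency factor. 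The high$\times$high pieces (e.g.\ $v_\infty\cdot\nabla v_\infty$, which actually do not appear here since $F_\infty$ involves only $\nabla a_1$ and $\nabla v_1$) reduce entirely to standard Moser estimates on $H^s_{d-1}$. Each resulting product of norms is bounded by $\|\{u_1,u_\infty\}\|_{X^s(0,T)}^2$, and the continuous-in-$t$ control provided by $X^s(0,T)$ gives the $L^2(0,T;\cdot)$ bound after multiplying by $T^{1/2}$.

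The nonlinear pressure contribution $g_3(\phi)\nabla a_1$ requires the composition structure $g_3(\phi)=P'(e^a)-1$ with $a=a_1+a_\infty$. I would write $g_3(\phi)=a\cdot\tilde h(a)$ with $\tilde h$ smooth, expand $a=a_1+a_\infty$ and apply Lemma \ref{lem2.2.} to control $\|\partial^\mu(\tilde h(a))\|_{L^2}$ by a polynomial in $\|a\|_{H^s}$ (the smallness hypothesis on $u_1$ and $u_\infty$ guarantees $\|a\|_{L^\infty}\le 1/2$, so the composition is well behaved); the residual product structure $a\,\tilde h(a)\,\nabla a_1$ is then estimated exactly as in the previous step, the slowly decaying factor $\nabla a_1$ once again carrying the weight via its $L^\infty_{d-1}$-control from ${\scr X}_{(1)}$. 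The expected main obstacle is bookkeeping in this step: distributing up to $s$ derivatives and the full weight $|x|^{d-1}$ across a product of three factors, one of which is a composition, while avoiding derivative loss on the velocity (which has no dispersive smoothing in the Euler setting). This is the same difficulty that motivated the weighted energy framework of Section \ref{S6}, and here it is resolved by always routing the highest derivative onto the one factor whose Sobolev norm is already included in $\|\{u_1,u_\infty\}\|_{X^s(0,T)}^2$ and the weight onto the low-frequency factor, so that no estimate ever asks for more than $s$ derivatives of $u_\infty$ or more than the $\scr Z_{(1)}(0,T)$-norm of $u_1$. Combining all pieces and invoking the bound $[g]_s$ for the forcing term yields the claimed inequality.
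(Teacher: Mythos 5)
Your decomposition and lemma usage match the paper's brief proof almost exactly: the paper also reduces Proposition~\ref{nonlinearestimateinfty} to the product estimates of Section~\ref{S2} after splitting $v=v_1+v_\infty$ and $a=a_1+a_\infty$, routes the weight $(1+|x|)^{d-1}$ onto the low-frequency factor via the ${\scr X}_{(1)},{\scr Y}_{(1)}$ norms while the derivatives go onto the factor whose Sobolev norm is part of $X^s$, uses Lemma~\ref{lemP_1 weightedLinfty} to absorb derivatives of low-frequency pieces, and exploits the key structural fact you correctly point out, namely that $F_\infty$ contains only $\nabla a_1$ and $\nabla v_1$, so no term asks for more than $s$ derivatives of $u_\infty$ (the paper uses the commutator Lemma~\ref{lem2.3.} where you use the Moser product Lemma~\ref{lem2.2.}, but these are interchangeable here).

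One point in your composition step is stated too loosely and would, as written, be false: you claim $\|\partial^\mu\tilde h(a)\|_{L^2}$ is controlled by a polynomial in $\|a\|_{H^s}$, but $a$ itself is not in $L^2$ in general --- for $d=3$ the low-frequency part satisfies $|a_1|\sim(1+|x|)^{-(d-2)}=(1+|x|)^{-1}$, so $\|a_1\|_{L^2(\mathbb{R}^3)}=\infty$, and ${\scr X}_{(1)}$ deliberately does not include $\|a_1\|_{L^2}$. What rescues the argument is that you only differentiate, so Fa\`a di Bruno produces products of $\partial^{\mu_i}a$ with every $|\mu_i|\geq1$; the norms you actually need are $\|\nabla a\|_{H^{s-1}}$ (available from ${\scr X}_{(1),L^2}$, $\|a_\infty\|_{H^s}$, and Lemma~\ref{lemP_1} (ii)) together with $\|a\|_{L^\infty}\leq 1/2$ for the coefficients $\tilde h^{(k)}(a)$. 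This is exactly what the paper's displayed bound for $\|{F}^2_\infty\|_{H^{s-1}_{d-1}}$ reflects: it contains $\|(1+|x|)^{d-2+j}\nabla^j a_1\|_{L^\infty}$ for $j=0,1$ and $\|(1+|x|)^{j-1}\nabla^j a_1\|_{L^2}$ for $j=1,2$, but never $\|a_1\|_{L^2}$. If you replace ``polynomial in $\|a\|_{H^s}$'' with ``polynomial in $\|\nabla a\|_{H^{s-1}}$ and $\|a\|_{L^\infty}$,'' the proof is complete and coincides with the paper's.
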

					
					\vspace{2ex}
					
					\noindent\textbf{Proof.} 
					By a straightforward application of Lemma $\ref{lem2.1.}$, Lemma $\ref{lem2.3.}$, Lemma $\ref{lemP_1 weightedLinfty}$ and Lemma $\ref{lemPinfty}$, we thus arrive at 
					\begin{eqnarray*}
						\lefteqn{\|F_\infty^1(u)\|_{H^s_{d-1}}
						}\\
						&
						\leq &C
						\|(1+|x|)^{d-1}\nabla a_{1}\|_{L^{\infty}}\times 
						(\|(1+|x|)^{d-1}\nabla v_{1}\|_{L^{\infty}}+\|\nabla v_{1}\|_{L^2}+\|v_{\infty}\|_{H^{s}_{d-1}}), \\
						\lefteqn{\|{F}^2_{\infty}(u,g))\|_{H^{s-1}_{d-1}}}
						\\
						&
						\leq &C\Bigl\{
						\Bigl(\sum_{j=0}^{1}\|(1+|x|)^{d-2+j}\nabla^{j}a_{1}\|_{L^{\infty}}+\|a_{\infty}\|_{H^s_{d-1}}\Bigr)
						\Bigl(\sum_{j=1}^{2}\|(1+|x|)^{j-1}\nabla^{j}a_{1}\|_{L^2}+\|a_{\infty}\|_{H^s_{d-1}}\Bigr)\\
						& \quad&
						\quad + \|(1+|x|)^{d-1}\nabla v_{1}\|_{L^{\infty}}(\| v_\infty\|_{H^{s}}+\| v_1\|_{L^2})+\|g\|_{H^{s}_{d-1}}.
					\end{eqnarray*}
					Integrating these inequalities on $(0,T)$, we obtain the desired estimate. 
					This completes the proof.
					$\hfill\square$
					
					\vspace{2ex}
					
					We next estimate $F_{1}(u^{(1)},g)-F_{1}(u^{(2)},g)$.
					
					\begin{prop}\label{nonlinearestimatesalowpart}
						Let $u^{(k)}_{1}=\trans(a^{(k)}_1,v^{(k)}_1)$ and $u^{(k)}_{\infty}=\trans(a^{(k)}_{\infty},v^{(k)}_{\infty})$ satisfy 
						$$\sup_{0\leq t \leq T}\|u^{(k)}_{1}(t)\|_{{\scr X}_{(1)}\times {\scr Y}_{(1)}}
						+\sup_{0\leq t \leq T}\|u^{(k)}_{\infty}(t)\|_{H^s_{n-1}}
						+\sup_{0\leq t \leq T}\|\phi^{(k)}(t)\|_{L^\infty}\leq \frac{1}{2},
						$$ 
						where  $\phi^{(k)}=\phi^{(k)}_1 +\phi^{(k)}_{\infty}$ $(k=1,2)$. Then it holds that
						\begin{eqnarray*}
							\lefteqn{\| \Gamma[{F}_{1}(u^{(1)},g)-{F}_{1}(u^{(2)},g)]\|_{\scr{Z}^{(1)}(0,T)}}
							\\
							&\leq& 
							C\sum_{k=1}^2\|\{u^{(k)}_{1},u^{(k)}_{\infty}\}\|_{X^{s}(0,T)}\|\{u^{(1)}_{1}-u^{(2)}_{1},u^{(1)}_{\infty}-u^{(2)}_{\infty}\}\|_{X^{s-1}(0,T)}\\
							&\quad & \quad +C[g]_s \|\{u^{(1)}_{1}-u^{(2)}_{1},u^{(1)}_{\infty}-u^{(2)}_{\infty}\}\|_{X^{s-1}(0,T)}
						\end{eqnarray*}
						uniformly for $u^{(k)}_{1}$ and $u^{(k)}_{\infty}$.
					\end{prop}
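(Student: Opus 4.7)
The plan is to mimic the decomposition used for Proposition \ref{nonlinearestimatelowpart} but apply it to the difference. Since $F_1(u,g)$ depends linearly on $g$, the external force cancels in the difference, and the remaining terms are purely quadratic in $u$. Writing $u^{(k)}=\trans(a^{(k)},v^{(k)})$ with $a^{(k)}=a_1^{(k)}+a_\infty^{(k)}$, $v^{(k)}=v_1^{(k)}+v_\infty^{(k)}$, and $\phi^{(k)}=e^{a^{(k)}}-1$, the first step is to rewrite each quadratic term as a telescoping sum
\[
v^{(1)}\!\cdot\!\nabla a^{(1)}-v^{(2)}\!\cdot\!\nabla a^{(2)}
=(v^{(1)}-v^{(2)})\!\cdot\!\nabla a^{(1)}+v^{(2)}\!\cdot\!\nabla(a^{(1)}-a^{(2)}),
\]
and similarly for $v^{(1)}\!\cdot\!\nabla v^{(1)}-v^{(2)}\!\cdot\!\nabla v^{(2)}$. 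Each factor is then split into low and high frequency parts so that we recover exactly the bilinear building blocks $G_k$, $H_k$ used in the proof of Proposition \ref{nonlinearestimatelowpart}, but with one factor being $u^{(1)}-u^{(2)}$ and the other being $u^{(k)}$.

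Next, I will apply Proposition \ref{propuiestimate} to each piece, using the same choice of divergence form as before. In particular, each cross-term falls into case (i), (ii), or (iii) depending on whether the derivative lies on the low or high frequency factor: the terms $G_1,G_2$ applied to low/low pairs become case (i) on the difference, while the analogues of $G_3, G_4$ (the pressure contribution) will use case (ii)/(iii) together with the divergence structure inherited from writing $g_3(\phi)\nabla a$ as $\nabla(P^{(2)}(\phi)\phi^2)$ up to a smooth factor. In each case one factor is estimated in $\scr Z_{(1)}(0,T)$ or $H^{s-1}$ norms (using only $X^{s-1}(0,T)$), while the other retains its full $X^{s}(0,T)$ norm.

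The main obstacle is the pressure difference $g_3(\phi^{(1)})\nabla a^{(1)}-g_3(\phi^{(2)})\nabla a^{(2)}$, because the composite nonlinearity $g_3$ mixes low and high frequency components of $\phi$. To handle this I will use the mean-value identity
\[
g_3(\phi^{(1)})-g_3(\phi^{(2)})=(\phi^{(1)}-\phi^{(2)})\int_0^1 g_3'\bigl(\theta\phi^{(1)}+(1-\theta)\phi^{(2)})\bigr)\,d\theta,
\]
and then redistribute derivatives so that the result is still expressible as $\partial_x^{\alpha}F_1^{(1)}$ with $F_1^{(1)}$ controlled in the weighted $L^\infty_{d-1}\cap L^2$ norms required by Proposition \ref{propuiestimate} (ii)--(iii). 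The smoothness of $g_3$ and the $L^\infty$ bound on $\phi^{(k)}$ allow estimating the integral factor by Lemmas \ref{lem2.2.} and \ref{lem2.3.}, losing only one derivative on $\phi^{(1)}-\phi^{(2)}$ (which is why the difference is measured in $X^{s-1}$ rather than $X^s$).

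Finally, collecting all contributions gives a bound of the form $C\sum_{k=1}^2 \|\{u_1^{(k)},u_\infty^{(k)}\}\|_{X^s(0,T)}\|\{u_1^{(1)}-u_1^{(2)},u_\infty^{(1)}-u_\infty^{(2)}\}\|_{X^{s-1}(0,T)}$; the extra $C[g]_s\|\cdot\|_{X^{s-1}}$ term enters through the $v^{(k)}\!\cdot\!\nabla v^{(k)}$ and $g_3(\phi^{(k)})\nabla a^{(k)}$ contributions once one uses the fact that in the intended iteration $\|u^{(k)}\|_{X^s(0,T)}$ is itself bounded by a quantity proportional to $[g]_s$. No new ingredient beyond those already established in Sections \ref{S5} and Proposition \ref{nonlinearestimatelowpart} is required; the estimate is essentially a bilinear version of the previous one applied to the difference.
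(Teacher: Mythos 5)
Your plan---telescoping each quadratic expression so that one factor is the difference $u^{(1)}-u^{(2)}$, splitting into low/high frequency blocks, invoking Proposition \ref{propuiestimate} case by case, and treating the composite-function factors via a mean-value integral combined with Lemmas \ref{lem2.2.}--\ref{lem2.3.}---is exactly the argument that "similar to Proposition \ref{nonlinearestimatelowpart}" refers to, and the resulting bilinear bound
\[
C\sum_{k=1}^2 \|\{u_1^{(k)},u_\infty^{(k)}\}\|_{X^s(0,T)}\,\|\{u_1^{(1)}-u_1^{(2)},u_\infty^{(1)}-u_\infty^{(2)}\}\|_{X^{s-1}(0,T)}
\]
is correct and already implies the stated inequality, since $G(g)$ drops out of $F_1(u^{(1)},g)-F_1(u^{(2)},g)$.

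One passage, though, should be flagged: your explanation that ``the extra $C[g]_s\|\cdot\|_{X^{s-1}}$ term enters \ldots once one uses the fact that in the intended iteration $\|u^{(k)}\|_{X^s(0,T)}$ is itself bounded by a quantity proportional to $[g]_s$'' is circular. The proposition is stated for arbitrary pairs satisfying only the $\tfrac12$-smallness hypothesis; you are not entitled to import the \emph{a posteriori} bound from the iteration into this proof. The right observation is simply that the $[g]_s$ term on the right-hand side is superfluous here because $g$ cancels in the difference, so your first (pure bilinear) estimate is already stronger than, and therefore implies, the claimed bound --- no separate mechanism producing a $[g]_s$ factor is needed. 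With that correction the proposal is sound.
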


					\vspace{2ex}
					
					Proposition $\ref{nonlinearestimatesalowpart}$ can be proved in a similar manner to the proof of Proposition $\ref{nonlinearestimatelowpart}$; and 
					we omit the proof.
					
					\vspace{2ex}
					
					We next estimate $F_{\infty}(u^{(1)},g)-F_{\infty}(u^{(2)},g)$.

					\begin{prop}\label{nonlinearestimatesahighpart}
						Let $u^{(k)}_{1}=\trans(a^{(k)}_1,v^{(k)}_1)$ and $u^{(k)}_{\infty}=\trans(a^{(k)}_{\infty},v^{(k)}_{\infty})$ satisfy 
						$$\sup_{0\leq t \leq T}\|u^{(k)}_{1}(t)\|_{{\scr X}_{(1)}\times {\scr Y}_{(1)}}
						+\sup_{0\leq t \leq T}\|u^{(k)}_{\infty}(t)\|_{H^s_{-1}}
						+\sup_{0\leq t \leq T}\|\phi^{(k)}(t)\|_{L^\infty}\leq \frac{1}{2},
						$$ 
						where $\phi^{(k)}=\phi^{(k)}_1 +\phi^{(k)}_{\infty}$ $(k=1,2)$. Then it holds that
						\begin{eqnarray*}
							\lefteqn{\| F_{\infty}(u^{(1)},g)-F_{\infty}(u^{(2)},g)]\|_{L^2(0,T;H^{s-1}_{d-1}\times H^{s-2}_{d-1})}}
							\\
							&\leq& 
							C\sum_{k=1}^2\|\{u^{(k)}_{1},u^{(k)}_{\infty}\}\|_{X^{s}(0,T)}\|\{u^{(1)}_{1}-u^{(2)}_{1},u^{(1)}_{\infty}-u^{(2)}_{\infty}\}\|_{X^{s-1}(0,T)}\\
							&\quad & \quad +C[g]_s \|\{u^{(1)}_{1}-u^{(2)}_{1},u^{(1)}_{\infty}-u^{(2)}_{\infty}\}\|_{X^{s-1}(0,T)}
						\end{eqnarray*}
						uniformly for $u^{(k)}_{1}$ and $u^{(k)}_{\infty}$.
					\end{prop}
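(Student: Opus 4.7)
The plan is to reduce the estimate for $F_{\infty}(u^{(1)},g)-F_{\infty}(u^{(2)},g)$ to the estimate for $F_{\infty}(u,g)$ already proved in Proposition \ref{nonlinearestimateinfty}, by decomposing every nonlinear piece into a telescoping sum in which exactly one factor is a difference $u^{(1)}-u^{(2)}$. Since the only $g$-dependence of $F_{\infty}$ is the additive term $G(g)$, which cancels on subtraction, the nontrivial contributions come from the three bilinear expressions
\begin{align*}
&v^{(1)}\!\cdot\!\nabla a_1^{(1)}-v^{(2)}\!\cdot\!\nabla a_1^{(2)}=(v^{(1)}-v^{(2)})\!\cdot\!\nabla a_1^{(1)}+v^{(2)}\!\cdot\!\nabla(a_1^{(1)}-a_1^{(2)}),\\
&v^{(1)}\!\cdot\!\nabla v_1^{(1)}-v^{(2)}\!\cdot\!\nabla v_1^{(2)}=(v^{(1)}-v^{(2)})\!\cdot\!\nabla v_1^{(1)}+v^{(2)}\!\cdot\!\nabla(v_1^{(1)}-v_1^{(2)}),\\
&g_3(\phi^{(1)})\nabla a_1^{(1)}-g_3(\phi^{(2)})\nabla a_1^{(2)}=(\phi^{(1)}-\phi^{(2)})\widetilde{g}\,\nabla a_1^{(1)}+g_3(\phi^{(2)})\nabla(a_1^{(1)}-a_1^{(2)}),
\end{align*}
where $\widetilde{g}=\widetilde{g}(\phi^{(1)},\phi^{(2)})=\int_0^1 g_3'\bigl(\phi^{(2)}+\theta(\phi^{(1)}-\phi^{(2)})\bigr)\,d\theta$ is a smooth composite function under the a priori bound $\sup_t\|\phi^{(k)}\|_{L^\infty}\le 1/2$.

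To each of these summands I would apply exactly the toolkit used in the proof of Proposition \ref{nonlinearestimateinfty}: Lemma \ref{lem2.1.} to pass from $L^\infty$ to $H^{s-1}$ at the top endpoint, Lemma \ref{lem2.3.} for commutator/Moser-type control of products, Lemma \ref{lemP_1 weightedLinfty} to handle derivatives acting on low-frequency factors with spatial weights, and Lemma \ref{lemPinfty} to absorb the outer $P_\infty$ projection. The only change with respect to Proposition \ref{nonlinearestimateinfty} is one of bookkeeping: whichever factor is the difference must be placed into the space that carries the $X^{s-1}$-norm, while the other factor keeps its full $X^{s}$-regularity and is controlled by $\|\{u^{(k)}_1,u^{(k)}_{\infty}\}\|_{X^s(0,T)}$. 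After integrating in time on $(0,T)$ and summing over $k=1,2$ this yields the product
$\sum_{k=1}^{2}\|\{u_1^{(k)},u_\infty^{(k)}\}\|_{X^s(0,T)}\,\|\{u_1^{(1)}-u_1^{(2)},u_\infty^{(1)}-u_\infty^{(2)}\}\|_{X^{s-1}(0,T)}$, which is the first term of the claimed inequality. The drop from the $H^s_{d-1}\times H^{s-1}_{d-1}$ norm of Proposition \ref{nonlinearestimateinfty} to the $H^{s-1}_{d-1}\times H^{s-2}_{d-1}$ norm here exactly accounts for the one derivative lost when passing to the difference.

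The additional term $C[g]_s\|\{u^{(1)}-u^{(2)}\}\|_{X^{s-1}(0,T)}$ is not produced by the pure nonlinearity—as noted, $g$ cancels on subtraction—but it is included as a harmless overestimate tailored for the iteration scheme of Section \ref{S7}: there the a priori bound $\|\{u^{(k)}_1,u^{(k)}_{\infty}\}\|_{X^s(0,T)}\leq C[g]_s$ will be available, so any resulting term of the form $[g]_s\|\cdots\|_{X^{s-1}}$ is already controlled by the bilinear term. I do not expect any real obstacle; the only mild technical point is to verify, term by term, that the Moser estimate for the composite $\widetilde{g}$ in the third identity supplies enough regularity—namely, that $\widetilde{g}(\phi^{(1)},\phi^{(2)})$ can be bounded in $H^{s-1}$ by $\|\phi^{(1)}\|_{H^{s-1}}+\|\phi^{(2)}\|_{H^{s-1}}$ under the $L^\infty$ smallness assumption—which follows from the smoothness of $g_3'$ together with Lemma \ref{lem2.2.}, exactly as in the corresponding step of the nondifferenced estimate.
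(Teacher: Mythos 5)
Your telescoping decomposition into terms each carrying exactly one difference factor, followed by the application of Lemmas \ref{lem2.1.}--\ref{lem2.3.}, \ref{lemP_1 weightedLinfty} and \ref{lemPinfty} (together with \ref{lemPinftyweight} for the weighted high-frequency factors), is precisely the route the paper indicates---the paper's proof consists of the one-line remark that the estimate follows from these lemmas ``in a similar manner to the proof of Proposition \ref{nonlinearestimateinfty},'' and your proposal supplies exactly that argument, including the correct observation that $G(g)$ cancels on subtraction so the $[g]_s$ term is a harmless overestimate. The only small imprecision is the statement that $\widetilde g$ can be bounded in $H^{s-1}$ by the $H^{s-1}$-norms of $\phi^{(k)}$: since $g_3'(0)=P''(1)$ is in general nonzero, $\widetilde g$ itself is not in $H^{s-1}$; one should instead bound $\|\widetilde g\|_{L^\infty}$ and $\|\nabla\widetilde g\|_{H^{s-2}}$ (equivalently, write $\widetilde g = P''(1)+(\widetilde g-P''(1))$), which is exactly what Lemma \ref{lem2.2.} provides and does not affect the conclusion.
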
 
					
					\vspace{2ex}
					
					Proposition $\ref{nonlinearestimatesahighpart}$ directly follows from Lemmas $\ref{lem2.1.}$--$\ref{lem2.3.}$, Lemma $\ref{lemP_1 weightedLinfty}$, Lemma $\ref{lemPinfty}$ and Lemma \ref{lemPinftyweight} in a similar manner to  the proof of Proposition $\ref{nonlinearestimateinfty}$.
					
					\vspace{2ex}

					We next show the following estimate which will be used in the proof of Proposition $\ref{iterationfirst}$. 
					
					\vspace{2ex}

				\vspace{2ex}

				\vspace{2ex}
				To prove Theorem \ref{Theorem 3.1},
				we next show the existence of a solution $\{u_{1},u_\infty\}$ 
				of \eqref{eq:(3)} on $[0,T]$ 
				satisfying $u_{1}(0)=u_{1}(T)$ and $u_{\infty}(0)=u_{\infty}(T)$ by an iteration argument.
				
				\vspace{2ex}
				For $N=0,$ 
				we define $u_{1}^{(0)}=\trans(a_{1}^{(0)},v_1^{(0)})$ 
				and $u_{\infty}^{(0)}=\trans(a_\infty^{(0)},v_\infty^{(0)})$ 
				by
				\begin{eqnarray}
					\left\{
					\begin{array}{lll}
						u_{1}^{(0)}(t)
						&=
						S_{1}(t)\scr S_{1}(T)[(I-S_1(T))^{-1}\mathbb{G}_{1}]+{\scr S}_{1}(t)[\mathbb{G}_{1}],\label{iteration,eq(0)}
						\\
						u_{\infty}^{(0)}(t)
						&=
						S_{\infty,0}(t)(I-S_{{\infty},0}(T))^{-1}\scr{S}_{\infty,0}(T)[\mathbb{G}_{\infty}]
						+\scr{S}_{\infty,0}(t)[\mathbb{G}_{\infty}],
					\end{array}
					\right.
				\end{eqnarray}
				where $t\in [0,T]$, 
				$\mathbb{G}=\trans(0,g(x,t))$, $\mathbb{G}_{1}=P_{1}\mathbb{G}$ 
				, $\mathbb{G}_{\infty}=P_{\infty}\mathbb{G}$, $a^{(0)}=a^{(0)}_1+a^{(0)}_{\infty}$ and $v^{(0)}=v^{(0)}_1+v^{(0)}_{\infty}$. 
				Note that $u^{(0)}_{1}(0)=u^{(0)}_{1}(T)$ and $u^{(0)}_{\infty}(0)=u^{(0)}_{\infty}(T)$.
				
				For $N\geq 1$, 
				we define 
				$u_{1}^{(N)}=\trans(a_1^{(N)},v_1^{(N)})$ 
				and $u_{\infty}^{(N)}=\trans(a_\infty^{(N)},v_{\infty}^{(N)})$, inductively, by
				\begin{eqnarray}
					\left\{
					\begin{array}{lll|}
						u_{1}^{(N)}(t)
						&
						=S_{1}(t)\scr S_{1}(T)[(I-S_1(T))^{-1}F_{1}(u^{(N-1)},g)]
						+{\scr S}_{1}(t)[F_{1}(u^{(N-1)},g)],\label{iteration,eq(p)}
						\\
						u_{\infty}^{(N)}(t)
						&
						=
						S_{\infty,u^{(N-1)}}(t)(I-S_{{\infty},u^{(N-1)}}(T))^{-1}
						\scr{S}_{\infty,u^{(N-1)}}(T)[F_{\infty}(u^{(N-1)},g)]
						\\
						&
						\quad
						+\scr{S}_{\infty,u^{(N-1)}}(t)[F_{\infty}(u^{(N-1)},g)],
					\end{array}
					\right.
				\end{eqnarray}
				where $t\in[0,T]$, $u^{(N-1)}=u_{1}^{(N-1)}+u_{\infty}^{(N-1)}$, $u^{(N-1)}_1=\trans(a^{(N-1)}_1,v^{(N-1)}_1)$, $a^{(N)}=a^{(N)}_1+a^{(N)}_{\infty}$ and $v^{(N)}=v^{(N)}_1+v^{(N)}_{\infty}$. 
				Note that $u^{(N)}_{1}(0)=u^{(N)}_{1}(T)$ and $u^{(0)}_{\infty}(0)=u^{(0)}_{\infty}(T)$.

				\vspace{2ex}
				\begin{prop}\label{iterationfirst}
					There exists a constant $\delta_{1}>0$ such that 
					if $[g]_{s}\leq \delta_{1}$, 
					then there holds the estimates 
					$$
					\|\{u^{(N)}_{1},u^{(N)}_{\infty}\}\|_{X^s(0,T)}\leq C_1[g]_{s}
					\leqno{\rm (i)} 
					$$
					for all $N\geq 0$, and 
					$$
					\begin{array}{rcl}
						\lefteqn{
							\|\{u^{(N+1)}_{1}-u^{(N)}_{1},u^{(N+1)}_{\infty}-u^{(N)}_{\infty}\}\|_{X^{s}(0,T)}
						}\\[2ex]
						& \leq &  
						C_1[g]_{s}\|\{u^{(N)}_{1}-u^{(N-1)}_{1},u^{(N)}_{\infty}-u^{(N-1)}_{\infty}\}\|_{X^{s}(0,T)}
					\end{array}
					\leqno{\rm (ii)} 
					$$
					for $N\geq 1$. 
					Here $C_1$ is a constant independent of $g$ and $N$. 
				\end{prop}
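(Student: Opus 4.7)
\vspace{2ex}
\noindent\textbf{Proof proposal (Proposition \ref{iterationfirst}).}
The plan is to prove (i) and (ii) by induction on $N$, with the main engines being the linear estimates of Proposition \ref{propuiestimate} and Proposition \ref{conclusionhighpart}, together with the nonlinear estimates of Propositions \ref{nonlinearestimatelowpart}--\ref{nonlinearestimatesahighpart}. First I will treat the base case $N=0$. Since $\mathbb{G}_1=P_1\mathbb G$ and $\mathbb G=\trans(0,g)$ with $g$ satisfying $[g]_s<\infty$, Proposition \ref{propuiestimate} (i) applied to the forcing $\mathbb{G}_1$ yields $\|u_1^{(0)}\|_{\scr Z_{(1)}(0,T)}\leq C[g]_s$. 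For the high-frequency part, the smallness assumption of Proposition \ref{conclusionhighpart} is trivially met for $\tilde u=0$, hence $\|u_\infty^{(0)}\|_{\scr Z^s_{(\infty),d-1}(0,T)}\leq C\|\mathbb G_\infty\|_{L^2(0,T;H^s_{d-1})}\leq C[g]_s$. Choosing $C_1$ sufficiently large and $\delta_1$ sufficiently small establishes (i) for $N=0$.

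For the inductive step of (i), assume $\|\{u_1^{(N-1)},u_\infty^{(N-1)}\}\|_{X^s(0,T)}\leq C_1[g]_s$. Shrinking $\delta_1$ if necessary, we ensure $C_1\delta_1\leq \tfrac12$ and also that the smallness condition $\|\partial_t\tilde\phi\|_{C([0,T];H^{s-1})}+\|\nabla\tilde\phi\|_{C([0,T];H^{s-1})}+\|\tilde v\|_{C([0,T];H^s)}\leq \delta$ required in Proposition \ref{conclusionhighpart} holds with $\tilde u=u^{(N-1)}$, invoking Remark \ref{note-a} to pass from $\phi$ to $a=\log(1+\phi)$. Proposition \ref{propuiestimate} gives $\|u_1^{(N)}\|_{\scr Z_{(1)}(0,T)}\leq C\|\Gamma[F_1(u^{(N-1)},g)]\|_{\scr Z_{(1)}(0,T)}$, which by Proposition \ref{nonlinearestimatelowpart} is bounded by $C(\|\{u_1^{(N-1)},u_\infty^{(N-1)}\}\|_{X^s(0,T)}^2+[g]_s)$. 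Analogously, Proposition \ref{conclusionhighpart} combined with Proposition \ref{nonlinearestimateinfty} yields the same type of bound for the high-frequency part. Summing, we obtain
\begin{equation*}
\|\{u_1^{(N)},u_\infty^{(N)}\}\|_{X^s(0,T)}\leq C_0\bigl(\|\{u_1^{(N-1)},u_\infty^{(N-1)}\}\|_{X^s(0,T)}^2+[g]_s\bigr)\leq C_0(C_1^2[g]_s^2+[g]_s),
\end{equation*}
and selecting $C_1\geq 2C_0$ and $\delta_1\leq 1/(2C_0C_1)$ closes the induction.

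For (ii), the differences $w_1^{(N)}:=u_1^{(N+1)}-u_1^{(N)}$ and $w_\infty^{(N)}:=u_\infty^{(N+1)}-u_\infty^{(N)}$ satisfy linear periodic problems of the same type. The low-frequency difference $w_1^{(N)}$ is given by $\Gamma[F_1(u^{(N)},g)-F_1(u^{(N-1)},g)]$, which Proposition \ref{nonlinearestimatesalowpart} bounds by $C\sum_k\|\{u_1^{(k)},u_\infty^{(k)}\}\|_{X^s}\cdot\|\{w_1^{(N-1)},w_\infty^{(N-1)}\}\|_{X^{s-1}}+C[g]_s\|\{w_1^{(N-1)},w_\infty^{(N-1)}\}\|_{X^{s-1}}$, which by (i) is at most $C[g]_s\|\{w_1^{(N-1)},w_\infty^{(N-1)}\}\|_{X^{s-1}}$. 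For the high-frequency difference, subtracting the equations for $u_\infty^{(N+1)}$ and $u_\infty^{(N)}$ gives
\begin{equation*}
\partial_t w_\infty^{(N)}+Aw_\infty^{(N)}+P_\infty B[u^{(N)}]w_\infty^{(N)}=-P_\infty B[u^{(N)}-u^{(N-1)}]u_\infty^{(N)}+F_\infty(u^{(N)},g)-F_\infty(u^{(N-1)},g),
\end{equation*}
which is a system of the form (\ref{eq:(2)}) with $\tilde u=u^{(N)}$. Applying Proposition \ref{conclusionhighpart}, the extra commutator-type term $P_\infty B[u^{(N)}-u^{(N-1)}]u_\infty^{(N)}$ is handled in the same spirit as Proposition \ref{nonlinearestimateinfty} using (i), while the nonlinear difference is estimated by Proposition \ref{nonlinearestimatesahighpart}. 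After shrinking $\delta_1$ once more so that $C_1[g]_s$ absorbs the small constants, we obtain the contraction estimate claimed in (ii).

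The main obstacle I expect is the regularity mismatch: the difference estimates of Propositions \ref{nonlinearestimatesalowpart}--\ref{nonlinearestimatesahighpart} naturally produce bounds in $X^{s-1}$ rather than $X^s$, which is typical for hyperbolic systems with derivative loss. The right viewpoint is that the iteration contracts in the lower regularity $X^{s-1}$, while the uniform $X^s$-bound (i) provides compactness; combining the two yields convergence in $X^s$ by interpolation or by extracting a weak-$*$ limit. Carefully tracking which norms appear where, and in particular verifying that the smallness hypothesis of Proposition \ref{conclusionhighpart} is preserved through the iteration via the uniform $X^s$-bound, is the delicate part of the argument. $\hfill\square$
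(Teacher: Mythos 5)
Your proposal is correct and follows essentially the same route as the paper, whose proof is a single sentence citing the linear periodic estimates (Propositions \ref{S1}, \ref{conclusionhighpart}) and the nonlinear bounds (Propositions \ref{nonlinearestimatelowpart}, \ref{nonlinearestimateinfty}). You fill in the base case $N=0$, the inductive quadratic bound with the choice $C_1\geq 2C_0$ and $\delta_1\leq 1/(2C_0C_1)$, and for (ii) you correctly supply the difference estimates of Propositions \ref{nonlinearestimatesalowpart}--\ref{nonlinearestimatesahighpart}, which the paper's proof omits from its citation list but clearly needs. Two fine points are worth flagging. First, the regularity issue you raise is real: the difference estimates gain only the $X^{s-1}(0,T)$ norm, so the contraction (ii) as stated in $X^s(0,T)$ is not literally what those propositions deliver; the natural reading is that (ii) holds in $X^{s-1}(0,T)$, which together with the uniform $X^s$ bound of (i) and weak-$*$ compactness yields a limit in $X^s$ solving \eqref{eq:(3)}. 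Be careful, though: interpolation gives strong convergence only in $X^{s'}$ for $s'<s$, not in $X^s$ itself; what one really needs, and what suffices for Proposition \ref{propfirstsolution}, is that the strong $X^{s-1}$ limit and the weak-$*$ $X^s$ limit coincide. Second, the notation $P_\infty B[u^{(N)}-u^{(N-1)}]u_\infty^{(N)}$ is slightly abusive, since $B[\tilde u]u$ is not linear in $\tilde u$ (the coefficient $g_3(\tilde\phi)$ is nonlinear); the correct commutator term is $(B[u^{(N)}]-B[u^{(N-1)}])u_\infty^{(N)}$, whose momentum component contains $(g_3(\phi^{(N)})-g_3(\phi^{(N-1)}))\nabla a_\infty^{(N)}$, estimated in the same spirit via the mean value theorem. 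Neither issue affects the validity of your argument.
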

				
				\vspace{2ex}
				\noindent\textbf{Proof.} 
				If $[g]_{s}\leq \delta_{1}$ for sufficiently small $\delta_{1}$, the estimate (i) and (ii) follows from Propositions \ref{S1}, \ref{conclusionhighpart}, \ref{nonlinearestimatelowpart}, \ref{nonlinearestimateinfty}. 
					$\hfill\square$ 
					
					\vspace{2ex}
					Before going further, we introduce a notation. We denote by $B_{X^k(a,b)}(r)$ 
					the closed unit ball of $X^k(a,b)$ centered at $0$ with radius $r$, i.e.,
					$$
					B_{X^k(a,b)}(r)
					=
					\left\{
					\{u_{1},u_\infty\}\in X^k(a,b); \|\{u_{1},u_\infty\}\|_{X^k(a,b)}\leq r
					\right\}.
					$$
					
					\vspace{2ex}
					\begin{prop}\label{propfirstsolution}
						There exists a constant $\delta_{2}>0$ such that 
						if $[g]_{s}\leq \delta_{2}$, 
						then the system \eqref{eq:(3)} has a unique solution $\{u_{1},u_{\infty}\}$ 
						on $[0,T]$ in $B_{X^s(0,T)}(C_1[g]_s)$ satisfying $u_{1}(0)=u_{1}(T)$ and $u_{\infty}(0)=u_{\infty}(T)$.
						The uniqueness of solutions of \eqref{eq:(3)} on $[0,T]$ 
						satisfying $u_{1}(0)=u_{1}(T)$ and $u_{\infty}(0)=u_{\infty}(T)$ 
						holds in $B_{X^s(0,T)}(C_1\delta_2)$. 
					\end{prop}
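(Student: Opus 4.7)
The plan is a Banach fixed point / Picard iteration argument built directly on Proposition~\ref{iterationfirst}. First I would choose $\delta_2 \leq \delta_1$ small enough that $C_1 \delta_2 < 1$. Then the contraction estimate (ii) of Proposition~\ref{iterationfirst}, combined with the uniform bound (i), implies that the sequence $\{u_1^{(N)}, u_\infty^{(N)}\}_{N\geq 0}$ constructed in \eqref{iteration,eq(0)}--\eqref{iteration,eq(p)} is Cauchy in $X^s(0,T)$, hence converges to a limit $\{u_1, u_\infty\} \in X^s(0,T)$ satisfying $\|\{u_1,u_\infty\}\|_{X^s(0,T)} \leq C_1 [g]_s$.

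Next I would pass to the limit $N \to \infty$ in \eqref{iteration,eq(p)} to verify that $\{u_1, u_\infty\}$ solves \eqref{eq:(3)}. For the low-frequency equation this is immediate from the continuity/boundedness of $S_1(t)$, $\scr{S}_1(t)$ and $(I - S_1(T))^{-1}$ on the relevant spaces (Propositions~\ref{S1}, \ref{inverseprop}, \ref{S_1andscrS_1estimate}), together with the Lipschitz dependence of $F_1(\cdot,g)$ on $u$ furnished by Proposition~\ref{nonlinearestimatesalowpart}. For the high-frequency equation one must verify that $S_{\infty,u^{(N-1)}}(t)$ and $(I - S_{\infty,u^{(N-1)}}(T))^{-1}$ depend continuously on the iterate $u^{(N-1)}$ in the $X^s$-topology. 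This is carried out by subtracting the two linearized systems with coefficients $u^{(N-1)}$ and the limit $u$, applying the weighted energy estimate of Proposition~\ref{p8} (and the consequent spectral estimate of Proposition~\ref{Sinftyproperty}) to the difference, and using $C_1\delta_2 < 1$ to close the estimate. Since $u_1^{(N)}(0) = u_1^{(N)}(T)$ and $u_\infty^{(N)}(0) = u_\infty^{(N)}(T)$ for every $N$, the limit inherits the periodicity, yielding a $T$-periodic solution of \eqref{eq:(3)} in $B_{X^s(0,T)}(C_1[g]_s)$.

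For the enlarged uniqueness statement in $B_{X^s(0,T)}(C_1\delta_2)$, let $\{u_1,u_\infty\}$ and $\{\tilde u_1, \tilde u_\infty\}$ be two such solutions. Subtracting the two copies of \eqref{eq:(3)}, then using Propositions~\ref{S1}, \ref{conclusionhighpart}, \ref{nonlinearestimatesalowpart} and \ref{nonlinearestimatesahighpart} to bound the difference of the right-hand sides, I would obtain an inequality of the form
\[
\|\{u_1 - \tilde u_1,\, u_\infty - \tilde u_\infty\}\|_{X^{s-1}(0,T)}
\leq C\,C_1 \delta_2\,
\|\{u_1 - \tilde u_1,\, u_\infty - \tilde u_\infty\}\|_{X^{s-1}(0,T)},
\]
which forces $u_j = \tilde u_j$ once $\delta_2$ is taken so small that $CC_1\delta_2 < 1$. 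Shrinking $\delta_2$ if necessary to simultaneously satisfy all the smallness conditions ($\delta_2 \leq \delta_1$, $C_1\delta_2 < 1$, $CC_1\delta_2 < 1$, and the smallness required by Propositions~\ref{Sinftyproperty} and~\ref{conclusionhighpart}) completes the argument.

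The main technical obstacle is the continuity of the high-frequency solution operator $S_{\infty,\tilde u}(t)$ and its time-$T$ inverse with respect to the coefficient $\tilde u$: the coefficients $\tilde v \cdot \nabla$ and $g_3(\tilde\phi)\nabla$ appearing in \eqref{Mainsystem} must be controlled in the weighted $X^s$-setting without losing derivatives, which is precisely the reason the extended Matsumura--Nishida weighted energy estimate developed in Section~\ref{S6} is needed. Once that continuity is established, the passage to the limit and the fixed-point closure are routine.
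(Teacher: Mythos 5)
Your proposal follows exactly the paper's route: choose $\delta_2=\min\{\delta_1,\tfrac{1}{2C_1}\}$ so that $C_1[g]_s<1$, invoke the contraction estimate of Proposition~\ref{iterationfirst}(ii) to make the Picard iterates Cauchy in $X^s(0,T)$, and pass to the limit in \eqref{iteration,eq(p)} to obtain a $T$-periodic solution in $B_{X^s(0,T)}(C_1[g]_s)$. The paper's own proof is extremely terse (``it is not difficult to see'') and does not even write out the uniqueness step, whereas you supply both the limit passage (continuity of $S_{\infty,\tilde u}$ in $\tilde u$, which is really already packaged inside Proposition~\ref{iterationfirst}(ii)) and an explicit uniqueness argument, correctly closing the latter in $X^{s-1}$ where the Lipschitz bounds of Propositions~\ref{nonlinearestimatesalowpart} and~\ref{nonlinearestimatesahighpart} actually live — so this is a correct and somewhat more careful rendition of the same argument.
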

					
					\vspace{2ex}
					\noindent\textbf{Proof.} 
					Let $\delta_2=\min\{\delta_1,\frac{1}{2C_1}\}$ 
					with $\delta_1$ given in Propositions \ref{iterationfirst}. 
					By Propositions \ref{iterationfirst}, 
					we see that 
					if $[g]_s\leq \delta_2$, 
					then $u^{(N)}_{1}=\trans(a_{1}^{(N)},v_1^{(N)})$ and $u^{(N)}_{\infty}=\trans(a_\infty^{(N)},v_{\infty}^{(N)})$
					converge to $u_{1}=\trans(\phi_1,m_1)$ and $u_{\infty}=\trans(a_\infty,v_{\infty})$, respectively, 
					in the sense 
					$$
					\{u^{(N)}_{1},u_\infty^{(N)}\} \ \rightarrow \ \{u_{1},u_\infty\} 
					\ \mbox{\rm in } \ X^{s}(0,T). 
					$$
					It is not difficult to see that $\{u_{1},u_\infty\} $ is a solution of \eqref{eq:(3)}
					satisfying $u_{1}(0)=u_{1}(T)$ and $u_{\infty}(0)=u_{\infty}(T)$. This completes the proof. 
					$\hfill\square$
					

					\vspace{2ex}

					\vspace{2ex}
					
					We can now construct a time periodic solution of \eqref{eq:(4.2.2)}-\eqref{eq:(4.2.3)} in the same argument as that in \cite{Kagei-Tsuda}. 
					As in \cite{Kagei-Tsuda}, based on the estimates in sections 6-8, one can show the following proposition on the unique existence of solutions of the 
					initial value problem. 
					
					\vspace{2ex}
					
					\begin{prop}\label{iterationinitialvaluemodosu}
						Let $h\in \mathbb{R}$ and let $u_{0}=u_{01}+u_{0\infty}$ 
						with $u_{01}\in {\scr X}_{(1)}\times {\scr Y}_{(1)}$ and $u_{0\infty}\in H_{(\infty),d-1}^s$. 
						Then there exist constants $\delta_{4}>0$ and $C_3>0$ such that 
						if 
						$$
						M(U_{01},U_{0\infty},g)
						:=
						\|u_{01}\|_{\scr{X}^{(1)}\times \scr{Y}^{(1)}}+\|u_{0\infty}\|_{H^{s}_{(\infty),d-1}}+[g]_s 
						\leq \delta_{4},
						$$
						there exists a solution $\{u_{1},u_{\infty}\}$ 
						of the initial value problem for \eqref{eq:(4.2.2)}-\eqref{eq:(4.2.3)} 
						on $[h,h+T]$ in $B_{ X^s(h,h+T)}(C_3M(u_{01},u_{0\infty},g))$ 
						satisfying the initial condition $u_j|_{t=h}=u_{0j}$ $(j=0,\infty)$. 
						The uniqueness for this initial value problem holds in $B_{X^s(h,h+T)}(C_3\delta_4)$. 
					\end{prop}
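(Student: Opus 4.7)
\vspace{2ex}
\noindent\textbf{Proof proposal.}
The plan is to mirror the argument that produced the time periodic solution in Proposition \ref{propfirstsolution}, but replacing the periodicity formula $(I-S_1(T))^{-1}$ and $(I-S_{\infty,u}(T))^{-1}$ with the contribution $S_1(t-h)u_{01}$ and $S_{\infty,\tilde u}(t-h)u_{0\infty}$ of the prescribed initial data. Concretely, I would set $u_1^{(0)}(t)=S_1(t-h)u_{01}+\scr S_1(t-h)[\mathbb G_1]$ and $u_\infty^{(0)}(t)=S_{\infty,0}(t-h)u_{0\infty}+\scr S_{\infty,0}(t-h)[\mathbb G_\infty]$, and then for $N\geq 1$ define inductively
\begin{align*}
u_1^{(N)}(t)&=S_1(t-h)u_{01}+\scr S_1(t-h)[F_1(u^{(N-1)},g)],\\
u_\infty^{(N)}(t)&=S_{\infty,u^{(N-1)}}(t-h)u_{0\infty}+\scr S_{\infty,u^{(N-1)}}(t-h)[F_\infty(u^{(N-1)},g)],
\end{align*}
with $u^{(N-1)}=u_1^{(N-1)}+u_\infty^{(N-1)}$. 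The goal is to show that for $M(u_{01},u_{0\infty},g)$ small enough this sequence is a Cauchy sequence in $X^s(h,h+T)$ inside the closed ball $B_{X^s(h,h+T)}(C_3M)$.

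First I would establish the uniform boundedness $\|\{u_1^{(N)},u_\infty^{(N)}\}\|_{X^s(h,h+T)}\leq C_3 M$ by induction. For the low frequency iterate this combines Proposition \ref{S1} (giving the $\scr Z_{(1)}$ bound of the linear contribution and derivative estimate $\del_t S_1(\cdot)u_{01}\in C([h,h+T];L^2)$) with the nonlinear bound from Proposition \ref{nonlinearestimatelowpart} applied to $F_1(u^{(N-1)},g)$. For the high frequency iterate, one verifies that the previous iterate $u^{(N-1)}$ fulfills the smallness assumption \eqref{periodic-assumption}-type bound on $\nabla\tilde\phi$, $\tilde v$ and $\partial_t\tilde\phi$ (here we use the embedding $\scr X_{(1)}\times\scr Y_{(1)}\hookrightarrow H^{s-1}$ via Lemma \ref{lemP_1} and the component-wise regularity of $X^s$), so that Proposition \ref{Sinftyproperty}(i)-(ii) gives
\begin{align*}
\|S_{\infty,u^{(N-1)}}(t-h)u_{0\infty}\|_{H^s_{(\infty),d-1}}&\leq C\|u_{0\infty}\|_{H^s_{(\infty),d-1}},\\
\|\scr S_{\infty,u^{(N-1)}}(t-h)[F_\infty(u^{(N-1)},g)]\|_{H^s_{(\infty),d-1}}&\leq C\|F_\infty(u^{(N-1)},g)\|_{L^2(h,h+T;H^s_{(\infty),d-1})},
\end{align*}
which is then closed with Proposition \ref{nonlinearestimateinfty}.

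Next I would pass to the contraction estimate. Subtracting the defining equations for $u_j^{(N+1)}$ and $u_j^{(N)}$ kills the initial-data terms in the low-frequency part, leaving $u_1^{(N+1)}-u_1^{(N)}=\scr S_1(t-h)[F_1(u^{(N)},g)-F_1(u^{(N-1)},g)]$, which by Proposition \ref{nonlinearestimatesalowpart} is controlled by $C(\|\{u_1^{(N)},u_\infty^{(N)}\}\|_{X^s}+\|\{u_1^{(N-1)},u_\infty^{(N-1)}\}\|_{X^s}+[g]_s)$ times $\|\{u_1^{(N)}-u_1^{(N-1)},u_\infty^{(N)}-u_\infty^{(N-1)}\}\|_{X^s}$. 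For the high-frequency part the corresponding difference produces two contributions: a purely inhomogeneous one handled by Proposition \ref{nonlinearestimatesahighpart}, and a delicate one arising because the propagator itself depends on the iterate, namely
\[
S_{\infty,u^{(N)}}(t-h)u_{0\infty}-S_{\infty,u^{(N-1)}}(t-h)u_{0\infty}=\scr S_{\infty,u^{(N)}}(t-h)\bigl[P_\infty(B[u^{(N-1)}-u^{(N)}]S_{\infty,u^{(N-1)}}(\cdot-h)u_{0\infty})\bigr],
\]
together with the analogous term for $\scr S_{\infty,u^{(N)}}-\scr S_{\infty,u^{(N-1)}}$. Each of these is estimated via Propositions \ref{Sinftyproperty}(ii), \ref{solvabilityhighpart} and $\|u_{0\infty}\|_{H^s_{(\infty),d-1}}+[g]_s\leq\delta_4$, producing the factor $C_3\delta_4$ in front of the difference norm. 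Choosing $\delta_4$ small makes the contraction constant less than $1/2$, so the iteration converges in $X^s(h,h+T)$, and the limit $\{u_1,u_\infty\}$ is the required solution. Uniqueness in $B_{X^s(h,h+T)}(C_3\delta_4)$ follows from the same difference estimate applied to two candidate solutions.

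The main obstacle will be the variable-coefficient character of $S_{\infty,\tilde u}$: unlike Proposition \ref{propfirstsolution}, where the same type of difficulty is encountered, here one must track the shift-in-time $t-h$ and verify that the constants in Propositions \ref{Sinftyproperty} and \ref{conclusionhighpart} depend only on $T$, not on $h$, which follows from the $T$-periodicity of the would-be background $u^{(N-1)}$ only in the periodic setting. In the present initial value setting $u^{(N-1)}$ is only $X^s(h,h+T)$, so the smallness bound $\|\partial_t\tilde\phi\|_{C([h,h+T];H^{s-1})}+\|\nabla\tilde\phi\|_{C([h,h+T];H^{s-1})}+\|\tilde v\|_{C([h,h+T];H^s)}\leq\delta$ required to invoke Proposition \ref{Sinftyproperty} must be secured from the a priori bound of Step one, which is possible because all constants in the linear estimates depend only on $T$ (not on $h$) by translation invariance of the underlying operator $A$ and of the weighted function spaces.
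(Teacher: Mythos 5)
Your proposal is correct and mirrors the argument the paper implicitly defers to: the periodic iteration of Proposition \ref{iterationfirst} with the $(I-S_j(T))^{-1}$ pieces replaced by the prescribed initial data $S_1(t-h)u_{01}$ and $S_{\infty,u^{(N-1)}}(t-h)u_{0\infty}$, closed by the same linear and nonlinear estimates. Two points to tighten. First, the notation $B[u^{(N-1)}-u^{(N)}]$ in your Duhamel identity is a slight abuse: $B[\tilde u]$ is linear in $\tilde v$ but not affine in $\tilde\phi$ because $g_3(\tilde\phi)$ is nonlinear, so the correct source term is $P_\infty\bigl((B[u^{(N-1)}]-B[u^{(N)}])S_{\infty,u^{(N-1)}}(\cdot-h)u_{0\infty}\bigr)$; this is still bounded by the difference of the iterates by the same mean-value/commutator estimates underlying Propositions \ref{nonlinearestimatesalowpart}--\ref{nonlinearestimatesahighpart}, so the contraction survives. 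Second, Proposition \ref{Sinftyproperty} as stated requires a $T$-periodic background (periodicity enters through the function $z(t)$ in its proof) and yields exponential decay in $t$, neither of which you have on $[h,h+T]$ with a non-periodic iterate $u^{(N-1)}$; the correct citation for the high-frequency propagator bound on the finite interval is the Gr\"onwall estimate produced by Propositions \ref{E0}, \ref{proR1}, \ref{p8} with $T'=T$, whose constants are translation invariant in $h$ — exactly the observation you make in your closing paragraph, but it should replace the appeal to Proposition \ref{Sinftyproperty}(i)--(ii) rather than accompany it.
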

					
					\vspace{2ex}
					
					By using Proposition \ref{iterationinitialvaluemodosu}, one can extend $\{u_1,u_{\infty}\}$ periodically on 
					$\mathbb{R}$ as a time periodic solution of  \eqref{eq:(4.2.2)}-\eqref{eq:(4.2.3)}. Since the argument for extension is the same as that given in \cite{Kagei-Tsuda}, 
					we here omit the details. Consequently, we obtain Theorem \ref{Theorem 3.1}. This completes the proof.

					\vspace{2ex}
					
					\section{Proof of Theorem \ref{Theorem 3.2}.} \label{S8}       
					The local existence theorem is verified by a similar iteration argument to that in Proposition \ref{solvabilityhighpart} based on  Lemma \ref{lemA2-0}. Indeed, we consider the following system; 
					\begin{eqnarray}
						\left\{
						\begin{array}{ll}
							\partial_{t} {\psi}^{(0)}+\div w^{(0)} + v_{per}\cdot\nabla {\psi}^{(0)} + w^{(0)} \cdot \nabla a_{per} =0,
							\\
							\partial_{t} {w}^{(0)}+ w^{(0)} + \nabla \psi^{(0)} +v_{per}\cdot\nabla {w}^{(0)}+ {w}^{(0)}\cdot\nabla v_{per}
							= 0,
							\label{stability-2}
						\end{array}
						\right.
					\end{eqnarray}
					and for $1 \leq N$ 
					\begin{eqnarray}
						\left\{
						\begin{array}{ll}
							\partial_{t} {\psi}^{(N)}+\div w^{(N)} + v_{per}\cdot\nabla {\psi}^{(N)} + w^{(N)} \cdot \nabla a_{per} + w^{(N-1)} \cdot \nabla \psi^{(N)}=0,
							\\
							\partial_{t} {w}^{(N)}+ w^{(N)} + \nabla \psi^{(N)} +v_{per}\cdot\nabla {w}^{(N)}+ {w}^{(N)}\cdot\nabla v_{per}
							\\
							\quad
							+ g_4 (a_{per}, \psi^{(N)}) \nabla \psi^{(N)} + g_5(a_{per}, \psi^{(N)}) \nabla a_{per} \BLACK + w^{(N-1)}\cdot\nabla {w}^{(N)}
							= 0.
							\label{stability-3}
						\end{array}
						\right.
					\end{eqnarray}
					Then the local existence of a solution to \eqref{stability-2} is directly obtained by Lemma \ref{lemA2-0}. A similar iteration argument to that in Proposition \ref{solvabilityhighpart} based on  Lemma \ref{lemA2-0} derives a fixed point of \eqref{stability-3}, 
					{\rm i.e., } a solution to \eqref{stability} in $C([0,T']; H^s)$. We note that by $\eqref{stability}_1$ $\del_t \psi \in C([0,T]; H^{s-1})$ and 
					$$
					\|\trans(\psi, w)\|_{C([0,T]; H^s)} + \|\del_t \psi\|_{C([0,T]; H^{s-1})} \leq C\|\trans(\psi_0, w_0)\|_{H^s}, 
					$$
					where $T>0$ is a fixed time and $C$ depends on $T$.  
					
					We note that Remark \ref{note-a} and Lemmas \ref{lem2.1.} imply that if $$
					\|u\|_{C([0,T]; H^s)} \leq \delta, 
					$$
					then $$
					\|g_j (a_{per}, \psi )\|_{L^\infty} + \|g_j (a_{per}, \psi)\|_{H^s} \leq C_\delta \|\nabla \psi\|_{H^{s-1}} 
					$$
					for $j=4,5$
					\BLACK 
					
					Hence we show the energy estimate $\eqref{energy-stability}_2$ which implies $\eqref{energy-stability}_1$  and $\eqref{energy-stability}_3$. 
					To prove $\eqref{energy-stability}_2$, we divide it into three steps.

					\begin{lem} (Step 1: $0$-th order energy inequality) 
						We assume that there exists a solution $u=\trans(\psi, w)$ to \eqref{stability} satisfying 
						$$
						\|u\|_{C([0,T]; H^s)} + \|\del_t \psi \|_{C([0,T]; H^{s-1})} \leq \delta_1 \leq 1.  
						$$ 
						Then it holds that 
						\begin{align}
							\begin{aligned}\label{zeroenergy-stability}
								&\frac{1}{2}\frac{\mathrm{d}}{\mathrm{d}t}\|u\|_{L^2}^2 +\|w\|_{L^2}^2\\
								&\leq \epsilon \|w\|_{L^2}^2 + C\delta_0^2\|\nabla w\|_{L^2}^2+ C\delta_1^2 \|\nabla \psi\|_{L^2}^2.
							\end{aligned}
						\end{align} 
						
					\end{lem}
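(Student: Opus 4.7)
The plan is to apply the standard $L^2$ energy method by testing $\eqref{stability}_1$ against $\psi$ and $\eqref{stability}_2$ against $w$, integrating over $\mathbb{R}^d$, and summing. This produces $\frac{1}{2}\frac{\mathrm{d}}{\mathrm{d}t}\|u\|_{L^2}^2+\|w\|_{L^2}^2$ on the left-hand side, while the principal cross term $(\div w,\psi)+(\nabla\psi,w)$ cancels after one integration by parts thanks to the symmetric-hyperbolic structure.

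Next, I would organize the remaining contributions into three groups. The first group consists of the convective terms $(v_{per}\cdot\nabla\psi,\psi)$ and $(v_{per}\cdot\nabla w,w)$, together with the coupling terms $(w\cdot\nabla a_{per},\psi)$, $(w\cdot\nabla v_{per},w)$, and $(g_5(a_{per},\psi)\nabla a_{per},w)$. These involve the periodic profile $u_{per}$, which by Theorem \ref{Theorem 3.1} and Remark \ref{note-a} satisfies $v_{per}\in L^\infty_{d-1}$ and $\nabla a_{per},\nabla v_{per}\in L^2_{d-1}$ with norms of size $\delta_0$. For the convective terms I would use H\"older's inequality in the form $\|v_{per}\|_{L^\infty}\|\nabla\psi\|_{L^2}\|\psi/(1+|x|)\|_{L^2}$ (or the analog for $w$), then combine with the Hardy inequality $\|\psi/|x|\|_{L^2}\leq C\|\nabla\psi\|_{L^2}$ valid for $d\geq3$ to trade the spatial decay of the coefficients against derivatives on the perturbation. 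Young's inequality then splits each contribution into a piece absorbed by $\epsilon\|w\|_{L^2}^2$ and a piece of the shape $C\delta_0^2\|\nabla w\|_{L^2}^2$ or $C\delta_0^2\|\nabla\psi\|_{L^2}^2$.

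The second group is the pressure-type term $(g_4(a_{per},\psi)\nabla\psi,w)$, where the bound $\|g_4(a_{per},\psi)\|_{L^\infty}\leq C(\delta_0+\delta_1)$ from Remark \ref{note-a} together with Young's inequality yields a contribution of the same form. The third group consists of the genuinely nonlinear terms $-(w\cdot\nabla\psi,\psi)$ and $-(w\cdot\nabla w,w)$. I would recast these via integration by parts as $\frac12(\div w,\psi^2)$ and $\frac12(\div w,|w|^2)$, and then control them using $\|w\|_{L^\infty}+\|\psi\|_{L^\infty}\leq C\|u\|_{H^s}\leq C\delta_1$ from Lemma \ref{lem2.1.}. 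Hardy's inequality is again used to convert any residual $\|\psi/|x|\|_{L^2}$ into $\|\nabla\psi\|_{L^2}$, producing the $C\delta_1^2\|\nabla\psi\|_{L^2}^2$ contribution.

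The main obstacle will be ensuring that every estimate on the right-hand side avoids producing a bare $\|\psi\|_{L^2}^2$ term, which is not allowed in the stated inequality. This forces systematic use of Hardy's inequality (and hence the dimensional restriction $d\geq3$) whenever a term of the wrong scaling in $\psi$ appears, and requires careful Young-type splittings so that the linear $\delta_0$ coefficients coming from $u_{per}$ are upgraded to $\delta_0^2$ coefficients at the cost of an $\epsilon\|w\|_{L^2}^2$ remainder that is later absorbed into the dissipation.
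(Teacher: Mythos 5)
Your overall plan matches the paper's: testing $\eqref{stability}_1$ against $\psi$ and $\eqref{stability}_2$ against $w$, cancelling the cross term $(\div w,\psi)+(\nabla\psi,w)$, and then controlling the $u_{per}$-weighted terms via H\"older combined with the Hardy inequality (which is exactly the mechanism the paper invokes and the reason $d\geq 3$ is needed). Your treatment of the convective and coupling terms involving $v_{per}$, $\nabla a_{per}$, $\nabla v_{per}$ and the pressure coefficients $g_4,g_5$ is in the same spirit as the paper's $\tilde{I}_1,\tilde{I}_2,\tilde{J}_1,\ldots,\tilde{J}_4$ estimates, and the Young splittings you describe are precisely how the $\epsilon\|w\|_{L^2}^2$ and $C\delta_0^2$ prefactors are produced there.

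There is, however, a genuine gap in your handling of the quadratic term $f^1(u)=-w\cdot\nabla\psi$. You propose to integrate by parts to $\tfrac12(\div w,\psi^2)$ and then absorb it; but this route produces a bare $\|\psi\|_{L^2}^2$ (or $\|\psi\|_{L^4}^2$) factor. Unlike the $v_{per}\cdot\nabla\psi$ term, here the vector field $w$ is the perturbation velocity and carries \emph{no} spatial weight, so there is nothing to trade against via Hardy; the inequality $\|\psi\|_{L^2}\leq C\|\nabla\psi\|_{L^2}$ does not hold on $\mathbb{R}^d$ and you cannot recover $\|\nabla\psi\|_{L^2}^2$ this way. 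The paper avoids this entirely by \emph{not} integrating by parts for this term: it estimates directly
$|(w\cdot\nabla\psi,\psi)|\leq\|\psi\|_{L^\infty}\|w\|_{L^2}\|\nabla\psi\|_{L^2}$,
placing the $L^\infty$ norm on the undifferentiated copy of $\psi$, which by Sobolev embedding is controlled by $\delta_1$; Young then gives $\epsilon\|w\|_{L^2}^2+C\delta_1^2\|\nabla\psi\|_{L^2}^2$ with no stray $\|\psi\|_{L^2}^2$. The companion term $-(w\cdot\nabla w,w)$ can be handled either way, since integration by parts there yields $\tfrac12(\div w,|w|^2)\leq C\delta_1\|w\|_{L^2}^2$ which is absorbed by the dissipation. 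So the fix is local: for the $f^1$ term replace your integration-by-parts step with the direct H\"older estimate the paper uses.
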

					
					\begin{proof} We assume that 
						$$
						\|u\|_{C([0,T]; H^s)} + \|\del_t \psi \|_{C([0,T]; H^{s-1})} \leq \delta_1 \leq 1.  
						$$

						Multiplying \eqref{stability} by $\psi$ and $w$ respectively, and integrating  over $\mathbb{R}^d$ by parts then adding them together, we obtain
						\begin{align*}
							&\frac{1}{2}\frac{\mathrm{d}}{\mathrm{d}t}\|\psi\|_{L^2}^2+(\div w, \psi)\nonumber\\
							&=-(v_{per}\cdot\nabla \psi, \psi) -( w\cdot\nabla a_{per}, \psi)+ (f^1(u), \psi)\\
							&:=\sum_{k=1}^3 \tilde{I}_k. 
						\end{align*} 
						\
						By the Hardy inequality and  Sobolev inequality, we obtain that 
						\begin{align}
							\begin{aligned}
								\tilde{I}_1 & \leq C\|v_{per}\|_{L^\infty_{d-2}}\|\nabla \psi\|_{L^2}, \\
								\tilde{I}_2 & \leq C\|\nabla a_{per}\|_{L^\infty_{d-1}}\|\nabla \psi\|_{L^2}\|w\|_{L^2}\\
								&\leq \epsilon \|w\|_{L^2}^2 + \delta_0^2 \|\nabla \psi\|_{L^2}^2, \\
								\tilde{I}_3 &\leq C\|\psi\|_{L^\infty}\|w\|_{L^2}\|\nabla \psi\|_{L^2}\\
								& \leq \epsilon \|w\|_{L^2}^2 + C\delta_1^2 \|\nabla \psi\|_{L^2}^2.
							\end{aligned}
						\end{align}
						On the other hand, 
						\begin{align}
							\frac{1}{2}\frac{\mathrm{d}}{\mathrm{d}t}\|w\|_{L^2}^2+(\nabla\psi, w)+\|w\|_{L^2}^2
							=\sum_{k=1}^5 \tilde{J}_k, 
						\end{align} 
						where 
						\begin{align}
							\begin{aligned}
								\tilde{J}_1 & = -(v_{per}\cdot \nabla w, w), \\
								\tilde{J}_2 &  = -(w\cdot \nabla v_{per}, w), \\
								\tilde{J}_3 &= -(g_4 (a_{per}, \psi)\nabla \psi, w), \\
								\tilde{J}_4 &= -(g_5(a_{per}, \psi)\nabla a_{per}, w), \\
								\tilde{J}_5 &= -(w\cdot \nabla w, w).
							\end{aligned}
						\end{align}
						The Hardy inequality and  Sobolev inequality yield similarly that 
						\begin{align*}
							\sum_{k=1}^5\tilde{J}_k  \leq \epsilon \|w\|_{L^2}^2 + C\delta_0^2\|\nabla w\|_{L^2}^2 + C\delta_1^2 \|\nabla\psi\|_{L^2}^2. 
						\end{align*}
						Consequently, we get that 
						\begin{align}\label{zero-energy-result-stability}
							&\frac{1}{2}\frac{\mathrm{d}}{\mathrm{d}t}\|u\|_{L^2}^2+\|w\|_{L^2}^2\nonumber\\
							&\leq \epsilon \|w\|_{L^2}^2 + C\delta_0^2\|\nabla w\|_{L^2}^2+ C\delta_1^2 \|\nabla \psi\|_{L^2}^2. 
						\end{align} 
						
					\end{proof} 
					
					\vspace{2ex}
					\begin{lem} (Step 2: Higher order energy inequality) 
						We assume that there exists a solution $u=\trans(\psi, w)$ to \eqref{stability} satisfying 
						$$
						\|u\|_{C([0,T]; H^s)} + \|\del_t \psi \|_{C([0,T]; H^{s-1})} \leq \delta_1 \leq 1.  
						$$ 
						Then there holds that for $1 \leq |\alpha| \leq s$ 
						\begin{align}
							\begin{aligned}\label{energy-higer-order-stability}
								&\frac{1}{2}\frac{\mathrm{d}}{\mathrm{d}t}\|\del_x^\alpha \psi \|_{L^2}^2+(\del_x^\alpha \psi, \del_x^\alpha\div w)\\
								&\leq \epsilon \|w\|_{L^2}^2 + C(\delta_0^2+\delta_1^2) \|\nabla \psi\|_{L^2}^2, \\
								&\frac{1}{2}\frac{\mathrm{d}}{\mathrm{d}t}\|\del_x^\alpha w \|_{L^2}^2-(\del_x^\alpha \psi, \del_x^\alpha\div w)+\|\partial_x^\alpha w \|_{L^2}^2 + \frac{1}{2}\frac{d}{dt} \int_{\mathbb{R}^d} g_4(a_{per}, \psi)|\del_x^\alpha \psi|^2 dx \\
								& \leq \epsilon \|w\|_{H^s}^2  +C(\delta_0+\delta_1) \|\nabla \psi\|_{H^{s-1}}^2. 
							\end{aligned}
						\end{align} 
					\end{lem}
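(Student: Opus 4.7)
The plan is to apply $\partial_x^\alpha$ to each of the two equations in \eqref{stability}, test against $\partial_x^\alpha \psi$ and $\partial_x^\alpha w$ respectively, and keep the cross term $(\partial_x^\alpha \psi,\partial_x^\alpha\div w)$ explicit on the left-hand side so that it cancels when the two inequalities are added. The two genuine issues are: (a) the derivative loss produced by the coupling term $g_4(a_{per},\psi)\nabla\psi$ in the momentum equation, which forces a Matsumura--Nishida style symmetrization; and (b) the fact that $a_{per}$ and $v_{per}$ only decay polynomially, so their spatial weights must be converted into $L^2$ bounds through the Hardy inequality together with Remark \ref{note-a}, which provides $\nabla a_{per}\in H^{s-1}_{d-1}$ and $v_{per}\in H^s_{d-1}$ with norms $\lesssim\delta_0$.

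\textbf{First inequality.} Applying $\partial_x^\alpha$ to $\eqref{stability}_1$ and pairing with $\partial_x^\alpha\psi$ gives, besides $\tfrac{1}{2}\tfrac{d}{dt}\|\partial_x^\alpha\psi\|_{L^2}^2+(\partial_x^\alpha\psi,\partial_x^\alpha\div w)$, the transport contribution, which after integration by parts produces $\tfrac{1}{2}(\div v_{per},|\partial_x^\alpha\psi|^2)$ plus a commutator $([\partial_x^\alpha,v_{per}]\nabla\psi,\partial_x^\alpha\psi)$ controlled by Lemma \ref{lem2.3.}, both of order $\delta_0\|\nabla\psi\|_{L^2}^2$; the term $(\partial_x^\alpha(w\cdot\nabla a_{per}),\partial_x^\alpha\psi)$, where $\nabla a_{per}$ is placed in $H^{s-1}_{d-1}$ and $w$ is handled by Lemma \ref{lem2.2.} together with Hardy; and the nonlinear term $(\partial_x^\alpha f^1,\partial_x^\alpha\psi)$ with $f^1=-w\cdot\nabla\psi$, which Lemma \ref{lem2.2.} absorbs using $\|u\|_{H^s}\leq\delta_1$. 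Collecting, one obtains the first estimate.

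\textbf{Second inequality.} For $\eqref{stability}_2$ tested against $\partial_x^\alpha w$, the transport term, the $w\cdot\nabla v_{per}$ term, and the term $g_5(a_{per},\psi)\nabla a_{per}$ are estimated as above, noting that $g_5(a_{per},0)=0$ gives $\|g_5(a_{per},\psi)\|_{L^\infty}\leq C\|\psi\|_{L^\infty}\leq C\delta_1$; the nonlinear term $f^2=-w\cdot\nabla w$ is standard. The critical term is $-(\partial_x^\alpha(g_4\nabla\psi),\partial_x^\alpha w)$. Pulling out the commutator with Lemma \ref{lem2.3.}, the principal piece $-(g_4\partial_x^\alpha\nabla\psi,\partial_x^\alpha w)$ would lose one derivative on $\psi$ if kept as is. We therefore integrate by parts to rewrite it as
\[
(\partial_x^\alpha\psi,\,g_4\partial_x^\alpha\div w)+(\partial_x^\alpha\psi,\,\nabla g_4\cdot\partial_x^\alpha w),
\]
and then substitute $\partial_x^\alpha\div w=-\partial_t\partial_x^\alpha\psi-\partial_x^\alpha(v_{per}\cdot\nabla\psi+w\cdot\nabla a_{per})+\partial_x^\alpha f^1$ from $\eqref{stability}_1$. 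The $\partial_t\partial_x^\alpha\psi$ contribution becomes
\[
-\tfrac{1}{2}\tfrac{d}{dt}\!\int_{\mathbb{R}^d}\! g_4(a_{per},\psi)\,|\partial_x^\alpha\psi|^2\,dx+\tfrac{1}{2}\!\int_{\mathbb{R}^d}\!(\partial_t g_4)\,|\partial_x^\alpha\psi|^2\,dx,
\]
producing exactly the extra energy in the statement; the error $\partial_t g_4$ is bounded via the chain rule by $\|\partial_t\psi\|_{H^{s-1}}+\|\partial_t a_{per}\|_{L^\infty}\leq C(\delta_0+\delta_1)$. All remaining substituted terms are lower order and estimated by Lemmas \ref{lem2.2.}--\ref{lem2.3.} and Hardy, yielding the second inequality.

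\textbf{Main obstacle.} The principal technical point is the derivative-loss-avoidance for the term $g_4\nabla\psi$; this is exactly the Matsumura--Nishida trick adapted to the perturbation system around $(a_{per},v_{per})$. The care needed is that the substitution of $\partial_x^\alpha\div w$ from the density equation must not reintroduce $\|\nabla\psi\|_{H^s}$: this is ensured because every term arising from $\partial_x^\alpha(v_{per}\cdot\nabla\psi+w\cdot\nabla a_{per})-\partial_x^\alpha f^1$ carries at most $s$ total derivatives with the weighted norms of $v_{per}$ and $\nabla a_{per}$ (supplied by Remark \ref{note-a}) playing the role of coefficients. The bound on $\partial_t g_4$ is safe because $\partial_t\psi$ can always be replaced via the density equation by spatial derivatives of $(\psi,w)$ of order at most $s$.
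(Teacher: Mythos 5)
Your proposal is correct and follows essentially the same route as the paper: apply $\partial_x^\alpha$, test against $\partial_x^\alpha\psi$ and $\partial_x^\alpha w$, keep the cross term explicit, decompose $-(\partial_x^\alpha(g_4\nabla\psi),\partial_x^\alpha w)$ into commutator plus principal piece, integrate by parts on the principal piece and substitute $\eqref{stability}_1$ for $\partial_x^\alpha\div w$, which produces exactly the extra energy $\tfrac{1}{2}\tfrac{d}{dt}\int g_4|\partial_x^\alpha\psi|^2\,dx$ and a $\partial_t g_4$ error bounded by $\delta_0+\delta_1$. One small remark: in Step 2 the Hardy inequality is not actually needed (it is used in the $0$-th order Step 1), since every term here carries at least one spatial derivative and the weighted regularity of $(a_{per},v_{per})$ from Remark~\ref{note-a} plus Lemmas~\ref{lem2.2.}--\ref{lem2.3.} already closes the estimates in $H^{s-1}$/$H^s$; but invoking Hardy is harmless and your bookkeeping is consistent with the paper's.
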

					\vspace{2ex}
					
					\begin{proof}
						
						For $1 \leq |\alpha| \leq s $, applying $\partial_x^\alpha $ to \eqref{stability} and multiplying by $\partial_x^\alpha u$, and integrating over $\mathbb{R}^3$, then adding them together, we obtain 
						
						\begin{align}
							\begin{aligned}
								&\frac{1}{2}\frac{\mathrm{d}}{\mathrm{d}t}\|\del_x^\alpha \psi \|_{L^2}^2+(\del_x^\alpha \psi, \del_x^\alpha\div w)+\|\partial_x^\alpha\psi \|_{L^2}^2 =\sum_{k=1}^3 \tilde{I}_{\alpha, k},\\
								&\frac{1}{2}\frac{\mathrm{d}}{\mathrm{d}t}\|\del_x^\alpha w \|_{L^2}^2-(\del_x^\alpha \psi, \del_x^\alpha\div w)+\|\partial_x^\alpha w \|_{L^2}^2 =\sum_{k=1}^5 \tilde{J}_{\alpha, k},\\
							\end{aligned}
						\end{align}	
						where 
						\begin{align}
							\begin{aligned}
								\tilde{I}_{\alpha, 1} & = -(\del_x^\alpha (v_{per}\cdot\nabla \psi), \del_x^\alpha\psi),\\ 
								\tilde{I}_{\alpha, 2} & = -( \del_x^\alpha (w\cdot\nabla a_{per}), \del_x^\alpha\psi), \\
								\tilde{I}_{\alpha, 3} &=  (\del_x^\alpha f^1(u), \del_x^\alpha\psi),
							\end{aligned}
						\end{align}
						and 
						\begin{align}
							\begin{aligned}
								\tilde{J}_{\alpha, 1} & = -(\del_x^\alpha(\del_x^\alpha(v_{per}\cdot \nabla w)), \del_x^\alpha w), \\
								\tilde{J}_{\alpha, 2} &  = -(\del_x^\alpha(w\cdot \nabla v_{per}), \del_x^\alpha w), \\
								\tilde{J}_{\alpha,3} &= -(\del_x^\alpha (g_4 (a_{per}, \psi)\nabla \psi),\del_x^\alpha w), \\
								\tilde{J}_{\alpha,4} &= -(\del_x^\alpha (g_5(a_{per}, \psi)\nabla a_{per}), \del_x^\alpha w), \\
								\tilde{J}_{\alpha,5} &= -(\del_x^\alpha (w\cdot \nabla w), \del_x^\alpha w).
							\end{aligned}
						\end{align}
						We see from the Sobolev inequality with Lemma \ref{lem2.3.}  that 
						\begin{align*}
							\tilde{I}_{\alpha, 1}+\tilde{I}_{\alpha, 3} \leq \epsilon \|w\|_{H^s}^2 + C(\delta_0^2+\delta_1^2) \|\nabla \psi\|_{H^{s-1}}^2. 
						\end{align*}
						On $\tilde{I}_{\alpha, 2}$, since 
						\begin{align}
							\begin{aligned}
								\tilde{I}_{\alpha,2} &= (w \cdot \nabla \del_x^\alpha a_{per}, \del_x^\alpha \psi)\\
								&\quad + (\del_x^\alpha (w\cdot \nabla a_{per})-w\cdot \nabla \del_x^\alpha a_{per}, \del_x^\alpha \psi ), 
							\end{aligned}
						\end{align}
						the Sobolev inequality with Lemma \ref{lem2.3.} verifies that 
						\begin{align}
							\begin{aligned}
								\tilde{I}_{\alpha,2} &\leq C\|w\|_{L^\infty}\|\nabla \del_x^\alpha a_{per}\|_{L^2}\|\del_x^\alpha \psi\|_{L^2} + C\|\nabla a_{per}\|_{H^{s-1}}\|w\|_{H^s}\|\del_x^\alpha \psi\|_{L^2}\\
								& \leq \epsilon \|w\|_{H^s}^2 + C\delta_0^2 \|\nabla \psi\|_{H^{s-1}}^2.
							\end{aligned}
						\end{align}
						Here we used that $\nabla a_{per} \in H^s$. On $\tilde{J}_{\alpha, k}$ $(k=1, \cdots, 5)$, the Sobolev inequality with Lemma \ref{lem2.3.} similarly yield that 
						\begin{align}
							\begin{aligned}
								\tilde{J}_{\alpha,1}+\tilde{J}_{\alpha,2}+\tilde{J}_{\alpha,5} &\leq 
								\epsilon \|w_{H^{s}}\|^2 + C(\delta_0^2 +\delta_1^2)\| w\|_{H^{s}}^2, \\
								\tilde{J}_{\alpha,4} &\leq \epsilon \|w\|_{H^s}^2 + C\delta_0^2 \|\nabla \psi\|_{H^{s-1}}^2. 
							\end{aligned}
						\end{align}
						On $\tilde{J}_{\alpha,3}$, set 
						\begin{align}
							\begin{aligned}
								\tilde{J}_{\alpha,3} &= -(g_4(a_{per},\psi) \nabla \del_x^\alpha \psi, \del_x^\alpha w) \\
								&\quad - (\del_x^\alpha (g_4 (a_{per},\psi)\nabla \psi)-g_4(a_{per},\psi) \nabla \del_x^\alpha \psi,\del_x^\alpha w)\\
								& := \tilde{J}_{\alpha,3,1}+ \tilde{J}_{\alpha,3,2}. 
							\end{aligned}
						\end{align}
						The integration by parts with substituting $\eqref{stability}_1$ derive that 
						\begin{align}
							\begin{aligned}
								\tilde{J}_{\alpha,3,1} &= (g_4(a_{per}, \psi) \del_x^\alpha \psi, \del_x^\alpha \div w) \\
								&\quad + (\nabla g_4 (a_{per}, \psi) \psi,\del_x^\alpha w)\\
								& =-(g_4(a_{per}, \psi) \del_x^\alpha \psi, \del_x^\alpha \del_t \psi)\\
								&\quad - (g_4(a_{per}, \psi) \del_x^\alpha \psi, \del_x^\alpha (v_{per}\cdot \nabla \psi ))\\
								&\quad - (g_4(a_{per}, \psi) \del_x^\alpha \psi, \del_x^\alpha (w\cdot \nabla a_{per}))\\
								&\quad - (g_4(a_{per}, \psi) \del_x^\alpha \psi, \del_x^\alpha (w\cdot \nabla \psi))\\
								&\quad +  (\nabla g_4 (a_{per},\psi) \psi,\del_x^\alpha w)\\
								&:= \sum_{\ell=1}^5\tilde{J}_{\alpha, 3,1, \ell}. 
							\end{aligned}
						\end{align}
						$\tilde{J}_{\alpha,3,1,1}$ is estimated as 
						\begin{align}
							\begin{aligned}
								\tilde{J}_{\alpha,3,1,1} &\leq -\frac{1}{2}\frac{d}{dt} \int_{\mathbb{R}^d} g_4(a_{per}, \psi)|\del_x^\alpha \psi|^2 dx \\
								& \quad + \frac{1}{2}\int_{\mathbb{R}^d}\del_t g_4(a_{per}, \psi)|\del_x^\alpha \psi|^2 dx\\
								&\leq - \frac{1}{2}\frac{d}{dt} \int_{\mathbb{R}^d} g_4(a_{per}, \psi)|\del_x^\alpha \psi|^2 dx \\
								& \quad + C(\|\del_t \phi_{per}\|_{L^\infty} + \|\del_t \psi\|_{L^\infty})\|\del_x^\alpha \psi\|_{L^2}^2\\ 
								& \leq - \frac{1}{2}\frac{d}{dt} \int_{\mathbb{R}^d} g_4(a_{per}, \psi)|\del_x^\alpha \psi|^2 dx \\
								& \quad + C(\delta_0+\delta_1)\|\del_x^\alpha \psi\|_{L^2}^2. 
							\end{aligned}
						\end{align}
						Here we used that $\sup_{t\in [0,T]}\|\del_t \phi_{per}\|_{H^{s-1}} \leq C\delta_0$ and   $\sup_{t\in [0,T]}\|\del_t \psi\|_{H^{s-1}} \leq \delta_1$. 
						
						$\tilde{J}_{\alpha,3,1,2}$ is estimated by the Sobolev inequality with Lemma \ref{lem2.3.} as 
						\begin{align}
							\begin{aligned}\label{aboid-loss-stability}
								\tilde{J}_{\alpha,3,1,2} &\leq -(g_4(a_{per}, \psi) \del_x^\alpha \psi, v_{per}\cdot \nabla \del_x^\alpha \psi )\\
								& \quad - (g_4(a_{per}, \psi) \del_x^\alpha \psi, \del_x^\alpha(v_{per}\cdot \nabla \psi )-v_{per}\cdot \nabla \del_x^\alpha \psi)\\
								&\leq C\|g_4(a_{per}, \psi)\|_{L^\infty}\|\del_x^\alpha \psi\|_{L^2} (\|\nabla v_{per}\|_{L^2}\|\del_x^\alpha \psi\|_{L^2}+\|\del_x^\alpha v_{per}\|_{L^2}\|\nabla \psi\|_{L^\infty})\\
								&\quad + C\|\div (g_4(a_{per}, \psi)v_{per})\|_{L^\infty}\|\del_x^\alpha \psi\|_{L^2}^2 \\
								&\leq C\|\nabla \psi_{per}\|_{H^{s-1}}\|\nabla v_{per}\|_{H^{s-1}}\|\nabla \psi \|_{H^{s-1}}^2\\
								&\leq C\delta_0^2 \|\nabla \psi \|_{H^{s-1}}^2. 
							\end{aligned}
						\end{align}
						It follows a similar argument as that in \eqref{aboid-loss-stability} that 
						\begin{align}
							\begin{aligned}
								\tilde{J}_{\alpha,3,1,3}+\tilde{J}_{\alpha,3,1,4} \leq \epsilon\|w\|_{H^s}^2 + C\delta_0^2 \|\nabla \psi\|_{H^s}^2.  
							\end{aligned}
						\end{align}
						The Sobolev inequality with Lemma \ref{lem2.3.} again verifies that 
						\begin{align}
							\begin{aligned}
								\tilde{J}_{\alpha,3,1,5}+\tilde{J}_{\alpha,3,2} \leq \epsilon \|w\|_{H^s}^2 + C\delta_0^2 \|\nabla \psi\|_{H^{s-1}}^2. 
							\end{aligned}
						\end{align}
						Therefore we obtain that 
						\begin{align}
							\begin{aligned}
								\tilde{J}_{\alpha,3} &\leq - \frac{1}{2}\frac{d}{dt} \int_{\mathbb{R}^d} g_4(a_{per}, \psi)|\del_x^\alpha \psi|^2 dx\\
								& \quad + \epsilon \|w\|_{H^s}^2 + C\delta_0^2\|\nabla \psi\|_{L^2}^2. 
							\end{aligned}
						\end{align}   
						
						$\tilde{J}_{\alpha,4} $ can be estimated directly by the Sobolev inequality with Lemma \ref{lem2.3.} as 
						\begin{align*}
							\tilde{J}_{\alpha,4} \leq \epsilon \|w\|_{H^s}^2 + C\delta_0^2\|\nabla \psi\|_{H^s}^2.
						\end{align*}
				Combining the above estimates together, we can prove \eqref{energy-higer-order-stability}. 
					\end{proof}
					
					\vspace{2ex}
					
					\begin{lem} 
						(Step 3: Deriving the dissipation term of density)  
						We assume that there exists a solution $u=\trans(\psi, w)$ to \eqref{stability} satisfying 
						\begin{align*}
							\|u\|_{C([0,T]; H^s)} \leq \delta_1 \leq 1.  
						\end{align*}
						Then there holds that 
						\begin{align}
							\begin{aligned}\label{energy-disspation-stability}
								&\frac{\del}{\del t}(\del_x^{\alpha-1}w, \del_x^\alpha \psi) + \|\del_x^\alpha 
								\psi\|_{L^2}^2 \\
								&\quad \leq C_1(\frac{1}{\epsilon_1}+\delta_0^2 +\delta_1^2)\|w\|_{H^s}^2+ \epsilon_1 \|\nabla\psi\|_{H^{s-1}}^2+C(\delta_0+\delta_1)\|\nabla \psi\|_{H^s}^2. 
							\end{aligned}
						\end{align}
						
					\end{lem}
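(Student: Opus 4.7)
The plan is to follow the Matsumura--Nishida approach already used in Proposition~\ref{E0}, specialised to the perturbation system~\eqref{stability}: apply $\partial_x^{\alpha-1}$ to $\eqref{stability}_2$ and then take the $L^2$-inner product against $\partial_x^\alpha\psi$. The pressure gradient contributes
$$
(\partial_x^{\alpha-1}\nabla\psi,\partial_x^\alpha\psi)=\|\partial_x^\alpha\psi\|_{L^2}^2,
$$
which is exactly the density dissipation on the left-hand side of~\eqref{energy-disspation-stability}. The time-derivative contribution is rearranged as
$$
(\partial_x^{\alpha-1}\partial_t w,\partial_x^\alpha\psi)=\frac{d}{dt}(\partial_x^{\alpha-1}w,\partial_x^\alpha\psi)-(\partial_x^{\alpha-1}w,\partial_x^\alpha\partial_t\psi),
$$
and $\partial_t\psi$ is substituted out using $\eqref{stability}_1$. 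The principal piece $(\partial_x^{\alpha-1}w,\partial_x^\alpha\div w)$ is integrated by parts to yield $\|\partial_x^\alpha w\|_{L^2}^2$, which is absorbed into $C_1\epsilon_1^{-1}\|w\|_{H^s}^2$ on the right-hand side.

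Next, the linear damping term $(\partial_x^{\alpha-1}w,\partial_x^\alpha\psi)$ that appears after integrating $w$ against $\nabla\psi$ is handled by Young's inequality with parameter $\epsilon_1$: it contributes $\epsilon_1\|\partial_x^\alpha\psi\|_{L^2}^2+C_1\epsilon_1^{-1}\|\partial_x^{\alpha-1}w\|_{L^2}^2$, matching the $\epsilon_1\|\nabla\psi\|_{H^{s-1}}^2$ and $C_1\epsilon_1^{-1}\|w\|_{H^s}^2$ terms. The convective and source nonlinearities $v_{per}\cdot\nabla w$, $w\cdot\nabla v_{per}$, $g_5(a_{per},\psi)\nabla a_{per}$, and $f^2(u)=-w\cdot\nabla w$ are estimated by H\"older's inequality, Sobolev embedding (Lemma~\ref{lem2.1.}) and the commutator/Moser inequality (Lemma~\ref{lem2.3.}), together with the a~priori smallness $\|(\nabla a_{per},v_{per})\|_{C([0,T];H^{s-1}\times H^s)}\leq C\delta_0$ from Theorem~\ref{Theorem 3.1} and Remark~\ref{note-a} and with $\|u\|_{C([0,T];H^s)}\leq \delta_1$. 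These contributions collectively fall into $C(\delta_0^2+\delta_1^2)\|w\|_{H^s}^2$.

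The delicate term is $(\partial_x^{\alpha-1}(g_4(a_{per},\psi)\nabla\psi),\partial_x^\alpha\psi)$, since $g_4$ itself depends on $\psi$ and naively puts a top-order derivative on $\psi$ on both sides of the pairing. I split it as
$$
(g_4(a_{per},\psi)\partial_x^\alpha\psi,\partial_x^\alpha\psi)+\bigl(\partial_x^{\alpha-1}(g_4\nabla\psi)-g_4\partial_x^\alpha\psi,\partial_x^\alpha\psi\bigr).
$$
For the first piece we use $\|g_4(a_{per},\psi)\|_{L^\infty}\leq C(\|\nabla a_{per}\|_{H^{s-1}}+\|\nabla\psi\|_{H^{s-1}})\leq C(\delta_0+\delta_1)$ (via Remark~\ref{note-a}), so that it is bounded by $C(\delta_0+\delta_1)\|\nabla\psi\|_{H^{s-1}}^2$ and can be absorbed into the $C(\delta_0+\delta_1)\|\nabla\psi\|_{H^s}^2$ term. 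The commutator is controlled by Lemma~\ref{lem2.3.} combined with $\|\nabla g_4\|_{H^{s-1}}\leq C(\delta_0+\delta_1)$, yielding the same type of bound. Summing over $1\leq|\alpha|\leq s$ and collecting terms then produces~\eqref{energy-disspation-stability}.

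The principal obstacle is exactly the pressure nonlinearity $g_4(a_{per},\psi)\nabla\psi$: one has to ensure that every time a top-order derivative of $\psi$ is generated on the right-hand side it is paired with a smallness factor $\delta_0+\delta_1$ (rather than a constant), so that no genuine derivative loss occurs and the resulting term can later be absorbed once $\delta_0,\delta_1$ are taken small. This is the same point emphasised in the introduction, and it is handled here purely through the a~priori smallness of the time-periodic background and the perturbation, rather than through any parabolic smoothing.
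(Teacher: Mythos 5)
Your proposal follows the same Matsumura--Nishida route as the paper: apply $\partial_x^{\alpha-1}$ to $\eqref{stability}_2$, pair with $\partial_x^\alpha\psi$, rewrite the time-derivative term and substitute $\partial_t\psi$ from $\eqref{stability}_1$, then control the remaining terms via Young, Sobolev embedding, and Lemma~\ref{lem2.3.}. Your treatment is in fact more detailed than the paper's (which simply invokes ``Sobolev with Lemma~\ref{lem2.3.}'' for all of $\tilde{G}_{\alpha,2},\dots,\tilde{G}_{\alpha,7}$), and the only slight imprecision is that a few of the nonlinear pairings (e.g.\ the $g_5(a_{per},\psi)\nabla a_{per}$ term) naturally land in the $\epsilon_1\|\nabla\psi\|_{H^{s-1}}^2$ or $C(\delta_0+\delta_1)$-weighted density bucket rather than in $C(\delta_0^2+\delta_1^2)\|w\|_{H^s}^2$, which does not affect the final estimate.
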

					
					\vspace{2ex}
					
					\begin{proof} 
						We  consider that for $1 \leq |\alpha| \leq s $, applying $\partial_x^{\alpha-1} $ to $\eqref{stability}_2$ and multiplying by $\partial_x^\alpha \psi$, and integrating over $\mathbb{R}^3$ to get 
						\begin{align}
							\frac{\del}{\del t}(\del_x^{\alpha-1}w, \del_x^\alpha \psi) + \|\del_x^\alpha \psi\|_{L^2}^2 
							= \sum_{k=1}^8\tilde{G}_{\alpha,k}, 
						\end{align}
						where 
						\begin{align}
							\begin{aligned}
								\tilde{G}_{\alpha, 1} & = (\del_x^{\alpha-1} w, \del_x^\alpha \psi), \\
								\tilde{G}_{\alpha, 2} &  = -(\del_x^{\alpha-1} w, \del_x^\alpha \psi), \\
								\tilde{G}_{\alpha, 3} & = -(\del_x^{\alpha-1} (v_{per} \cdot \nabla w), \del_x^\alpha \psi), \\
								\tilde{G}_{\alpha, 4} &  = -(\del_x^{\alpha-1}(w\cdot \nabla v_{per}), \del_x^\alpha \psi), \\
								\tilde{G}_{\alpha,5} &= -(\del_x^{\alpha-1} (g_4 (a_{per}, \psi)\nabla \psi),\del_x^\alpha \psi), \\
								\tilde{G}_{\alpha,6} &= -(\del_x^{\alpha-1} (g_5(a_{per}, \psi)\nabla a_{per}), \del_x^\alpha \psi), \\
								\tilde{G}_{\alpha,7} &= -(\del_x^{\alpha-1} (w\cdot \nabla w), \del_x^\alpha \psi).  
							\end{aligned}
						\end{align}
						The Sobolev inequality with Lemma \ref{lem2.3.} directly show that 
						\begin{align}
							\begin{aligned}
								\sum_{k=2}^8 \tilde{G}_{\alpha,k} \leq \frac{C}{\epsilon_1}\|w\|_{H^s}^2 +\epsilon_1 \|\nabla \psi\|_{H^{s-1}} +C(\delta_0+\delta_1)\|\nabla \psi\|_{H^s}^2. 
							\end{aligned}
						\end{align}
						On $\tilde{G}_{\alpha,1}$, substituting $\eqref{stability}_1$ to $\del_t \psi$,  the Sobolev inequality with Lemma \ref{lem2.3.} verifies that 
						\begin{align*}
							\tilde{G}_{\alpha,1} \leq C_1(\frac{1}{\epsilon_1}+\delta_0^2 +\delta_1^2)\|w\|_{H^s}^2+ \epsilon_1 \|\nabla\psi\|_{H^{s-1}}^2.
						\end{align*}
					Combining the above estimates together, we prove \eqref{energy-disspation-stability}.
					\end{proof}
					
					\vspace{2ex}
					We take $\epsilon =\frac{1}{4}$, $\epsilon_1 =\frac{1}{2}$ and a large constant $C_2$ satisfying that 
					$C_2 > 4C_1$ independent $\delta_0$ and $\delta_1$.  
					When we take summation on $\alpha$ and 
					add \eqref{zeroenergy-stability}, $C_2 \times \eqref{energy-higer-order-stability}$ and  \eqref{energy-disspation-stability} with taking small  $\delta_0$ and $\delta_1$, we get $\eqref{energy-stability}_2$. This completes the proof.
					
					\
					\
					\
					\
					\
					\

					\noindent {\bf Acknowledgements.} 
					The first author  is supported by Anhui Provincial Natural Science Foundation (Grant No. 2408085QA031). The second author is supported by JSPS grant whose number is 22K13946.


					\end{document}